\tikzset{commutative diagrams/.cd}
\theoremstyle{plain}
\newtheorem{thm}{Theorem}[section]
\newtheorem{lem}[thm]{Lemma}
\newtheorem{prop}[thm]{Proposition}
\newtheorem{cor}[thm]{Corollary}
\newtheorem*{thm*}{Theorem}
\newenvironment{repcorollary}[1]
  {\rcor}
  {\endrcor}
\newenvironment{reptheorem}[1]
  {\rthm}
  {\endrthm}
\theoremstyle{definition}
\newtheorem{defn}[thm]{Definition}
\newtheorem{ex}[thm]{Example}
\theoremstyle{remark}
\newtheorem{rmk}[thm]{Remark}
\theoremstyle{remark}
\newtheorem{obs}[thm]{Observation}
\newtheorem{nt}[thm]{Notation}
\DeclareFontFamily{OMX}{MnSymbolE}{}
\DeclareSymbolFont{MnLargeSymbols}{OMX}{MnSymbolE}{m}{n}
\DeclareFontShape{OMX}{MnSymbolE}{m}{n}{
    <-6>  MnSymbolE5
   <6-7>  MnSymbolE6
   <7-8>  MnSymbolE7
   <8-9>  MnSymbolE8
   <9-10> MnSymbolE9
  <10-12> MnSymbolE10
  <12->   MnSymbolE12
}{}
\DeclareFontShape{OMX}{MnSymbolE}{b}{n}{
    <-6>  MnSymbolE-Bold5
   <6-7>  MnSymbolE-Bold6
   <7-8>  MnSymbolE-Bold7
   <8-9>  MnSymbolE-Bold8
   <9-10> MnSymbolE-Bold9
  <10-12> MnSymbolE-Bold10
  <12->   MnSymbolE-Bold12
}{}
\let\llangle\@undefined
\let\rrangle\@undefined
\DeclareMathDelimiter{\llangle}{\mathopen}%
                     {MnLargeSymbols}{'164}{MnLargeSymbols}{'164}
\DeclareMathDelimiter{\rrangle}{\mathclose}%
                     {MnLargeSymbols}{'171}{MnLargeSymbols}{'171}
\newcommand\ackname{Acknowledgements}
  \newenvironment{acknowledgements}{%
      \titlepage
      \null\vfil
      \@beginparpenalty\@lowpenalty
      \begin{center}%
        \bfseries \ackname
        \@endparpenalty\@M
      \end{center}}%
     {\par\vfil\null\endtitlepage}
  \newenvironment{acknowledgements}{%
      \if@twocolumn
        \section*{\abstractname}%
      \else
        \small
        \begin{center}%
          {\bfseries \ackname\vspace{-.5em}\vspace{\z@}}%
        \end{center}%
        \quotation
      \fi}
      {\if@twocolumn\else\endquotation\fi}
\newcommand{\Rf}{\mathcal{R}^{\textit{f}}}
\newcommand{\Sf}{\mathcal{S}^{\textit{f}}}
\newcommand{\widehatSf}{\widehat{\mathcal{S}}^{\textit{f}}}
\newcommand{\salbar}{\overline{Sal}(\Gamma)}
\newcommand{\sal}{Sal(\Gamma)}
\newcommand{\salbargam}{\overline{Sal}(\widehat{\Gamma})}
\newcommand{\bigslant}[2]{{\raisebox{.2em}{$#1$}\left/\raisebox{-.2em}{$#2$}\right.}}
\newcommand{\ggr}{\langle R \rangle}
\newcommand{\cX}{\mathcal{X}}
\newcommand{\cY}{\mathcal{Y}}
\newcommand{\cZ}{\mathcal{Z}}
\newcommand{\cT}{\mathcal{T}}
\newcommand{\Sn}{\mathfrak{S}_n}
\newcommand{\A}{\mathrm{A}}
\newcommand{\W}{\mathrm{W}}
\newcommand{\PVB}{\mathrm{PVB}}
\newcommand{\VB}{\mathrm{VB}}
\newcommand{\VA}{\mathrm{VA}}
\newcommand{\KVA}{\mathrm{KVA}}
\newcommand{\PVA}{\mathrm{PVA}}
\newcommand{\LVA}{\mathrm{LVA}}
\newcommand{\Prod}{\mathrm{Prod}}
\newcommand{\M}{\mathrm{M}}
\newcommand{\Stab}{\mathrm{Stab}}
\newcommand{\id}{\mathrm{id}}
\newcommand{\Dd}{\mathcal{D}}
\begin{document}
\title{Spaces Related to Virtual Artin Groups}
\author{Federica Gavazzi}

\maketitle

\begin{abstract}\noindent
This work explores the topological properties of virtual Artin groups, a recent extension of the ``virtual" concept\textemdash  initially developed for braids\textemdash  to all Artin groups, as introduced by Bellingeri, Paris, and Thiel. For any given Coxeter graph $\Gamma$, we define a CW-complex $\Omega(\Gamma)$ whose fundamental group is isomorphic to the pure virtual Artin group $\PVA[\Gamma]$, which coincides with the pure virtual braid group when $\Gamma$ is $A_{n-1}$. This construction generalizes the previously studied BEER complex, originally defined for pure virtual braids, to all Coxeter graphs. We investigate the asphericity of $\Omega(\Gamma)$ and demonstrate that it holds when $\Gamma$ is of spherical type or of affine type, thereby characterizing
$\Omega(\Gamma)$ as a classifying space for 
$\PVA[\Gamma]$. To achieve this, we establish a connection between $\Omega(\Gamma)$ and the Salvetti complex associated with a specific Coxeter graph $\widehat{\Gamma}$ related to $\Gamma$, showing that they share a common covering space. This finding links the asphericity of $\Omega(\Gamma)$ to the $K(\pi, 1)$-conjecture for Artin groups associated with $\widehat{\Gamma}$. Additionally, the paper introduces and studies almost parabolic (AP) reflection subgroups, which play a crucial role in constructing these complexes.
\medskip

{\footnotesize
\noindent \emph{2020 Mathematics Subject Classification.} 20F36.

\noindent \emph{Key words.} Virtual Artin groups, Virtual braids, Salvetti complex, Reflection subgroups.}

\end{abstract}

\section{Introduction}
Virtual Artin groups were defined by Bellingeri, Paris and Thiel in \cite{BellParThiel}, extending to all Artin groups the already largely studied structure of virtual braids. Indeed, given $S$ a countable set and $\Gamma$ a Coxeter graph on $S$, a virtual Artin group (denoted by $\VA[\Gamma]$) has two distinct kinds of generators: a first set $\{\sigma_s\,\mid\,s\in S\}$ playing the role of the classical Artin generators, and a second one $\{\tau_s\mid s\in S\}$, representing the virtual generators. The relations appearing in the presentation
are of three types: Artin relations involving the classical generators, Coxeter relations involving the $\tau_s$'s, and, lastly, mixed relations, mimicking the action of the Coxeter group $\W[\Gamma]$ on its root system.
As for virtual braid groups, two natural homomorphisms are defined: $\iota_{\A}:\A[\Gamma]\longrightarrow \VA[\Gamma]$ and $\iota_{\W}:\W[\Gamma]\longrightarrow \VA[\Gamma]$ that are proven to be injective, and two surjective homomorphisms
$\pi_P:\VA[\Gamma]\longrightarrow \W[\Gamma]$ and $\pi_{K}:\VA[\Gamma]\longrightarrow \W[\Gamma]$, which give rise to two emblematic kernel subgroups of $\VA[\Gamma]$: the pure virtual Artin group $\PVA[\Gamma]$ and $\KVA[\Gamma]$, respectively. 
The latter two groups are studied by the same authors in \cite{BellParThiel}, in which they exhibit a presentation with generators and relations for both and prove that $\KVA[\Gamma]$ is an Artin group with respect to some Coxeter graph $\widehat{\Gamma}$, closely related to the root system of $\W[\Gamma]$.

\medskip
\noindent A connected CW-complex $X$ is called a \emph{classifying space} for a discrete group $G$ if its fundamental group is $G$ and all the higher homotopy groups vanish.
When the Coxeter graph $\Gamma$ is $A_{n-1}$, then the virtual Artin group $\VA[A_{n-1}]$ is the virtual braid group on $n$ strands, and $\PVA[A_{n-1}]$ is the pure virtual braid group, whose presentation was given by Bardakov in \cite{Bard04}.
This last group was further studied by Bartholdi, Enriquez, Etingof and Rains in \cite{BEER} in relation with the Yang\textendash Baxter equations. In particular, in the cited work, the authors build a space $\Omega_n$, referred to as the \textit{BEER complex}, that has fundamental group isomorphic to $\PVB_n$. Moreover, they claim that this space is locally CAT(0), which would imply asphericity and hence that $\Omega_n$ is a classifying space for the pure virtual braid group on $n$ strands. However, this last result is incorrect. In Subsection \ref{originalBEERspace}, we prove the following result:
\begin{reptheorem}{FalseCat0}
The complex $\Omega_3$ is not locally CAT(0).
\end{reptheorem}
\noindent Thus, the local CAT(0) property can no longer be used to imply that the universal covering of $\Omega_n$ is contractible. 

\medskip
\noindent The purpose of this work is to generalize the construction of the BEER space $\Omega_n$, originally done for braids, to all Coxeter graphs $\Gamma$. We then obtain a CW-complex $\Omega(\Gamma)$ (see Subsection \ref{generalBeer}) whose fundamental group is isomorphic to the pure virtual Artin group $\PVA[\Gamma]$, and which coincides with $\Omega_n$ for $\Gamma=A_{n-1}$ (Theorem \ref{omegan=omegaxan}). We then investigate whether this generalized space is aspherical. In Section \ref{commoncovering}, we answer this question affirmatively if $\Gamma$ is a Coxeter graph of spherical or affine type.
We have then the following results:

\begin{repcorollary}{beeraspht}
 If $\Gamma$ is a Coxeter graph of spherical type, then  the BEER complex $\Omega(\Gamma)$ is aspherical. Consequently, it is a classifying space for the pure virtual Artin group $\PVA[\Gamma]$. 
\end{repcorollary}

\noindent
Thus, the space originally built in \cite{BEER} turns out to be aspherical, even though it is not locally CAT(0).

\begin{repcorollary}{originalbeeraspht}
    The Bartholdi\textendash Enriquez\textendash Etingof\textendash Rains complex $\Omega_n$ is aspherical.
\end{repcorollary}
\noindent
For the affine case, we obtain an analogous corollary after rephrasing some results in \cite{BellParThiel} that we used to show the spherical-type case:
\begin{repcorollary}{beerasphtAFF}
 If $\Gamma$ is a Coxeter graph of affine type, then  the BEER complex $\Omega(\Gamma)$ is aspherical. Consequently, it is a classifying space for the pure virtual Artin group $\PVA[\Gamma]$.
\end{repcorollary}
\noindent
The tools that we use to prove the asphericity of $\Omega(\Gamma)$ rely on comparing one of its covering spaces $\overline{\Omega}(\Gamma)$ with another cell complex $\overline{\Sigma}(\Gamma)$, that is known to be aspherical when $\Gamma$ is of spherical type or of affine type.

\medskip
\noindent
For any Artin group $\A[\Gamma]$ with  $\Gamma$ a finite Coxeter graph, we can build the Salvetti complex $\salbar$, introduced by Salvetti (see \cite{Sal87},\cite{Sal94}), and widely used to study and discuss the long-standing \emph{$K(\pi,1)$-conjecture}. Given $(\W[\Gamma],S)$ a Coxeter system with $S$ a finite set and $\W[\Gamma]$ acting properly and faithfully on a nonempty open convex cone $I$ in a real vector space $V$, we call reflections the elements in $\mathcal{R}=\{wsw^{-1}\,|\,w\in \W[\Gamma], s\in S\}$, and each $r\in \mathcal{R}$ has an associated reflection hyperplane $H_r$. The complement of the complexification of the union of all these reflection hyperplanes in
the complexification of $I$ gives a space on which $\W[\Gamma]$ acts freely and properly discontinuously. We can then consider the quotient under the action of the Coxeter group, obtaining a space denoted by $N(\Gamma)$, which is conjectured to be a classifying space for $\A[\Gamma]$. Thanks to Van der Lek (\cite{van1983homotopy}), we know that its fundamental group is indeed the Artin group $\A[\Gamma]$, so the $K(\pi,1)$-conjecture is true if the described space is aspherical. The complex defined by Salvetti has the same homotopy type as such a complement of the complexification of the reflection arrangement, and it has been proven to be aspherical for many types of Coxeter graphs so far. We recall here the case of our interest, that is, for $\Gamma$ of affine type, proven by Paolini and Salvetti in \cite{PaoSal}.

\medskip \noindent
In this work, we recall the definition of the Salvetti complex for a Coxeter graph $\Gamma$, and we point out that we do not necessarily need a finite set of vertices (see Subsection \ref{salgam}). In particular, we define this CW-complex inductively on its skeleta, and we describe explicitly its cells and attaching maps. In \cite{BellParThiel} the authors prove that $\KVA[\Gamma]$ is the Artin group $\A[\widehat{\Gamma}]$ (that in general is not finitely generated), thus, we can build the complex $\salbargam$ such that $\pi_1(\salbargam)=\A[\widehat{\Gamma}]$. For this space we find an explicit description (in Subsection \ref{salbargam}) that is independent of the definition of the graph $\widehat{\Gamma}$. We call this cell complex $\Sigma(\Gamma)$, and its fundamental group is isomorphic to $\KVA[\Gamma]$. \\
In Section \ref{commoncovering} we describe an isomorphism between two covering spaces $\overline{\Sigma}(\Gamma)$ and $\overline{\Omega}(\Gamma)$ of $\Sigma(\Gamma)$ and $\Omega(\Gamma)$, respectively. In particular, for any $\Gamma$ (finite or not), we show the following:
\begin{reptheorem}{maintheorem}
The Salvetti complex $\Sigma(\Gamma)$ and the BEER complex $\Omega(\Gamma)$ have a common covering space. 
\end{reptheorem}
\noindent
Therefore, the two spaces share the same universal covering, and $\Omega(\Gamma)$ is aspherical if and only if $\Sigma(\Gamma)$ is aspherical. Now we see that in the two cases in which $\Gamma$ is either of spherical type or of affine type, we can conclude that $\Sigma(\Gamma)$ (and, consequently, $\Omega(\Gamma)$) is aspherical.
\medskip 

\noindent When $\Gamma$ is of spherical type, the Coxeter graph $\widehat{\Gamma}$ has a finite number of vertices, so the open convex cone $I$ is well-defined and we can state the $K(\pi,1)$-conjecture for $\A[\widehat{\Gamma}]$.
Thanks to \cite[Theorem 6.3]{BellParThiel}, we know that in this case $\A[\widehat{\Gamma}]$ satisfies the $K(\pi,1)$-conjecture, and so $\salbargam=\Sigma(\Gamma)$ is aspherical. This implies (Corollary \ref{beeraspht}) the result of asphericity for the generalized BEER complex $\Omega(\Gamma)$ in the spherical-type case.
\medskip

\noindent If $\Gamma$ is not of spherical type, then $\widehat{\Gamma}$ has an infinite set of vertices. The classical $K(\pi,1)$-conjecture does not make sense in the case in which the Coxeter graph is infinite, as remarked in \cite[Section 6]{BellParThiel}. However, we can always define the Salvetti complex, and we know that it has the same homotopy type as that of $N(\Gamma)$ when the Coxeter graph is finite. For these cases, the conjecture is equivalent to asking whether $\salbar$ is aspherical. It is therefore reasonable to ``extend'' the conjecture also to the infinite cases, even if $N(\Gamma)$ and $I$ are not well-defined, by asking whether the Salvetti complex is aspherical. With this intent, we can rephrase Theorem \ref{maintheorem} saying that $\Omega(\Gamma)$ is aspherical if and only if the $K(\pi,1)$-conjecture is true for $\widehat{\Gamma}$.
\medskip

\noindent In Theorem 4.1 of \cite{BellParThiel}, the authors prove that for $\Gamma$, a Coxeter graph of spherical type or of affine type, all free of infinity subsets of the generators of $\widehat{\Gamma}$ are finite, and that they are of spherical or of affine type. Furthermore, in \cite{GodPar12} and \cite{EllSk10}, the authors prove that given a Coxeter graph $\Gamma$ on a set $S$, if for all free of infinity subsets $X\subset S$ the Salvetti complex $\overline{Sal}(\Gamma_X)$ is aspherical, then $\salbar$ is aspherical. Combining these results, we find the following.

\begin{reptheorem}{gammahatkpiAFF} Let $\Gamma$ be a Coxeter graph of spherical type or of affine type, then $\A[\widehat{\Gamma}]$ satisfies the $K(\pi,1)$-conjecture. Namely, $\salbargam=\Sigma(\Gamma)$ is aspherical.
\end{reptheorem}

\medskip

\noindent The Salvetti complex $\salbar$ of a Coxeter graph $\Gamma$ has a cell for each standard parabolic subgroup of $\W[\Gamma]$ of spherical type. The two CW-complexes $\Sigma(\Gamma)$ and $\Omega(\Gamma)$ will have the cells in bijection with a more general class of subgroups of $\W[\Gamma]$: the \textit{almost parabolic} (abbreviated as AP) reflection subgroups of spherical type. The AP reflection subgroups are defined and studied in Subsection \ref{APrefsubgrp}.

\medskip
\noindent
The structure of this article is as follows. In Section \ref{prem} we include all the preliminaries for the construction of the two spaces. We first introduce classical notions about Coxeter groups and Artin groups (Subsection \ref{CoxandArt}), then describe the fundamental cells that will be used to build $\Sigma(\Gamma)$ and $\Omega(\Gamma)$, namely, the Coxeter polytopes (\ref{coxeterpolytopes}). Here, we also establish the conventions for the orientations of the edges that will be followed throughout the work. At the end of the section, we recall the basic results about virtual Artin groups (\ref{virtualartingroups}) taken from \cite{BellParThiel}. In Section \ref{salberrap} we give the definition of the Salvetti complex $\salbar$ with attaching maps and skeleton structure (\ref{salgam}), then in Subsection \ref{APrefsubgrp} we treat the reflection subgroups and we define the AP reflection subgroups. We then use this concept in Subsection \ref{salbargam} to build the CW-complex $\Sigma(\Gamma)$ with fundamental group $\KVA[\Gamma]$. In Section \ref{BEERspace}, we first explain the construction of the space originally defined in \cite{BEER} (see \ref{originalBEERspace}), include the proof that its fundamental group coincides with $\PVB_n$, and later show that the local CAT(0) property already fails for $n=3$. In Subsection \ref{generalBeer}, we propose the definition of a CW-complex $\Omega(\Gamma)$ for any $\Gamma$ Coxeter graph, using again the structure of the AP reflection subgroups. In Subsection \ref{eqialityofspaces}, we verify that the space $\Omega(\Gamma)$ that we introduce in the previous section, when $\Gamma=A_{n-1}$, coincides with $\Omega_n$.
 Finally, in Section \ref{commoncovering}, we prove the main result (Theorem \ref{maintheorem}), showing that the two spaces $\Sigma(\Gamma)$ and $\Omega(\Gamma)$ have a common covering space, and then that $\Sigma(\Gamma)$ is aspherical if and only if $\Omega(\Gamma)$ is aspherical. From this, we have as a consequence the asphericity of $\Omega(\Gamma)$ for $\Gamma$ of spherical type (Corollary \ref{beeraspht}), and for $\Gamma$ of affine type (Corollary \ref{beerasphtAFF}).

\begin{acknowledgements}\noindent
The author thanks her Ph.D. advisor, Prof. Luis Paris, for his valuable insights and guidance throughout their many discussions. She is also grateful to the anonymous referee of AGT for their precise, helpful, and thorough corrections, which have significantly enhanced both the quality and the clarity of exposition of this work. In addition, she wishes to thank the reviewers of her Ph.D. thesis (of which this article forms a part), Professors Thomas Gobet and Eddy Godelle, for their careful reading and insightful suggestions. 
\end{acknowledgements}

\section{Preliminaries}\label{prem}
We summarize here all the definitions, theorems and tools that will be needed in Sections \ref{salberrap}, \ref{BEERspace} and \ref{commoncovering}. Furthermore, we fix the notation for Coxeter and Artin groups, Coxeter polytopes, and virtual Artin groups.
\subsection{Coxeter groups and Artin groups}\label{CoxandArt}

Given $S$ a countable set, we consider $\mathrm{M}:=(m_{s,t})_{s,t\in S}$ a symmetric square matrix indexed by $S$. $\mathrm{M}$ is a \textit{Coxeter matrix} on $S$ if its entries are in $\mathbb{N}\cup\{\infty\}$, $m_{s,s}=1$ for all $s \in S$ and $m_{s,t}\geq 2$ for all $s,t\in S$ with $s\neq t$. This information is typically encoded by a graph $\Gamma$, referred to as the \textit{Coxeter graph}. The set of vertices in $\Gamma$ corresponds to $S$, and there is an edge between two vertices $s$ and $t$ if and only if $m_{s,t}\geq 3$. Additionally, the edge is labeled with $m_{s,t}$ if $m_{s,t}\geq4$. We often write $\mathrm{M}=\mathrm{M}[\Gamma]$.
We say that $\Gamma$ or $\mathrm{M}$ is \textit{finite} if $S$ is a finite set.

\begin{nt}
Take two letters $a,b$, and a positive integer $m$. We denote by $ \Prod_L(a,b;m)=aba\,\cdots $ the alternating word in $a$ and $b$ starting with an $a$ on the left, of length $m$. Similarly, we denote by $\Prod_R(b,a;m)=\cdots \,aba$ the alternating word in $a$ and $b$ starting with an $a$ on the right, of length $m$.
\end{nt}
    \noindent Let $S$ be a countable set and $\Gamma$ be a Coxeter graph on $S$. The \textit{Coxeter group} $\W[\Gamma]$ is the group defined by the presentation with generators and relations
    \begin{equation}\label{prescoxgroup}
       \W[\Gamma]:=\langle S\, |\, s^2=1 \; \forall s\in S, \; (st)^{m_{s,t}}=1 \, \forall s,t\in S, \, s\neq t\rangle, 
    \end{equation}
   \noindent where for $m_{s,t}= \infty$ we just interpret it as no relation between $s$ and $t$.
   The pair $(\W[\Gamma],S)$ is called a \textit{Coxeter system}. The \textit{Artin group} $\A[\Gamma]$ is the group defined by the presentation with generators and relations
    \[\A[\Gamma]:=\langle S\, |\,\Prod_R(s,t;m_{s,t})=\Prod_R(t,s;m_{s,t}) \, \forall s,t\in S, \, s\neq t\rangle,\]
    where for $m_{s,t}= \infty$ we just interpret it as no relation between $s$ and $t$.
    The pair $(\A[\Gamma],S)$ is called an \textit{Artin system}. \\

\noindent The objects $\Gamma, \A[\Gamma], \W[\Gamma]$, and $\mathrm{M}[\Gamma]$ are said to be \textit{of spherical type} when $\W[\Gamma]$ is a finite group. Observe that for $\W[\Gamma]$ to be finite, we necessarily have that $\Gamma$ is a finite graph. When $\Gamma$ is a connected graph, we say that the Coxeter system (or the Coxeter group) is \textit{irreducible}. The irreducible Coxeter systems of spherical type have been completely classified by Coxeter (see, for example, \cite[Chapter 2]{Hump}).\\
\\
\noindent
The majority of the general notions about Coxeter groups that we will recall in this section are usually stated for finite Coxeter graphs. However, it is quite straightforward to generalize them to the infinite case, looking at the finite supports of the elements. Thus, we will assume that $(\W[\Gamma],S)$ is a Coxeter system, with a set of generators $S$ not necessarily finite. The main references for this matter are \cite{Hump}, \cite{Bourbaki}, and \cite{Davis2008}.\\
\\
\noindent There is a quotient map  $\theta: \A[\Gamma]\longrightarrow \W[\Gamma]$ sending $s\longmapsto s$ and defining a short exact sequence $1\longrightarrow \mathrm{CA}[\Gamma]\longrightarrow \A[\Gamma]\longrightarrow \W[\Gamma]\longrightarrow 1$. The kernel $\ker(\theta):=\mathrm{CA}[\Gamma]$ is called in the literature a \textit{colored Artin group} or a \textit{pure Artin group}. In a Coxeter group, there is a natural \textit{length function} $l:\W[\Gamma]\longrightarrow \mathbb{N}$ with respect to the generating set $S$. For any element $w=s_1\,\cdots\, s_r\in \W[\Gamma]$ with $s_1,\ldots, s_r\in S$ and $r$ as small as possible, we write $l(w)=r$ and we call any expression for $w$ of length $r$ a \textit{reduced expression}.
Given a Coxeter system $(\W[\Gamma],S)$, we take a formal real vector space $V$ with a basis $\Pi=\{\alpha_s\,\mid \, s\in S\}$ in one-to-one correspondence with $S$. Thus $V:=\bigoplus_{s\in S}\mathbb{R}\cdot \alpha_s$, and the dimension of $V$ is finite (and equal to $|S|$) if and only if $S$ is finite.
We denote by $\langle\,\cdot,\cdot\rangle$ the symmetric bilinear form  $V\times V \longrightarrow \mathbb{R}$ given by
\[
\langle\alpha_s,\alpha_t\rangle=\begin{cases}
    -\cos{\left(\frac{\pi}{m_{s,t}}\right)} &\mbox{  if $m_{s,t}\neq \infty$}\\
    -1 &\mbox{  if $m_{s,t}=\infty$.}
\end{cases}
\]
Define a group homomorphism $\rho:\W[\Gamma]\longrightarrow GL(V)$ as $s\longmapsto \rho(s)$, where for all $v\in V$ $\rho(s)(v):=v-2\langle\alpha_s,v \rangle \alpha_s$.
\begin{thm}(\cite[Chapter V, Section 4]{Bourbaki})
   The homomorphism $\rho$ is a faithful linear representation.
\end{thm}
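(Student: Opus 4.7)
The plan is to check that $\rho$ respects the defining Coxeter relations and then use the standard theory of positive roots to deduce injectivity. Since every element of $\W[\Gamma]$ has finite support in $S$, I may at each step reduce to a finitely generated parabolic subsystem, so the fact that $S$ is allowed to be infinite causes no real trouble.

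First I would verify that $\rho$ descends to a well-defined group homomorphism. The involution relation $\rho_s^2 = \id$ follows by direct computation from $\rho_s(v) = v - 2\langle \alpha_s, v\rangle \alpha_s$ together with $\langle \alpha_s, \alpha_s\rangle = 1$. For the braid-type relation $(\rho_s \rho_t)^{m_{s,t}} = \id$ with $m_{s,t} < \infty$, I would restrict to the plane $V_{s,t} := \mathbb{R}\alpha_s + \mathbb{R}\alpha_t$, whose Gram matrix has determinant $\sin^2(\pi/m_{s,t}) > 0$, so the bilinear form is positive definite there. On $V_{s,t}$ the operators $\rho_s,\rho_t$ are genuine Euclidean reflections whose product is a rotation by $2\pi/m_{s,t}$, hence of order exactly $m_{s,t}$; on a chosen complement of $V_{s,t}$ inside $V$ both $\rho_s$ and $\rho_t$ act as the identity, so the relation extends to all of $V$. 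The case $m_{s,t} = \infty$ requires no check since there is no relation to verify.

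For faithfulness, I would introduce the root system $\Phi := \{\rho(w)\alpha_s \mid w \in \W[\Gamma],\, s \in S\}$ and call a nonzero vector of $V$ \emph{positive} (resp. \emph{negative}) if all its coordinates in the basis $\Pi$ are nonnegative (resp. nonpositive); write $\Phi^{\pm}$ for the corresponding subsets of $\Phi$. The core of the argument is the following claim, proved by induction on $l(w)$: for every $w \in \W[\Gamma]$ and every $s \in S$, we have $\rho(w)\alpha_s \in \Phi^+ \cup \Phi^-$, and moreover
\[
\rho(w)\alpha_s \in \Phi^+ \iff l(ws) > l(w).
\]
Granting this, faithfulness is immediate: assume $\rho(w) = \id$ with $w \neq 1$ and fix a reduced expression $w = s_1 \cdots s_r$. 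Setting $w' := s_1 \cdots s_{r-1}$, reducedness gives $l(w' s_r) = l(w') + 1$, so by the claim $\rho(w')\alpha_{s_r} \in \Phi^+$, whence $\rho(w)\alpha_{s_r} = \rho(w')(-\alpha_{s_r}) = -\rho(w')\alpha_{s_r} \in \Phi^-$, contradicting $\rho(w) = \id$.

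The main obstacle is the inductive claim itself, which must be proved simultaneously for the positivity/negativity dichotomy and the length criterion. The delicate substep is showing that each $\rho_s$ permutes the positive roots different from $\alpha_s$, so that the sign of $\rho(w)\alpha_s$ is governed entirely by the last letter of a reduced expression for $w$. This I would reduce to a statement inside each two-generator parabolic subsystem, where the geometric input already used for well-definedness --- the fact that $V_{s,t}$ carries a dihedral root system of order $2m_{s,t}$ in the spherical case, with a standard analogue when $m_{s,t} = \infty$ --- suffices to close the induction.
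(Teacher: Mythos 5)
Your outline is correct and is essentially the classical Bourbaki--Humphreys argument that the paper itself invokes by citation (the paper gives no proof of this theorem, referring only to \cite[V.4]{Bourbaki}): well-definedness via the rank-two dihedral planes $V_{s,t}$ where the form is positive definite, and faithfulness via the simultaneous induction establishing the dichotomy $\Phi = \Phi^+ \sqcup \Phi^-$ together with the criterion $\rho(w)\alpha_s \in \Phi^+ \iff l(ws) > l(w)$. The only points to tighten in a full write-up are that the complement of $V_{s,t}$ should be taken orthogonal with respect to the bilinear form (which exists since the form is nondegenerate on $V_{s,t}$), and that the inductive claim itself --- the real content, which you only sketch --- is closed exactly as you indicate, by passing to the minimal coset representative for the dihedral parabolic $\W_{\{s,t\}}$.
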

\noindent
The map $\rho$ is called the \textit{canonical linear representation}, and the associated action of an element $w\in \W[\Gamma]$ on $v\in V$ is denoted here by $\rho(w)(v)=w (v)$. This action preserves the bilinear form, so $\langle u,v\rangle=\langle w (u), w (v)\rangle$ for all $u,v\in V$ and $w\in \W[\Gamma]$. A result that we will frequently use is the following:
\begin{thm}[\cite{Cox34},\cite{Cox35}]\label{finiteandscalproduct}
    Let $(\W[\Gamma],S)$ be a finite Coxeter system and $\langle\cdot,\cdot\rangle$ be its associated symmetric bilinear form. Then $\langle\cdot,\cdot\rangle$ is a scalar product if and only if $\W[\Gamma]$ is of spherical type.
\end{thm}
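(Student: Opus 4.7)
The two implications call for different arguments, and I would treat them separately.

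$(\Leftarrow)$ Suppose $\W[\Gamma]$ is finite. Starting from any positive definite inner product on $V$, averaging over $\W[\Gamma]$ — possible since $|\W[\Gamma]|<\infty$ — yields a $\W[\Gamma]$-invariant scalar product $B'$ on $V$; the canonical form $\langle\cdot,\cdot\rangle$ is also $\W[\Gamma]$-invariant. Maschke's theorem then decomposes $V=V_1\oplus\cdots\oplus V_k$ into irreducible $\W[\Gamma]$-subrepresentations. On each $V_i$, a Schur-type argument shows that the space of $\W[\Gamma]$-invariant symmetric bilinear forms is one-dimensional, so $\langle\cdot,\cdot\rangle|_{V_i}=c_i\,B'|_{V_i}$ for some $c_i\in\mathbb{R}$. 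Taking a simple root $\alpha_s$ with non-zero projection onto $V_i$ and comparing $\langle\alpha_s,\alpha_s\rangle=1$ against $B'(\alpha_s,\alpha_s)>0$ forces $c_i>0$, whence $\langle\cdot,\cdot\rangle$ is positive definite on all of $V$.

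$(\Rightarrow)$ Conversely, suppose $\langle\cdot,\cdot\rangle$ is a scalar product; then $V$ becomes a finite-dimensional Euclidean space and the $\W[\Gamma]$-action preserves the form. Faithfulness of $\rho$ embeds $\W[\Gamma]$ into the compact group $O(V)$, and since a discrete subgroup of a compact group is finite, it suffices to show that $\rho(\W[\Gamma])$ is discrete. For this I would introduce the set of roots $\Phi=\{w(\alpha_s):w\in\W[\Gamma],\,s\in S\}\subset V$; each root is a unit vector and $\Phi$ spans $V$, so $\rho(\W[\Gamma])$ acts faithfully on $\Phi$, and it is enough to prove $\Phi$ is finite. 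In every rank-two dihedral sub-Coxeter system $\langle s,t\rangle$, the orbit of a simple root is explicitly the set of $2m_{s,t}$ vertices of a regular polygon at prescribed angles, and pairwise inner products of roots take values in a discrete subset determined by $\mathrm{M}[\Gamma]$. Combined with compactness of the unit sphere of $V$, this forces $\Phi$ to be finite, so $\W[\Gamma]$ embeds into the symmetric group on $\Phi$ and is itself finite.

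The main obstacle, in direction $(\Rightarrow)$, is promoting the transparent rank-two computation to a genuine discreteness statement for $\Phi$ in all of $V$: controlling the inner products of roots $w(\alpha_s)$ for arbitrarily long $w$ requires an induction on the length of reduced expressions, or else a detour through the theory of the fundamental chamber and the Tits cone to obtain a tessellation of $V$ by $\W[\Gamma]$-translates of a Weyl chamber and then conclude via a volume (or solid-angle) argument on the unit sphere.
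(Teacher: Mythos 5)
The paper does not prove this statement at all: it is imported from Coxeter's work (it is Theorem 6.4 in Humphreys, one of the paper's standard references), so the only question is whether your argument stands on its own. In both directions it has a genuine gap.

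In $(\Leftarrow)$, the step that fails is the deduction of $c_i>0$. If $\alpha_s$ has nonzero projections onto several summands $V_j$, then (even granting that the $V_j$ are mutually orthogonal for both forms, which itself needs an argument when isomorphic irreducibles occur) all you obtain is the single equation $1=\langle\alpha_s,\alpha_s\rangle=\sum_j c_j\,B'(\pi_j\alpha_s,\pi_j\alpha_s)$, and positivity of this sum does not force each $c_j$ to be positive; nothing guarantees that some simple root, or indeed any root, lies entirely inside a given $V_i$. The standard repair is to reduce to $\Gamma$ connected and first prove that every proper $\W[\Gamma]$-invariant subspace of $V$ is contained in the radical of $\langle\cdot,\cdot\rangle$; complete reducibility (from your averaged form $B'$) then forces the radical to vanish and $V$ to be irreducible, so Schur yields a single constant $c$ with $1=c\,B'(\alpha_s,\alpha_s)$ and hence $c>0$.

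In $(\Rightarrow)$, the reduction to ``a discrete subgroup of the compact group $O(V)$ is finite'' is sound, but discreteness is precisely the substance of this implication, and your mechanism for it does not work: the assertion that pairwise inner products of roots lie in a discrete set determined by $\mathrm{M}[\Gamma]$ has no justification outside rank two (there is no integrality constraint for non-crystallographic types, and the rank-two computation says nothing about $\langle w(\alpha_s),w'(\alpha_t)\rangle$ for long $w,w'$), so finiteness of $\Phi[\Gamma]$ is not established. You concede as much in your closing paragraph, which makes this direction a plan rather than a proof. The standard completion is the route you allude to: chamber geometry shows that the identity is the only element of $\W[\Gamma]$ mapping the open fundamental chamber to a chamber meeting it, hence $\rho(\W[\Gamma])$ is discrete in $GL(V)$, and compactness of $O(V)$ then forces finiteness.
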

\noindent
If the symmetric bilinear form $\langle\cdot,\cdot\rangle$ is positive semidefinite but not positive definite, then we say that $\mathrm{M},\Gamma, \W[\Gamma]$ or $\A[\Gamma]$ are of \textit{affine type}. The irreducible Coxeter systems of affine type have also been classified (see for example \cite[Section 2.7]{Hump}).

\begin{defn}
    Given a Coxeter system $(\W[\Gamma],S)$, the \textit{root system} $\Phi[\Gamma]$ is the set $\Phi[\Gamma]=\{w(\alpha_s)\;|\;w\in \W[\Gamma], s\in S\}$.
\end{defn}
\noindent
Observe that $\Phi[\Gamma]=-\Phi[\Gamma]$, and that the root system is finite if and only if $\Gamma$ is of spherical type.
We can write every root $\beta\in \Phi[\Gamma]$ as a linear combination of the vectors in the basis $\Pi$ (called the \textit{simple roots} or the \textit{base of $\Phi[\Gamma]$}) $\beta=\sum_{s\in S}\lambda_s\alpha_s$. We say that $\beta$ is a \textit{positive root} if $\lambda_s\geq 0$ for all $s\in S$, and we call $\beta$ a \textit{negative root} in the case where the coefficients $\lambda_s\leq 0$ for all $s\in S$. We denote the two disjoint sets of positive and negative roots, respectively, by $\Phi^+[\Gamma]$ and $\Phi^-[\Gamma]$, and by \cite{deodh82} it holds that $\Phi[\Gamma]=\Phi^+[\Gamma]\sqcup \Phi^-[\Gamma]$.
By \cite[Section 5.4]{Hump}, we have that for $w\in \W[\Gamma]$, $s\in S$,
if $l(ws)<l(w)$ then $w(\alpha_s)\in \Phi^-[\Gamma]$, while if $l(ws)>l(w)$ then $w(\alpha_s)\in \Phi^+[\Gamma]$.\\
\\
\noindent
Consider now $X\subset S$. If $X$ is such that $m_{s,t}\neq \infty$ for all $s,t\in X$, then we say that $X$ is \textit{free of infinity}. 

\begin{defn}\label{defparsub}
   Let $\Gamma$ be a Coxeter graph on a vertex set $S$. Take now any $X\subset S$. The subgroup of $\W[\Gamma]$ generated by $X$ is called a \textit{standard parabolic subgroup} and it is denoted by $\W_{X}[\Gamma]$. For any $w\in \W[\Gamma]$, the conjugate $w\W_{X}[\Gamma]w^{-1}$ is called a \textit{parabolic subgroup} of $\W[\Gamma]$. 
\end{defn} 
\noindent When there is no ambiguity, we will omit the Coxeter graph $\Gamma$ and we will just write $\mathrm{W}$ and $\mathrm{W}_X$. Consider now $\Gamma_X$ as the full subgraph of $\Gamma$ spanned by the vertices in $X$. According to \cite[Section 5.5]{Hump}, the natural map $s\longmapsto s$ for all $s\in X$ induces an isomorphism between $\W[\Gamma_X]$ and $\W_{X}[\Gamma]$. Thus, $\W[\Gamma_X]$ can be viewed as a subgroup of $\W[\Gamma]$. Additionally, we have that the length function in $\W[\Gamma_X]$ coincides with the restriction of the length function of $\W[\Gamma]$ to $\W_{X}[\Gamma]$. Now let us consider the cosets and the double cosets with respect to the standard parabolic subgroups of $\W[\Gamma]$.

\begin{defn}
    Let $\Gamma,\mathrm{W},S$ be as before, and $X,Y\subset S$. We say that \textit{$w$ is $(Y,X)$-minimal} if it is of minimal length in the double coset $\W_{Y} w \W_{X}=\{uwv\;|\; u\in \W_{Y},v\in \W_{X}\}$.
\end{defn}
\noindent When $X\subset S$ is a singleton $X=\{s\}$, we often write $\W_{s}$ instead of $\W_{\{s\}}$ and $(Y,s)$-minimal (resp. $(s,Y)$-minimal) instead of $(Y,\{s\})$-minimal (resp. $(\{s\},Y)$-minimal). 
\begin{lem}(\cite[Chapter IV, Section 1, Exercise 3]{Bourbaki} or \cite[Lemmas 4.3.1 and 4.3.3]{Davis2008})\label{w0mindoublecoset}
Let $(\W[\Gamma],S)$ be a Coxeter system, $w\in \W[\Gamma]$ and $X,Y\subset S$. Then there exists a unique $w_0\in \W$ that is $(Y,X)$-minimal in the double coset $\W_{Y}w\W_{X}$. Moreover, there exist $u\in \W_{Y}$ and $v\in \W_{X}$ such that $w=uw_0v$ and $l(w)=l(u)+l(w_0)+l(v)$.
\end{lem}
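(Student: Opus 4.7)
The plan is to combine a minimality argument on the length function with the Deletion Condition from Coxeter group theory. First I would choose $w_0 \in \W_Y w \W_X$ achieving the minimum length (which exists as a least element of a nonempty subset of $\mathbb{N}$). Minimality forces the descent characterization
\[
l(sw_0) > l(w_0) \text{ for every } s\in Y, \qquad l(w_0 s) > l(w_0) \text{ for every } s\in X,
\]
since otherwise $sw_0$ (resp.\ $w_0 s$) would be a strictly shorter representative of the same double coset.

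Next I would establish the one-sided length-additivity lemma: if $l(w_0 s) > l(w_0)$ for all $s \in X$, then $l(w_0 v) = l(w_0) + l(v)$ for every $v \in \W_X$. The argument goes by induction on $l(v)$ via the Exchange/Deletion Condition. Writing $v = v' s$ with $l(v') = l(v) - 1$, if the length failed to add then a reduced expression for $w_0 v' s$ could be obtained by deleting one letter from the concatenation of reduced expressions of $w_0$ and $v'$: a deletion inside the $v'$-part would force a reduced expression for $v$ of length $\leq l(v')-1$, contradicting $l(v) = l(v')+1$, while a deletion inside the $w_0$-part would produce some $w_0'' \in w_0 \W_X \subseteq \W_Y w_0 \W_X$ with $l(w_0'') < l(w_0)$, contradicting minimality of $w_0$ in the double coset. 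A symmetric argument yields $l(u w_0) = l(u) + l(w_0)$ for all $u \in \W_Y$.

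For the decomposition, given any $w \in \W_Y w_0 \W_X$ I would choose a factorization $w = u w_0 v$ with $u \in \W_Y$, $v \in \W_X$ minimizing $l(u) + l(v)$, and claim $l(w) = l(u) + l(w_0) + l(v)$. Suppose not: then concatenating reduced words for $u$, $w_0$, and $v$ yields an unreduced word of length $l(u) + l(w_0) + l(v) > l(w)$, and the Deletion Condition provides two letters whose removal gives a reduced expression for $w$. A case analysis shows that exactly one deleted letter lies in the $u$-block and the other in the $(w_0 v)$-block, since otherwise $l(u)$ or $l(w_0 v)$ would drop by $2$, contradicting reducedness of the respective pieces. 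If the second deletion lands in the $v$-portion, one extracts a new decomposition $w = u' w_0 v'$ with $l(u') + l(v') < l(u) + l(v)$, contradicting the minimality of that sum. If it lands in the $w_0$-portion, one obtains $uw_0 = u'w_0'$ with $l(u') + l(w_0') < l(u) + l(w_0)$, contradicting the additivity $l(uw_0) = l(u) + l(w_0)$ established in the previous paragraph.

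Finally, uniqueness of $w_0$ is immediate: if $w_0'$ is another minimum-length element, then applying the decomposition step to $w_0' \in \W_Y w_0 \W_X$ yields $w_0' = \tilde u\, w_0\, \tilde v$ with $l(w_0') = l(\tilde u) + l(w_0) + l(\tilde v)$; since $l(w_0) = l(w_0')$ by minimality of both, we must have $l(\tilde u) = l(\tilde v) = 0$, hence $\tilde u = \tilde v = 1$ and $w_0 = w_0'$. The main obstacle in this plan is the case analysis of paragraph three: one has to carefully track where each of the two deleted letters from the Deletion Condition can land, and check that each surviving case leads either to a contradiction of the minimality of $l(u)+l(v)$ or to a contradiction of the one-sided length-additivity of paragraph two.
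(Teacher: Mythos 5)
Your argument is correct, but note that the paper offers no proof of this lemma at all: it is quoted as a known result with a citation to the exercises of Bourbaki, Chapter 4. So there is no "paper proof" to compare against; what you have written is a self-contained replacement for the reference. Your three ingredients all check out: (i) minimality of $w_0$ gives the descent conditions on both sides; (ii) the one-sided additivity $l(w_0 v)=l(w_0)+l(v)$ follows by induction with the exchange condition, and your case split is sound -- a deletion in the $v'$-part shortens $v=v's$ by two, while a deletion in the $w_0$-part produces $w_0 v's(v')^{-1}\in w_0\W_X$ of smaller length, contradicting minimality in the double coset; (iii) in the decomposition step, minimizing $l(u)+l(v)$ over factorizations $w=uw_0v$ and invoking the Deletion Condition correctly rules out every placement of the two deleted letters, using (ii) to know that the $(w_0v)$-block and the $(uw_0)$-product are reduced. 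Uniqueness then falls out as you say. This is a legitimate variant of the classical proof (the route in Bourbaki or Geck--Pfeiffer instead peels off the right coset representative first, then the left one, and shows the resulting middle element is the double-coset minimum); your "minimize $l(u)+l(v)$" device avoids that intermediate lemma at the cost of the slightly longer case analysis you flagged, which you have nonetheless carried out correctly.
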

\begin{cor}
Let $(\W,S)$ be a Coxeter system, $w\in \W[\Gamma]$ and $X\subset S$. Then:
\begin{enumerate}
    \item The element $w$ is $(\emptyset,X)$-minimal if and only if $l(ws)>l(w)$ for all $s\in X$, and  $l(ws)>l(w)$ for all $s\in X$ if and only if $l(wu)=l(w)+l(u)$ for all $u\in \W_{X}$;
    \item The element $w$ is $(X,\emptyset)$-minimal if and only if $l(sw)>l(w)$ for all $s\in X$, and  $l(sw)>l(w)$ for all $s\in X$ if and only if $l(uw)=l(u)+l(w)$ for all $u\in \W_{X}$.
\end{enumerate}
\end{cor}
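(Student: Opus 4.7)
The plan is to reduce everything to the preceding lemma, applied with $Y=\emptyset$, which gives for every $w\in \W[\Gamma]$ a unique $(\emptyset,X)$-minimal element $w_0$ of $w\W_X$ together with a factorisation $w = w_0 v$, $v\in \W_X$, such that $l(w)=l(w_0)+l(v)$. I will prove (1) in full, and then deduce (2) by applying (1) to $w^{-1}$.

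For the first equivalence in (1), the implication $(\Rightarrow)$ is immediate: if $w$ is $(\emptyset,X)$-minimal then $l(ws)\ge l(w)$ for every $s\in X$, and since $l(ws)$ and $l(w)$ have different parities (a consequence of the canonical representation giving rise to a sign character on $\W[\Gamma]$, or equivalently of the basic fact that $l(ws)=l(w)\pm 1$), we actually have $l(ws)>l(w)$. For $(\Leftarrow)$, assume $l(ws)>l(w)$ for all $s\in X$ and write $w=w_0 v$ as above. Suppose for contradiction that $v\neq 1$, pick a reduced expression $v=s_1\cdots s_k$ with $s_i\in X$, and set $s:=s_k$. Then
\[ws = w_0 s_1\cdots s_{k-1},\]
so $l(ws)\le l(w_0)+(k-1)=l(w)-1<l(w)$, contradicting the hypothesis. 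Hence $v=1$ and $w=w_0$ is $(\emptyset,X)$-minimal.

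For the second equivalence in (1), $(\Leftarrow)$ is obtained by specialising $u=s$. For $(\Rightarrow)$, assume $l(ws)>l(w)$ for all $s\in X$; by the equivalence just proved, $w$ is the unique $(\emptyset,X)$-minimal element of $w\W_X$. Take any $u\in \W_X$. Since $wu\in w\W_X$, the lemma applied to $wu$ supplies a factorisation $wu = w_0'\, u'$ with $w_0'$ the unique $(\emptyset,X)$-minimal element of $wu\,\W_X=w\W_X$ and with $l(wu)=l(w_0')+l(u')$. Uniqueness forces $w_0'=w$, hence $u'=u$ and $l(wu)=l(w)+l(u)$, as desired.

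Part (2) follows formally by inversion. Since $l(g)=l(g^{-1})$ and $\W_X$ is closed under inversion, one has $l(sw)=l(w^{-1}s)$ for $s\in X$ and $l(uw)=l(w^{-1}u^{-1})$ for $u\in \W_X$ with $u^{-1}\in \W_X$; applying part (1) to $w^{-1}$ and the subset $X$ translates each statement of (2) into the corresponding one of (1). The main bookkeeping issue, which I expect to be the only delicate point, is checking that $l(w_0 s_1\cdots s_{k-1})\le l(w_0)+(k-1)$ correctly exploits the fact that $s_1\cdots s_{k-1}$ is a reduced word in $\W_X$; this is immediate from the subadditivity $l(ab)\le l(a)+l(b)$, so no further input is needed.
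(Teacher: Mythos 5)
Your proof is correct and follows exactly the route the paper intends: the corollary is stated without proof as a direct consequence of the preceding Bourbaki lemma (applied with $Y=\emptyset$), which is precisely your reduction, combined with the standard parity fact $l(ws)=l(w)\pm 1$ and the fact, noted earlier in the paper, that the length function of $\W_X$ is the restriction of that of $\W$. The deduction of (2) from (1) by inversion is also the standard argument and is carried out correctly.
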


\subsection{Coxeter polytopes}\label{coxeterpolytopes}
The subject treated here can be found in detail in \cite[Chapter 7.3]{Davis2008}.\\
In this subsection, let $\Gamma$ be a Coxeter graph of spherical type, and let $(\W[\Gamma],S)$ be its Coxeter system, set $\Pi=\{\alpha_s\;|\; s\in S\}$ and $V=\bigoplus_{s\in S}\mathbb{R}\cdot \alpha_s$, and consider the canonical faithful linear representation $\rho:\W[\Gamma]\longrightarrow GL(V)$. We will denote $\rho(w)(v)$ by $w(v)$, for all $w\in \W[\Gamma]$ and $ v\in V$. We will summarize here all the definitions and properties that we need concerning Coxeter polytopes.\\
Since $\W[\Gamma]$ is a finite group, we know that $V$ has finite dimension and that the bilinear symmetric form induced on $V$ is a scalar product.
Let $\Pi^*=\{\alpha_s^*\;|\;s\in S\}$ be the dual basis of $\Pi$ with respect to $\langle\cdot,\cdot\rangle$, namely $\langle\alpha^*_s,\alpha_t\rangle=0$ if $s\neq t$ and $\langle\alpha^*_s,\alpha_t\rangle=1$ if $s=t$. Set $o=\sum_{s\in S}\alpha_s^*$ and note that it is not fixed by any nontrivial element in $\W[\Gamma]$.

\begin{defn}
The \textit{Coxeter polytope} of $(\W[\Gamma],S)$ is the convex hull of the points in $\{w(o)\;|\;w\in \W[\Gamma]\}$, denoted by $C[S]$, and contained in $V$.     
\end{defn}
\noindent Given a finite set of points $B\subset V$ in a vector space $V$, we denote by $conv\{b\,|\,b\in B\}$ their convex hull in $V$.

\begin{rmk}
    Note that the polytope $C[S]$ is only defined when $\W[\Gamma]$ is of spherical type; otherwise, the vertices $w(o)$ would not form a finite set of points. Observe that the vertices span $V$, viewed as an affine space, so the interior of $C[S]$ is nonempty. 
\end{rmk}

\noindent The Coxeter group $\W[\Gamma]$ acts on its Coxeter polytope $C[S]$ in the obvious way.

\begin{ex}
    Let $\Gamma$ be the Coxeter graph of type $A_{n-1}$, with $S$ given by the simple transpositions $S=\{(12), (23),\ldots,(n-1 \; n)\}$. The associated Coxeter group $\W[A_{n-1}]$ is the symmetric group $\mathfrak{S}_n$ and the Coxeter polytope $C[S]$ is the well-known \textbf{permutohedron} of order $n$.
\end{ex}
\noindent
A polytope can equivalently be defined as the nonempty and bounded intersection of a finite number of half-spaces in an affine space. We say that an (affine) hyperplane $H$ \textit{supports} a polytope $C$ if the intersection $C\cap H$ is nonempty and if the polytope lies in one of the two closed half-spaces bounded by $H$. A \textit{face} of a polytope is the intersection of the polytope itself with a supporting hyperplane. The set of faces of $C[S]$ with the inclusion relation forms a poset $\mathcal{F}(C[S])$. \\
\\
\noindent
Given $X\subset S$ and $u\in \W[\Gamma]$, denote now by $F(u,X,S)$ the subset of $C[S]$ given by the convex hull of $\left\{uw(o)\;|\;w\in \W_{X} \right\}$. Of course, $u(F(\id,X,S))=F(u,X,S)$ for every $u$ in $\W[\Gamma]$.

\begin{lem}(\cite[Lemma 7.3.3]{Davis2008})
\label{facescoxpol}
The set of faces of $C\left[S\right]$ is exactly \\        $\left\{F(u,X,S)\;|\; u\in \W, X \subset S, X\neq S \right\}$, and the map that associates $F(u,X,S)$ to $u\W_{X}$ is a poset isomorphism between $\mathcal{F}(C[S])$ and the poset of cosets with respect to the standard parabolic subgroups of $\W$, ordered by inclusion.
\end{lem}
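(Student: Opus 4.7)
The plan is to set up the bijection $u\W_X \leftrightarrow F(u,X,S)$ by exhibiting explicit supporting hyperplanes of $C[S]$ whose intersections with $C[S]$ are exactly the sets $F(u,X,S)$, and then using the already noted fact that $o$ has trivial stabilizer in $\W[\Gamma]$. Since $u \cdot F(\id,X,S) = F(u,X,S)$ and the canonical linear representation preserves $\langle\cdot,\cdot\rangle$, supporting hyperplanes are carried to supporting hyperplanes, so it suffices to treat the ``standard'' case $u = \id$ and then translate.

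For $u = \id$, I would use the functional $\varphi_X(v) := \langle c_X, v\rangle$ where $c_X := \sum_{s \in S \setminus X} \alpha_s^*$. By the dual-basis relations, $\langle \alpha_s, c_X\rangle$ equals $1$ for $s \in S \setminus X$ and $0$ for $s \in X$, so $c_X$ lies in the closed fundamental chamber and $\Stab_{\W[\Gamma]}(c_X) = \W_X$. The core step is to prove
\[
\varphi_X(w(o)) \leq \varphi_X(o) \quad \text{with equality iff } w \in \W_X.
\]
I would rewrite $\varphi_X(w(o)) = \langle o, w^{-1}(c_X)\rangle$ and invoke the classical Coxeter-theoretic lemma that, for any $v$ in the closed fundamental chamber and any $y \in \W[\Gamma]$, the vector $v - y(v)$ is a nonnegative combination of positive roots. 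That lemma is itself proved by induction on $l(y)$ from the implication $l(ys) > l(y) \Rightarrow y(\alpha_s) \in \Phi^+[\Gamma]$ recalled in Subsection \ref{CoxandArt}. Combined with $\langle o, \beta\rangle > 0$ for every $\beta \in \Phi^+[\Gamma]$ (since $o$ lies in the interior of the chamber) and with the linear independence of the simple roots, this gives the inequality and forces equality precisely on $\W_X$. This is the main technical obstacle; once settled, the level set $\{v : \varphi_X(v) = \varphi_X(o)\}$ is a supporting hyperplane realizing $F(\id,X,S)$ as a face, and $u$-translation covers all pairs.

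For the converse direction, any face $F \subseteq C[S]$ contains a vertex $u(o)$; acting by $u^{-1}$ reduces to the case $o \in F$, with supporting functional $\varphi = \langle y, \cdot\rangle$. Since $\varphi$ attains its maximum at $o$ and $o - \rho_s(o) = 2\alpha_s$, the inequality $\langle y, 2\alpha_s\rangle \geq 0$ forces $y$ to lie in the closed fundamental chamber. Setting $X := \{s \in S : \langle \alpha_s, y\rangle = 0\}$ and applying the same fundamental-chamber lemma to $y$ yields $\{w \in \W[\Gamma] : w(o) \in F\} = \W_X$, so $F = F(\id,X,S)$. Finally, $\Stab_{\W[\Gamma]}(o) = \{\id\}$ upgrades any inclusion or equality of vertex sets $u_i \W_{X_i} \cdot o$ of faces $F(u_i, X_i, S)$ to the corresponding inclusion or equality of cosets $u_i \W_{X_i}$, yielding both the well-definedness of the bijection and its poset-isomorphism property (with $X_1 = X_2$ in the equality case because $X_i$ is recovered from $\W_{X_i}$ as the set of simple reflections it contains).
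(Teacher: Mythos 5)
The paper gives no proof of this lemma: it is imported verbatim from Davis (\cite[Lemma 7.3.3]{Davis2008}), so there is no internal argument to compare against, and your proposal supplies what the paper delegates to the reference. Your argument is correct and is the standard one for orbit polytopes of finite reflection groups: realize $F(\id,X,S)$ as the maximum set on $C[S]$ of the functional $\langle c_X,\cdot\rangle$ with $c_X=\sum_{s\in S\setminus X}\alpha_s^*$, prove $\langle c_X,w(o)\rangle\le\langle c_X,o\rangle$ with equality exactly on $\W_X$ via the fact that $v-y(v)$ is a nonnegative combination of positive roots for $v$ in the closed chamber (paired against $o$, whose pairing with every positive root is strictly positive), and handle the converse by moving a vertex of an arbitrary face to $o$ and reading $X$ off the walls containing the supporting functional; the trivial stabilizer of $o$ then converts statements about vertex sets into statements about cosets. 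The two ingredients you invoke without proof — the nonnegative-combination lemma and the stabilizer computation $\Stab(v)=\W_X$ for $v$ in the closed chamber with $X=\{s\in S\;|\;\langle\alpha_s,v\rangle=0\}$, which you need both for $c_X$ and for the functional $y$ in the converse — are standard and both follow from the same induction on length using the implication $l(ys)>l(y)\Rightarrow y(\alpha_s)\in\Phi^+[\Gamma]$ recalled in Subsection \ref{CoxandArt}, so I do not count them as gaps. One cosmetic point: for the map to be a poset isomorphism one must restrict to cosets of \emph{proper} standard parabolic subgroups, matching the exclusion $X\neq S$ on the face side; this imprecision sits in the statement of the lemma, not in your proof.
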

\noindent
Thanks to this result, we know that all the faces of $C[S]$ are of the form $F(u,X,S)$, and that $F(u,X,S)\subset F(u',X',S)$ if and only if $u\W_{X}\subset u'\W_{X'}$.\\
\\
\noindent Given $X \subset S$, set 
$\Pi_X=\left\{\alpha_s\,|\, s\in X\right\}$
and let $V_X$ be the vector subspace of $V$ whose basis is $\Pi_X$. Then, we call $\Pi_X^*=\left\{\alpha_{s,X}^*\,|\, s \in X\right\}$ the dual basis of $\Pi_X$ in $V_X$ (which does not coincide with the set $\left\{\alpha_s^*\,|\, s \in X\right\}$). Since the form $\langle\cdot,\cdot\rangle$ is positive-definite, its restriction to $V_X$ is non-degenerate. This allows us to identify $\Pi_X^*$ with a subset of $V_X\subset V$.\\
\\
 \noindent Now set $o_{X}=\sum_{s\in X}\alpha_{s,X}^*$. Denote by $C\left[X\right]$ the Coxeter polytope associated to the Coxeter system $(\W_{X},X)$ with vector space $V_X$, namely the convex hull of $\left\{w( o_{X}) \;|\; w \in \W_{X}\right\}\subset V_X$.
\begin{nt}
     If $X$ is a singleton $\{s\}$, we conventionally write $C[s]$ and $o_s$ instead of $C[\{s\}]$ and $o_{\{s\}}$. In the same way, in the faces, we write $F(u,s,S)$ instead of $F(u,\{s\},S)$. Moreover, consider the face $F(u,X,S)=conv\left\{uw (o)\;|\; w \in \W_{X}\right\}$ of $C[S]$, with $X\subset S$ and $u\in \W$. When we draw a Coxeter polytope (like in Figure \ref{fig:faceisom}), we label the vertices of the form $F(u,\emptyset,S)$ simply by the element $u$.
\end{nt}
\noindent
We will show that there is a natural isometry between $F(\id,X,S)$ and $C[X]$.

\begin{lem}\label{transl}
The translation of $V$ by the vector $o-o_{X}$ commutes with all $w\in \W_{X}$. In particular, it sends $w\left(o_{X}\right)$ to $w\left(o\right)$ for every $w \in \W_{X}$ and it sends isometrically $C[X]$ onto $F(\id,X,S)$.
\end{lem}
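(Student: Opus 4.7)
The plan is to prove that $o-o_X$ belongs to the orthogonal complement $V_X^\perp$ of $V_X$ in $V$, and then to observe that every element of $\W_X$ fixes $V_X^\perp$ pointwise. From this, the commutation of the translation with $\W_X$, and the rest of the statement, will follow directly.

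\medskip

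\noindent\textbf{Step 1: $o-o_X\in V_X^\perp$.} For every $t\in X$, I would compute
\[
\langle o,\alpha_t\rangle=\sum_{s\in S}\langle\alpha_s^*,\alpha_t\rangle=1,\qquad \langle o_X,\alpha_t\rangle=\sum_{s\in X}\langle\alpha_{s,X}^*,\alpha_t\rangle=1,
\]
using the defining property of the dual bases $\Pi^*$ and $\Pi_X^*$. Hence $\langle o-o_X,\alpha_t\rangle=0$ for every $t\in X$. Since $\Pi_X$ spans $V_X$, this gives $o-o_X\in V_X^\perp$.

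\medskip

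\noindent\textbf{Step 2: $\W_X$ fixes $V_X^\perp$ pointwise.} For any generator $s\in X$ and any $v\in V_X^\perp$, the reflection $\rho_s$ acts by $\rho_s(v)=v-2\langle\alpha_s,v\rangle\alpha_s=v$, since $\alpha_s\in V_X$ and $v\in V_X^\perp$. As the generators of $\W_X$ all act trivially on $V_X^\perp$, so does every $w\in\W_X$.

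\medskip

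\noindent\textbf{Step 3: The translation commutes with $\W_X$ and sends $C[X]$ to $F(\id,X,S)$.} Let $T:V\to V$ be the translation $T(v)=v+(o-o_X)$. Combining the two previous steps, $w(o-o_X)=o-o_X$ for every $w\in \W_X$, hence for all $v\in V$,
\[
T(w(v))=w(v)+(o-o_X)=w(v)+w(o-o_X)=w(v+(o-o_X))=w(T(v)),
\]
so $T$ commutes with the action of $\W_X$. In particular $T(o_X)=o$, and therefore $T(w(o_X))=w(T(o_X))=w(o)$ for every $w\in\W_X$. Since $T$ is a translation, it is an affine isometry from $V$ to $V$ and sends convex hulls to convex hulls; applied to the finite sets $\{w(o_X)\mid w\in\W_X\}\subset V_X$ and their images $\{w(o)\mid w\in\W_X\}\subset V$, this yields $T(C[X])=F(\id,X,S)$ isometrically.

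\medskip

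\noindent The only subtle point is the book-keeping in Step~1: the dual basis $\Pi_X^*=\{\alpha_{s,X}^*\}$ is taken inside $V_X$ and does not coincide with $\{\alpha_s^*\mid s\in X\}\subset V$, so one must be careful to pair $o_X$ only against vectors of $V_X$. Once the orthogonality $o-o_X\in V_X^\perp$ is established, the remainder of the argument is essentially formal.
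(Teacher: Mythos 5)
Your proof is correct and follows essentially the same route as the paper: both arguments rest on the single computation $\langle \alpha_s, o-o_X\rangle = 1-1 = 0$ for all $s\in X$, and then deduce that the translation commutes with each generating reflection of $\W_X$. Your phrasing via the fixed vector $w(o-o_X)=o-o_X$ is just a slightly more structured packaging of the paper's direct computation with the reflection formula; the content is identical.
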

\begin{proof}
    First, observe that $\langle \alpha_s, o-o_X\rangle=1-1=0$ for all $s\in X$. Now take any $v$ in $V$ and let $\tau$ denote the translation by vector $o-o_X$. We want to show that $\tau\circ s(v)=s\circ \tau (v)$ for any $s\in X$. Computing the latter we obtain $s(v+(o-o_X))=v+(o-o_X)-2\langle \alpha_s, v+(o-o_X)\rangle \alpha_s=v-2\langle \alpha_s, v\rangle \alpha_s+(o-o_X)$, that is, $\tau\circ s(v)$. Hence $\tau$ commutes with every generator of $\W_{X}$, and then with any $w\in \W_{X}$. Therefore, $\tau(w(o_X))=w(o_X+(o-o_X))=w(o)$, and $\tau$ sends isometrically $C[X]$ onto $F(\id,X,S)$.
\end{proof}

\begin{ex}
\noindent An example of a Coxeter polytope $C[S]$ where $S=\{s,t\}$ and $m_{s,t}=4$ is illustrated in Figure \ref{fig:faceisom}. The corresponding Coxeter group $\W$ is the dihedral group of order 8. In Figure \ref{fig:rootsystem} we draw the root system of $\W$, with base $\Pi=\{\alpha_s,\alpha_t\}$.  We can see in Figure \ref{fig:faceisom} the isometry between $C[t]=C[\{t\}]$ and the face $F(\id,t,S)$ of $C[S]$.

\begin{figure}[h!]
    \begin{minipage}{7cm}
\includegraphics[width=6cm]{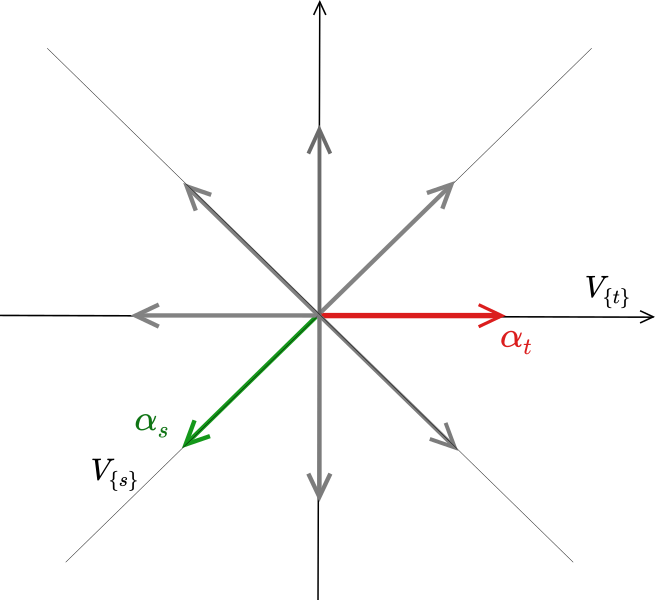}
    \caption{The root system of the dihedral group of order 8.}
    \label{fig:rootsystem}
    \end{minipage} \hfill
    \begin{minipage}{7cm}
\includegraphics[width=6.7cm]{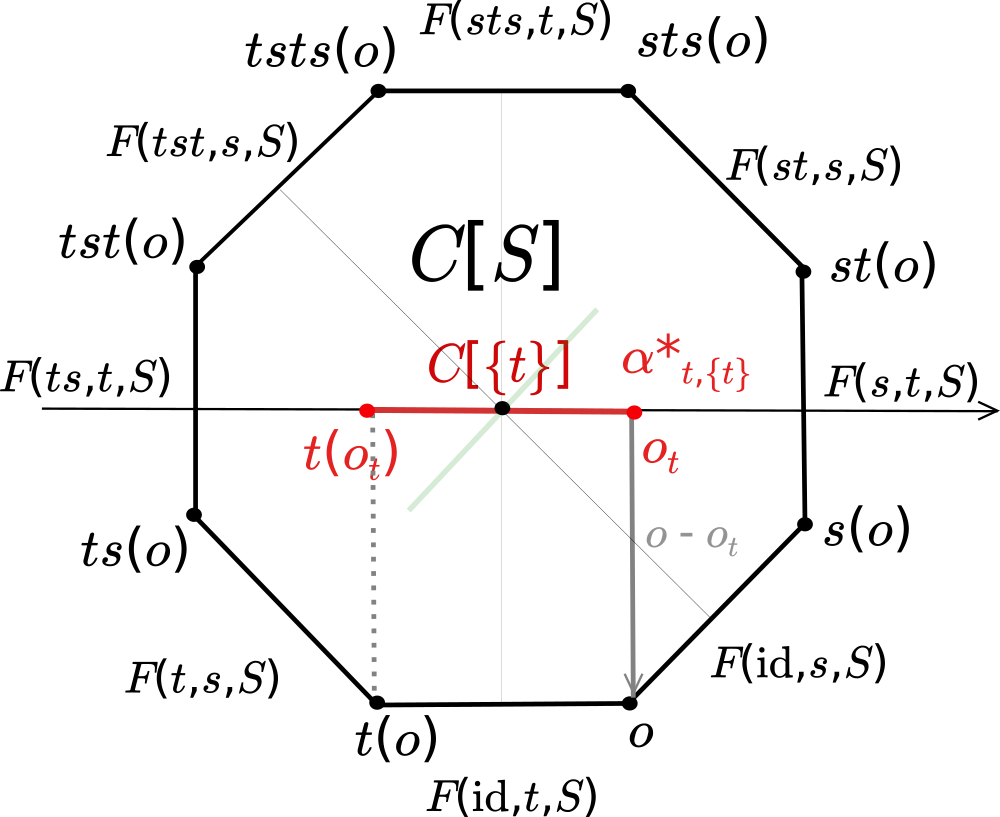}
    \caption{The isometry between the face $F(\id,t,S)$ of $C[S]$ and $C[t]$.}
    \label{fig:faceisom}
    \end{minipage} 
\end{figure}
\end{ex}

\noindent
When working in a vector space $V$ viewed as an affine space, we will denote the translation of $V$ by vector $v\in V$ by $\tau(v)$. Thanks to Lemma \ref{transl}, we get that any face $F(u,X,S)=u(F(\id,X,S))$ of $C[S]$ is naturally isometric to the Coxeter polytope $C[X]$ via the translation $\tau(o-o_X)$, composed with the isometry $u\in \W[\Gamma]\subset GL(V)$.
\noindent
Since the faces of $C[S]$ are in bijection with the cosets $u\W_{X}$ with $u\in \W[\Gamma]$ and $X\subset S$, from now on we will choose $u$ to be the $(\emptyset,X)$-minimal representative.

\begin{nt}\label{orientedges}\textbf{Orientation on the edges of the Coxeter polytope.}
In this work we will need the edges of a Coxeter polytope to be oriented. Generally, for any  pair of points $P,Q$ in the vector space $V$, the segment joining them will be denoted by $[P,Q]=\overline{PQ}$. If we assign an orientation to it, we define two functions $\mathfrak{s},\mathfrak{t}:\overline{PQ}\longrightarrow \{P,Q\}$, called the \textit{source} and the \textit{target} of the segment, respectively. For example, if $\mathfrak{s}(\overline{PQ})=Q$ and $\mathfrak{t}(\overline{PQ})=P$, the segment is oriented from the vertex $Q$ to the vertex $P$. If $f$ is an affine isometry of $V$, the orientation of $f(\overline{PQ})=\overline{f(P)f(Q)}$ is such that $\mathfrak{s}(f(\overline{PQ})):=f(\mathfrak{s}(\overline{PQ}))$ and $\mathfrak{t}(f(\overline{PQ})):=f(\mathfrak{t}(\overline{PQ}))$.\\
\\
\noindent Take now $s\in S$, and consider $\W_{\{s\}}=\{\id,s\}$. The fundamental point to build the Coxeter polytope $C[s]$ is $o_{s}=\alpha_{s,\{s\}}^*=\alpha_s$, so we get that $C[s]=conv\{\alpha_s,s(\alpha_s)=-\alpha_s\}$ is the segment $[-\alpha_s,\alpha_s]$. From now on we will choose an orientation from the point $\alpha_s$ to the point $-\alpha_s$, i.e. $\mathfrak{s}(C[s])=\alpha_s$ and $\mathfrak{t}(C[s])=-\alpha_s$. If we consider the face $F(\id,s,S)=[o,s(o)]$ of $C[S]$, we know by Lemma \ref{transl} that it is the translation of $[\alpha_s,-\alpha_s]$ under $\tau(o-\alpha_s)$, so, for what we have just said, the orientation will be from $\tau(o-\alpha_s)(\mathfrak{s}(C[s]))=o$ to $\tau(o-\alpha_s)(-\alpha_s)=s(o)$. Similarly, in the face $F(u,s,S)=[u(o),us(o)]$ with $u$ being $(\emptyset,\{s\})$-minimal, the orientation is from $u(o)=u(\mathfrak{s}(F(\id,s,S)))$ to $us(o)=u(\mathfrak{t}(F(\id,s,S)))$.
\end{nt}

\subsection{Virtual Artin groups}\label{virtualartingroups}
All the definitions and notation in this subsection are taken from \cite{BellParThiel}, in which virtual Artin groups were introduced.
\begin{defn}
Given $\Gamma$, a Coxeter graph on the countable set $S$, and $\mathrm{M}=(m_{s,t})_{s,t \in S}$ its associated Coxeter matrix, consider the following two sets:
\begin{align*}
&\mathcal{S}=\{\sigma_s\,|\,s\in S\}  &\mathcal{T}=\{\tau_s\,|\,s\in S\},    
\end{align*}
both in one-to-one correspondence with $S$. 
We define the \textit{virtual Artin group} associated with $\Gamma$ to be the abstract group $\VA[\Gamma]$, defined by the presentation with generating set $\mathcal{S} \cup \mathcal{T}$ and the following relations:
\begin{enumerate}
\item[(V1)]\label{v1} $\Prod_R(\sigma_s,\sigma_t;m_{s,t})=\Prod_R(\sigma_t,\sigma_s;m_{s,t})$ for all $s,t \in S$ such that $s\neq t$ and $m_{s,t}\neq \infty$;
\item[(V2)]\label{v2} $\Prod_R(\tau_s,\tau_t;m_{s,t})=\Prod_R(\tau_t,\tau_s;m_{s,t})$ for all $s,t \in S$ such that $s\neq t$ and $m_{s,t}\neq \infty$; and $\tau_s^2=1$ for all $s \in S$;
\item [(V3)]\label{v3} $\Prod_R(\tau_s,\tau_t;m_{s,t}-1) \sigma_s=\sigma_r \Prod_R(\tau_s,\tau_t;m_{s,t}-1)$ $\forall s,t \in S$ such that $s\neq t$ and $m_{s,t}\neq \infty$, where $r=s$ if $m_{s,t}$ is even and $r=t$ if $m_{s,t}$ is odd.
\end{enumerate}
\end{defn}

\begin{ex}
If $\Gamma$ is the Coxeter graph $A_{n-1}$, then $\W[A_{n-1}]$ is isomorphic to the symmetric group $\Sn$ on $n$ elements, $\A[A_{n-1}]$ is the braid group on $n$ strands $\mathrm{B}_n$, and $\VA[A_{n-1}]$ is the virtual braid group on $n$ strands, denoted by $\VB_n$.
\end{ex}
\noindent
We can define two natural group homomorphisms:
\begin{align}
\iota_{\A}: \A[\Gamma] &\longrightarrow \VA[\Gamma]  &\iota_{\W}: \W[\Gamma] &\longrightarrow \VA[\Gamma] \nonumber\\
s & \longmapsto \sigma_s,  &s & \longmapsto \tau_s.
\nonumber\end{align}
It is easy to check that these maps respect the relations in $\A[\Gamma]$ and $\W[\Gamma]$. It will follow from Theorem \ref{preskva} and Remark \ref{waction} that $\iota_{\W}$ is injective. It is shown in \cite[Corollary 2.3]{BellParThiel} that $\iota_{\A}$ is also injective. Consider as well the two maps
\begin{align}
\pi_P: \VA[\Gamma] &\longrightarrow \W[\Gamma]  &\pi_K: \VA[\Gamma] &\longrightarrow \W[\Gamma] \nonumber\\
\sigma_s & \longmapsto s,  &\sigma_s & \longmapsto 1,\nonumber\\
\tau_s & \longmapsto s,  &\tau_s & \longmapsto s.
\nonumber\end{align}
Observe that both $\pi_K$ and $\pi_P$ are group homomorphisms. In this work we will devote our attention to the two kernels of these maps.

\begin{defn}
The \textit{pure virtual Artin group} $\PVA[\Gamma]$ is the kernel of the homomorphism $\pi_P$. The kernel of $\pi_K$ is instead denoted by $\KVA[\Gamma]$ (sometimes called \textit{kure virtual Artin group}).
\end{defn}

\begin{rmk}\label{waction}
 Observe that $\pi_P \circ \iota_{\W}=id_{\W[\Gamma]}$, thus we get that $\pi_P$ is surjective, $\iota_{\W}$ is injective (so we can interpret $\W[\Gamma]$ as a subgroup of $\VA[\Gamma]$), and by the short exact split sequence
    \[
\begin{tikzpicture}[node distance=2cm, auto]
  \node (uno) {$1$};
  \node (pvb) [right of=uno] {$\PVA[\Gamma]$};
  \node (vb) [right of=pvb] {$\VA[\Gamma]$};
	\node (sn) [right of=vb] {$\W[\Gamma]$};
	\node (unodue) [right of=sn] {$1$};
 \draw[->] (uno) to node {}  (pvb);
 \draw[->] (pvb) to node {$i$} (vb);
\draw[transform canvas={yshift=0.5ex},->] (vb) - -(sn) node[above,midway] {$\pi_P$};
\draw[transform canvas={yshift=-0.5ex},->](sn) -- (vb) node[below,midway] {$\iota_{\W}$}; 
\draw[->] (sn) to node {} (unodue);
\end{tikzpicture} \]
we deduce that the virtual Artin group has a semidirect product structure: $\VA[\Gamma] \cong \PVA[\Gamma] \rtimes \W[\Gamma]$.
This determines a left action of $\W[\Gamma]$ on $\PVA[\Gamma]$ defined by $w \cdot g := \iota_{\W}(w) \,g\, \iota_{\W}(w)^{-1}$ for all $w \in \W[\Gamma]$ and $g \in \PVA[\Gamma]$.\\
\\
\noindent
Similarly, observe that $\pi_K \circ \iota_{\W}=id_{\W[\Gamma]}$. Using the same reasoning, $\pi_K$ is surjective, and the virtual Artin group has a semidirect product structure: $\VA[\Gamma] \cong \KVA[\Gamma] \rtimes \W[\Gamma]$.
This determines a left action of $\W[\Gamma]$ on $\KVA[\Gamma]$ defined by $w \cdot g := \iota_{\W}(w)\, g
\,\iota_{\W}(w)^{-1}$ for all $w \in \W[\Gamma]$ and $g \in \KVA[\Gamma]$.
\end{rmk}
\noindent
Let $\beta$ be a root in $ \Phi[\Gamma]$, and choose $w\in \W[\Gamma]$, $s\in S$ such that $w(\alpha_s)=\beta$. Set $\delta_{\beta}=w\cdot \sigma_s=\iota_{\W}(w)\,\sigma_s\, \iota_{\W}(w)^{-1}\in \KVA[\Gamma]$ and $\zeta_{\beta}=w\cdot(\tau_s\sigma_s)=\iota_{\W}(w)\,\tau_s\sigma_s \, \iota_{\W}(w)^{-1}\in \PVA[\Gamma]$.  By \cite[Lemma 2.2]{BellParThiel}, we know that $\delta_{\beta}$ and $\zeta_{\beta}$ do not depend on the choice of $w$ and $s$.\\
\\
\noindent
Given $\Gamma$ a Coxeter graph, we now recall how to define the Coxeter graph $\widehat{\Gamma}$. The latter has the vertices in bijection with the roots $\Phi[\Gamma]$, and for all $\beta,\gamma\in \Phi[\Gamma]$ the labels $\widehat{m}_{\beta,\gamma}$ are defined as follows:
\begin{itemize}
\item For each $\beta\in \Phi[\Gamma]$, set $\widehat{m}_{\beta,\beta}=1$;
    \item If $\beta\neq \gamma$ and there exist $w\in \W[\Gamma]$, $s,t\in S$ such that $\beta=w(\alpha_s)$, $\gamma=w(\alpha_t)$ and $m_{s,t}\neq \infty$, then set $\widehat{m}_{\beta,\gamma}=m_{s,t}$;
    \item Otherwise, set $\widehat{m}_{\beta,\gamma}=\infty$.
\end{itemize}
\noindent
Again, the definition of $\widehat{m}_{\beta,\gamma}$ does not depend on the choice of $w$ and $s,t$ (see Section 2 in \cite{BellParThiel}).\\
\\
\noindent Let $\cX\subset \Phi[\Gamma]$ be a subset of roots. We say that $\cX$ is \textit{free of infinity} if for all $\beta,\gamma\in \cX$, the label $\widehat{m}_{\beta,\gamma}\neq \infty$.\\
\\
\noindent The next two results are important for the following discussion, as they give presentations for the kernels $\PVA[\Gamma]$ and $\KVA[\Gamma]$. Specifically, $\KVA[\Gamma]$ turns out to be an Artin group with respect to the just defined Coxeter graph $\widehat{\Gamma}$, while $\PVA[\Gamma]$ is generated by the elements $\zeta_{\beta}$.
Denote the standard generating set of $\A[\widehat{\Gamma}]$ by $\{\widehat{\delta}_{\beta}\;|\; \beta\in \Phi[\Gamma]\}$.

\begin{thm}(\cite[Theorem 2.3]{BellParThiel})\label{preskva}
Let $\Gamma$ be a Coxeter graph. Then the map $\{\widehat{\delta}_{\beta}\;|\; \beta\in \Phi[\Gamma]\}\longrightarrow \{\delta_{\beta}\;|\; \beta\in \Phi[\Gamma]\}$, $\widehat{\delta}_{\beta}\longmapsto \delta_{\beta}$, induces an isomorphism $\varphi:\A[\widehat{\Gamma}]\longrightarrow \KVA[\Gamma]$.
\end{thm}
\noindent
From now on we will often identify $\KVA[\Gamma]$ with $\A[\widehat{\Gamma}]$.\\ \noindent
Now, consider an abstract set $\{\widehat{\zeta}_{\beta}\;|\;\beta \in \Phi[\Gamma]\}$ in one-to-one correspondence with $\Phi[\Gamma]$. As we will extensively discuss in Section \ref{APrefsubgrp}, 
for every $\beta=w(\alpha_s)\in \Phi[\Gamma]$ with $w\in \W[\Gamma]$ and $s\in S$, we can consider the element $r_{\beta}:=wsw^{-1}$, which behaves like a reflection acting on the vector space $V$. Namely, if we consider the linear map $\rho(r_{\beta})$ acting on $V$, it sends the nontrivial vector $\beta$ to its opposite $-\beta$, and it fixes pointwise the hyperplane orthogonal to $\beta$ with respect to the bilinear form $\langle\cdot,\cdot\rangle$.\\

\noindent Let $\beta,\gamma\in \Phi[\Gamma]$ be such that $\widehat{m}_{\beta,\gamma}\neq \infty$, and let $m=\widehat{m}_{\beta,\gamma}$. We define the roots $\beta_1,\ldots,\beta_m, \gamma_1,\ldots,\gamma_m \in \Phi[\Gamma]$ by $\beta_1=\beta$, $\gamma_1=\gamma$ and, for $k\geq 2$,
\begin{equation}\label{betakappa}
    \beta_k=\begin{cases}
    \Prod_R(r_{\gamma},r_{\beta};k-1)(\gamma) \mbox{  if $k$ is even,}\\
    \Prod_R(r_{\beta},r_{\gamma};k-1)(\beta) \mbox{  if $k$ is odd.}
\end{cases}\\
\gamma_k=\begin{cases}
    \Prod_R(r_{\beta},r_{\gamma};k-1)(\beta) \mbox{  if $k$ is even,}\\
    \Prod_R(r_{\gamma},r_{\beta};k-1)(\gamma) \mbox{  if $k$ is odd.}
\end{cases}\\
\end{equation}
\noindent
We set $Z(\gamma,\beta,m)=\widehat{\zeta}_{\beta_m}\cdots\,\widehat{\zeta}_{\beta_2}\,\widehat{\zeta}_{\beta_1}$, which is a word over the set $\{\widehat{\zeta}_{\beta}\;|\;\beta \in \Phi[\Gamma]\}$. By \cite[Lemma 2.5]{BellParThiel}, the word $Z(\beta,\gamma,m)$ is the reverse word of $Z(\gamma,\beta,m)$, meaning, $\widehat{\zeta}_{\beta_1}\,\widehat{\zeta}_{\beta_2}\cdots\,\widehat{\zeta}_{\beta_m}$. We denote by $\widehat{\PVA}[\Gamma]$ the group defined by the presentation with generating set $\{\widehat{\zeta}_{\beta}\;|\;\beta \in \Phi[\Gamma]\}$ and relations \[
Z(\gamma,\beta,\widehat{m}_{\beta, \gamma})=Z(\beta,\gamma,\widehat{m}_{\beta, \gamma}),
\]
 for $\beta, \gamma \in \Phi [\Gamma]$ such that
$\widehat{m}_{\beta, \gamma} \neq \infty$.
\begin{thm}(\cite[Theorem 2.6]{BellParThiel})\label{prespva}
   Let $\Gamma$ be a Coxeter graph. Then the map $\{\widehat{\zeta}_{\beta}\;|\;\beta \in \Phi[\Gamma]\}\longrightarrow \{{\zeta}_{\beta}\;|\;\beta \in \Phi[\Gamma]\}$, $\widehat{\zeta}_{\beta}\longmapsto {\zeta}_{\beta}$, induces an isomorphism $\varphi:\widehat{\PVA}[\Gamma]\longrightarrow \PVA[\Gamma]$.
\end{thm}
\noindent
The purpose of this work is to construct a space whose fundamental group is $\PVA[\Gamma]$, and then investigate its asphericity by comparing it with a space that serves as the classifying space of $\KVA[\Gamma]$, provided $\Gamma$ satisfies certain conditions. The entire study will be made possible through the presentations of the two groups given in Theorems \ref{preskva} and \ref{prespva}.

\section{The Salvetti complex and the AP reflection subgroups}\label{salberrap}
\subsection{The Salvetti complex for \texorpdfstring{$\Gamma$}{TEXT} }\label{salgam}
In Subsection \ref{coxeterpolytopes} we have seen how to build the Coxeter polytope for a Coxeter system $(\W[\Gamma],S)$, with $\Gamma$ of spherical type. If the Coxeter group is not finite, we can still define a Coxeter polytope for each standard parabolic subgroup of spherical type. In this subsection, we will use these Coxeter polytopes as the fundamental cells to define the Salvetti complex.
\begin{defn}
   Let $(\W[\Gamma],S)$ be the Coxeter system associated to the Coxeter graph $\Gamma$. Write $\Sf:=\{X\subset S\,|\, \W_{X} \mbox{ is of spherical type}\}$. The \textit{spherical dimension of $\Gamma$}, denoted by $n_{sph}(\Gamma)$, is the integer $n_{sph}(\Gamma):=max\{\,|X| \, | \,X\in \Sf\}$ if $\{|X| \mid X \in \mathcal S^f\}$ is bounded, and $n_{sph}(\Gamma) = \infty$ if $\{|X| \mid X \in \mathcal S^f\}$ is unbounded.
If $1\leq k\leq n_{sph}(\Gamma)$,
denote also by $\Sf_k=\{X\in \Sf \,|\, |X|=k\}$.
\end{defn}
\noindent 
If $\Gamma$ is of spherical type, then the spherical dimension of $\Gamma$ coincides with the rank $|S|$.
\\
\noindent 
For any $X$ in $\Sf$, we can then consider the Coxeter polytope $C[X]=conv\{w(o_X)\,|\, w\in \W_{X}\}\subset V_X$. Applying Lemma \ref{facescoxpol} to $C[X]$, we obtain that the faces of $C[X]$ are all of the form $F(u,Y,X)=conv\{uw(o_X)\,|\, w\in \W_{Y}\}$ with $Y\subset X\in \Sf$ and $u\in \W_{X}$ $(\emptyset,Y)$-minimal. In the same way, we can consider the Coxeter polytope $C[Y]=conv\{w(o_Y)\,|\, w\in \W_{Y}\}\subset V_Y$. Thanks to Lemma \ref{transl}, the composition $\tau(o_Y-o_X)\circ u^{-1}$ sends isometrically the face $F(u,Y,X)$ of $C[X]$ onto the Coxeter polytope $C[Y]$.
\begin{nt}
Typically, an \textit{$n$-cell} in a CW-complex is a space that is homeomorphic to a closed Euclidean ball in $\mathbb{R}^n$. In this context, by a slight abuse of terminology, we will also refer to the images of these balls under attaching maps as cells. These maps identify the boundary of the ball with a subset of the $n-1$-skeleton of the CW-complex.
\end{nt}

\noindent We are now ready to describe the cellular structure of the Salvetti complex.

\begin{defn}\label{defsalcel}
The \textit{Salvetti complex of $\Gamma$}, denoted by $\salbar$, is the CW-complex whose skeleta are inductively defined as follows.\\
\\
\textbf{Description of the 0-skeleton.} The 0-skeleton $\overline{Sal}^{0}(\Gamma)$ consists of just one point, which we will denote by $x^0$.\\
    \\
\textbf{Description of the 1-skeleton.} For each generator $s \in S$, take a copy $\mathbb{A}(s)$ of the Coxeter polytope $C[s]=[-\alpha_s,\alpha_s]$, which we identify with $[-\alpha_s,\alpha_s]$ via an isometry $f_s: [-\alpha_s,\alpha_s]\longrightarrow \mathbb{A}(s)$. As established in Notation \ref{orientedges}, we set the orientation of $C[s]$ from the point $\alpha_s$ to the point $-\alpha_s$, and $\mathbb{A}(s)$ inherits the orientation through $f_s$. 
    We then define a map $\varphi^1_{s}:\partial\mathbb{A}(s)\longrightarrow \overline{Sal}^{0}(\Gamma)$ by setting $\varphi^1_{s}(f_s(\alpha_s))=\varphi^1_{s}(f_s(-\alpha_s))=x^0$. Then we set 
   \[
    \overline{Sal}^{1}(\Gamma):=\bigslant{\{x^0\}\bigsqcup \left( \bigsqcup_{s\in S}\mathbb{A}(s)\right)}{\sim},
    \]
    where, for all $s\in S$ and $x\in \partial \mathbb{A}(s)$, we have $x \sim \varphi^1_s(x)$. The embedding of $\mathbb{A}(s)$ into $\{x^0\}\bigsqcup$ $\left(\bigsqcup_{s'\in S}\mathbb{A}(s')\right)$ induces a characteristic map $\phi^1_s: \mathbb{A}(s)\longrightarrow \overline{Sal}^1(\Gamma)$ whose image is a 1-cell denoted by $\Delta^1(s)$. Notice that $\phi^1_s$ extends the attaching map $\varphi^1_s$ and it is a homeomorphism from the interior of the segment $\mathbb{A}(s)$ onto $\Delta^1(s)\,\backslash\, \{x^0\}$. Thus, as a set, $\overline{Sal}^1(\Gamma)=\{x^0\}\bigcup\left(\bigcup_{s\in S}\Delta^1(s)\right)$ where each $\Delta^1(s)$ is a segment with both ends attached to the only vertex $x^0$. The arcs $\Delta^1(s)$ are naturally oriented with the orientation induced by that of $\mathbb{A}(s)$.\\
    \\\textbf{Description of the $k$-skeleton.} Suppose that we have defined the $(k-1)$-skeleton $\overline{Sal}^{k-1}(\Gamma)$, for $2\leq k \leq n_{sph}(\Gamma)$. Namely, for all $Y\in \Sf$ with $|Y|=h\leq k-1$, we have a copy $\mathbb{D}(Y)$ of the Coxeter polytope $C[Y]$ which we identify with $C[Y]$ via an isometry $f_{Y}:C[Y]\longrightarrow \mathbb{D}(Y)$, and we have an $h$-cell $\Delta^h(Y)\subset \overline{Sal}^{k-1}(\Gamma)$, which is the image of a continuous characteristic map $\phi^h_Y: \mathbb{D}(Y)\longrightarrow \Delta^h(Y)$ establishing a homeomorphism between the interior of $\mathbb{D}(Y)$ and $\Delta^h(Y)\,\backslash\, \partial\Delta^h(Y)$. If $|Y|=1$, then there exists $s\in S$ such that $Y=\{s\}$, and we assume $\mathbb{D}(Y)=\mathbb{A}(s)$. If $Y=\emptyset$, then we adopt the following conventions: 
    \[
    V_{\emptyset}=\{0\},\qquad \W_{\emptyset}=\{\id\},\qquad o_{\emptyset}=0,\qquad C[\emptyset]=\{0\}.
    \]
    \noindent We also set $\mathbb{D}(\emptyset)=\{0\}$. Take $f_{\emptyset}:C[\emptyset]\longrightarrow \mathbb{D}(\emptyset)$ to be the identity map, and define $\varphi_{\emptyset}^0:\mathbb{D}(\emptyset)\longrightarrow\overline{Sal}^0(\Gamma)$ to be the map sending $0$ to the unique vertex $x^0$. Thus $\Delta^0(\emptyset)=\{x^0\}$.
    \\
    \\
    \noindent Now, take for each $X\in \Sf_k$ a copy $\mathbb{D}(X)$ of the Coxeter polytope $C[X]$, identified with $C[X]$ via an isometry $f_{X}:C[X]\longrightarrow \mathbb{D}(X)$. Let $Y\subset X$ with $|Y|=h\leq k-1$ and $u\in \W_{X}$ be an $(\emptyset,Y)$-minimal element.
    Define now $\xi^k_{u,Y,X}:F(u,Y,X)\longrightarrow \overline{Sal}^{k-1}(\Gamma)$ to be the following composition of maps:

    \begin{equation}
        F(u,Y,X)\xrightarrow[]{u^{-1}}F(\id,Y,X)\xrightarrow[]{\tau(o_Y-o_X)}C[Y]\xrightarrow[]{f_{Y}}\mathbb{D}(Y)\xrightarrow[]{\phi^h_Y}\Delta^{h}(Y)\subset \overline{Sal}^{k-1}(\Gamma).
    \end{equation}
In other words,
\[\xi^k_{u,Y,X}=\phi^h_Y\circ f_Y \circ \tau(o_Y-o_X)\circ u^{-1}.\]
Recall that if $F(u,Y,X)$ is a face of $C[X]$ with $Y$ and $u$ as described before, the faces of $F(u,Y,X)$ are of the form $F(uv,Z,X)=conv\{uvw(o_X)\,|\,w\in W_Z\}$ with $Z\subset Y$ and $v\in \W_{Y}$, $(\emptyset,Z)$-minimal. We now state a result to be proven later.
   \begin{lem}\label{varphiiscontinuous}
    Let $X,Y,Z \in \Sf$ be such that $Z\subset Y\subset X$, and $|X|=k,|Y|=h,|Z|=l$. Take $u\in \W_{X}$, $u$ $(\emptyset,Y)$-minimal and $v\in \W_{Y}$, $v$ $(\emptyset,Z)$-minimal. We have that $F(uv,Z,X)\subset F(u,Y,X)\subset C[X]$. Then, the restriction of $\xi^k_{u,Y,X}$ to $F(uv,Z,X)$ coincides with $\xi^k_{uv,Z,X}$. 
\end{lem}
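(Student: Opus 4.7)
The plan is to unpack both sides according to the definition of $\xi^k$ and reduce the equality to one of affine maps on $F(uv,Z,X)$. Two inputs are needed: the description of the characteristic map $\phi^h_Y$ on the sub-face $f_Y(F(v,Z,Y)) \subset \mathbb{D}(Y)$, coming from the construction of $\overline{Sal}^h(\Gamma)$ at the previous inductive step, and the fact that $v \in \W_Y$ commutes with the translation $\tau(o_Y - o_X)$ inside $V_X$.

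First I would check that $uv$ is $(\emptyset,Z)$-minimal in $\W_X$, so that $\xi^k_{uv,Z,X}$ is one of the maps introduced in Definition \ref{defsalcel}: since $u$ is $(\emptyset,Y)$-minimal one has $l(uv') = l(u) + l(v')$ for every $v' \in \W_Y$, and since $v$ is $(\emptyset,Z)$-minimal in $\W_Y$ one has $l(vw') = l(v) + l(w')$ for every $w' \in \W_Z$; combining the two gives $l(uvw') = l(uv) + l(w')$ for all $w' \in \W_Z$. Next I would rewrite the restriction. The map $u^{-1}$ sends $F(uv,Z,X)$ onto $F(v,Z,X) \subset F(\id,Y,X)$, and Lemma \ref{transl}, applied in $V_X$ with $\W_Y$ playing the role of $\W[\Gamma]$, shows that $\tau(o_Y - o_X)$ commutes with every element of $\W_Y$ in $V_X$ and sends each $vw'(o_X)$ to $vw'(o_Y)$; in particular it sends $F(v,Z,X)$ onto the face $F(v,Z,Y) \subset C[Y]$. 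By construction of $\overline{Sal}^h(\Gamma)$ at the previous inductive step, $\phi^h_Y \circ f_Y: C[Y] \to \Delta^h(Y)$ agrees on $F(v,Z,Y)$ with $\xi^h_{v,Z,Y} = \phi^l_Z \circ f_Z \circ \tau(o_Z - o_Y) \circ v^{-1}$. Substituting this in the definition of $\xi^k_{u,Y,X}$ yields
\[
\xi^k_{u,Y,X}\big|_{F(uv,Z,X)} \;=\; \phi^l_Z \circ f_Z \circ \tau(o_Z - o_Y) \circ v^{-1} \circ \tau(o_Y - o_X) \circ u^{-1}.
\]

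It then remains to compare this expression with $\xi^k_{uv,Z,X} = \phi^l_Z \circ f_Z \circ \tau(o_Z - o_X) \circ (uv)^{-1}$, which boils down to the affine identity
\[
\tau(o_Z - o_Y) \circ v^{-1} \circ \tau(o_Y - o_X) \circ u^{-1} \;=\; \tau(o_Z - o_X) \circ v^{-1} \circ u^{-1}.
\]
The key point is that $v^{-1}$ commutes with $\tau(o_Y - o_X)$: the computation $\langle \alpha_s, o_Y - o_X\rangle = 1 - 1 = 0$ for every $s \in Y$ shows that $o_Y - o_X \in V_X$ is orthogonal to each simple root of $\W_Y$, hence fixed by every generator of $\W_Y$ and therefore by $v^{-1}$. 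Pulling the translation across $v^{-1}$ and composing the two translations via $\tau(o_Z - o_Y) \circ \tau(o_Y - o_X) = \tau(o_Z - o_X)$ finishes the argument. The only mildly delicate step is the application of Lemma \ref{transl} inside $V_X$ rather than the original $V$; everything else is a direct calculation or an immediate consequence of the inductive setup for the skeletons.
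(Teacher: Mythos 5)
Your proposal is correct and follows essentially the same route as the paper's proof: unpack both compositions, use the inductive description of $\phi^h_Y$ on $\partial\mathbb{D}(Y)$ to replace $\phi^h_Y\circ f_Y$ on $F(v,Z,Y)$ by $\xi^h_{v,Z,Y}$, and then commute $v^{-1}$ past $\tau(o_Y-o_X)$ via Lemma \ref{transl} before composing the two translations. Your explicit check that $uv$ is $(\emptyset,Z)$-minimal is a small but welcome addition that the paper leaves implicit.
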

\noindent
Therefore, we can define a continuous map $\xi^k_X:\partial C[X]\longrightarrow \overline{Sal}^{k-1}(\Gamma)$ as follows. Let $x\in \partial C[X]$. We choose $Y\subset X$ and $u\in \W_X$ an $(\emptyset, Y)$-minimal element such that $x\in F(u,Y,X)$ and we set $\xi^k_X(x)=\xi^k_{u,Y,X}(x)$. Thanks to Lemma \ref{varphiiscontinuous}, this definition does not depend on the choice of $F(u,Y,X)$, and the map $\xi^k_X$ is continuous. Then we define $\varphi^k_X:\partial \mathbb{D}(X)\longrightarrow \overline{Sal}^{k-1}(\Gamma)$ by setting
\[
\varphi^k_X=\xi^k_{X}\circ (f^{-1}_{X})|_{\partial \mathbb{D}(X)}.
\]

\noindent
The $k$-skeleton of the cell complex will be defined as  
    \[
    \overline{Sal}^{k}(\Gamma):=\bigslant{\left(\overline{Sal}^{k-1}(\Gamma)\bigsqcup \left( \bigsqcup_{X\in \Sf_k}\mathbb{D}(X)\right)\right)}{\sim},
    \]
    where, for all $X\in \Sf_k$ and all $x\in \partial \mathbb{D}(X)$, we have $x\sim \varphi^k_{X}(x)$.\\ The embedding of $\mathbb{D}(X)$ into $\overline{Sal}^{k-1}(\Gamma)\bigsqcup \left( \bigsqcup_{X'\in \Sf_k}\mathbb{D}(X')\right)$ induces a characteristic map $\phi^k_X:\mathbb{D}(X)\longrightarrow \overline{Sal}^{k}(\Gamma)$ whose image is denoted by $\Delta^k(X)$.\\
    \\
\textbf{Description of the complex.} We set $\salbar=\bigcup_{k=0}^{\infty}\overline{Sal}^{k}(\Gamma)$ and we endow it with the weak topology.

\end{defn}
\noindent
Observe that the highest dimension of the cells in the Salvetti complex is the spherical dimension of $\Gamma$.
We now prove the result that was stated in the definition of the Salvetti complex.
\begin{proof}[Proof of Lemma \ref{varphiiscontinuous}]
We denote by $\tau(a)$, as in Subsection \ref{coxeterpolytopes}, the translation by vector $a$.
The map $\xi^k_{uv,Z,X}$ in details is 
\begin{align*}
    F(uv,Z,X)\xrightarrow[]{v^{-1}u^{-1}}F(\id,Z,X) \xrightarrow[]{\tau(o_{Z}-o_{X})}C[Z]\xrightarrow[]{f_Z}\mathbb{D}(Z)\xrightarrow[]{\phi^l_Z} \Delta^l(Z).
\end{align*}
Now, we study the restriction of $\xi^k_{u,Y,X}$ to $F(uv,Z,X)$, which is described as:
\begin{align*}
    F(uv,Z,X)\xrightarrow[]{u^{-1}}F(v,Z,X) \xrightarrow[]{\tau(o_{Y}-o_{X})}F(v,Z,Y)\subset  C[Y]\xrightarrow[]{f_Y}\mathbb{D}(Y)\xrightarrow[]{\phi^h_Y} \overline{Sal}^h(\Gamma).
\end{align*}
Since $F(v,Z,Y)\subset \partial C[Y]$, we have that $f_Y(F(v,Z,Y))\subset \partial \mathbb{D}(Y)$, so by induction, the restriction of $\phi^h_Y$ to the boundary of the cell is $\varphi^h_{Y}=\xi^h_Y\circ (f^{-1}_Y)|_{\partial{\mathbb{D}(Y)}}$. Thus, $\phi_Y^h \circ f_Y|_{F(v,Z,Y)} = \xi_{v,Z,Y}^h|_{F(v,Z,Y)}$, which is the composition of maps
\begin{align*}
F(v,Z,Y)\xrightarrow[]{v^{-1}}F(\id,Z,Y)\xrightarrow[]{\tau(o_{Z}-o_{Y})}C[Z]\xrightarrow[]{f_Z}\mathbb{D}(Z)\xrightarrow[]{\phi^l_Z}\Delta^l(Z).
\end{align*}
Gathering the previous equations, we obtain that the restriction of  $\xi^k_{u,Y,X}$ to $F(uv,Z,X)$ is the composition $\phi^l_Z\circ f_Z\circ \tau(o_{Z}-o_{Y})\circ v^{-1}\circ \tau(o_{Y}-o_{X})\circ u^{-1}$.
By Lemma \ref{transl} we know that the translation $\tau(o_{Y}-o_{X})$ commutes with all the elements in $\W_{Y}$, and, in particular, with $v^{-1}$. Therefore, we can rewrite the restriction of  $\xi^k_{u,Y,X}$ as $\phi^l_Z\circ f_Z\circ \tau(o_{Z}-o_{Y})\circ \tau(o_{Y}-o_{X})\circ v^{-1}u^{-1}$, which simplifies to $\phi^l_Z\circ f_Z \circ \tau(o_{Z}-o_{X})\circ v^{-1}u^{-1}$, and this coincides with $\xi^k_{uv,Z,X}$.
\end{proof}
\noindent
We now have a definition of the Salvetti complex $\salbar$ with an explicit description of its skeleton structure. 

\begin{ex}
Take $(\W[\Gamma],S)$ a Coxeter system, and let $X\in \Sf_2$, where $X=\{s,t\}\subset S$.
 Figures \ref{coxpolfig} and \ref{2cellsalfig} illustrate an example of the fundamental cell $C[X]$ and a 2-cell $\Delta^2(X)$ of the Salvetti complex $\salbar$.   

   \begin{figure}[ht]
\centering
\begin{minipage}{8.5cm}
  \centering
  \includegraphics[width=8cm]{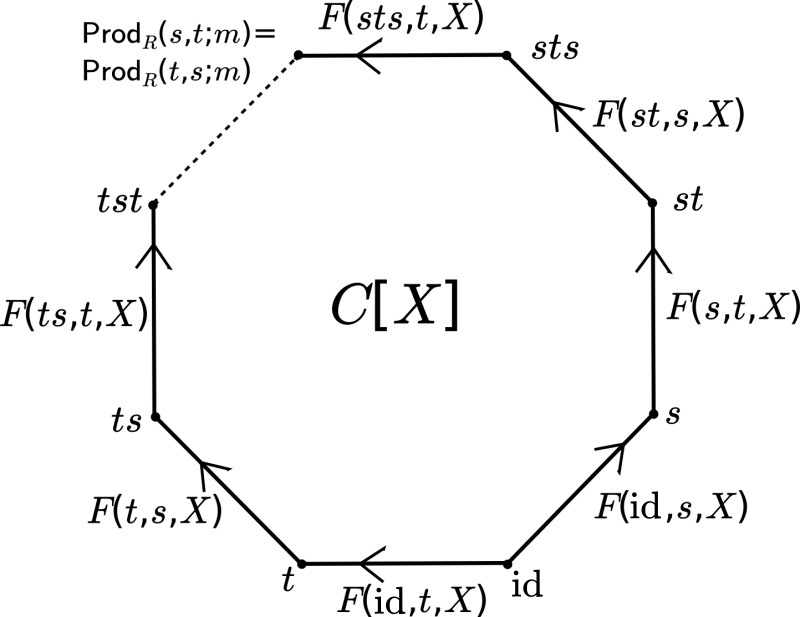}
  \caption{The Coxeter polytope for $X={\{s,t\}}$ and $m_{s,t}=4$.}
  \label{coxpolfig}
\end{minipage}
\qquad
\begin{minipage}{7cm}
  \centering
\includegraphics[width=7cm]{ 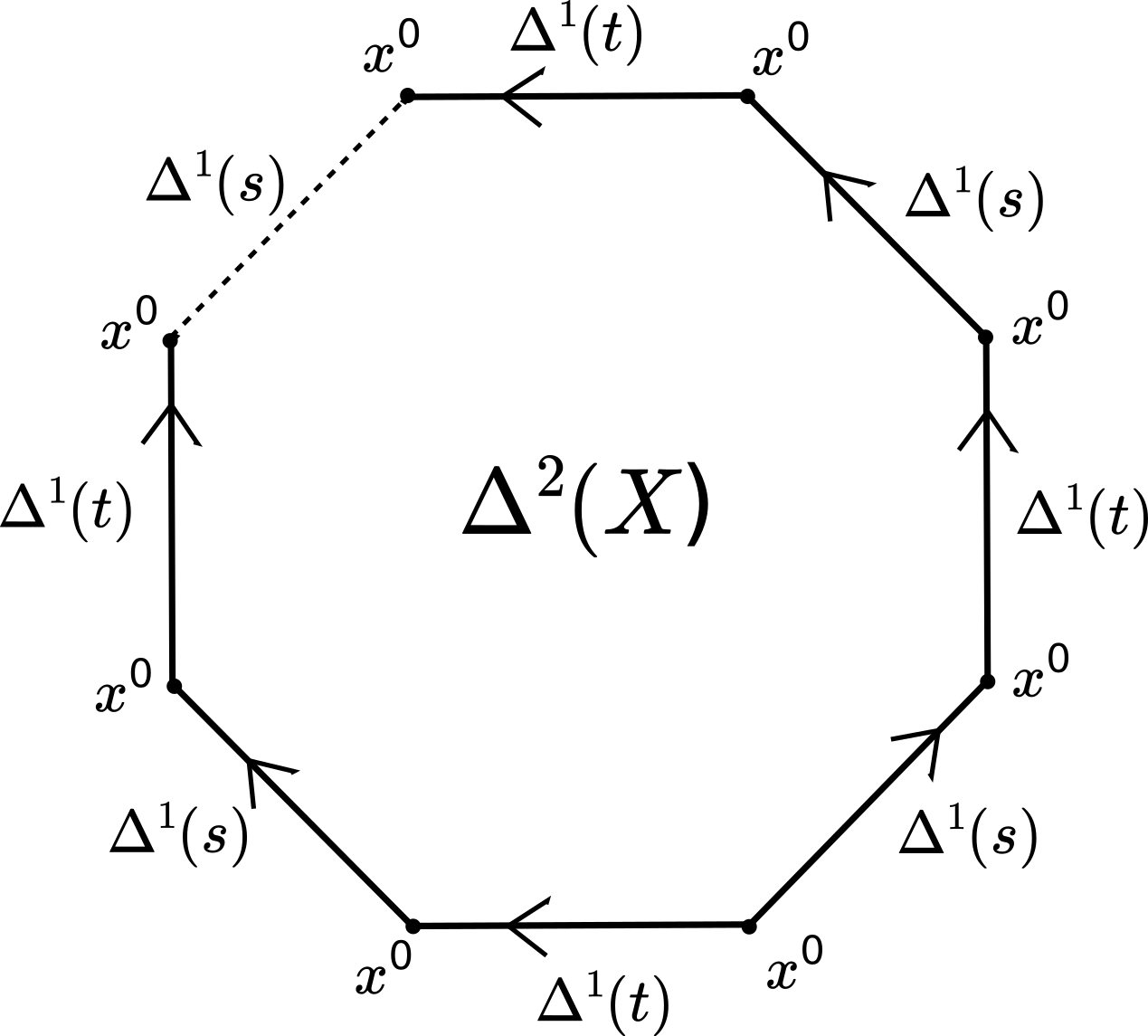}
  \caption{The 2-cell $\Delta^2(X)$ of the Salvetti complex $\salbar$.}
  \label{2cellsalfig}
\end{minipage}
\end{figure}
\end{ex}

\begin{obs}
    To be more precise, since the fundamental cells attached in the complex are polytopes, $\salbar$ is indeed a polyhedral complex, with a piecewise Euclidean structure.
\end{obs}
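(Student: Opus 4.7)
The plan is to verify that the Salvetti complex fits the standard definition of a piecewise Euclidean polyhedral complex in the sense of Bridson--Haefliger, namely a CW-complex obtained by gluing convex Euclidean polytopes along their faces via isometries. The observation is essentially unpacking what has already been built in Definition \ref{defsalcel}, and the key point is that every map used in the construction is an isometry between Euclidean polytopes.

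First, I would fix on each cell $\mathbb{D}(X)$ the Euclidean metric transported from $C[X]$ via the isometry $f_X$. Since $X\in \Sf$, the subgroup $\W_X$ is of spherical type, so by Theorem \ref{finiteandscalproduct} the restriction of the canonical bilinear form to $V_X$ is a genuine scalar product; hence $C[X]\subset V_X$ is a bona fide convex Euclidean polytope, and $\mathbb{D}(X)$ inherits the structure of such a polytope. The faces of $\mathbb{D}(X)$ are exactly the images under $f_X$ of the faces $F(u,Y,X)$ of $C[X]$, as described by Lemma \ref{facescoxpol}.

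Next, I would check that each attaching map glues faces via polytope isometries. Given $F(u,Y,X)$ with $u$ $(\emptyset,Y)$-minimal, the map
\[
\xi^k_{u,Y,X}=\phi^h_Y\circ f_Y\circ \tau(o_Y-o_X)\circ u^{-1}
\]
is the composition of (i) the linear map $u^{-1}$, which is a Euclidean isometry of $V_X$ because elements of $\W_X$ preserve the scalar product; (ii) the translation $\tau(o_Y-o_X)$, which is an isometry by construction (and by Lemma \ref{transl} sends $F(\id,Y,X)$ isometrically onto $C[Y]$); (iii) the isometry $f_Y$; and (iv) by induction, the characteristic map $\phi^h_Y$, which is itself an isometric embedding of the Euclidean polytope $\mathbb{D}(Y)$ as the cell $\Delta^h(Y)$. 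Hence the restriction of the attaching map $\varphi^k_X$ to each face of $\partial \mathbb{D}(X)$ is an isometry onto a lower-dimensional polytope-cell, and Lemma \ref{varphiiscontinuous} guarantees that these face-isometries agree on intersections of faces, so the identifications are consistent.

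Putting these ingredients together, $\salbar$ is the quotient of the disjoint union $\bigsqcup_{X\in \Sf}\mathbb{D}(X)$ by isometric identifications of faces, which is precisely the definition of a piecewise Euclidean polyhedral complex; the metric on $\salbar$ is the quotient length metric induced by the Euclidean metrics on the cells $\mathbb{D}(X)$. The only mild subtlety is verifying that the gluing identifies each face of $\mathbb{D}(X)$ with \emph{a whole} lower-dimensional cell rather than just an embedded subset; this follows from the fact that the composition $f_Y\circ \tau(o_Y-o_X)\circ u^{-1}$ maps $F(u,Y,X)$ surjectively onto $\mathbb{D}(Y)$ (it is built out of bijective isometries). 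No additional work is required beyond the unpacking of Definition \ref{defsalcel}.
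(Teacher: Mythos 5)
Your proposal is correct and takes essentially the same route as the paper, which states this observation without proof as an immediate unpacking of Definition \ref{defsalcel}: the cells are Coxeter polytopes, genuinely Euclidean because $\W_X$ is of spherical type so the form on $V_X$ is a scalar product (Theorem \ref{finiteandscalproduct}), and every face identification $f_Y\circ\tau(o_Y-o_X)\circ u^{-1}$ in the attaching data is an isometry of polytopes, with Lemma \ref{varphiiscontinuous} ensuring compatibility. One wording caveat: the characteristic map $\phi^h_Y$ is \emph{not} an isometric embedding, since it identifies boundary points (for instance both endpoints of $\mathbb{A}(s)$ map to $x^0$), so the accurate formulation is the one your final paragraph already gives — $\salbar$ is the quotient of $\bigsqcup_{X\in\Sf}\mathbb{D}(X)$ by isometric face identifications, with self-gluings permitted as in the weak notion of a piecewise Euclidean polyhedral complex.
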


\begin{obs}
    The maps $\xi^k_{u,Y,X}$ preserve the orientation on the edges. Take $X\in \Sf_k$. For any $\{s\}=Y\subset X$ and $u\in \W[\Gamma]$, $u$ $(\emptyset,Y)-$minimal, the edge $F(u,s,X)$, oriented from $u(o_{X})$ to $us(o_{X})$, is sent by $\xi^k_{u,Y,X}=\phi^1_Y \circ f_Y\circ \tau(o_{Y}-o_{X})\circ u^{-1}$ to $\phi^1_{s}(f_s(C[s]))$, which is the segment $\mathbb{A}(s)$ oriented from $f_s(\alpha_s)$ to $f_s(-\alpha_s)$ with the vertices collapsed at $x^0$.
\end{obs}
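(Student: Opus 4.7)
The plan is to verify the assertion by directly tracking each of the four maps in the composition $\xi^k_{u,s,X} = \phi^1_s \circ f_s \circ \tau(o_s - o_X) \circ u^{-1}$, checking where the endpoints of $F(u,s,X)$ land and confirming that the orientation is respected at each step. Since $\W_{\{s\}} = \{\id, s\}$, the edge $F(u,s,X) = [u(o_X), us(o_X)]$ has source $u(o_X)$ and target $us(o_X)$ by Notation \ref{orientedges}, so the claim is essentially a bookkeeping exercise.

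The core computation chains the four maps in order. The linear isometry $u^{-1}$ sends the oriented segment to $F(\id, s, X) = [o_X, s(o_X)]$ with source $o_X$ and target $s(o_X)$. Next, the translation $\tau(o_s - o_X)$ will be analyzed via an adapted version of Lemma \ref{transl}, in which $X$ plays the role of $S$ and $\{s\}$ plays the role of $X$: it sends $w(o_X)$ to $w(o_s)$ for each $w \in \{\id, s\}$. In particular, $o_X \mapsto o_s = \alpha_s$ and $s(o_X) \mapsto s(\alpha_s) = -\alpha_s$, so the image is exactly $C[s] = [\alpha_s, -\alpha_s]$ carrying the orientation fixed in Notation \ref{orientedges}. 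The isometry $f_s$ then sends source $\alpha_s$ and target $-\alpha_s$ to $f_s(\alpha_s)$ and $f_s(-\alpha_s)$ respectively, by the very definition of the inherited orientation on $\mathbb{A}(s)$. Finally, the characteristic map $\phi^1_s$ restricted to $\partial \mathbb{A}(s)$ is the attaching map $\varphi^1_s$, which collapses both $f_s(\alpha_s)$ and $f_s(-\alpha_s)$ to $x^0$, while it sends the interior of $\mathbb{A}(s)$ homeomorphically onto the interior of $\Delta^1(s)$. Combining these four steps yields precisely the claim.

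The only delicate point is the adapted version of Lemma \ref{transl}. I would verify that $\tau(o_s - o_X)$ commutes with both $\id$ and $s$ on $V_X$ by checking that $s$ fixes the translation vector $o_s - o_X$; an easy expansion reduces this to the identity $\langle \alpha_s, o_X \rangle = 1$, which is immediate from $o_X = \sum_{t \in X} \alpha_{t,X}^*$ together with the duality $\langle \alpha_{t,X}^*, \alpha_s \rangle = \delta_{s,t}$ for $t \in X$. No genuine obstacle arises; the verification is simply a careful unpacking of the conventions fixed in Definition \ref{defsalcel} and Notation \ref{orientedges}.
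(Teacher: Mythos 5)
Your proposal is correct and follows essentially the same route as the paper: the observation is a direct unpacking of the composition $\xi^k_{u,Y,X}$, using Lemma \ref{transl} in the adapted form (with $X$ playing the role of $S$ and $\{s\}$ that of $X$) already invoked in Definition \ref{defsalcel}, together with the orientation conventions of Notation \ref{orientedges}. Your verification that $s$ fixes the translation vector via $\langle \alpha_s, o_X\rangle = 1 = \langle \alpha_s, o_s\rangle$ is exactly the computation underlying the paper's Lemma \ref{transl}, so nothing is missing.
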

\noindent
The Salvetti complex $\salbar$ was originally defined by Salvetti (see \cite{Sal87},\cite{Sal94}), who proved that it has the same homotopy type as the complement of the complexified reflection hyperplane arrangement $\W[\Gamma]$, under the action of $\W[\Gamma]$. Here, we use the notation  $\salbar$ instead of $\sal$ to indicate that the space is already taken under the action of $\W[\Gamma]$. The famous long-standing \textbf{$K(\pi,1)$-conjecture} can be reformulated to state that the Salvetti complex is a $K(\A[\Gamma],1)$-space, meaning that its fundamental group is $\A[\Gamma]$ and all higher homotopy groups vanish. The first part of the statement is known to be true thanks to the results in \cite{van1983homotopy}, so the open question is whether the universal covering of $\salbar$ is contractible. This conjecture has been solved for many Artin groups (see \cite{Deligne1972} for the spherical type case, or the more recent \cite{PaoSal} for the affine type), but the general case for any $\Gamma$ is still open.
\noindent
A widely used definition of $\salbar$ as a quotient of a simplicial complex is due to Paris (see Sections 3.2 and 3.3 in \cite{Par14}), where a similar cellular decomposition of the space also appears. From this description, the next result follows.

\begin{thm}(\cite{Sal87})\label{fundsalvetti}
    Let $\Gamma$ be a finite Coxeter graph, let $(\W[\Gamma],S)$ be its Coxeter system, $\A[\Gamma]$ be the associated Artin group and $\salbar$ be its Salvetti complex. Then\[ \pi_1(\salbar)\cong \A[\Gamma].\]
\end{thm}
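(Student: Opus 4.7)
The strategy is to apply the standard description of the fundamental group of a CW-complex in terms of its 2-skeleton: $\pi_1(\salbar)$ depends only on $\overline{Sal}^2(\Gamma)$, so it suffices to identify the free group generated by the 1-cells modulo the relations read off the attaching maps of the 2-cells. Higher-dimensional cells (corresponding to $X\in \Sf_k$ for $k\geq 3$) contribute nothing.

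First I would analyze the 1-skeleton. By the construction in Definition \ref{defsalcel}, $\overline{Sal}^{1}(\Gamma)$ is obtained from the single 0-cell $x^0$ by attaching, for each $s\in S$, the oriented segment $\mathbb{A}(s)$ with both endpoints glued to $x^0$. Thus $\overline{Sal}^{1}(\Gamma)$ is a bouquet of circles indexed by $S$, and its fundamental group is the free group $F(S)$ on generators $\{s \mid s \in S\}$, where $s$ denotes the homotopy class of the loop $\Delta^1(s)$ traversed in the direction of its orientation (i.e.\ from $x^0$ back to $x^0$ following the image of $\alpha_s \to -\alpha_s$).

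Next I would study the 2-cells. By construction, these correspond to subsets $X \in \Sf_2$, that is, pairs $X=\{s,t\}\subseteq S$ with $m:=m_{s,t}\neq \infty$. For such an $X$, $\W_X$ is the dihedral group of order $2m$, so the Coxeter polytope $C[X]$ is a Euclidean $2m$-gon whose vertices are $\{w(o_X) \mid w \in \W_X\}$. By Lemma \ref{facescoxpol}, its edges are exactly the faces $F(u,\{s\},X)$ and $F(u,\{t\},X)$ for $(\emptyset,\{s\})$- (resp.\ $(\emptyset,\{t\})$-) minimal elements $u$; these alternate around the boundary. The key geometric computation is to describe the map $\varphi^2_X\colon \partial\mathbb{D}(X)\to \overline{Sal}^1(\Gamma)$ at the level of $\pi_1$. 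Starting at $o_X$ and going around the polygon, one path of length $m$ reaches the longest element $w_0(o_X)$ through the vertices $o_X,\ s(o_X),\ st(o_X),\ \ldots$, while the opposite path of length $m$ reaches $w_0(o_X)$ through $o_X,\ t(o_X),\ ts(o_X),\ \ldots$. The orientation convention from Notation \ref{orientedges} together with the fact that the $(\emptyset,\{s\})$-minimality condition picks out precisely the source of each oriented edge guarantees that each edge along the first path is traversed \emph{in the direction of its orientation}, and similarly for the second path. When one reads the loop $\partial C[X]$ as a word in $F(S)$ by going forward along the $s$-path and backward along the $t$-path, the composition $\xi^2_X$ projects these edges to their corresponding 1-cells $\Delta^1(s)$, $\Delta^1(t)$ in the 1-skeleton. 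This yields the word
\[
\Prod_L(s,t;m)\,\cdot\,\Prod_L(t,s;m)^{-1},
\]
so the relation imposed by the 2-cell $\Delta^2(X)$ is precisely $\Prod_R(s,t;m_{s,t})=\Prod_R(t,s;m_{s,t})$, i.e.\ the Artin relation for the pair $\{s,t\}$.

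To conclude, applying the standard presentation theorem for the fundamental group of a 2-complex, $\pi_1(\overline{Sal}^{2}(\Gamma))$ is the quotient of $F(S)$ by the normal subgroup generated by these relations, which is exactly the Artin presentation, and therefore $\pi_1(\salbar)\cong \A[\Gamma]$. The main technical obstacle is the orientation bookkeeping in the second step: one must verify that the attaching map of each rank-$2$ cell visits each bounding edge \emph{exactly once} and in the direction consistent with the orientation convention, so that no spurious inverses appear and the two halves of the polygon contribute the two sides of the Artin relation. This reduces to checking that along the $s$-chain $\id,\,s,\,st,\,sts,\ldots$, the prefixes that are $(\emptyset,\{s\})$-minimal (resp.\ $(\emptyset,\{t\})$-minimal) are precisely those of even (resp.\ odd) length, which follows from the elementary length functions on dihedral groups.
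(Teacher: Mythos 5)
Your argument is correct, but it is worth noting that the paper itself does not prove this statement: Theorem \ref{fundsalvetti} is quoted from Salvetti's original work, and the text points to the cellular description in Sections 3.2--3.3 of \cite{Par14} from which the result ``follows.'' What you have written is essentially the missing direct verification. Your route --- reduce to the 2-skeleton, identify $\overline{Sal}^{1}(\Gamma)$ as a bouquet of circles indexed by $S$, and read the Artin relation off the boundary of each $2m$-gon $\mathbb{D}(X)$ for $X=\{s,t\}\in\Sf_2$ --- is exactly the technique the paper deploys later in its proof of Theorem \ref{pi1beer} for $\Omega(\Gamma)$, so it is fully consistent with the paper's framework. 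Your key bookkeeping claims check out: the observation following Lemma \ref{varphiiscontinuous} guarantees that $\xi^2_{u,Y,X}$ carries the edge $F(u,s,X)$, oriented from $u(o_X)$ to $us(o_X)$, homeomorphically and orientation-compatibly onto $\Delta^1(s)$; and the prefixes $\id, s, st, sts,\dots$ of length $<m$ are indeed alternately $(\emptyset,\{s\})$- and $(\emptyset,\{t\})$-minimal, so each half of the boundary is traversed entirely in the positive direction and the loop reads $\Prod_L(s,t;m)\,\Prod_L(t,s;m)^{-1}$, which is the braid relation. The only cosmetic point is that your relation is stated with $\Prod_L$ while the paper's presentation of $\A[\Gamma]$ uses $\Prod_R$; these define the same set of relations, but a one-line reconciliation would make the match with the stated Artin presentation explicit. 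What your self-contained argument buys is independence from the external references; what the citation buys the paper is brevity and the stronger statement (also due to Salvetti) that $\salbar$ has the homotopy type of the quotiented complexified arrangement complement, which your combinatorial computation does not recover.
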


\noindent Observe that the Salvetti complex of $(\W[\Gamma],S)$ has the cells in bijection with the standard parabolic subgroups of spherical type $\W_{X}$ of $\W[\Gamma]$, where $X\subset S$.

\subsection{Almost parabolic reflection subgroups}\label{APrefsubgrp}
The two CW-complexes that we are about to introduce will have their cells in bijection with a specific type of reflection subgroups of $\W[\Gamma]$. Before delving into the details, we will first present some general results about reflection subgroups of a Coxeter group, and then proceed to analyze the case of our interest.

\begin{defn}
    Let $(\W[\Gamma],S)$ be a Coxeter system. Define $\mathcal{R}=\{wsw^{-1}|\; s\in S, w\in \W\}$ as the set of all possible conjugates of the generators $S$. These are called \textit{reflections of $\W$}, and any subgroup of $\W$ generated by a subset $R\subset \mathcal{R}$ is called a \textit{reflection subgroup} of $\W$.
\end{defn}
\noindent
In particular, standard parabolic subgroups are reflection subgroups. Moreover, if $X\subset S$ and $w\in \W[\Gamma]$, then the non-standard parabolic subgroup $w\W_{X}w^{-1}$ is the reflection subgroup generated by the set $wXw^{-1}\subset \mathcal{R}$. Parabolic subgroups are a special case of the groups that we will introduce. \\
\\
\noindent
The reason why an element $wsw^{-1}$ of $\mathcal{R}$ is called a reflection is because it behaves as a reflection acting on the vector space $V$. Specifically, it fixes pointwise a hyperplane of $V$ and sends a nonzero vector (orthogonal to the hyperplane) to its opposite. For each $\beta=w(\alpha_s)$ with $w\in \W[\Gamma]$ and $s\in S$, recall that we defined $r_{\beta}=wsw^{-1}$. It can be observed that for every $v$ in $V$, $r_{\beta}(v)=v-2\langle v,\beta\rangle \beta $. In particular $r_{\beta}(\beta)=-\beta$ and $r_{\beta}(v)=v$ for all $v$ in $ H_{\beta}=\{v\in V\,|\, \langle \beta,v\rangle =0\}$. Notice that $r_{\beta}=r_{-\beta}$, creating a bijection between the set of positive (or negative) roots $\Phi^+[\Gamma]$ and the set of reflections $\mathcal{R}$. \\
\\
\noindent
Now consider a reflection subgroup $\langle R \rangle$ of $\W[\Gamma]$, with $R\subset\mathcal{R}$. If $\Gamma$ is of spherical type, then $\W[\Gamma]$ is finite, hence every reflection subgroup $\langle R\rangle$ is
a finite reflection group. By the classical theorem that asserts that finite real reflection groups
are finite Coxeter groups (see, for instance, \cite[Section 1.9]{Hump}), it follows that
$\langle R\rangle$ is a Coxeter group. More precisely, the fact that finite reflection groups admit a presentation of the form as in Equation \ref{prescoxgroup} is a classical result in the field, which can be found for instance in \cite[Section 1.9]{Hump}.\\
In the case where the group $\W[\Gamma]$ is not finite, it is less evident that the reflection subgroup admits a Coxeter presentation with respect to some set of generators, but it is true and it has independently been proven by Deodhar in \cite{Deodhar1989} and Dyer in \cite{DYER199057}. This leads to the following result:
\begin{thm}(\cite{Deodhar1989},\cite{DYER199057})\label{deodref}
    A reflection subgroup of a Coxeter group is itself a Coxeter group.
\end{thm}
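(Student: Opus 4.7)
The plan is to exhibit a canonical set of Coxeter generators for $\langle R \rangle$ using the root-theoretic description of reflections recalled in the excerpt, following the approach of Dyer and Deodhar. Let $W'=\langle R\rangle$ and recall that the set of reflections $\mathcal{R}$ is in bijection with the positive roots $\Phi^{+}[\Gamma]$ via $r_{\beta}\leftrightarrow \beta$, where $r_{\beta}(v)=v-2\langle v,\beta\rangle\beta$. Define the \emph{positive root system of $W'$} by
\[
\Phi^{+}(W')=\{\beta\in \Phi^{+}[\Gamma]\,|\, r_{\beta}\in W'\}
\]
and the set of \emph{canonical simple roots}
\[
\Delta(W')=\{\beta\in \Phi^{+}(W')\,|\, r_{\beta}(\Phi^{+}(W')\setminus\{\beta\})\subset \Phi^{+}[\Gamma]\}.
\]
Setting $S'=\{r_{\beta}\,|\,\beta\in \Delta(W')\}$, the goal is to prove that $(W',S')$ is a Coxeter system.

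First I would introduce, for each $w\in \W[\Gamma]$, the inversion set $N(w)=\{\beta\in \Phi^{+}[\Gamma]\,|\, w^{-1}(\beta)\in \Phi^{-}[\Gamma]\}$, and recall the standard identity $|N(w)|=l(w)$, together with the cocycle relation $N(uv)=N(u)\,\triangle\, u(N(v))$ (symmetric difference, with signs flipped where needed). I would then define a relative length function on $W'$ by $l'(w)=|N(w)\cap \Phi^{+}(W')|$ and check that $l'(w)\geq 0$ with equality only for $w=\id$, and that $l'(r_{\beta}w)=l'(w)\pm 1$ for every $\beta\in \Delta(W')$, the sign depending on whether $w^{-1}(\beta)$ is positive or negative. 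The key combinatorial fact needed here is that if $\beta\in \Delta(W')$ then $r_{\beta}$ permutes $\Phi^{+}(W')\setminus\{\beta\}$ and swaps $\beta$ with $-\beta$; this is what distinguishes the canonical simple roots from arbitrary elements of $\Phi^{+}(W')$.

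Next I would prove by induction on $l'(w)$ that every $w\in W'$ is a product of elements of $S'$: given $w\neq \id$, pick any $\gamma\in N(w)\cap \Phi^{+}(W')$ of minimal $W$-length as a root, show that $\gamma\in \Delta(W')$ (if some other $\delta\in\Phi^{+}(W')$ were sent to a negative root by $r_{\gamma}$, a short argument on root heights would contradict minimality), and observe that $l'(r_{\gamma}w)=l'(w)-1$. This simultaneously yields that $S'$ generates $W'$ and that $l'$ is the word length with respect to $S'$. The same root-exchange argument, applied to a reduced expression $w=r_{\beta_{1}}\cdots r_{\beta_{k}}$ with $\beta_{i}\in \Delta(W')$, gives the \emph{strong exchange condition}: if $l'(r_{\gamma}w)<l'(w)$ for some reflection $r_{\gamma}\in W'$, then $r_{\gamma}w=r_{\beta_{1}}\cdots \widehat{r_{\beta_{i}}}\cdots r_{\beta_{k}}$ for some index $i$. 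By Matsumoto's characterization of Coxeter systems (a group with a generating set of involutions satisfying the exchange condition is a Coxeter system), this finishes the proof.

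The main obstacle is the step where one must verify that the minimal-height root $\gamma\in N(w)\cap\Phi^{+}(W')$ actually lies in $\Delta(W')$: one has to rule out that $r_{\gamma}$ sends some other positive root of $W'$ to a negative root, using only that $\gamma$ has minimal height inside $N(w)\cap\Phi^{+}(W')$. This is a delicate combinatorial fact about root systems of (possibly infinite) Coxeter groups, and it is where Dyer's and Deodhar's arguments do the real work; once it is established the rest is formal. Depending on the context in which the paper uses this theorem, I would likely just invoke it as a black box from \cite{Deodhar1989} and \cite{DYER199057}, since giving a self-contained proof would require introducing the full machinery of root-system combinatorics developed in those papers.
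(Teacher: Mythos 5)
The paper gives no proof of this statement at all: it is quoted directly from Deodhar and Dyer as a known result, exactly as you propose to do in your final paragraph. Your sketch of the canonical-generators/strong-exchange argument is a faithful outline of Dyer's proof (and you correctly identify the one genuinely delicate step, showing that a suitably minimal root of $N(w)\cap\Phi^{+}(W')$ is canonical), so your treatment is consistent with, and indeed more detailed than, what the paper does.
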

\noindent
Thus, given a reflection subgroup $\langle R\rangle$ of $\W[\Gamma]$ with $R\subset \mathcal{R}$, it is possible to find a set $S'\subset \mathcal{R}$ such that $(\langle R\rangle,S')$ is a Coxeter system.
In general the set of Coxeter generators $S'$ does not coincide with the set of reflections $R$. In particular, the rank of $\langle R\rangle$ can be smaller than $|R|$. The reflection subgroups that we introduce in the following, will have the property that the set of Coxeter generators $S'$ is equal to $R$.
\begin{defn}
Given a set of roots $\cX \subset \Phi[\Gamma]$, we can consider the reflection subgroup generated by $R_{\cX}=\{r_{\beta}\,|\,\beta\in \cX\}\subset\mathcal{R}$. From now on, we will define any reflection subgroup $\ggr$ as $\langle R_{\cX}\rangle$ for some $\cX\subset \Phi[\Gamma]$. Observe that the choice made the other way around is not univocal, i.e. given a set of reflections $R$, there are many different choices for the associated subset of roots. Indeed, for any $r=wsw^{-1}\in R$, we can choose either $w(\alpha_s)$ or $-w(\alpha_s)$. Thus, if $|R|=n$, there are $2^n$ different subsets of roots $\cX\subset\Phi[\Gamma]$ that realize the same set of reflections $ R_{\cX}$.
\end{defn}

\begin{rmk}\label{parabolic}
A parabolic subgroup is associated to a very special subset of roots $\cX$. Specifically, if $P=w\W_{X}w^{-1}=\langle wXw^{-1}\rangle$ with $w\in \W[\Gamma]$ and $X\subset S$, assuming without loss of generality that $w$ is $(\emptyset,X)$-minimal, then $\cX=w(\Pi_X)\subset \Phi^+[\Gamma]$ is going to be the associated set of roots. Being the image under the isometry $w$ of the subset of the basis $\Pi_X$, $\cX=\{w(\alpha_s)|\,s\in X\}$ consists of linearly independent vectors, all of which can be written as $\beta=w(\alpha_{s})$ with the same $w\in \W[\Gamma]$, for $s\in X$. 
\end{rmk}
\begin{defn}
Let $\cX$ be a subset of $\Phi[\Gamma]$. If $\cX=w(\Pi_X)$ for some $X\subset S$ and $w\in \W[\Gamma]$, then we say that $\cX$ is \textit{parabolic}. If the reflection subgroup associated to $\cX$ is of spherical type, then we say that $\cX$ is of \textit{spherical type}.
\end{defn}
\begin{rmk}
    In the previous definition we did not require for $w$ to be $(\emptyset,X)$-minimal. The choices of $w\in \W[\Gamma]$ and $X\subset S$ for which we can express $\cX$ as $w(\Pi_{X})$ may not be unique.
\end{rmk}
\noindent
We are now interested in subsets $\cX\subset\Phi[\Gamma]$ that resemble parabolic ones, but weakening some of the properties. We will require $\cX$ to be composed of linearly independent roots, but we will drop the condition that all the roots $\beta$ in $\cX$ must be expressible as $\beta=w(\alpha_s)$ for the same $w\in \W[\Gamma]$ and for some $s\in S$.
\begin{defn}\label{defapsetroots}
    Let $\cX\subset \Phi[\Gamma]$. We say that $\cX$ is \textit{almost parabolic} (and we write $\cX$ is AP) if
    \begin{enumerate}
        \item [(AP1)] The vectors in $\cX$ are linearly independent;
        \item [(AP2)] For all $ \beta,\gamma \in \cX$, there exist $w\in \W[\Gamma]$ and $s,t\in S$ such that $\beta=w(\alpha_s)$, $\gamma=w(\alpha_t)$.
    \end{enumerate}
    \noindent Given a reflection subgroup $\ggr$ for $R\subset \mathcal{R}$, if there exists a choice of $\cX\subset \Phi[\Gamma]$ such that $\langle R\rangle=\langle R_{\cX}\rangle$ and $\cX$ is almost parabolic, then we also call the reflection subgroup $\langle R_{\cX}\rangle$ an \textit{almost parabolic reflection subgroup}, or an \textit{AP reflection subgroup}, for short. We note that if $\cX$ is parabolic, then it is almost parabolic.

\end{defn}

\begin{rmk}\label{card2parabolic}
    If $\cX\subset \Phi[\Gamma]$ is AP and $|\cX|=2$, then, by Property (AP2), $\cX$ is parabolic. Indeed, let $\cX=\{\beta,\gamma\}$ with $\beta=w(\alpha_s)$,$\gamma=w(\alpha_t)$ for some $s,t\in S$ and $w\in \W[\Gamma]$. Then, by setting $X=\{s,t\}$, we obtain $\cX=w(\Pi_X)$.
\end{rmk}

\begin{rmk}
   Given $\langle R\rangle$ a reflection subgroup, in Definition \ref{defapsetroots} we established that $\ggr$ is AP if there exists $\cX\subset \Phi[\Gamma]$ an AP subset of roots such that $\ggr=\langle R_{\cX}\rangle$. Note that such a choice of $\cX$ may not yield a generating set of reflections $R_{\cX}$ that coincides with $R$ (see Example \ref{exparabolicsofSn}). Moreover, even if two sets of roots give rise to the same set of reflections, the fact that one of them is AP does not imply that the other is AP as well. For instance, if $\cX=\{\beta,\gamma\}$ is AP, then $\cX'=\{-\beta,\gamma\}$ and $-\cX'$ are not necessarily AP, even though $R_{\cX}=R_{\cX'}=R_{-\cX'}$.
\end{rmk}
\noindent
We will now see that if a reflection subgroup given by $\cX\subset\Phi[\Gamma]$ is AP, then a set of Coxeter generators is given exactly by the reflections associated with the roots in $\cX$.

\begin{thm}\label{refAP}
 Let $(\W[\Gamma],S)$ be a Coxeter system and let $\cX \subset \Phi[\Gamma]$ be almost parabolic. Then its associated reflection subgroup $\langle R_{\cX} \rangle$ with $R_{\cX}=\{r_{\beta}\;|\;\beta\in \cX\}$ has a Coxeter presentation with the canonical set of generators $R_{\cX}$. Namely, $(\langle R_{\cX}\rangle,R_{\cX})$ is a Coxeter system.
\end{thm}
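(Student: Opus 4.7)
The plan is to realize $(\langle R_{\cX}\rangle, R_{\cX})$ as a Coxeter system by constructing an abstract Coxeter group $\W'$ with canonical generating set in bijection with $\cX$, and then exhibiting an isomorphism between $\W'$ and $\langle R_{\cX}\rangle$ matching the generators. First, I would define a Coxeter matrix $\widetilde{M}=(m_{\beta,\gamma})_{\beta,\gamma\in\cX}$ by setting $m_{\beta,\beta}=1$ and, for distinct $\beta,\gamma\in\cX$, picking $w\in\W[\Gamma]$ and $s,t\in S$ with $\beta=w(\alpha_s)$, $\gamma=w(\alpha_t)$ as provided by AP2, and setting $m_{\beta,\gamma}=m_{s,t}$. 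The well-definedness is forced by the invariance of the bilinear form under $\W[\Gamma]$: one has $\langle\beta,\gamma\rangle=\langle\alpha_s,\alpha_t\rangle$, which equals $-\cos(\pi/m_{s,t})$ or $-1$ depending on whether $m_{s,t}$ is finite or infinite, and the function $m\mapsto -\cos(\pi/m)$ is injective on $\{2,3,\ldots\}\cup\{\infty\}$. Let $\W'$ be the Coxeter group abstractly defined by $\widetilde{M}$, with canonical generators $\{\widetilde{r}_{\beta}\}_{\beta\in\cX}$.

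Next, I would construct a surjective homomorphism $\psi\colon \W'\to\langle R_{\cX}\rangle$ by $\widetilde{r}_{\beta}\mapsto r_{\beta}$. The relation $r_{\beta}^2=1$ is immediate since $r_{\beta}$ is a conjugate of a generator in $S$, and for a braid-type relation with $m_{\beta,\gamma}=m_{s,t}$ finite, writing $r_{\beta}=wsw^{-1}$ and $r_{\gamma}=wtw^{-1}$ gives $(r_{\beta}r_{\gamma})^{m_{s,t}}=w(st)^{m_{s,t}}w^{-1}=1$. Surjectivity is clear.

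For injectivity I would compare canonical representations. Let $V'$ be the formal real vector space with basis $\{\alpha'_{\beta}\}_{\beta\in\cX}$ equipped with the bilinear form coming from $\widetilde{M}$, and let $V''=\operatorname{span}_{\mathbb{R}}(\cX)\subset V$. By AP1, the map $\Psi\colon V'\to V''$ sending $\alpha'_{\beta}\mapsto\beta$ is a linear isomorphism, and by the computation of the previous paragraph it is an isometry for the two bilinear forms. The subspace $V''$ is stable under each $r_{\beta}$ (since $r_{\beta}(\gamma)=\gamma-2\langle\beta,\gamma\rangle\beta\in V''$), so $\langle R_{\cX}\rangle$ acts on $V''$. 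A direct computation on generators shows $\Psi\circ\widetilde{r}_{\beta}=r_{\beta}\circ\Psi$, hence the canonical action of $\W'$ on $V'$ is intertwined by $\Psi$ with the action of $\langle R_{\cX}\rangle$ on $V''$ pulled back along $\psi$. Since the canonical linear representation of $\W'$ is faithful (Bourbaki), any $w'\in\ker\psi$ acts trivially on $V'$ and thus equals $1$. Therefore $\psi$ is an isomorphism and $(\langle R_{\cX}\rangle,R_{\cX})$ is a Coxeter system with Coxeter matrix $\widetilde{M}$.

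I expect the main technical obstacle to be the careful verification of the well-definedness of $\widetilde{M}$ and of the intertwining $\Psi\circ\widetilde{r}_{\beta}=r_{\beta}\circ\Psi$; this is where both AP1 (to make $\Psi$ an isomorphism) and AP2 (to make $\widetilde{M}$ and the homomorphism $\psi$ well-defined) are crucial. Dropping either hypothesis would break the argument: without AP1 one cannot transport the faithfulness of the canonical representation of $\W'$, and without AP2 there is no natural Coxeter matrix on $\cX$ to begin with.
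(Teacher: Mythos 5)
Your proposal is correct and follows essentially the same route as the paper: build the abstract Coxeter group from a matrix supplied by (AP2), map its generators onto $R_{\cX}$, and deduce injectivity by intertwining the canonical representation with the restricted action on $\operatorname{span}(\cX)$, using (AP1) to make the comparison map an isomorphism. The only cosmetic difference is that you justify the well-definedness of $m_{\beta,\gamma}$ via injectivity of $m\mapsto -\cos(\pi/m)$ on the values of the bilinear form, whereas the paper reads it off as the order of $r_{\beta}r_{\gamma}$; both are valid one-line arguments.
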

\begin{proof}
    By Theorem \ref{deodref}, we know that $\langle R_{\cX} \rangle$ is a Coxeter group. We now aim to show that $R_{\cX}$ is a Coxeter generating set for $\langle R_{\cX} \rangle$ and we will exhibit the associated Coxeter matrix. By property (AP2), for every pair of reflections $r_{\beta},r_{\gamma}\in R_{\cX}$, there exist $ w$ in $\W[\Gamma]$ and $ s,t$ in $S$ such that $r_{\beta}=wsw^{-1}$ and $r_{\gamma}=wtw^{-1}$. Thus, the product $(r_{\beta}r_{\gamma})=w(st)w^{-1}$ has order $m_{s,t}$. If there exists $w'\in \W[\Gamma]$ and $s',t'\in S$ such that $\beta=w'(\alpha_{s'})$ and $\gamma=w'(\alpha_{t'})$, the previous equality implies that $m_{s,t}=m_{s',t'}$. 
    We denote by $m_{\beta,\gamma}$ the order of $r_{\beta}r_{\gamma}$
    and we can then provide a Coxeter matrix $\mathrm{M}_{\cX}:=(m_{\beta,\gamma})_{\beta,\gamma\in \cX}$ on $\cX$. Let us define an abstract set $Q_{\cX}=\{q_{\beta}\,|\, \beta\in \cX\}$ and let $\W_{\cX}$ be the abstract Coxeter group given by the following presentation:
    \begin{equation}
        \W_{\cX}=\langle q_{\beta},\,\beta\in \cX\;|\; q_{\beta}^{2}=1\; \forall \beta\in \cX, \; (q_{\beta}q_{\gamma})^{m_{\beta,\gamma}}=1 \; \forall \beta,\gamma\in \cX,\; \beta\neq \gamma\;, {m}_{\beta,\gamma}\neq \infty\rangle.\nonumber 
    \end{equation}
    We wish to prove that the groups $\W_{\cX}$ and $\langle R_{\cX}\rangle$ are isomorphic. Define $f_{\cX}:\W_{\cX}\longrightarrow \langle R_{\cX}\rangle$ by sending $q_{\beta}\longmapsto r_{\beta}$ for all $\beta$ in $\cX$. From what has been shown, this is a well-defined group homomorphism. Since $\langle R_{\cX}\rangle$ is generated by $R_{\cX}$, $f_{\cX}$ is surjective.\\
    \\
    \noindent Now, define as usual $V=\bigoplus_{s\in S}\mathbb{R}\cdot \alpha_s$ and $V_{\cX}=\bigoplus_{\beta\in \cX}\mathbb{R}\cdot \beta\subset V$. Recall that for any $\beta\in \Phi[\Gamma]$ and $v\in V$, the reflection $r_{\beta}$ acts on $V$ through $r_{\beta}(v)=v-2\langle v, \beta\rangle \beta$, where $\langle\cdot,\cdot\rangle$ is the symmetric bilinear form defined by the Coxeter matrix on $S$. Since all the reflections $r_{\beta}$ stabilize $V_{\cX}$, we can consider the restriction of the canonical representation of $\W[\Gamma]$ to $\rho|_{\langle R_{\cX}\rangle}:\langle R_{\cX}\rangle\longrightarrow 
   GL(V_{\cX})$.\\
   \\
   \noindent Next, define a set $\Pi_{\cX}=\{\eta_{\beta}\,|\, \beta\in \cX\}$ in one-to-one correspondence with the set of generators $Q_{\cX}$ of $\W_{\cX}$, and a formal real vector space $U_{\cX}=\bigoplus_{\beta\in \cX}\mathbb{R}\cdot \eta_{\beta}$ whose basis is $\Pi_{\cX}$. There is a symmetric bilinear form $\llangle \cdot, \cdot \rrangle_{\cX}$ on $U_{\cX}$ defined by $\llangle \eta_{\beta}, \eta_{\gamma} \rrangle_{\cX}=-\cos(\frac{\pi}{m_{\beta,\gamma}})$ if $m_{\beta,\gamma}\neq \infty$ and $\llangle \eta_{\beta}, \eta_{\gamma} \rrangle_{\cX}=-1$ otherwise.
    The canonical linear representation $\rho_{\cX}:\W_{\cX}\longrightarrow GL(U_{\cX})$ is given by $q_{\beta}\longmapsto (u\longmapsto u-2\llangle \eta_{\beta}, u \rrangle_{\cX}\eta_{\beta})$ for all $u$ in $U_{\cX}$, and we know that it is faithful.\\
    \\
    \noindent The abstract Coxeter group $\W_{\cX}$ naturally acts on $V_{\cX}$ through the composition
    \begin{align*}
        \W_{\cX}\xrightarrow[]{f_{\cX}} \langle R_{\cX} \rangle &\xrightarrow[]{\rho|_{ \langle R_{\cX} \rangle }} GL(V_{\cX})\\
        q_{\beta}\longmapsto r_{\beta}&\longmapsto (v\longmapsto v-2\langle v,\beta \rangle \beta),
    \end{align*}
    for all $v\in V_{\cX}$, giving a representation that we call $\rho'_{\cX}$.\\
    \\
    \noindent We now show that the two representations $\rho_{\cX}$ and $\rho'_{\cX}$ of $\W_{\cX}$ are isomorphic. It is sufficient to show that there is an isomorphism $a:U_{\cX}\longrightarrow V_{\cX}$ such that for all $q_{\beta}\in Q_{\cX}$ we have $a\circ \rho_{\cX}(q_{\beta})=\rho'_{\cX}(q_{\beta})\circ a$. Define the isomorphism $a$ by $a(\eta_{\beta})=\beta\in V_{\cX}$ and observe that for all $\beta,\gamma\in \cX$, $\llangle \eta_{\beta},\eta_{\gamma}\rrangle_{\cX}=\langle \beta,\gamma\rangle $. Now, compute for any $\eta_{\gamma}\in U_{\cX}$
    the value of $a(\rho_{\cX}(q_{\beta})(\eta_{\gamma}))=a(\eta_{\gamma}-2\llangle \eta_{\beta}, \eta_{\gamma} \rrangle_{\cX}\eta_{\beta})=\gamma-2\langle \beta,\gamma\rangle \beta$, which is exactly $\rho'_{\cX}(q_{\beta})(a(\eta_{\gamma}))$. Hence, the equality holds for all the vectors $u$ in $U_{\cX}$ and the two representations are isomorphic.\\
    \\
    \noindent Thus the representation $\rho'_{\cX}=\rho|_{\langle R_{\cX}\rangle}\circ f_{\cX}$ is faithful, and the group homomorphism $f_{\cX}$ is injective. Therefore, the two groups $\W_{\cX}$ and $\langle R_{\cX}\rangle$ are isomorphic and $\langle R_{\cX}\rangle$ admits a presentation of the form
    \begin{equation}\label{presrefsub}
        \langle R_{\cX}\rangle\cong \W_{\cX}=\langle r_{\beta},\,\beta\in \cX\;|\; r_{\beta}^{2}=1\; \forall \beta\in \cX, \; (r_{\beta}r_{\gamma})^{m_{\beta,\gamma}}=1 \; \forall \beta,\gamma\in \cX,\; \beta\neq \gamma,\,{m}_{\beta,\gamma}\neq \infty\rangle. 
    \end{equation}
    Namely, $(\langle R_{\cX}\rangle,R_{\cX})$ is a Coxeter system.
\end{proof}

\begin{nt}
    By the previous theorem, we know that an AP subset of roots $\cX$ is associated to a reflection subgroup that admits a Coxeter presentation with set of generators given by the reflections relative to the roots in $\cX$, like in Equation \ref{presrefsub}. For this reason, from now on, whenever we have $\cX$ AP, we will denote the reflection subgroup by $\W_{\cX}=\langle R_{\cX}\rangle$ and the Coxeter system by $(\W_{\cX}, R_{\cX})$ to mean that we consider the presentation for $\W_{\cX}$ with Coxeter matrix $\mathrm{M}_{\cX}$.
\end{nt}

\begin{rmk}
    If $\ggr$ is an AP reflection subgroup, the Coxeter presentation obtained in Equation \ref{presrefsub} does not depend on the choice of the AP root set $\cX$ such that $R=R_{\cX}$.
    Suppose that $\cX,\cX'$ are two AP sets of roots such that $\cX'\neq \cX$ and $R=R_{\cX}=R_{\cX'}$.
 Then $\cX'$ is obtained from $\cX$ by changing the sign of a certain number of roots. For each $\beta\in \cX$, we have that $\beta'\in \cX'$, where either $\beta'=\beta$ or $\beta'=-\beta$. Since the two sets of roots $\cX$ and $\cX'$ are AP, we apply Theorem \ref{refAP} and we obtain two Coxeter presentations as in Equation \ref{presrefsub}, one for $\cX$ and one for $\cX'$. These two presentations for the same group have obviously the same generating set $R_{\cX}=R_{\cX'}$, and the two Coxeter matrices $\mathrm{M}_{\cX}$ and $\mathrm{M}_{\cX'}$ coincide as well. Indeed, if $\beta,\gamma\in \cX$, the entry $m_{\beta',\gamma'}$ of $M_{\cX'}$ is the order $m_{\beta,\gamma}$ of $r_{\beta'}r_{\gamma'}=r_{\beta}r_{\gamma}$.
\end{rmk}

\begin{obs}\label{obschangingcXap}
    Note also that the base of the root system associated with the Coxeter group $\W_{\cX}$ with Presentation \ref{presrefsub} is precisely the set $\cX$. Changing the associated AP set of roots corresponds to changing the choice of the base for the
    root system of $\W_{\cX}$.
\end{obs}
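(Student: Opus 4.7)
My plan for the first claim is to reuse the isomorphism $a:U_\cX\to V_\cX$ that was constructed inside the proof of Theorem \ref{refAP}. That map sends the abstract basis $\Pi_\cX=\{\eta_\beta\mid\beta\in\cX\}$ bijectively onto $\cX\subset V$ and intertwines the canonical linear representation $\rho_\cX$ of $\W_\cX$ on $U_\cX$ with the representation $\rho'_\cX=\rho|_{\W_\cX}\circ f_\cX$ on $V_\cX$. Since the base of the root system attached to the Coxeter system $(\W_\cX,R_\cX)$ is by definition the image of $\Pi_\cX$ inside the canonical representation, applying the representation-isomorphism $a$ immediately identifies this base with $\cX$, and the whole root system of $\W_\cX$ is realized concretely as the $\W_\cX$-orbit of $\cX$ inside $V_\cX$. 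No additional work beyond invoking $a$ is needed.

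For the second claim, the approach is to take two AP sets of roots $\cX$ and $\cX'$ giving rise to the same reflection subgroup, so that $R_\cX=R_{\cX'}$. As noted in the preceding remark, $\cX'$ is obtained from $\cX$ by flipping the signs of some of its elements. In particular $\cX'$ is linearly independent, spans the same subspace $V_\cX=V_{\cX'}$, and its associated reflections coincide with those coming from $\cX$. Applying the first part of the observation to $\cX'$ then yields that $\cX'$ is also a base for the root system of $\W_\cX=\W_{\cX'}$. The freedom of flipping signs in the AP data therefore matches exactly the freedom of choosing, pair by pair $\{\beta,-\beta\}$, a base for this root system.

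The only subtle point to keep in mind is that the root system of $\W_\cX$ admits two natural presentations: an abstract one coming from the Coxeter presentation in Equation \ref{presrefsub}, and a concrete one sitting inside the ambient root system $\Phi[\Gamma]$. The bridge between the two has already been set up in the proof of Theorem \ref{refAP} via the intertwiner $a$, so the observation essentially amounts to repackaging that fact and applying it to each alternative AP set $\cX'$.
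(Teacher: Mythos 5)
Your argument is correct and follows the route the paper implicitly intends: the observation is stated without proof, and its justification is exactly the intertwiner $a$ from the proof of Theorem \ref{refAP} (which identifies the simple roots $\eta_{\beta}$ of the abstract canonical representation with the roots $\beta\in\cX$), combined with the preceding remark that two AP sets realizing the same reflections differ only by sign flips. The one point worth making explicit in your second paragraph is that the root system attached to $(\W_{\cX'},R_{\cX'})$ coincides with that of $(\W_{\cX},R_{\cX})$ as a subset of $V_{\cX}$ --- immediate since $-\beta=r_{\beta}(\beta)$ lies in the $\W_{\cX}$-orbit of $\cX$, or equivalently since $\cX'=w_{\mathcal{T}}(\cX)$ as in the proof of Proposition \ref{diffXeqCoxpol} --- so that ``a base for the root system of $\W_{\cX'}$'' really is a base for the same root system.
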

\noindent
In the next subsection we will focus on reflection subgroups of spherical type that are
also AP. Spherical-type parabolic subgroups provide examples of such subgroups,
but not all such subgroups are parabolic.
\begin{ex}
    It is not straightforward to find an example of an AP reflection subgroup of spherical type that is not parabolic. In \cite{DPR13}, the authors give a classification of reflection subgroups of finite Coxeter groups, and thanks to this we know that for $\Gamma=E_7$ the Coxeter group $\W[E_7]$ contains a reflection subgroup isomorphic to the direct product of 7 copies of $\W[A_1]=\mathbb{Z}_2$ (see the last row of Table 4 in the reference). Therefore, in $\W[E_7]$ we can find $r_1,\ldots, r_7$ reflections such that $r_ir_j=r_jr_i$ for all $i\neq j$, $i,j\in \{1,\ldots,7\}$. The reflection subgroup that they generate cannot be parabolic, since, being of rank 7, it should be the whole group $\W[E_7]$ whose generators do not commute. But it is almost parabolic. To show that, we need the following lemma.

\begin{lem}\label{JeCorrige}
Let $(\W[\Gamma],S)$ be a Coxeter system, with $\Gamma$ such that all labels $m_{s,t}$ belong to $\{2,3\}$. Let $\beta, \beta' \in \Phi[\Gamma]$ be roots such that the reflections $r_{\beta}$
and $r_{\beta'}$ are distinct and commute. Then there exist $w\in \W[\Gamma]$ and $s,t\in S$
such that $\beta=w(\alpha_s)$, $\beta'=w(\alpha_{t})$. In particular, $m_{s,t}=2$. 
\end{lem}

\noindent 
The proof of this lemma is based on the study of the dual representation $\rho^*:\W[\Gamma]\longrightarrow GL(V^*)$ and on the action of $\W[\Gamma]$ on the Tits cone. We recall the fundamental notions and we refer to \cite{Bourbaki} and \cite{krammer} for the details. \medskip\\
 \noindent As in Section \ref{CoxandArt}, given a Coxeter group $\W[\Gamma]$ with generating set $S$, we denote by $V$ the real formal vector space with basis $\Pi=\{\alpha_s\mid s\in S\}$. The canonical linear representation of $\W$ is the map $\rho:\W[\Gamma]\longrightarrow GL(V)$. In what follows, let $V^*$ denote the dual of the real vector space $V$. The \textit{dual representation} of $\rho$, denoted as $\rho^*$, is defined by:
\begin{align}
\rho^* \;:\; \W[\Gamma] &\longrightarrow GL\left(V^*\right) \nonumber  \\
w &\longrightarrow \rho^{*}\left(w\right), \nonumber
\end{align}
where, for all $\xi \in V^*$, we set $\rho^{*}\left(w\right)\left(\xi\right):=\xi\circ \rho\left(w^{-1}\right)$. As we saw previously in this section, each reflection $r=wsw^{-1}\in \mathcal{R}$ acts on $V$ through $\rho$ as a reflection with respect to the hyperplane $H_{\beta}=H_{w(\alpha_s)}$, where $H_{\beta}=\{v\in V\,|\, \langle\beta,v\rangle=0\}$. We can see that $r^*=\rho^*(r)$ acts on $V^*$ as a reflection with fixed hyperplane $H^*_{w(\alpha_s)}=\left\{\xi \in V^*\,|\,\xi(w(\alpha_s))=\xi(\beta)=0\right\}$.\\
The \textit{fundamental chamber} of the action of $\W[\Gamma]$ on $V^*$ through $\rho^*$ is
\[
\overline{C}=\{\xi\in V^*\mid  \xi(\alpha_s)\geq 0\; \text{ for all $s\in S$}\},
\]
\noindent and we consider the following:
\[
I=\bigcup_{w\in \W[\Gamma]}\rho^*(w)(\overline{C}).
\]
\noindent The interior of $I$, denoted by $I^{\circ}$, is an open, nonempty convex cone called \textit{Tits cone}. The following result can be found in \cite[Section 2]{krammer}.
\begin{enumerate}
    \item [(1)] For all $\beta\in \Phi[\Gamma]$, $\rho^*(r_{\beta})$ is a reflection that fixes $H^*_{\beta}$.
    \item[(2)] For all $\beta\in \Phi[\Gamma]$, we have $H^*_{\beta}\cap I^{\circ}\neq \emptyset$.
    \item[(3)] The set $\{H^*_{\beta}\mid\beta \in \Phi[\Gamma]\}$ is locally finite in $I^{\circ}$. Namely, for any $\xi \in I^{\circ}$, there exists an open neighborhood $O\subseteq I^{\circ}$ of $\xi$ such that the set of reflection hyperplanes $H^*_{\beta}$ intersecting $O$ is finite. 
    \item[(4)] For all $\xi\in I^{\circ }$, the subgroup $\W_{\xi}=\{w\in \W[\Gamma]\mid \rho^*(w)(\xi)=\xi\}$ of $\W[\Gamma]$ is a finite parabolic subgroup.
\end{enumerate}
\begin{proof}[Proof of Lemma \ref{JeCorrige}]
Since $r_{\beta}$ and $r_{\beta'}$ commute, we have $r_{\beta}r_{\beta'}r_{\beta}= r_{\beta'}$. On the other hand,
$r_{\beta}r_{\beta'}r_{\beta}=r_{r_{\beta}(\beta')}$. Thus $r_{r_{\beta}(\beta')}= r_{\beta'}$ , so $r_{\beta}(\beta')=\pm \beta'$. The negative sign
would imply that $\beta'$ belongs to the (-1)-eigen-space of $r_{\beta}$ , hence that $r_{\beta'}=r_{\beta}$, contrary to the hypothesis. Therefore $r_{\beta}(\beta')=\beta'$, and the formula
$r_{\beta}(\beta') = \beta' - 2\langle\beta' , \beta\rangle\beta$ gives $\langle \beta,\beta'\rangle=0$.\\\\
\noindent Set now $L=H^*_{\beta}\cap H^*_{\beta'}$ the intersection of the reflecting hyperplanes in the dual vector space $V^*$ associated respectively with $\beta$ and $\beta'$. Since $\langle \beta,\beta'\rangle=0$, the subgroup $G$ generated by $r_{\beta}$ and $r_{\beta'}$ is finite and isomorphic to $\mathbb{Z}/2\mathbb{Z}\times\mathbb{Z}/2\mathbb{Z}$. We choose now a point $\xi$ in the interior of $\overline{C}$ and we set $\mu_0=\sum_{g\in G}\rho^*(g)(\xi)$. Since $I^{\circ}$ is convex, then $\mu_0$ must belong to $I^{\circ}$. Moreover, observe that $\mu_0$ is fixed by all elements of $G$, and in particular by $r_{\beta}$ and $r_{\beta'}$. Therefore, $\mu_0\in L$, and thus $L\cap I^{\circ}\neq \emptyset$. Now, since $I^{\circ}$ is open and the set $\{H^*_{\gamma}\mid \gamma \in \Phi[\Gamma]\}$ is locally finite, we can choose a ``generic'' point $\mu\in L\cap I^{\circ}$, i.e. a point such that, for all $\gamma \in \Phi[\Gamma]$, if $\mu \in H^*_{\gamma}$, then $L\subseteq H^*_{\gamma}$.\\
\\
\noindent By the previously listed properties, we know that the group $\W_{\mu}=\{w\in \W[\Gamma]\mid \rho^*(w)(\mu)=\mu\}$ is a finite parabolic subgroup of $\W[\Gamma]$. Furthermore, since any element of $\W_{\mu}$ fixes $L$, this group is necessarily of rank 2. Indeed, write $\W_{\mu}=w\W_Yw^{-1}$ for some $Y\subseteq S$. For each $y\in Y$, the reflecting hyperplane $H^*_{w(\alpha_y)}$ contains $\mu$, hence contains $L$. Thus $L\subseteq \bigcap _{y\in Y}H^*_{w(\alpha_y)}$. The roots $w(\alpha_y)$ for $y\in Y$ are linearly independent, so the intersection on the right has codimension $|Y|$. Since $L$ has codimension 2, it follows that $|Y|\leq 2$. Since $\W_{\mu}$ contains the two distinct reflections $r_{\beta}$ and $r_{\beta'}$, we also have $|Y|\geq 2$. Hence $|Y|=2$. Then, there exist $s,t\in S$ and $w\in \W[\Gamma]$ such that $Y=\{s,t\}$, $m_{s,t}\neq \infty$ and $\W_{\mu}=w\W_{\{s,t\}}w^{-1}$. In particular, $\beta, \beta'\in w(\Phi[\Gamma_{\{s,t\}}])$.\\
\\
\noindent
Recall that the value of $m_{s,t}$ is either 2 or 3. If $m_{s,t}=3$, then two roots $\beta, \beta'$ in $w(\Phi[\Gamma_{\{s,t\}}])$ such that $\langle \beta,\beta' \rangle=0$ cannot exist. Then it must necessarily be $m_{s,t}=2$. In this case
\[
w(\Phi[\Gamma_{\{s,t\}}])=\{w(\alpha_s),-w(\alpha_s), w(\alpha_t),-w(\alpha_t)\}.
\]
\noindent
Without loss of generality, we can suppose that $\beta \in \{w(\alpha_s),-w(\alpha_s)\}$ and $\beta' \in \{w(\alpha_t),-w(\alpha_t)\}$. If $\beta=w(\alpha_s)$ and $\beta'=w(\alpha_t)$, then the result is trivially true. If  $\beta=-w(\alpha_s)$ and $\beta'=w(\alpha_t)$, then $\beta=ws(\alpha_s)$ and $\beta'=ws(\alpha_t)$, and the result is true. Similarly, if  $\beta=w(\alpha_s)$ and $\beta'=-w(\alpha_t)$, then $\beta=wt(\alpha_s)$ and $\beta'=wt(\alpha_t)$, and the result is true. Finally, if $\beta=-w(\alpha_s)$ and $\beta'=-w(\alpha_t)$, then $\beta=wst(\alpha_s)$ and $\beta'=wst(\alpha_t)$, and the result is true.
    \end{proof}
    \noindent
Now, coming back to $\W[E_7]$, for each $i \in \{1, \dots, 7\}$ we fix a root $\beta_i \in \Phi [\Gamma]$ such that $r_i = r_{\beta_i}$.
By Lemma \ref{JeCorrige}, for each pair $\beta_i, \beta_j$ with $i \neq j$ and $i,j \in \{1, \dots, 7\}$, we can find $w \in W [\Gamma]$ and $s_i, s_j \in S$ such that $\beta_i = w (\alpha_{s_i})$ and $\beta_j = w (\alpha_{s_j})$.
Therefore, $\mathcal{X} = \{\beta_1, \dots, \beta_7\}$ is AP and $\langle R_{\mathcal X} \rangle$ is an AP reflection subgroup which is not parabolic.
    \end{ex}

\begin{ex}\label{exd4}
Another example of an almost parabolic (AP) reflection subgroup that is not parabolic can be found in the Coxeter group of type $D_4$, illustrated in Figure \ref{fig:d4}. 
    \begin{figure}[h!]
        \centering
        \includegraphics[width=3cm]{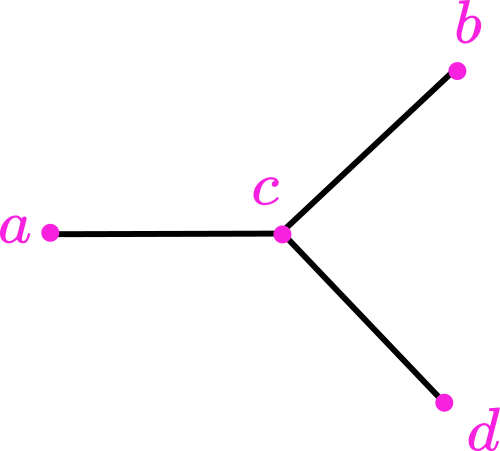}
        \caption{The Coxeter graph of type $D_4$.}
        \label{fig:d4}
    \end{figure}
  \noindent 
  Compatibly with Subsection \ref{CoxandArt}, we can construct the root system of $\W[D_4]$ as follows. Let $e_1,e_2,e_3,e_4$ be the canonical basis of $\mathbb{R}^4$, and set
  \[
  \alpha_a=\frac{e_1-e_2}{\sqrt{2}},\qquad\alpha_c=\frac{e_2-e_3}{\sqrt{2}},\qquad\alpha_b=\frac{e_3-e_4}{\sqrt{2}},\qquad\alpha_d=\frac{e_3+e_4}{\sqrt{2}}.
  \]
  \noindent Then $\langle \alpha_s,\alpha_s\rangle=1$ for all $s\in \{a,b,c,d\}$, and the roots $\alpha_s,\alpha_b,\alpha_c,\alpha_d$ are the simple roots of the root system of type $D_4$ (see \cite[Ch.VI, $\S4$, n.8]{Bourbaki}). Now consider the longest root
  \[
  \eta=\alpha_a+2\alpha_c+\alpha_b+\alpha_d=\frac{e_1+e_2}{\sqrt{2}}.
  \]
  \noindent This root is orthogonal to the three simple roots $\alpha_a$, $\alpha_b$ and $\alpha_d$:
  \[
  \langle\eta,\alpha_a \rangle=\langle \alpha_a,\alpha_a\rangle+2\langle\alpha_c,\alpha_a\rangle=1+2\left(-\frac{1}{2}\right)=0,
  \]
  \noindent and similarly 
  \[
  \langle \alpha_b,\eta \rangle=0, \qquad\qquad\langle\alpha_d,\eta\rangle=0.
  \]
  \noindent Since also
  \[
  \langle\alpha_a ,\alpha_b \rangle=\langle\alpha_a,\alpha_d\rangle=\langle\alpha_b,\alpha_d\rangle=0,
  \]
  \noindent the four roots
  \[
  \cX=\{\alpha_a,\alpha_b,\alpha_d, \eta\}
  \]
\noindent are pairwise orthogonal, hence linearly independent. Therefore the corresponding reflections
\[
a=r_{\alpha_a},\qquad b=r_{\alpha_b},\qquad d=r_{\alpha_d},\qquad r:=r_{\eta},\qquad
\]
\noindent commute pairwise and generate a reflection subgroup of type of type $\W[A_1]\times \W[A_1]\times \W[A_1]\times \W[A_1]$.\medskip\\
\noindent If one wishes to get an expression of $r$ in the standard generators of $\W[D_4]$, one can express the highest root step-by-step:
\begin{align*}
    c(\alpha_a)=\alpha_a+\alpha_c,\\
    bc(\alpha_a)=\alpha_a+\alpha_b+\alpha_c,\\
    dbc(\alpha_a)=\alpha_a+\alpha_b+\alpha_c+\alpha_d,
\end{align*}
\noindent and therefore
\[ 
\eta=(cdbc)(\alpha_a)=\alpha_a+\alpha_b+2\alpha_c+\alpha_d.
\]
\noindent Thus, $r=r_{\eta}=(cdbc)a(cdbc)^{-1}$. By Lemma \ref{JeCorrige}, since all edge labels in the Coxeter graph of $D_4$ are either 2 or 3, it follows that $\langle R_{\cX} \rangle$ is an AP reflection subgroup. However, since its rank is four, if $\langle R_{\cX}\rangle$ were parabolic, it would necessarily be the entire group $\W[D_4]$, which is a contradiction. Therefore, $\langle R_{\cX}\rangle$ is an almost parabolic reflection subgroup, but not a parabolic one.
\end{ex}    
\begin{rmk}
From Example \ref{exd4}, we see that the property of being an almost parabolic reflection subgroup is not preserved under intersection. Indeed, we can construct another AP reflection subgroup by conjugating $\langle R_{\cX} \rangle$ by the generator $c$. This yields a new reflection subgroup  $\langle R'\rangle$ of $\W[D_4]$ generated by $\{cac,cbc,cdc,crc\}$. Again, by Lemma \ref{JeCorrige}, $\langle R'\rangle$ is an AP reflection subgroup, which is not parabolic.
\\
\\
\noindent It can be checked computationally (for instance in GAP/CHEVIE) that the intersection of these two AP reflection subgroups is $\{\id, w_0\}$, where $w_0=abdr=cabdrc$ is the longest element of $\W[D_4]$. This element acts on the vector space $V$ as a rotation of $\pi$ radiant (i.e., it is the central symmetry with respect to the origin), and hence cannot be a reflection. Therefore, the intersection \[\langle R\rangle \cap \langle R'\rangle=Z(\W[D_4])=\{\id,w_0\},\] 
\noindent is not a reflection subgroup, and thus it is not an AP reflection subgroup.\\
\\
\noindent In general, the intersection of two reflection subgroups is not necessarily a reflection subgroup. There are exceptions: most notably, the intersection of two parabolic subgroups in a Coxeter group is again a parabolic subgroup, and hence a reflection subgroup.  Other families of reflection subgroups, such as \textit{geodesic} reflection subgroups, are also known to be stable under intersection. A subgroup $\W'$ of a Coxeter group $\W$ is said to be \textit{geodesic} if, for every $w\in \W'$, whenever $w=t_1t_2\cdots t_k$
is a reduced expression of $w$ (as a product of reflections $t_i\in \mathcal{R}$, not necessarily simple ones) in $\W$, then each $t_i$ belongs to $\W'$.
It follows immediately from the definition that any such subgroup is a reflection subgroup. Moreover, the intersection of geodesic subgroups is again geodesic. Parabolic subgroups are geodesic (see Theorem 1.4 in \cite{BaumeisterDyerStumpWegener2014}), but generally there are geodesic subgroups that are not parabolic.\\
\end{rmk}
\bigskip
\noindent In the following, we relate the AP reflection subgroups of $\W[\Gamma]$ with the standard parabolic subgroups of $\W[\widehat{\Gamma}]$.\\
\\
\noindent
We have seen in Subsection \ref{virtualartingroups} that given a Coxeter graph $\Gamma$, we can build another Coxeter graph $\widehat{\Gamma}$ whose set of vertices is $\Phi[\Gamma]$ and the Coxeter matrix $\widehat{\mathrm{M}}=(\widehat{m}_{\beta,\gamma})_{\beta,\gamma \in \Phi}$ is defined as $\widehat{m}_{\beta,\gamma}=m_{s,t}$ if there exist $w\in \W, s,t\in S$ such that $m_{s,t}\neq\infty$ and $\beta=w(\alpha_s)$,$\gamma=w(\alpha_t)$, while we set $\widehat{m}_{\beta,\gamma}=\infty$ otherwise. Denote by $\widehat{S}$ a set $\{ \widehat{s}_{\beta}\;|\; \beta \in \Phi[\Gamma]\}$ in one-to-one correspondence with the root system $\Phi[\Gamma]$. Then, consistent with the definition of $\widehat{\W}=\W[\widehat{\Gamma}]$ recalled in Subsection \ref{virtualartingroups}, we have that the Coxeter presentation of $\widehat{\W}$ is 
    \[
    \widehat{\W}=\langle \widehat{S}\;|\; \widehat{s}_{\beta}^2=1 \; \forall \widehat{s}_{\beta}\in \widehat{S},\; (\widehat{s}_{\beta}\widehat{s}_{\gamma})^{\widehat{m}_{\beta,\gamma}}=1 \; \forall \widehat{s}_{\beta},\widehat{s}_{\gamma}\in \widehat{S}, \quad \widehat{s}_{\beta}\neq\widehat{s}_{\gamma},\;\widehat{m}_{\beta,\gamma}\neq \infty \rangle.
    \]
    \noindent
    Thus, if we take a subset $\cX$ of the root system $\Phi[\Gamma]$, the presentation as a Coxeter group of the standard parabolic subgroup $\widehat{\W}_{\cX}\subset \widehat{\W}$ will be 
    \begin{equation}
        \label{wchi}
    \widehat{\W}_{\cX}=\langle \widehat{S}_{\cX}\;|\; \widehat{s}_{\beta}^2=1 \; \forall \widehat{s}_{\beta}\in \widehat{S}_{\cX},\; (\widehat{s}_{\beta}\widehat{s}_{\gamma})^{\widehat{m}_{\beta,\gamma}}=1 \; \forall \widehat{s}_{\beta},\widehat{s}_{\gamma}\in \widehat{S}_{\cX},\quad \widehat{s}_{\beta}\neq\widehat{s}_{\gamma}, \;\widehat{m}_{\beta,\gamma}\neq \infty   \rangle,
    \end{equation}
    where $\widehat{S}_{\cX}=\{\widehat{s}_{\beta}\;|\;\beta\in \cX\}$.\\
    \\
\noindent
We now relate the AP reflection subgroups of $\W[\Gamma]$ with the standard parabolic subgroups of $\W[\widehat{\Gamma}]$.
\begin{prop}\label{reflecrionstandard}
    Let $\cX\subset\Phi[\Gamma]$ be an AP set of roots, and let $(\W_{\cX},R_{\cX})$ be the associated Coxeter system as in Theorem \ref{refAP}. Then there is a group isomorphism
    \begin{align*}
        \mu: \widehat{\W}_{\cX}&\longrightarrow \W_{\cX}\\
              \widehat{s}_{\beta}&\longmapsto r_{\beta}.
    \end{align*}
\end{prop}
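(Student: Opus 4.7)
The plan is to observe that both $\widehat{\W}_{\cX}$ and $\W_{\cX}$ already come equipped with Coxeter presentations on sets that are naturally in bijection with $\cX$, and to verify that the two presentations literally coincide under the correspondence $\widehat{s}_{\beta}\leftrightarrow r_{\beta}$. Once this is established, the map $\mu$ will automatically extend to an isomorphism.

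More concretely, I would first recall the two presentations. On the one hand, Equation~\eqref{wchi} gives $\widehat{\W}_{\cX}$ as the Coxeter group on the generators $\{\widehat{s}_{\beta}\mid \beta\in\cX\}$ with Coxeter matrix $\widehat{\mathrm{M}}|_{\cX}=(\widehat{m}_{\beta,\gamma})_{\beta,\gamma\in\cX}$. On the other hand, since $\cX$ is AP, Theorem~\ref{refAP} gives $\W_{\cX}$ as the Coxeter group on $R_{\cX}=\{r_{\beta}\mid\beta\in\cX\}$ with Coxeter matrix $\mathrm{M}_{\cX}=(m_{\beta,\gamma})_{\beta,\gamma\in\cX}$, where $m_{\beta,\gamma}$ denotes the order of $r_{\beta}r_{\gamma}$ in $\W[\Gamma]$.

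The heart of the proof is then to check the equality $\widehat{m}_{\beta,\gamma}=m_{\beta,\gamma}$ for every $\beta,\gamma\in\cX$. The case $\beta=\gamma$ is trivial (both equal $1$). For $\beta\neq\gamma$, property (AP2) furnishes $w\in\W[\Gamma]$ and $s,t\in S$ with $\beta=w(\alpha_s)$, $\gamma=w(\alpha_t)$. Then $r_{\beta}r_{\gamma}=w(st)w^{-1}$, which has the same order as $st$, namely $m_{s,t}$, so $m_{\beta,\gamma}=m_{s,t}$. If $m_{s,t}\neq\infty$, the definition of $\widehat{\Gamma}$ directly yields $\widehat{m}_{\beta,\gamma}=m_{s,t}$, hence the equality. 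The subtle point, which I flag as the only real obstacle, is the case $m_{s,t}=\infty$: one must rule out the possibility that some other pair $w',s',t'$ with $\beta=w'(\alpha_{s'})$, $\gamma=w'(\alpha_{t'})$ has $m_{s',t'}<\infty$. But any such pair would force $r_{\beta}r_{\gamma}=w'(s't')(w')^{-1}$ to have finite order $m_{s',t'}$, contradicting $m_{\beta,\gamma}=m_{s,t}=\infty$. Hence $\widehat{m}_{\beta,\gamma}=\infty=m_{\beta,\gamma}$ in this case too (and we note in passing that the well-definedness of $\widehat{m}_{\beta,\gamma}$ reproven here is consistent with \cite{BellParThiel}).

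Having matched the two Coxeter matrices, the assignment $\widehat{s}_{\beta}\mapsto r_{\beta}$ respects every defining relation of $\widehat{\W}_{\cX}$, so $\mu$ extends to a surjective group homomorphism $\widehat{\W}_{\cX}\longrightarrow\W_{\cX}$. Injectivity is immediate because the target is itself presented by the same generators and the same relations, so the inverse map $r_{\beta}\mapsto\widehat{s}_{\beta}$ is also a well-defined homomorphism; the two compositions are the identity on generators and hence everywhere. This establishes that $\mu$ is an isomorphism.
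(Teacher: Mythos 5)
Your proposal is correct and follows essentially the same route as the paper: both identify the two Coxeter presentations from Equation~\eqref{wchi} and Theorem~\ref{refAP} and reduce everything to the equality $\widehat{m}_{\beta,\gamma}=m_{\beta,\gamma}$, which the paper treats as immediate (the well-definedness of $\widehat{m}_{\beta,\gamma}$ being quoted from \cite{BellParThiel} and the equality $m_{\beta,\gamma}=m_{s,t}$ having been established in the proof of Theorem~\ref{refAP}). Your explicit handling of the $m_{s,t}=\infty$ case is a harmless elaboration of that same point, not a different argument.
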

\noindent
The proof is simply the observation that the two groups have identical presentations with the same Coxeter matrix ($\widehat{m}_{\beta,\gamma}=m_{\beta,\gamma}$), as shown in Equations \ref{presrefsub} and \ref{wchi}. This also implies that the map $\mu$ preserves the length function in the Coxeter groups, that is a property that will be needed later. For all $\widehat{u}\in \widehat{\W}_{\cX}$, let $\mu(\widehat{u})=u\in \W_{\cX}$.
\begin{cor}\label{minimal}
    Let $\cY\subset\cX\subset \Phi[\Gamma]$, with $\cX$ AP. Suppose $\widehat{u}\in \widehat{\W}_{\cX}$ is $(\emptyset,\widehat{S}_{\cY})$-minimal. Then $(\W_{\cY},R_{\cY})$ is a standard parabolic subgroup of $\W_{\cX}$ isomorphic to $\widehat{\W}_{\cY}$ and $\mu(\widehat{u})=u\in \W_{\cX}$ is $(\emptyset,R_{\cY})$-minimal. 
\end{cor}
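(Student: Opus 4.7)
The plan is to reduce the corollary to Proposition~\ref{reflecrionstandard} by first noting that $\cY$ inherits the AP property from $\cX$, and then checking that the isomorphism $\mu$ preserves the length function so that minimality is transported from $\widehat{\W}_{\cX}$ to $\W_{\cX}$.

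First I would verify that $\cY$ is itself AP. Property (AP1) is immediate since any subset of a set of linearly independent vectors is linearly independent, and (AP2) holds for every pair in $\cY \subset \cX$ by the AP2 property of $\cX$. Thus, by Theorem~\ref{refAP}, the subgroup $\W_{\cY} = \langle R_{\cY}\rangle$ with $R_{\cY} = \{r_\beta \mid \beta \in \cY\}$ admits the Coxeter presentation given in \eqref{presrefsub}, with Coxeter matrix $\mathrm{M}_{\cY}$ which is the restriction of $\mathrm{M}_{\cX}$ to $\cY \times \cY$. Applying Proposition~\ref{reflecrionstandard} to the AP set $\cY$ then gives the isomorphism $\widehat{\W}_{\cY} \cong \W_{\cY}$. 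Since $R_{\cY} \subseteq R_{\cX}$, the group $\W_{\cY}$ is generated by a subset of the canonical Coxeter generators of $\W_{\cX}$, so it is a standard parabolic subgroup of $\W_{\cX}$.

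For the minimality statement, the key observation is that $\mu$ is an isomorphism of Coxeter systems carrying the distinguished generators $\widehat{S}_{\cX}$ bijectively onto $R_{\cX}$, so it preserves the length function: if $\widehat{u} = \widehat{s}_{\beta_1}\cdots \widehat{s}_{\beta_k}$ is a reduced expression, then $u = \mu(\widehat{u}) = r_{\beta_1}\cdots r_{\beta_k}$ is an expression of the same length, and conversely, giving $l(\widehat{u}) = l(u)$. Using the characterization of $(\emptyset, \widehat{S}_{\cY})$-minimality recalled earlier, namely that $\widehat{u}$ is $(\emptyset,\widehat{S}_{\cY})$-minimal iff $l(\widehat{u}\,\widehat{s}_\beta) > l(\widehat{u})$ for every $\beta \in \cY$, I would apply $\mu$ to both sides to obtain $l(u\, r_\beta) > l(u)$ for every $\beta \in \cY$, which by the same characterization inside $\W_{\cX}$ is exactly the statement that $u$ is $(\emptyset, R_{\cY})$-minimal.

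There is no serious obstacle here: the whole content is the verification that AP-ness descends to subsets and that $\mu$ is a length-preserving bijection between the two Coxeter presentations, after which the corollary is a direct translation of the minimality criterion along $\mu$. The only point requiring a small word of care is to confirm that $\widehat{m}_{\beta,\gamma} = m_{\beta,\gamma}$ for $\beta, \gamma \in \cY$, but this follows immediately from the definitions of $\widehat{\Gamma}$ and of $\mathrm{M}_{\cX}$ combined with property (AP2) for $\cY$.
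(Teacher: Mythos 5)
Your proof is correct and follows essentially the same route as the paper: observe that $\cY$ inherits the AP property, invoke Theorem \ref{refAP} and Proposition \ref{reflecrionstandard} to identify $(\W_{\cY},R_{\cY})$ as a standard parabolic subgroup of $\W_{\cX}$ isomorphic to $\widehat{\W}_{\cY}$, and transport minimality along the length-preserving isomorphism $\mu$ via the criterion $l(ur_\beta)>l(u)$ for all $\beta\in\cY$. No gaps.
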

\begin{proof}
Denote by $l_{\cX}$ the length function in $\W_{\cX}$ with respect to $R_{\cX}$.
If $\cY\subset \cX$, then $\cY$ is also AP, and its associated reflection subgroup $\W_{\cY}$ has a presentation like in Equation \ref{presrefsub}, relative to the Coxeter system $(\W_{\cY},R_{\cY})$, with $R_{\cY}\subset R_{\cX}$. Thus, it is a standard parabolic subgroup of $\W_{\cX}$. It is clear that $\W_{\cY}\cong \widehat{\W}_{\cY}$ through $\mu$. Moreover, if $\widehat{u}\in \widehat{\W}_{\cX}$ is $(\emptyset,\widehat{S}_{\cY})$-minimal, it means that for all $\beta \in\cY$ we have $\widehat{l}(\widehat{u}\widehat{s}_{\beta})=\widehat{l}(\widehat{u})+1$, where $\widehat{l}$ denotes the length function in $\widehat{\W}_{\cX}$. By definition of $\mu$, for all $\beta \in \mathcal Y$, we have $\widehat{l}(\widehat{u})+1= \widehat{l} (\widehat{u} \widehat{s_\beta}) =  l_{\cX}(\mu(\widehat{u}\widehat{s}_{\beta}))=l_{\cX}(ur_{\beta})=l_{\cX}(u)+1$. This implies that $u\in \W_{\cX}$ is $(\emptyset, R_{\cY})$-minimal. 
\end{proof}

\begin{rmk}\label{warningreflectionsub}
    We have already noticed that if $\cX$ and $\cX'$ are two different subsets of $\Phi[\Gamma]$, they could still generate the same reflection subgroup $\langle R_{\cX}\rangle=\langle R_{\cX'}\rangle$. It suffices that one of the subsets of roots associated to $R_{\cX}$ is AP to have a presentation as in Equation \ref{presrefsub} for the reflection subgroup, that we call $\W_{\cX}$ and that is isomorphic to the standard parabolic subgroup $\widehat{\W}_{\cX}$ of $\widehat{\W}$ by Proposition \ref{reflecrionstandard}. But if $\cX\neq \cX'$, the two groups $\widehat{\W}_{\cX}$ and $\widehat{\W}_{\cX'}$ are always different, even if they are isomorphic.
\end{rmk}

\subsection{The Salvetti complex of \texorpdfstring{$\widehat{\Gamma}$}{TEXT} }\label{salbargam}

We recall that thanks to \cite{BellParThiel} the Artin group $\A[\widehat{\Gamma}]$ is isomorphic to the subgroup $\KVA[\Gamma]$ of $\VA[\Gamma]$ (Theorem \ref{preskva}). As we have seen in Section \ref{salgam}, for any Coxeter graph we can build the Salvetti complex, which is a CW-complex whose fundamental group is the associated Artin group. We will now construct the Salvetti complex for $\widehat{\Gamma}$, and to do so we will need to study the spherical-type standard parabolic subgroups of $\widehat{\W}:=\W[\widehat{\Gamma}]$. \\

\noindent
Set $\widehatSf=:\{\cX\subset \Phi[\Gamma]\;|\; \widehat{\W}_{\cX} \mbox{ is of spherical type}\}$ and $\widehatSf_k=\{\cX\in \widehatSf\,|\,|\cX|=k\}$, for $k\in \mathbb{N}$. According to Definition \ref{defsalcel}, we have that the space $\salbargam$ has a cell for each $\cX\in \widehatSf$. The following lemma gives a characterization of the sets of roots $\cX$ for which $\widehat{\W}_{\cX}$ is of
spherical type. 

\begin{lem}\label{chiapsphtyp}
    Let $\cX\subset \Phi[\Gamma]$ be a set of roots. Then $\widehat{\W}_{\cX}$ is of spherical type if and only if $\cX$ is AP and of spherical type.
\end{lem}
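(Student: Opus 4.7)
The plan is to prove the two directions separately, with the forward direction being immediate from Proposition \ref{reflecrionstandard} and the reverse direction being the substantive part.

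For the easy direction, I would assume $\cX$ is AP and of spherical type. By definition $\cX$ of spherical type means that the reflection subgroup $\W_\cX$ is finite, and since $\cX$ is AP, Proposition \ref{reflecrionstandard} furnishes an isomorphism $\widehat{\W}_\cX \cong \W_\cX$. Hence $\widehat{\W}_\cX$ is finite, that is, of spherical type. This disposes of one implication in a single step.

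For the reverse direction, assume $\widehat{\W}_\cX$ is of spherical type. I would first verify property (AP2). For any two distinct $\beta, \gamma \in \cX$, the label $\widehat m_{\beta, \gamma}$ must be finite: otherwise the subgroup $\langle \widehat{s}_\beta, \widehat{s}_\gamma\rangle$ of $\widehat{\W}_\cX$ would be a free product $\mathbb{Z}_2 \ast \mathbb{Z}_2$, contradicting finiteness of $\widehat{\W}_\cX$. Thus, by the very definition of the Coxeter graph $\widehat\Gamma$ recalled in Subsection \ref{virtualartingroups}, there exist $w \in \W[\Gamma]$ and $s, t \in S$ with $\beta = w(\alpha_s)$ and $\gamma = w(\alpha_t)$, giving (AP2).

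The main obstacle is property (AP1), linear independence of $\cX$. Here I would use the bilinear-form argument modeled on the proof of Theorem \ref{refAP}. Consider the formal vector space $U_\cX = \bigoplus_{\beta \in \cX} \mathbb{R}\cdot \eta_\beta$ equipped with the symmetric bilinear form $\llangle \eta_\beta, \eta_\gamma \rrangle_\cX = -\cos(\pi/\widehat m_{\beta, \gamma})$ (which is well defined thanks to step one), and the linear map $a : U_\cX \to V$ sending $\eta_\beta \mapsto \beta$. The key observation is that $a$ preserves the bilinear form: from (AP2), writing $\beta = w(\alpha_s)$, $\gamma = w(\alpha_t)$ and recalling that the canonical form is $\W[\Gamma]$-invariant, one has
\[
\langle \beta, \gamma\rangle = \langle \alpha_s, \alpha_t\rangle = -\cos(\pi/m_{s,t}) = -\cos(\pi/\widehat m_{\beta, \gamma}) = \llangle \eta_\beta, \eta_\gamma\rrangle_\cX.
\]
Now, since $\widehat{\W}_\cX$ is a finite Coxeter group, Theorem \ref{finiteandscalproduct} ensures that $\llangle \cdot, \cdot \rrangle_\cX$ is a scalar product on $U_\cX$, hence non-degenerate. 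If a linear combination $\sum \lambda_\beta \beta$ vanished in $V$, then the vector $u = \sum \lambda_\beta \eta_\beta$ would satisfy $\llangle u, v\rrangle_\cX = \langle a(u), a(v)\rangle = 0$ for every $v \in U_\cX$, forcing $u = 0$ and hence all $\lambda_\beta = 0$. This establishes (AP1), and combined with (AP2) shows $\cX$ is AP. Finally, Proposition \ref{reflecrionstandard} gives $\W_\cX \cong \widehat{\W}_\cX$, which is finite, so $\cX$ is of spherical type, completing the proof.
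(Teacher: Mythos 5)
Your proposal is correct and follows essentially the same route as the paper: both directions hinge on Proposition \ref{reflecrionstandard}, and the substantive step (AP1) is obtained by transporting the positive definite form guaranteed by Theorem \ref{finiteandscalproduct} from the formal space to $V$ via $\eta_\beta \mapsto \beta$. The only cosmetic difference is that the paper concludes linear independence from $\langle u,u\rangle>0$ for nontrivial $u$, while you invoke non-degeneracy; these are interchangeable here.
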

\begin{proof}
    Suppose first $\widehat{\W}_{\cX}$ is of spherical type. Then, its set of generators $\cX$ must be free of infinity, so for all $\beta,\gamma \in \cX$, it must be $\widehat{m}_{\beta,\gamma}\neq \infty$. As in Theorem 4.1 of \cite{BellParThiel}, we can build a formal set $\widehat{\Pi}:=\{\widehat{E}_{\beta}\;|\; \beta\in \Phi[\Gamma]\}$ and a vector space $\widehat{V}:=\bigoplus_{\beta\in \Phi[\Gamma]}\mathbb{R}\cdot \widehat{E}_{\beta}$ with basis $\widehat{\Pi}$. We can also introduce a bilinear symmetric form $\llangle\cdot,\cdot\rrangle:\widehat{V}\times \widehat{V}\longrightarrow \mathbb{R}$ by $\llangle\widehat{E}_{\beta},\widehat{E}_{\gamma}\rrangle:=-\cos{\left(\frac{\pi}{\widehat{m}_{\beta,\gamma}}\right)}$ if $\widehat{m}_{\beta,\gamma}\neq\infty$ and $\llangle\widehat{E}_{\beta},\widehat{E}_{\gamma}\rrangle:=-1$ otherwise. By Claim 2 in the same reference, we have that $\llangle\widehat{E}_{\beta},\widehat{E}_{\gamma}\rrangle=\langle \beta,\gamma\rangle$ if $\widehat{m}_{\beta,\gamma}\neq \infty$, that is our case for all $\beta,\gamma$ in $\cX$. Let $\widehat{V}_{\cX}:=\bigoplus_{\beta\in \cX}\mathbb{R}\cdot \widehat{E}_{\beta}$, and note that if $\widehat{\W}_{\cX}$ is of spherical type, then by Theorem \ref{finiteandscalproduct} we have that $\llangle\cdot,\cdot\rrangle|_{\widehat{V}_{\cX}}$ is positive definite.\\\\
    \noindent Suppose that the set $\cX$ is linearly dependent, i.e., there exist some coefficients $\lambda_{\beta}$ not all zero such that $\sum_{\beta\in \cX}\lambda_{\beta}\,\beta=0$. Set $\widehat{u}=\sum_{\beta\in \cX}\lambda_{\beta}\widehat{E}_{\beta}\in \widehat{V}_{\cX}$. Since the vectors $\widehat{E}_{\beta}, \beta \in \cX$ form a basis of $\widehat{V}_{\cX}$, we have that $\widehat{u}\neq 0$. Since the form $\llangle\cdot,\cdot\rrangle$ is positive definite on $\widehat{V}_{\cX}$, we get $\llangle\widehat{u},\widehat{u}\rrangle|_{\widehat{V}_{\cX}}>0$. On the other hand, using $\llangle \widehat{E}_{\beta},\widehat{E}_{\gamma}\rrangle=\langle\beta,\gamma\rangle$ for $\beta,\gamma\in \cX$, we obtain
    \[
    \llangle\widehat{u},\widehat{u}\rrangle|_{\widehat{V}_{\cX}}=\llangle \sum_{\beta\in \cX}\lambda_{\beta}\widehat{E}_{\beta},\sum_{\gamma\in \cX}\lambda_{\gamma}\widehat{E}_{\gamma}\rrangle|_{\widehat{V}_{\cX}}=\left \langle\sum_{\beta\in \cX}\lambda_{\beta}\,\beta, \sum_{\gamma\in \cX}\lambda_{\gamma}\,\gamma\right\rangle=\langle 0,\sum_{\gamma\in \cX}\lambda_{\gamma}\,\gamma\rangle=0;  \]
    which is a contradiction. Thus $\cX$ is a set of linearly independent vectors, which proves property (AP1). Furthermore, since $\widehat{\Gamma}_{\cX}$ is free of infinity, we have that for all $\beta,\gamma\in \cX$, $\widehat{m}_{\beta,\gamma}\neq \infty$, and by the definition of $\widehat{m}_{\beta,\gamma}$, we obtain property (AP2).
    Thus, $\cX$ is AP. Now, by Proposition \ref{reflecrionstandard}, the reflection subgroup associated to $\cX$ is isomorphic to $\widehat{\W}_{\cX}=\W[\widehat{\Gamma}_{\cX}]$, which is finite by hypothesis. Thus, $\cX$ must be AP and of spherical type.\\
    Now suppose that $\cX$ is AP. By Proposition \ref{reflecrionstandard} we have that the reflection subgroup associated to $\cX$ is isomorphic to $\widehat{\W}_{\cX}$. If $\cX$ is also of spherical type, then $\widehat{\W}_{\cX}$ is of spherical type.
\end{proof}

\noindent
We now know that the elements in $\widehatSf$ are in bijection with the subsets of roots $\cX\subset\Phi[\Gamma]$ that are AP and of spherical type. For each $\cX\in \widehatSf_k$, in $\salbargam$ there is a cell $\Delta^{k}(\cX)$ with a characteristic map $\phi^k_{\cX}: \mathbb{D}(\cX) \longrightarrow \Delta^{k}(\cX)$, where $\mathbb{D}(\cX)$ is a copy of the Coxeter polytope $\widehat{C}[\cX]$, identified via an isometry $f_{\cX}:\widehat{C}[\cX]\longrightarrow \mathbb{D}(\cX)$. We write $\widehat{C}[\cX]$ instead of $C[\cX]$ to emphasize that we are working with the Coxeter polytope of a spherical type standard parabolic subgroup of $\widehat{\W}=\W[\widehat{\Gamma}]$. This polytope $\widehat{C}[\cX]=conv\{\widehat{z}(\widehat{o}_{\cX})\;|\; \widehat{z}\in \widehat{\W}_{\cX}\}$ lives in the vector space $\widehat{V}_{\cX}=\bigoplus_{\beta\in \cX}\mathbb{R}\cdot \widehat{E}_{\beta}$, but we can transfer all the information in $V_{\cX}:=\bigoplus_{\beta\in \cX}\mathbb{R}\cdot \beta\subset V$. Recall that $\widehat{o}_{\cX}=\sum_{\beta\in \cX}\widehat{E}_{\beta,\cX}^*$, where $\widehat{\Pi}_{\cX}^*=\{\widehat{E}_{\beta,\cX}^*\,|\, \beta\in \cX\}$ is the dual basis of $\widehat{\Pi}_{\cX}=\{\widehat{E}_{\beta}\,|\,\beta\in \cX\}$ in $\widehat{V}_{\cX}$ with respect to the bilinear symmetric form $\llangle\cdot,\cdot\rrangle|_{\widehat{V}_{\cX}}$, which is a scalar product since $\widehat{\W}_{\cX}$ is finite. The faces of $\widehat{C}[\cX]$ are of the form $\widehat{F}(\widehat{u},\cY,\cX)$, with $\cY\subset \cX$, $\widehat{u}\in \widehat{\W}_{\cX}, \; (\emptyset, \widehat{S}_{\cY})$-minimal, and $\widehat{F}(\widehat{u},\cY,\cX)=conv\{\widehat{u}\widehat{z}(\widehat{o}_{\cX})\;|\; \widehat{z}\in \widehat{\W}_{\cY}\}$.\\
\\
As was shown in Subsection \ref{coxeterpolytopes}, we can construct the Coxeter polytope for any spherical type Coxeter group. In particular, if $\cX\subset \Phi[\Gamma]$ is of spherical type and AP, then the associated reflection subgroup $\langle R_{\cX}\rangle:=\W_{\cX}$ is of spherical type, and its simple root system is precisely $\cX$. Recall that we set $V_{\cX}=\bigoplus_{\beta\in \cX}\mathbb{R}\cdot \beta\subset V$. The group $\W_{\cX}$ acts on $V_{\cX}$ through the canonical faithful linear representation, that is a restriction of the representation $\rho:\W\longrightarrow GL(V)$. Since the group $\W_{\cX}$ is finite, the standard bilinear form in $V_{\cX}$ is a scalar product, and we can then identify $V_{\cX}$ with its dual $V_{\cX}^*$. We then define $o_{\cX}:=\sum_{\beta\in \cX}\beta_{\cX}^*$, where $\cX^*=\{\beta_{\cX}^*\,|\, \beta\in \cX\}$ is the dual basis of $\cX$ in the subspace $V_{\cX}$ with respect to the scalar product. Then, define
\begin{equation}
\label{coxpolrefsub}
C[\cX]=conv\{z(o_{\cX})\;|\;z\in \W_{\cX}\}\subset V_{\cX},
\end{equation}
that is the Coxeter polytope of the spherical type AP reflection subgroup $\W_{\cX}$. Consistent with the previous notation, the faces of $C[\cX]$ are $F(u,\cY,\cX)=conv\{uz(o_{\cX})\,|\,z\in \W_{\cY}\}$ with $\cY\subset \cX$, $u\in \W_{\cX}$, $u \; (\emptyset,R_{\cY})$-minimal, as in Theorem \ref{facescoxpol}. Additionally, $F(u,\cY,\cX)\subset F(u',\cY',\cX)$ holds if and only if $u\W_{\cY}\subset u'\W_{\cY'}$, when considered as cosets of the Coxeter group $\W_{\cX
}$.\\
\noindent In the following result we highlight that it is possible to have $C[\cX]=C[\cX']$, even if $\cX\neq \cX'$. 

\begin{prop}\label{diffXeqCoxpol}
Suppose that $\cX,\cX'$ are two sets of roots which are AP and of spherical type, such that $\cX'\neq \cX$ and $R_{\cX}=R_{\cX'}$. Then, as sets, $C[\cX]=C[\cX']$.
\end{prop}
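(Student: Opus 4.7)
The plan is to reduce the statement to showing that the two fundamental points $o_{\cX}$ and $o_{\cX'}$ lie in a single orbit of the common reflection subgroup $\W_{\cX}=\W_{\cX'}$; once this is known, the two orbits (and therefore their convex hulls $C[\cX]$ and $C[\cX']$) must agree.

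First, since $R_{\cX}=R_{\cX'}$, the subgroups $\W_{\cX}$ and $\W_{\cX'}$ of $\W[\Gamma]$ coincide. Because $\cX'$ is obtained from $\cX$ by flipping the signs of some roots (as observed in the remark preceding the proposition), we also have $V_{\cX}=V_{\cX'}$; call this common subspace $U$. Both $C[\cX]$ and $C[\cX']$ are by definition the convex hulls inside $U$ of the $\W_{\cX}$-orbits of their respective fundamental points, so it suffices to prove $o_{\cX'}\in \W_{\cX}\cdot o_{\cX}$.

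By Theorem \ref{refAP}, both $\cX$ and $\cX'$ are bases of simple roots for the same finite reflection group $\W_{\cX}$ acting on $U$. Set $D_{\cX}=\{v\in U\mid\langle v,\beta\rangle\geq 0\text{ for all }\beta\in\cX\}$ and define $D_{\cX'}$ analogously; these are two Weyl chambers of one and the same reflection arrangement of $\W_{\cX}$ in $U$. Moreover, the defining relation $\langle o_{\cX},\beta\rangle=1$ for every $\beta\in\cX$ (and the analogous one for $o_{\cX'}$) forces $o_{\cX}$ and $o_{\cX'}$ to lie in the interiors of $D_{\cX}$ and $D_{\cX'}$ respectively. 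By the simple transitivity of a finite Coxeter group on its Weyl chambers, there exists a unique $z_0\in\W_{\cX}$ with $z_0(D_{\cX})=D_{\cX'}$; because the set of simple roots of a chamber is an intrinsic invariant of that chamber (they are the indecomposable positive roots), we must have $z_0(\cX)=\cX'$ as unordered sets.

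Since $z_0$ acts as an isometry of $U$ and carries the basis $\cX$ bijectively onto $\cX'$, it also carries the dual basis: $z_0(\beta^*_{\cX})=z_0(\beta)^*_{\cX'}$ for every $\beta\in\cX$. Summing over $\beta\in\cX$ then yields
\[
z_0(o_{\cX})=\sum_{\beta\in\cX}z_0(\beta^*_{\cX})=\sum_{\beta\in\cX}z_0(\beta)^*_{\cX'}=\sum_{\beta'\in\cX'}(\beta')^*_{\cX'}=o_{\cX'},
\]
placing $o_{\cX'}$ inside $\W_{\cX}\cdot o_{\cX}$ and finishing the argument. The main technical point is the equality $z_0(\cX)=\cX'$ at the level of unordered sets, which rests on the classical fact that each chamber of a finite reflection arrangement has a uniquely determined set of simple roots; once this is granted, the dual-basis computation above is routine.
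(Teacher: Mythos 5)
Your proof is correct, but it takes a genuinely different route from the paper's. You invoke the classical machinery of finite reflection groups: $\cX$ and $\cX'$ are two simple systems for the same root system $\Phi_{\cX}=\W_{\cX}\cdot\cX$ in $V_{\cX}$ (this is exactly the content of the Observation following Theorem \ref{refAP}, so you are entitled to it), hence by simple transitivity on chambers there is a unique $z_0\in\W_{\cX}$ with $z_0(\cX)=\cX'$, and the dual-basis computation then gives $z_0(o_{\cX})=o_{\cX'}$, so the two vertex orbits coincide. The paper instead constructs the conjugating element explicitly: writing $\cX=\mathcal{T}\sqcup\mathcal{V}$ with $\mathcal{T}=\cX\setminus\cX'$ and $\mathcal{V}=\cX\cap\cX'$, it uses the AP condition on \emph{both} $\cX$ and $\cX'$ to force $\langle\beta,\gamma\rangle=0$ for $\beta\in\mathcal{T}$, $\gamma\in\mathcal{V}$, and then takes $z_0=w_{\mathcal{T}}$, the longest element of $\W_{\mathcal{T}}$ (which is in fact the same element your chamber argument produces). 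Your approach is shorter and more conceptual, and it makes clear that the statement is really just "any two simple systems are $W$-conjugate"; what it does not deliver is the orthogonal splitting $V_{\cX}=V_{\mathcal{T}}\oplus V_{\mathcal{V}}$ and the resulting product decomposition $C[\cX]=C[\mathcal{T}]\times C[\mathcal{V}]$, which the paper extracts from its proof and then relies on in Remark \ref{orientedgesparabolic} to track how edge orientations change between $C[\cX]$ and $C[\cX']$. So if you adopted your argument in the paper, that product structure would still need to be established separately.
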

\begin{proof}
    We have already noted that if  $R_{\cX}=R_{\cX'}$, then $\mathrm{M}_{\cX}=\mathrm{M}_{\cX'}$ as Coxeter matrices. Clearly, this also implies that the two vector spaces $V_{\cX}$ and $V_{\cX'}$ coincide, although, in general, $o_{\cX}\neq o_{\cX'}$. Consider now the two Coxeter polytopes in $V_{\cX}$ defined as follows:
    \begin{align*}
    &C[\cX]=conv\{z(o_{\cX})\;|\;z\in \W_{\cX}\},  &C[\cX']=conv\{z'(o_{\cX'})\;|\;z'\in \W_{\cX}\}.
\end{align*}
To prove that they are the same object, it suffices to show that the two sets $\{z(o_{\cX})\;|\;z\in \W_{\cX}\}$ and $\{z'(o_{\cX'})\;|\;z'\in \W_{\cX}\}$ coincide.
To this end, we show that there exists an element $w\in \W_{\cX}$ such that $w(o_{\cX})=o_{\cX'}$. Let $\mathcal{T}=\cX\setminus \cX'$. For all $\beta\in \mathcal{T}$, $-\beta\in \cX'$. We can then write $\cX=\mathcal{T}\sqcup\mathcal{V}$ and $\cX'=-\mathcal{T}\sqcup \mathcal{V}$, where $\mathcal{V}=\cX\cap \cX'$. The set $\mathcal{T}$ is AP and of spherical type, so there exists the longest element $w_{\mathcal{T}}$ in the Coxeter group $\W_{\mathcal{T}}\subset \W_{\cX}$. This element has the property that $w_{\mathcal{T}}(\mathcal{T})=-\mathcal{T}$ and that $w_{\mathcal{T}}^2=1$. Furthermore, we can write $V_{\cX}=V_{\mathcal{T}}\oplus V_{\mathcal{V}}$. \\
\\
\noindent
For $\cZ$ being one of the subsets $\mathcal{T}$, $-\mathcal{T}$, $\mathcal{V}$, $-\mathcal{V}$ of $\cX$, we define $\cZ^*=\{\beta_{\cZ}^*\,|\,\beta\in \cZ\}$
to be the dual basis to $\cZ$ in the subspace $V_{\cZ}= \langle \cZ \rangle \subset V$, with respect to the restriction
of the scalar product on $V$ to $V_{\cZ}$.\\
\\
\noindent Let $\beta$ be in $\mathcal{T}$ and $\gamma \in \mathcal{V}$. Since $\beta$ and $\gamma$ belong to $\cX$ and $\cX$ is AP, there is $w \in \W[\Gamma]$ such that $\beta=w(\alpha_s)$ and $\gamma=w(\alpha_t)$ for some standard generators $s,t\in S$, and hence $\langle \beta,\gamma \rangle=\langle \alpha_s,\alpha_t\rangle\leq 0$ by the definition of the bilinear form $\langle \cdot,\cdot\rangle$ and properties of the Coxeter matrix. Since $\beta$ and $\gamma$ belong to $\cX$ and $\cX$ is AP, we have that $\langle \beta,\gamma\rangle \leq 0$. Similarly, we get that $\langle -\beta,\gamma\rangle \leq 0$ because $-\beta$ and $\gamma$ belong to $\cX'$ and $\cX'$ is AP. Hence, $\langle \beta,\gamma\rangle = 0$, i.e. the roots in $\mathcal{T}$ are orthogonal to the roots in $\mathcal{V}$.
This directly implies the following properties:
\begin{itemize}
    \item $\beta_{\cX}^*=\beta_{\mathcal{T}}^*$ for all $\beta \in \mathcal{T}$,
    \item $\gamma_{\cX}^*=\gamma_{\mathcal{V}}^*$ for all $\gamma \in \mathcal{V}$,
    \item $t(x)=x$ for all $t\in \W_{\mathcal{T}}$ and $x\in V_{\mathcal{V}}$,
     \item $v(y)=y$ for all $v\in \W_{\mathcal{V}}$ and $y\in V_{\mathcal{T}}$,
     \item $\W_{\cX}=\W_{\mathcal{T}}\times \W_{\mathcal{V}}.$

\end{itemize}
We decompose $o_{\cX}$ as $o_{\cX}=\sum_{\beta\in \mathcal{T}}\beta^*_{\cX}+\sum_{\gamma\in \mathcal{V}}\gamma^*_{\cX}$, and similarly we do for $o_{\cX'}=\sum_{\beta'\in -\mathcal{T}}(\beta')^*_{\cX'}+\sum_{\gamma\in \mathcal{V}}\gamma^*_{\cX'}$. Based on the properties we just established, we get that  $o_{\cX}=\sum_{\beta\in \mathcal{T}}\beta^*_{\mathcal{T}}+\sum_{\gamma\in \mathcal{V}}\gamma^*_{\mathcal{V}}$ and, in the same way,  $o_{\cX'}=\sum_{\beta'\in -\mathcal{T}}(\beta')^*_{-\mathcal{T}}+\sum_{\gamma\in \mathcal{V}}\gamma^*_{\mathcal{V}}$. In particular, we get that $o_{\cX}=o_{\mathcal{T}}+o_{\mathcal{V}}$ and that  $o_{\cX'}=o_{-\mathcal{T}}+o_{\mathcal{V}}$ with $o_{\mathcal{T}},o_{-\mathcal{T}}\in V_{\mathcal{T}}$ and $o_{\mathcal{V}}\in V_{\mathcal{V}}$.  
Observe as well that: 
\[
w_{\mathcal{T}}({\mathcal{T}}^*)=\{-\beta_{\mathcal{T}}^*\,|\,\beta \in \mathcal{T}\}
= \{ (\beta')_{-\mathcal{T}}^* \mid \beta' \in -\mathcal{T} \} ={-\mathcal{T}}^*\,.
\]
This implies that $w_{\mathcal{T}}(o_{\mathcal{T}})=o_{-\mathcal{T}}$. Consequently, we have:
\begin{align*}
    w_{\mathcal{T}}(o_{\cX})=w_{\mathcal{T}}\left(\sum_{\beta \in \mathcal{T}}\beta_{\cX}^*\right)+ w_{\mathcal{T}}\left(\sum_{\gamma \in \mathcal{V}}\gamma_{\cX}^*\right)=w_{\mathcal{T}}(o_{\mathcal{T}})+ w_{\mathcal{T}}\left(o_{\mathcal{V}}\right)=
  o_{-\mathcal{T}}+ o_{\mathcal{V}}=o_{\cX'}.
\end{align*}
Thus, the two sets
\begin{align*}
    \{z(o_{\cX})\;|\;z\in \W_{\cX}\}=\{z'w_{\mathcal{T}}(o_{\cX})\;|\;z'\in \W_{\cX}\}=\{z'(o_{\cX'})\;|\;z'\in \W_{\cX'}\}
\end{align*}
are equal, and so are their convex hulls $C[\cX]=C[\cX']$. To be more precise, let us denote by 
\begin{align*}
    &C[\mathcal{T}]=conv\{t(o_{\mathcal{T}})\,|\, t\in \W_{\mathcal{T}}\}\subset V_{\mathcal{T}}, &C[\mathcal{V}]=conv\{v(o_{\mathcal{V}})\,|\, v\in \W_{\mathcal{V}}\}\subset V_{\mathcal{V}},
\end{align*}
the Coxeter polytopes associated with the AP spherical subgroups $\W_{\mathcal{T}}$ and $\W_{\mathcal{V}}$, respectively.
Since all the $r_{\beta}$'s with $\beta \in \mathcal{T}$ commute with all $r_{\gamma}$'s with $\gamma\in \mathcal{V}$, for all $z\in \W_{\cX}$ we can gather in each of its reduced expressions with respect to $R_{\cX}$ the generators in $R_{\mathcal{T}}$ and in $R_{\mathcal{V}}$. Namely, we can uniquely write $z$ as $z=tv=vt$ with $v\in \W_{\mathcal{V}}$ and $t\in \W_{\mathcal{T}}$. Computing $z(o_{\cX})$ we get \[z(o_{\cX})=tv(o_{\mathcal{T}}+o_{\mathcal{V}})=tv(o_{\mathcal{T}})+vt(o_{\mathcal{V}})=t(o_{\mathcal{T}})+v(o_{\mathcal{V}}).\]
Therefore, we obtain:
\begin{align*}
    C[\cX]=conv\{z(o_{\cX})\,|\,z\in \W_{\cX}\}=conv\{vt(o_{\cX})\,|\,t\in \W_{\mathcal{T}}, v\in \W_{\mathcal{V}}\}=\\conv\{t(o_{\mathcal{T}})+v(o_{\mathcal{V}})\,|\,t\in \W_{\mathcal{T}}, v\in \W_{\mathcal{V}}\}=C[\mathcal{T}]\times C[\mathcal{V}].
\end{align*}
Similarly, we find that $C[\cX']=C[-\mathcal{T}]\times C[\mathcal{V}]$. Note that: \[C[\mathcal{T}]=conv\{t(o_{\mathcal{T}})\,|\, t\in \W_{\mathcal{T}}\}=conv\{t'w_{\mathcal{T}}(o_{\mathcal{T}})\,|\, t'\in \W_{\mathcal{T}}\}=conv\{t'(o_{-\mathcal{T}})\,|\, t'\in \W_{-\mathcal{T}}\}=C[-\mathcal{T}].\]
From this description, we can more clearly conclude that 
\[C[\cX]=C[\mathcal{T}]\times C[\mathcal{V}]=C[-\mathcal{T}]\times C[\mathcal{V}]=C[\cX'].\]
\end{proof}

\noindent
Let us define $\Rf=\{\cX\subset\Phi[\Gamma]\;|\; \cX\mbox{ is AP and of spherical type}\}$. From Proposition \ref{chiapsphtyp}, we know that $\widehatSf=\Rf$, and for each $\cX \in \Rf$, we can construct the Coxeter polytope $C[\cX]$ as described earlier. We introduce the notation $\Rf_k$ for the set $\{\cX\in \Rf \,|\, |\cX|=k\}$. Due to Proposition \ref{diffXeqCoxpol}, different elements $\cX,\cX'$ of $\Rf$ might yield two copies of the same Coxeter polytope $C[\cX]$. However, as Remark \ref{orientedgesparabolic} highlights, even if $C[\cX]$ and $C[\cX']$ coincide as sets, \textbf{they may have different orientations along their edges.}

\begin{ex}
    Let $\cX=\{\beta,\gamma\}$ and $\cX'=\{-\beta,\gamma\}$, where both are AP and of spherical type. In this scenario, $\mathcal{T}=\{\beta\}$ and $\mathcal{V}=\{\gamma\}$. The Coxeter polytope $C[\cX]$ can be described as $C[\mathcal{T}]\times C[\mathcal{V}]$, which is the same as $C[\cX']=C[-\mathcal{T}]\times C[\mathcal{V}]$ when viewed as a set. The point $o_{\cX}$ is $\beta+\gamma=o_{\mathcal{T}}+o_{\mathcal{V}}$, whereas $o_{\cX'}$ is $-\beta+\gamma=o_{\mathcal{-T}}+o_{\mathcal{V}}$. The edge $F(\id,\beta,\cX)$ is oriented from $\beta+\gamma$ towards $r_{\beta}(\beta+\gamma)=-\beta+\gamma$. This edge is identical to $F(\id,-\beta,\cX')$, but in $C[\cX']$, it is oriented from $o_{\cX'}=-\beta+\gamma$ towards $r_{\beta}(-\beta+\gamma)=\beta+\gamma$. 
    \begin{figure}[ht]
        \centering
        \includegraphics[width=16cm]{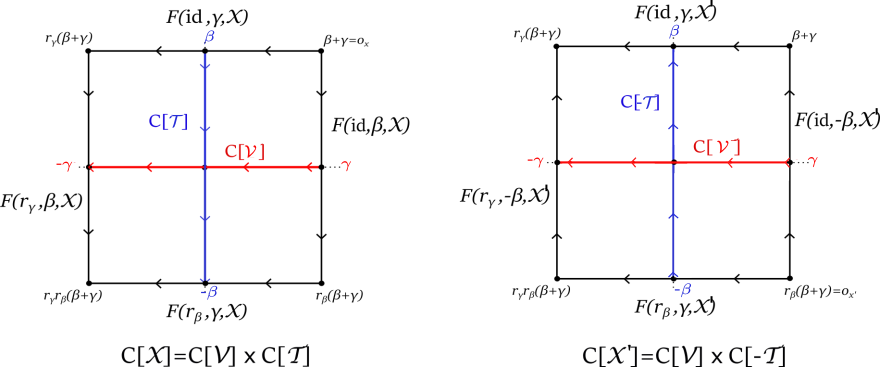}
        \caption{The two polytopes $C[\cX]$ and $C[\cX']$.}
        \label{productofcoxeterpolytopes}
    \end{figure}
    Refer to Figure \ref{productofcoxeterpolytopes}
     to see how the edges corresponding to the roots in $\mathcal{V}$ (in this case, just $\gamma$) preserve their orientation.
\end{ex}

\begin{rmk}\label{orientedgesparabolic}\textbf{Orientation of the edges of $C[\cX]$ and $C[\cX']$.}\\
Suppose $\cX$ and $\cX'$ are as in the statement of
Proposition \ref{diffXeqCoxpol}. The faces of the Coxeter polytope $C[\cX]$ are all of the form $F(u,\cY,\cX)$, with $u\in \W_{\cX}, \; (\emptyset, R_{\cY})$-minimal, as in Corollary \ref{minimal}. In particular, the edges of $C[\cX]$ are of the form $F(u,\beta,\cX)$ with $\beta\in \cX$ and $u$ being $(\emptyset,\{r_{\beta}\})$-minimal.
We use the description of $C[\cX]$ as $C[\mathcal{T}]\times C[\mathcal{V}]$, as described in the proof of Proposition \ref{diffXeqCoxpol}.\medskip\\
\noindent Consider now the edge $F(u,\beta,\cX)=conv\{uw(o_{\cX})\,\mid \,w\in \W_{\beta}=\{\id,r_{\beta}\}\}$, that is the segment $[u(o_{\cX}), ur_{\beta}(o_{\cX})]$, oriented from $u(o_{\cX})$ towards $ur_{\beta}(o_{\cX})$. We have shown that $w_{\mathcal{T}}$, which is the longest element in $\W_{\mathcal{T}}$, sends $o_{\cX}$ to $o_{\cX'}$. Since $w_{\mathcal{T}}$ has order two, we can also write $o_{\cX}=w_{\mathcal{T}}(o_{\cX'})$. Therefore, 
$F(u,\beta,\cX)$ is the segment $[uw_{\mathcal{T}}(o_{\cX'}),ur_{\beta}w_{\mathcal{T}}(o_{\cX'})]$. A classical result in the theory of Coxeter groups is that the longest element $w_{\mathcal{T}}$ satisfies $w_{\mathcal{T}}\mathcal{T}w_{\mathcal{T}}=\mathcal{T}$ (\cite[Lemma 4.6.1]{Davis2008}).\medskip \\
\noindent 
Set $\delta=w_{\mathcal{T}}(\beta)\in -\mathcal{T}$ and $u'=ur_{\beta}w_{\mathcal{T}}$. Since $w_{\mathcal{T}}^2=\id$, we have $r_{\delta}=r_{w_{\mathcal{T}}(\beta)}=w_{\mathcal{T}}r_{\beta}w_{\mathcal{T}}$, and hence $u'r_{\delta}=ur_{\beta}w_{\mathcal{T}}(w_{\mathcal{T}}r_{\beta}w_{\mathcal{T}})=uw_{\mathcal{T}}$. Therefore $F(u,\beta,\cX)=conv\{uw_{\mathcal{T}}(o_{\cX'}), ur_{\beta}w_{\mathcal{T}}(o_{\cX'})\}=conv\{u'r_{\delta}(o_{\cX'}),u'(o_{\cX'})\}=F(u',\delta,\cX')$ as unoriented segments. However, the orientation induced from $C[\cX]$ is $uw_{\mathcal{
T}}(o_{\cX'})\longrightarrow ur_{\beta}w_{\mathcal{T}}(o_{\cX'})$, whereas the orientation induced from $C[\cX']$ is $u'(o_{\cX'})\longrightarrow u'r_{\delta }(o_{\cX'})$, that is, the opposite orientation. Thus the same geometric edge has opposite orientations in $C[\cX]$ and $C[\cX']$.\medskip \\
\noindent On the other hand, if $\gamma \in \mathcal{V}$, then $F(u,\gamma,\cX)=conv\{uw(o_{\cX})\mid w\in \W_{\gamma}=\{\id,r_{\gamma}\}\}$ is the segment $[u(o_{\cX}),ur_{\gamma}(o_{\cX})]$, oriented from $u(o_{\cX})$ towards $ur_{\gamma}(o_{\cX})$. As before, we can replace $o_{\cX}$ by $w_{\mathcal{T}}(o_{\cX'})$. Observe that in the proof of Proposition \ref{diffXeqCoxpol}, we also show that all $\gamma\in \mathcal{V}$ are orthogonal to all $\beta\in \mathcal{T}$. Specifically, the reflection $r_{\gamma}$ commutes with all elements of $\W_{\mathcal{T}}$, and in particular it commutes with $w_{\mathcal{T}}$. Therefore, the edge $F(u,\gamma,\cX)$ is equal to $F(u',\gamma,\cX')=[uw_{\mathcal{T}}(o_{\cX'}),uw_{\mathcal{T}}r_{\gamma}(o_{\cX'})]$, where $u'=uw_{\mathcal{T}}$. This segment is oriented from $u(o_{\cX})=u'(o_{\cX'})$ towards $ur_{\gamma}(o_{\cX})=u'r_{\gamma}(o_{\cX'})$ in both $C[\cX]$ and $C[\cX']$.
\end{rmk}

\begin{rmk}\label{coxpolitopeofaparabolic}
   \textbf{The Coxeter polytope of a parabolic subset of roots.} If $\cX=w(\Pi_X)$ with $w\in \W[\Gamma]$, $X\subset S$, namely if $\cX$ is parabolic, then the Coxeter polytope $C[\cX]$ defined earlier is isometric to the classical polytope $C[X]$ studied in Subsection \ref{coxeterpolytopes}. The groups $\W_{\cX}=w\W_{X}w^{-1}$ and $\W_{X}$ are isomorphic through conjugation. The action of the element $w$ on $V_{X}$ gives $w(V_X)=V_{\cX}$, and since it preserves the scalar product, we get $w(o_{X})=o_{\cX}$. This implies that $w$ sends isometrically $C[X]$ onto $C[\cX]$, as explained below. For all $z\in \W_{X}$, we have: 
    \begin{align*}  w:V_{X}=\bigoplus_{s\in X}\mathbb{R}\cdot \alpha_s&\longrightarrow V_{w(\Pi_X)}=V_{\cX}=\bigoplus_{s\in X}\mathbb{R}\cdot w(\alpha_s),\\
        C[X]&\longrightarrow C[\cX],\\
        z(o_{X})&\longmapsto wzw^{-1}w(o_{X})=wzw^{-1}(o_{\cX}).
    \end{align*}
   \noindent Thus, we conclude that $\widehat{C}[\cX]$ and $C[\cX]$ are the Coxeter polytopes associated with two isomorphic groups.
\end{rmk}
\noindent
 The next step is to describe the linear isometry that maps one polytope to the other. This will be done in Proposition \ref{isomhatnohat}.
 \begin{ex}
An example of Coxeter polytope of a parabolic subset of roots is illustrated in Figure \ref{fig:2coxpolparabolic}, where $\Gamma=A_2$, $S=\{(1\;2), (2\;3)\}$ and $\Phi[A_2]$ is as in Example \ref{exparabolicsofSn}.

\begin{figure}[h!]
    \centering
    \includegraphics[width=12cm]{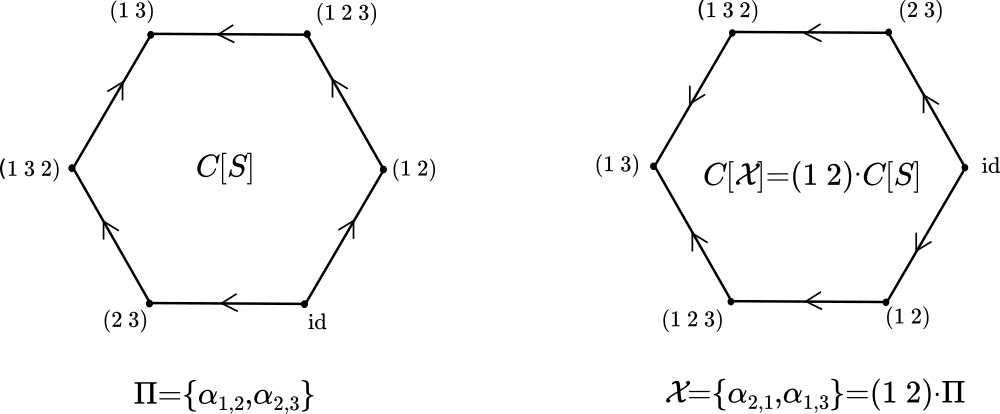}
    \caption{The Coxeter polytope of $\mathfrak{S}_3$ and $C[\cX]$ for $\cX=\{\alpha_{2,1},\alpha_{1,3}\}$.}
    \label{fig:2coxpolparabolic}
\end{figure}
\end{ex}

\begin{rmk}\label{samegroupdiffcX}
Let $\Gamma$ be a Coxeter graph. Observe that it could happen that $\cX,\cX'\subset \Phi[\Gamma]$ are both AP and of spherical type such that $\langle R_{\cX}\rangle=\langle R_{\cX'}\rangle$, but $\cX\neq \cX'$ and $R_{\cX}\neq R_{\cX'}$. For instance, consider $\Gamma=A_2$ as in Example \ref{exparabolicsofSn}. If we set $\cX=(1\;2)\cdot \{\alpha_{1,2},\alpha_{2,3}\}=\{\alpha_{2,1},\alpha_{1,3}\}$, then $\cX$ is parabolic and then clearly almost parabolic. The reflection group $\langle R_{\cX}\rangle$ coincides with the entire symmetric group $\mathfrak{S}_3$, which can also be seen as the reflection group $\langle R_{\cX'}\rangle$, with $\cX'=(1\;3)\cdot\Pi=\{\alpha_{3,2},\alpha_{2,1}\}$.
However, $R_{\cX}=\{(1\;2),(1\;3)\}\neq R_{\cX'}=\{(3\;2),(2\;1)\}$. In this case,\\ $C[\cX]=(1\;2)\cdot C[S]$ is the image of $C[S]$ under the action of the transposition $(1\;2)$, while $C[\cX']=(1\;3)\cdot C[S]$ is the image of $C[S]$ under the action of $(1\;3)$. The two Coxeter polytopes $C[\cX]$ and $C[\cX']$ are illustrated in Figure \ref{fig:2coxpol}.

\begin{figure}[h!]
\centering\includegraphics[width=12cm]{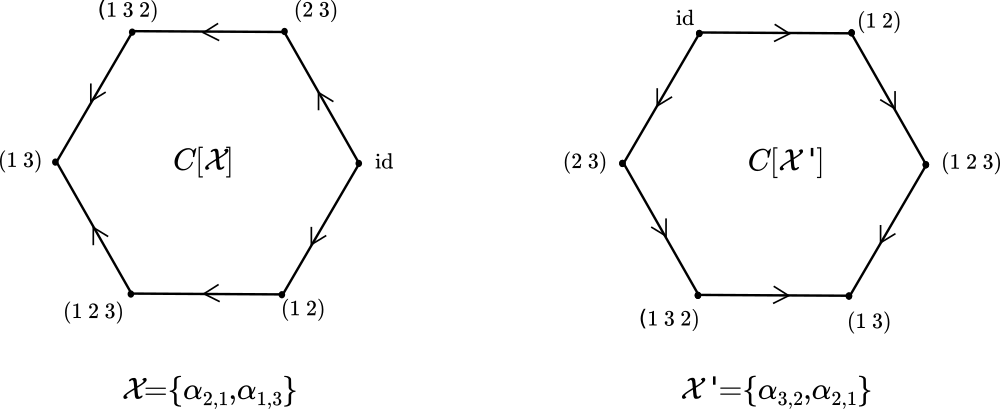}
    \caption{The Coxeter polytopes of $\cX=\{\alpha_{2,1},\alpha_{1,3}\}$ and $\cX'=\{\alpha_{3,2},\alpha_{2,1}\}$.}
    \label{fig:2coxpol}
\end{figure} 
\end{rmk}

\begin{prop}\label{isomhatnohat}
Consider $\cX\subset \Phi[\Gamma]$ AP and of spherical type.
    Then, there is a linear isomorphism $f:\widehat{V}_{\cX}\longrightarrow V_{\cX}$ that sends isometrically $\widehat{C}[\cX]$ to $C[\cX]$. Moreover, $f$ is equivariant under the action of $\widehat{\W}_{\cX}$ in the sense that, for all $\widehat w \in \widehat{\W}_{\mathcal X}$ and all $\widehat x \in \widehat V_{\mathcal X}$, the isomorphism satisfies $f (\widehat w (\widehat x)) = \mu (\widehat w) \big( f (\widehat x) \big)$,  
and it sends $\widehat{o}_{\cX}$ to $o_{\cX}$.
\end{prop}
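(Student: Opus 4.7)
\smallskip

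\noindent \textbf{Proof proposal.} The plan is to \emph{define $f$ on the natural basis} and then check the four required properties (linear isomorphism, isometry on the underlying bilinear forms, equivariance, sends $\widehat{o}_{\cX}$ to $o_{\cX}$, sends the polytope to the polytope) essentially by reading off the definitions. I would set $f(\widehat{E}_\beta) := \beta$ for each $\beta \in \cX$ and extend by linearity. Since $\cX$ is AP, property (AP1) says the vectors of $\cX$ are linearly independent, so they form a basis of $V_{\cX}$; thus $f$ is a linear isomorphism from $\widehat{V}_{\cX}$ onto $V_{\cX}$.

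\smallskip

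\noindent The key input for the isometry property was already established inside the proof of Lemma~\ref{chiapsphtyp}: by Claim~2 in \cite[Thm.~4.1]{BellParThiel}, since all $\widehat{m}_{\beta,\gamma}\neq \infty$ for $\beta,\gamma \in \cX$, one has
\[
\llangle \widehat{E}_\beta,\widehat{E}_\gamma \rrangle = \langle \beta,\gamma\rangle \quad \text{for all } \beta,\gamma \in \cX.
\]
So $f$ carries the bilinear form $\llangle\cdot,\cdot\rrangle|_{\widehat{V}_{\cX}}$ to the restriction $\langle\cdot,\cdot\rangle|_{V_{\cX}}$, and hence (both being scalar products in the spherical type setting) $f$ is a linear isometry. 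From this it follows formally that $f$ sends the dual basis $\widehat{\Pi}_{\cX}^*=\{\widehat{E}_{\beta,\cX}^*\}$ of $\widehat{\Pi}_{\cX}$ to the dual basis $\cX^*=\{\beta_{\cX}^*\}$ of $\cX$, and therefore
\[
f(\widehat{o}_{\cX}) = f\Bigl(\sum_{\beta\in \cX}\widehat{E}^*_{\beta,\cX}\Bigr) = \sum_{\beta\in \cX}\beta^*_{\cX} = o_{\cX}.
\]

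\smallskip

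\noindent For equivariance, since $\widehat{\W}_{\cX}$ is generated by $\{\widehat{s}_\beta\}_{\beta\in \cX}$, it suffices to verify $f\circ \widehat{s}_\beta = r_\beta \circ f$ on the basis $\widehat{\Pi}_{\cX}$. For $\beta,\gamma\in \cX$ I compute
\[
f\bigl(\widehat{s}_\beta(\widehat{E}_\gamma)\bigr) = f\bigl(\widehat{E}_\gamma - 2\llangle \widehat{E}_\beta,\widehat{E}_\gamma\rrangle \widehat{E}_\beta\bigr) = \gamma - 2\langle \beta,\gamma\rangle \beta = r_\beta(\gamma) = r_\beta\bigl(f(\widehat{E}_\gamma)\bigr),
\]
where the middle equality uses the identification of the bilinear forms above, and $\mu(\widehat{s}_\beta) = r_\beta$ by definition of $\mu$ in Proposition~\ref{reflecrionstandard}. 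Extending linearly and multiplicatively gives $f\circ \widehat{w} = \mu(\widehat{w}) \circ f$ for every $\widehat{w}\in \widehat{\W}_{\cX}$.

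\smallskip

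\noindent Finally, combining equivariance with $f(\widehat{o}_{\cX})=o_{\cX}$, for every $\widehat{z}\in \widehat{\W}_{\cX}$ we have $f(\widehat{z}(\widehat{o}_{\cX})) = \mu(\widehat{z})(o_{\cX})$, and as $\widehat{z}$ ranges over $\widehat{\W}_{\cX}$, $\mu(\widehat{z})$ ranges over all of $\W_{\cX}$ (by Proposition~\ref{reflecrionstandard}). Since $f$ is an affine (in fact linear) isomorphism, it maps convex hulls to convex hulls, so $f(\widehat{C}[\cX]) = C[\cX]$. No real obstacle arises; the only point requiring care is the identification of the bilinear forms on matching pairs of basis vectors, which is exactly the content of Claim~2 in \cite[Thm.~4.1]{BellParThiel} already invoked in Lemma~\ref{chiapsphtyp} and which relies crucially on the fact that $\widehat{m}_{\beta,\gamma}$ is finite for all $\beta,\gamma\in \cX$, a consequence of $\cX$ being AP and of spherical type.
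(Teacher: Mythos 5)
Your proposal is correct and follows essentially the same route as the paper: define $f$ on the basis by $\widehat{E}_\beta\mapsto\beta$, use the identity $\llangle\widehat{E}_\beta,\widehat{E}_\gamma\rrangle=\langle\beta,\gamma\rangle$ (valid because $\widehat{m}_{\beta,\gamma}\neq\infty$ for $\cX$ AP of spherical type) to get that $f$ preserves the forms and hence sends $\widehat{o}_{\cX}$ to $o_{\cX}$, check equivariance on the generators $\widehat{s}_\beta$, and combine these to map vertex set to vertex set and hence polytope to polytope. The paper organizes exactly these steps as its Claims 1 and 2, so there is nothing to add.
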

\begin{proof}
Define $f$ by:
\begin{align*}
    f:\widehat{V}_{\cX}&\longrightarrow V_{\cX}\\
    \widehat{E}_{\beta}&\longmapsto \beta,
\end{align*}
which is clearly an isomorphism since it maps a basis to a basis. Let $\mu(\widehat{z})=z\in \W_{\cX}$, with $\mu$ as in Proposition \ref{reflecrionstandard}. Now, we aim to show that $f(\widehat{C}[\cX])=C[\cX]$, which means that for all $\widehat{z}\in \widehat{\W}_{\cX}$, we need to prove that $f(\widehat{z}(\widehat{o}_{\cX}))=\mu(\widehat{z})(f(\widehat{o}_{\cX}))=z({o}_{\cX})$. This requires two claims.\\
\\
\textit{Claim 1.} The linear map $f$ preserves the scalar product. In particular, $f(\widehat{o}_{\cX})={o}_{\cX}$.\\
\textit{Proof of Claim 1.} Observe that for $\beta,\gamma\in \cX$, we have $\langle f(\widehat{E}_{\beta}),f(\widehat{E}_{\gamma})\rangle=\langle\beta,\gamma\rangle=\llangle \widehat{E}_{\beta},\widehat{E}_{\gamma}\rrangle$. Thus,
for any $\widehat{x},\widehat{y}\in \widehat{V}_{\cX}$, it holds that $\llangle \widehat{x},\widehat{y}\rrangle=\langle f(\widehat{x}),f(\widehat{y})\rangle$. In particular, for all $\beta \in \cX$, $f(\widehat{E}_{\beta,\cX}^{*})=\beta_{\cX}^*$ and this concludes the proof of Claim 1.\\
\\
\textit{Claim 2.} 
    For all $\widehat{z} \in \widehat{\W}_{\cX}$ and for all $ \widehat{x} \in \widehat{V}_{\cX}$, we have $f(\widehat{z}(\widehat{x}))=z(f(\widehat{x}))$. Namely, $f$ is equivariant under the action of $\widehat{\W}_{\cX}$.\\
\textit{Proof of Claim 2.} 
Since we know by Claim 1 that $f$ preserves the scalar product, it suffices to show the second claim for the generators of $\widehat{\W}_{\cX}$. Take $\widehat{z}=\widehat{s}_{\beta}$ for some $\beta\in \cX$. Then, we compute:
    \begin{align*}
    f(\widehat{s}_{\beta}(\widehat{x}))= f\left(\widehat{x}-2\llangle \widehat{x},\widehat{E}_{\beta}\rrangle \widehat{E}_{\beta}\right)=
     f(\widehat{x})-2\llangle \widehat{x},\widehat{E}_{\beta}\rrangle f\left(\widehat{E}_{\beta}\right)=
     r_{\beta}(f(\widehat{x}))=\mu(\widehat{s}_{\beta})(f(\widehat{x})).
\end{align*} 
Thus, the assertion holds true for all $\widehat{z}\in \widehat{\W}_{\cX}$ and $\widehat{x}\in \widehat{V}_{\cX}$, completing the proof of Claim 2.\\
\\
By combining the two claims, we conclude that the linear isometry $f$ sends the polytope $\widehat{C}[\cX]$ to $C[\cX]$ as follows:
\begin{align*}
    f: \widehat{C}[\cX]&\xrightarrow[\quad]{}C[\cX]\\
    \widehat{z}(\widehat{o}_{\cX})&\longmapsto z(o_{\cX}).
\end{align*}
\end{proof}
\begin{rmk}
    The map $f$, in particular, sends the face $\widehat{F}(\widehat{u},\cY,\cX)$ to the face $F(u,\cY,\cX)$, where $u=\mu(\widehat{u})\in \W_{\cX}\;$ is $ (\emptyset,R_{\cY})$-minimal.
\end{rmk}
\noindent
We know that in $\salbargam$ there is a cell $\Delta^k(\cX)$ of dimension $k$ for every $\cX
\subset \Phi[\Gamma]$ such that $|\cX|=k\leq n_{sph}(\widehat{\Gamma})$, and the standard parabolic subgroup $\widehat{\W}_{\cX}$ is of spherical type. As we have seen, this set of roots
$\cX\subset\Phi[\Gamma]$ must be AP and of spherical type. Thus, the interior of the cell $\Delta^k(\cX)$ is homeomorphic to the interior of a copy $\mathbb{D}(\cX)$ of the Coxeter polytope $\widehat{C}[\cX]$ via a characteristic map $\widehat{\phi}_{\cX}^k$. Thanks to Proposition \ref{isomhatnohat} we can take $C[\cX]$ instead of $\widehat{C}[\cX]$.
This allows us to define a CW-complex $\Sigma(\Gamma)=\salbargam$ whose fundamental group is $\KVA[\Gamma]$, without explicitly referring to the graph $\widehat{\Gamma}$. Recall that $\Rf=\{\cX\subset \Phi[\Gamma]\;|\; \cX \mbox{ is AP and of spherical type}\}$ and $\Rf_k=\{\cX\in \Rf \,|\, |\cX|=k\}$.

\begin{defn}\label{sigmadef}
Let $\Gamma$ be a Coxeter graph, and $\Phi[\Gamma]$ be its root system. Consider the sets $\Rf$ and $\Rf_k$ as previously defined. The CW-complex $\Sigma(\Gamma)$ is constructed as follows.\\
\\
    \textbf{Description of the 0-skeleton.} The 0-skeleton $\Sigma^{0}(\Gamma)$ consists of a single point, denoted by $\sigma^0$.\\
    \\
    \textbf{Description of the 1-skeleton.} For every root $\beta\in \Phi[\Gamma]$, we take a copy $\mathbb{A}(\beta)$ of the Coxeter polytope $C[\beta]$, defined as $conv\{z(o_{\beta})\;|\;z\in \W_{\beta}=\{\id,r_{\beta}\}\}$. This polytope is isometric to $C[\beta]$ via the isometry $f_{\beta}:C[\beta]\longrightarrow \mathbb{A}(\beta)$. The basepoint is $o_{\beta}=\beta_{\beta}^*=\beta$, so $C[\beta]$ is simply the segment $[-\beta,\beta]$, naturally oriented from $\beta$ to $-\beta$. This orientation is transmitted to $\mathbb{A}(\beta)$ through the isometry $f_{\beta}$. Even though $C[\beta]=C[-\beta]$, we have $\mathbb{A}(-\beta)\neq \mathbb{A}(\beta)$. Then, for any $\beta\in \Phi[\Gamma]$, we define a map $\varphi^1_{\beta}:\partial \mathbb{A}(\beta)\longrightarrow \Sigma^0(\Gamma)$ by setting $\varphi^1_{\beta}(f_{\beta}(\beta))=\varphi^1_{\beta}(f_{\beta}(-\beta))=\sigma^0$. The 1-skeleton $\Sigma^1(\Gamma)$ is defined as the disjoint union of $\{\sigma^0\}$ and all the cells $\mathbb{A}(\beta)$, under the identifications that attach $x$ to $\varphi^1_{\beta}(x)$ for all $\beta\in \Phi[\Gamma]$ and for all $x\in \partial\mathbb{A}(\beta)$. Each inclusion of $\mathbb{A}(\beta)$ in $\{\sigma^0\}\sqcup (\bigsqcup_{\beta'\in \Phi[\Gamma]}\mathbb{A}(\beta'))$, composed with the quotient projection onto $\Sigma^1(\Gamma)$, gives a characteristic map $\phi^1_{\beta}: \mathbb{A}(\beta)\longrightarrow \Sigma^1(\Gamma)$ whose image is denoted by $\Delta^1(\beta)$. The map $\phi^1_{\beta}$ extends $\varphi^1_{\beta}$ and it is a homeomorphism from the interior of the segment $\mathbb{A}(\beta)$ to $\Delta^1(\beta)\,\backslash\,\{\sigma^0\}$.\\
    \\
    \textbf{Description of the $k$-skeleton.} Now suppose that the $(k-1)$-skeleton of $\Sigma(\Gamma)$ is defined for $k\geq 2$. For each set $\cY\subset \Phi[\Gamma]$ AP and of spherical type of rank $h\leq k-1$, we have a copy $\mathbb{D}(\cY)$ of the Coxeter polytope $C[\cY]$, identified with $C[\cY]$ via an isometry $f_{\cY}:C[\cY]\longrightarrow \mathbb{D}(\cY)$. There is an $h$-cell $\Delta^h(\cY)$ in $\Sigma^{k-1}(\Gamma)$, 
    which is the image of a continuous characteristic map $\phi^h_Y: \mathbb{D}(\cY)\longrightarrow \Delta^h(\cY)$ establishing a homeomorphism between the interior of $\mathbb{D}(\cY)$ and $\Delta^h(\cY)\,\backslash\, \partial\Delta^h(\cY)$. If $|\cY|=1$, then there exists $\beta\in \Phi[\Gamma]$ such that $\cY=\{\beta\}$, and we assume $\mathbb{D}(\cY)=\mathbb{A}(\beta)$. If $\cY=\emptyset$, then we adopt the following conventions: 
    \[
    V_{\emptyset}=\{0\},\qquad \W_{\emptyset}=\{\id\},\qquad o_{\emptyset}=0,\qquad C[\emptyset]=\{0\}.
    \]
    \noindent We also set $\mathbb{D}(\emptyset)=\{0\}$. Take $f_{\emptyset}:C[\emptyset]\longrightarrow \mathbb{D}(\emptyset)$ to be the identity map, and define $\varphi_{\emptyset}^0:\mathbb{D}(\emptyset)\longrightarrow\Sigma^0(\Gamma)$ to be the map sending $0$ to the unique vertex $\sigma^0$. Thus $\Delta^0(\emptyset)=\{\sigma^0\}$.\\
    \\
    
\noindent Now, for each $\cX\in \Rf_k$, take a copy $\mathbb{D}(\cX)$ of the Coxeter polytope $C[\cX]$, identified with $C[\cX]$ via an isometry $f_{\cX}:C[\cX]\longrightarrow \mathbb{D}(\cX)$. For any subset $\cY\subset \cX$ with $|\cY|=h\leq k-1$, and each $u\in \W_{\cX}$ a $(\emptyset,R_{\cY})$-minimal element, define $\xi^k_{u,\cY,\cX}:F(u,\cY,\cX)\longrightarrow \Sigma^{k-1}(\Gamma)$
     as the following composition of maps:
\[\xi^k_{u,\cY,\cX}=\phi^h_{\cY}\circ f_{\cY} \circ \tau(o_{\cY}-o_{\cX})\circ u^{-1}.\]
Recall that if $F(u,\cY,\cX)$ is a face of $C[\cX]$ with $\cY$ and $u$ as before, the faces of $F(u,\cY,\cX)$ are of the form $F(uv,\cZ,\cX)=conv\{uvw(o_{\cX})\,|\,w\in \W_{\cZ}\}$ with $\cZ\subset \cY$ and $v\in \W_{\cY}$, $(\emptyset,R_{\cZ})$-minimal.
   \begin{lem}\label{goodefxi}
    Let $\cX,\cY,\cZ \in \Rf$ be such that $\cZ\subset \cY\subset \cX$, and $|\cX|=k,|\cY|=h,|\cZ|=l$. Take $u\in \W_{\cX}$, $u$ $(\emptyset,R_{\cY})$-minimal and $v\in \W_{\cY}$, $v$ $(\emptyset,R_{\cZ})$-minimal. We have that $F(uv,\cZ,\cX)\subset F(u,\cY,\cX)\subset C[\cX]$. Then, the restriction of $\xi^k_{u,\cY,\cX}$ to $F(uv,\cZ,\cX)$ coincides with $\xi^k_{uv,\cZ,\cX}$. 
\end{lem}
\noindent
The proof of this lemma is identical to that of Lemma \ref{varphiiscontinuous}. Thus, we can define a continuous map $\xi^k_{\cX}:\partial C[\cX]\longrightarrow \Sigma^{k-1}(\Gamma)$ as follows. Let $x\in \partial C[\cX]$. We choose $\cY\subset \cX$ and $u\in \W_{\cX}$ an $(\emptyset, R_{\cY})$-minimal element such that $x\in F(u,\cY,\cX)$ and we set $\xi^k_{\cX}(x)=\xi^k_{u,\cY,\cX}(x)$. Thanks to Lemma \ref{goodefxi}, this definition does not depend on the choice of $F(u,\cY,\cX)$, and the map $\xi^k_{\cX}$ is continuous. Now, define $\varphi^k_{\cX}:\partial \mathbb{D}(\cX)\longrightarrow \Sigma^{k-1}(\Gamma)$ by setting
\[
\varphi^k_{\cX}=\xi^k_{\cX}\circ (f^{-1}_{\cX})|_{\partial \mathbb{D}(\cX)}.
\]

 \noindent   
 The map $\varphi^{k}_{\cX}$ is continuous. Now for all $\cX\in \Rf_k$ and for all $x\in \partial \mathbb{D}(\cX)$, we write $x\sim \varphi^k_{\cX}(x)$. The $k$-skeleton of the cell complex is defined as: 
    \[
    \Sigma^{k}(\Gamma):=\bigslant{\left(\Sigma^{k-1}(\Gamma)\bigsqcup \left( \bigsqcup_{\cX \in \Rf_k}\mathbb{D}(\cX)\right)\right)}{\sim}.
    \]
    For each $\cX\in \Rf_k$, the cell $\mathbb{D}(\cX)$ has a natural characteristic map $\phi^k_{\cX}$ to $\Sigma^k(\Gamma)$, whose image is denoted by $\Delta^k(\cX)$. This $\phi^k_{\cX}$ is the composition of the inclusion of $\mathbb{D}(\cX)$ into the disjoint union and the quotient projection. \\
    \\
    \textbf{Description of the complex.} We set $\Sigma(\Gamma)=\bigcup_{k=0}^{\infty}\Sigma^{k}(\Gamma)$, endowed with the weak topology.
\end{defn}
\noindent
Indeed $\salbargam$ can be defined for any Coxeter graph $\Gamma$ without explicitly referring to the the graph $\widehat{\Gamma}$. In fact, we just used the spherical-type AP reflection subgroups of $\W[\Gamma]$.

\noindent
Since $\Sigma(\Gamma)=\salbargam$, we have the following result.
\begin{prop}
    Let $\Gamma$ be a Coxeter graph. Then the fundamental group $\pi_1(\Sigma(\Gamma))$ is isomorphic to $\KVA[\Gamma]$.
\end{prop}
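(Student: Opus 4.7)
The plan is to exhibit an isomorphism of CW-complexes between $\Sigma(\Gamma)$ and $\salbargam$, and then invoke the known results for Salvetti complexes and Artin groups to conclude. By Theorem \ref{fundsalvetti}, we have $\pi_1(\salbargam) \cong \A[\widehat{\Gamma}]$, and by Theorem \ref{preskva}, $\A[\widehat{\Gamma}] \cong \KVA[\Gamma]$. Thus it suffices to show that $\Sigma(\Gamma)$ and $\salbargam$ agree as CW-complexes (up to a canonical cellular homeomorphism).

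First I would note that by Lemma \ref{chiapsphtyp} the indexing sets of the cells of the two complexes coincide: in both cases there is one $k$-cell for each $\cX \in \Rf_k = \widehatSf_k$. For each such $\cX$ and $k = |\cX|$, Proposition \ref{isomhatnohat} produces a linear isometry $f \colon \widehat{V}_{\cX} \to V_{\cX}$ that sends $\widehat{C}[\cX]$ onto $C[\cX]$, maps $\widehat{o}_{\cX}$ to $o_{\cX}$, and is equivariant with respect to the isomorphism $\mu \colon \widehat{\W}_{\cX} \to \W_{\cX}$ of Proposition \ref{reflecrionstandard}. Via these isometries we can identify, for every $\cX$, the model cell $\mathbb{D}(\cX)$ used in Definition \ref{sigmadef} with the model cell used in Definition \ref{defsalcel} applied to $\widehat{\Gamma}$.

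The key technical step is to verify that, under this identification, the attaching maps for $\Sigma(\Gamma)$ match those for $\salbargam$. Recall from Definition \ref{sigmadef} that the attaching map $\varphi^k_{\cX}$ is assembled out of the pieces $\xi^k_{u,\cY,\cX} = \phi^h_{\cY} \circ f_{\cY} \circ \tau(o_{\cY}-o_{\cX}) \circ u^{-1}$, indexed by $\cY \subset \cX$ and $(\emptyset,R_{\cY})$-minimal $u \in \W_{\cX}$. In $\salbargam$, the same face $\widehat{F}(\widehat{u},\cY,\cX)$ is attached via the analogous composition involving $\widehat{u}^{-1}$, the translation $\tau(\widehat{o}_{\cY} - \widehat{o}_{\cX})$, and $\widehat{f}_{\cY}$. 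By Corollary \ref{minimal}, $\widehat{u}$ is $(\emptyset,\widehat{S}_{\cY})$-minimal in $\widehat{\W}_{\cX}$ if and only if $u := \mu(\widehat{u})$ is $(\emptyset,R_{\cY})$-minimal in $\W_{\cX}$, so the indexing sets of faces agree. The equivariance $f(\widehat{w}(\widehat{x})) = \mu(\widehat{w})(f(\widehat{x}))$ together with $f(\widehat{o}_{\cX}) = o_{\cX}$ and $f(\widehat{o}_{\cY}) = o_{\cY}$ (coming from inductively choosing $f_{\cY}$ to be compatible with $f$) ensure that the diagrams commute: $f$ intertwines $\widehat{u}^{-1}$ with $u^{-1}$, and sends the translation $\tau(\widehat{o}_{\cY} - \widehat{o}_{\cX})$ to $\tau(o_{\cY} - o_{\cX})$. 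Proceeding inductively on the skeleton, the isometries assemble into a cellular homeomorphism $\Sigma(\Gamma) \to \salbargam$.

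The main obstacle, which is really bookkeeping rather than a genuine difficulty, is checking this compatibility inductively: one needs to fix, once and for all, isometries $f_{\cX} \colon C[\cX] \to \mathbb{D}(\cX)$ in $\Sigma(\Gamma)$ and $\widehat{f}_{\cX} \colon \widehat{C}[\cX] \to \widehat{\mathbb{D}}(\cX)$ in $\salbargam$ related by the linear isometry of Proposition \ref{isomhatnohat}, and track how a face $F(u,\cY,\cX)$ is identified with $\widehat{F}(\widehat{u},\cY,\cX)$ at the lower-dimensional stage. The equivariance in Proposition \ref{isomhatnohat} and the length-preserving property of $\mu$ (Corollary \ref{minimal}) make this compatibility automatic at each stage, so that $\Sigma(\Gamma)$ and $\salbargam$ are identified as CW-complexes. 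Combining with Theorems \ref{fundsalvetti} and \ref{preskva} then yields $\pi_1(\Sigma(\Gamma)) \cong \pi_1(\salbargam) \cong \A[\widehat{\Gamma}] \cong \KVA[\Gamma]$.
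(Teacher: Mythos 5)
Your proposal is correct and follows essentially the same route as the paper: the paper's proof is a one-line appeal to the identity $\Sigma(\Gamma)=\salbargam$ (which the whole of Subsection \ref{salbargam} is devoted to establishing, via Lemma \ref{chiapsphtyp}, Proposition \ref{reflecrionstandard}, Corollary \ref{minimal} and Proposition \ref{isomhatnohat}) combined with Theorems \ref{fundsalvetti} and \ref{preskva}. You have simply spelled out the cell-by-cell verification of that identification in more detail than the paper does, using exactly the same ingredients.
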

\noindent
\begin{proof}
We aim to establish the following chain of equalities:
\[\pi_1(\Sigma(\Gamma))=\pi_1(\salbargam)\cong \A[\widehat{\Gamma}]\cong \KVA[\Gamma].\]
\noindent While the first equality holds by definition, the last one follows from Theorem \ref{preskva}, which asserts that $\KVA[\Gamma]\cong \A[\widehat{\Gamma}]$. To show the isomorphism in the center, it suffices to apply Theorem \ref{fundsalvetti} to $\Sigma(\Gamma)=\salbargam$. However, Theorem \ref{fundsalvetti} is classically stated and proven for Coxeter groups of finite rank, which is not the case for $\widehat{\Gamma}$. For this reason, we give a direct analysis of $\salbargam=\Sigma(\Gamma)$. The complex $\Sigma(\Gamma)$ has one 0-cell and one oriented 1-cell $\Delta^1(\beta)$ for every $\beta\in \Phi[\Gamma]$. Thus, the fundamental group of the 1-skeleton $\Sigma^1(\Gamma)$ is the free group on the root system $\Phi[\Gamma]$, where we denote each generator by $\widehat{\delta}_{\beta}$. The 2-cells are indexed by almost parabolic subsets $\cX=\{\beta,\gamma\}$, which are parabolic by Remark \ref{card2parabolic}. The 2-cells of $\Sigma(\Gamma)$ are thus indexed by pairs $\{\beta=w(\alpha_s),\gamma=w(\alpha_t)\}$ with $s,t\in S$, $w\in \W[\Gamma]$, and $\widehat{m}_{\beta,\gamma}=m_{s,t}<\infty$. The attaching map of the corresponding 2-cell is the usual $2m_{s,t}$-gon and imposes precisely the Artin relation $\Prod_R(\widehat{\delta}_\beta, \widehat{\delta}_\gamma;\widehat{m}_{\beta,\gamma})=\Prod_R(\widehat{\delta}_\gamma,\widehat{\delta}_\beta;\widehat{m}_{\beta,\gamma})$. There are no 2-cells for pairs with $\widehat{m}_{\beta,\gamma}=\infty$. Since attaching cells of dimension at least $3$ does not change the fundamental group, the cellular presentation of $\pi_1(\Sigma(\Gamma))=\pi_1(\salbargam)$ is exactly the Artin presentation of $\A[\widehat{\Gamma}]$.

\end{proof}

\noindent
To summarize, the goal of this subsection was to construct the Salvetti complex for the Coxeter graph $\widehat{\Gamma}$, thereby creating a space with the fundamental group isomorphic to $\A[\widehat{\Gamma}]=\KVA[\Gamma]$. This process involved attaching a cell for each spherical-type parabolic subgroup of $\widehat{\W}$. We discovered that these subgroups could be interpreted as specific (spherical-type) reflection subgroups of $\W[\Gamma]$, termed AP spherical-type reflection subgroups. This insight allowed us to re-frame the entire CW-complex construction in terms of AP reflection subgroups of $\W[\Gamma]$, sidestepping the complexities associated with $\widehat{\Gamma}$ and $\widehat{\W}=\W[\widehat{\Gamma}]$. Consequently, we derived a cell complex $\Sigma(\Gamma)$ such that $\pi_1(\Sigma(\Gamma))\cong \KVA[\Gamma]$ for every Coxeter graph $\Gamma$.

\section{The BEER space \texorpdfstring{$\Omega(\Gamma)$}{TEXT}}\label{BEERspace}
We have just seen that given $\Gamma$ any Coxeter graph, we can build a CW-complex $\Sigma(\Gamma)$ (that coincides with the Salvetti complex $\salbargam$) whose fundamental group is the kernel of the homomorphism $\pi_K:\VA[\Gamma]\longrightarrow \W[\Gamma]$ defined in Subsection \ref{virtualartingroups}. The aim now is to define a cell complex whose fundamental group is the kernel $\PVA[\Gamma]$ of the homomorphism $\pi_P:\VA[\Gamma]\longrightarrow \W[\Gamma]$. This space will be a generalization to all Coxeter graphs of the space already described in \cite{BEER} for the case $\Gamma=A_{n-1}$. We will call this space (and our generalization) BEER space, and we will denote it by $\Omega(\Gamma)$. \\
\\
When the graph $\Gamma$ is $A_{n-1}$, the associated Coxeter group $\W$ is the symmetric group $\Sn$, the Artin group $\A[\Gamma]$ is the braid group on $n$ strands, and $\VA[A_{n-1}]$ is $\VB_n$, the virtual braid group on $n$ strands. Hence, the group $\PVA[A_{n-1}]$ is the pure virtual braid group on $n$ strands. The Coxeter polytope $C[S]$ is the permutohedron. The authors in \cite{BEER} define $\Omega(A_{n-1}):=\Omega_n$ to be a quotient of $C[S]\times \Sn$ up to an equivalence relation that involves the opposite faces of the permutohedron. They prove that $\pi_1(\Omega_n)\cong \PVB_n$, and they also claim that the space is locally CAT(0), and then aspherical. Unfortunately, the result of local non-positive curvature is false. We prove in Subsection \ref{originalBEERspace} the following result: 
\begin{reptheorem}{FalseCat0}
The complex $\Omega_3$ is not locally CAT(0).
\end{reptheorem}

\noindent
The tool of local non-positive curvature cannot be used to imply asphericity. The question now is whether or not $\Omega_n$ (and its generalization to all Coxeter graphs $\Gamma$) is aspherical. When $\Gamma$ is of spherical type or of affine type, we prove in Section \ref{commoncovering}, Corollaries \ref{beeraspht} and \ref{beerasphtAFF}, that the answer is yes.

\subsection{The complex \texorpdfstring{$\Omega_n$}{TEXT} and the failure of the CAT(0) property}\label{originalBEERspace}
In this subsection, we present the definition of $\Omega_{n}$ as outlined in \cite[Section 8.3]{BEER}. Since our aim is to extend this construction to encompass all Coxeter graphs, we delve into further details beyond the scope of the original exposition, tailoring the concepts to suit the context of this study. Notably, we furnish a detailed proof of Theorem \ref{fundoriginalbeer} that states $\pi_1(\Omega_n)=\PVB_n$, alongside the proof that this space is not locally CAT(0).\\
\\
\noindent
For $\Gamma=A_{n-1}$ and $S=\{s_1,\ldots,s_{n-1}\}$, the set of simple transpositions $s_i=(i\;i+1)$ for $i\in\{1,\ldots,n-1\}$, the Coxeter group is the familiar symmetric group $\Sn$ and the Coxeter polytope $C[S]$ is the permutohedron. We examine its explicit description following the construction of Subsection \ref{coxeterpolytopes}. 
Let $U=\mathbb{R}^n$ with the standard scalar product and $\left\{e_1, \ldots , e_n\right\}$ as the canonical basis. Denote now by $V$ the hyperplane of $U$ of equation $x_1+ \cdots + x_n=0$ and for $i \in \left\{1, \ldots , n-1\right\}$, set $\alpha_i=\frac{e_i-e_{i+1}}{\sqrt{2}}$. Then, $\Pi=\left\{\alpha_1, \ldots, \alpha_{n-1}\right\}$ is the set of simple roots of $\Sn$ and a basis of $V$.
Take $\Pi^*=\left\{\alpha_1^*, \ldots, \alpha_{n-1}^*\right\}$ the dual basis of $\Pi$ with respect to the standard scalar
product on $U$, restricted to $V$. It is easy to check that for every $k \in \left\{1, \ldots, n-1\right\}$ we can describe 

\[
\alpha_k^*=\frac{\sqrt{2}\left(n-k\right)}{n}\sum_{j=1}^{k}e_j - \frac{\sqrt{2} k}{n}\sum_{j=k+1}^{n} e_j.
\]
In fact, for all $i,j\in \{1,\ldots,n-1\}$, we have
\[ \left\langle \alpha_i^*,\alpha_j\right\rangle=\begin{cases}
\frac{n-i}{n}-\frac{n-i}{n}=0 \mbox{  if $j<i$}\\
\frac{n-i}{n}+\frac{i}{n}=1 \mbox{  if $i=j$}\\
-\frac{i}{n} + \frac{i}{n}=0 \mbox{   if $i<j$.}
\end{cases}\]
Thus, in this setting, we have $o=\sum_{k=1}^{n-1} \alpha_k^*=\sum_{j=1}^{n}\frac{n+1-2j}{\sqrt{2}}e_j$ and $C[S]=conv\{w(o)\,|\,w\in \Sn\}$.
\begin{ex}
For $n=3$, the basis of $V$ is $\alpha_1=\left(\frac{1}{\sqrt{2}},-\frac{1}{\sqrt{2}},0\right), \alpha_2=\left(0,\frac{1}{\sqrt{2}},-\frac{1}{\sqrt{2}}\right)$ and its dual basis is $\alpha_1^*=\left(\frac{2\sqrt{2}}{3}, -\frac{\sqrt{2}}{3},  -\frac{\sqrt{2}}{3}\right), \alpha_2^*=\left( \frac{\sqrt{2}}{3}, \frac{\sqrt{2}}{3},  -\frac{2\sqrt{2}}{3}\right)$, with $o=\left(\sqrt{2},0,-\sqrt{2}\right)$.
We show a picture of the permutohedron for $n=3$ in Figure \ref{perm3}. 
 \begin{figure}[ht]
       \centering
  \includegraphics[width=8cm]{ 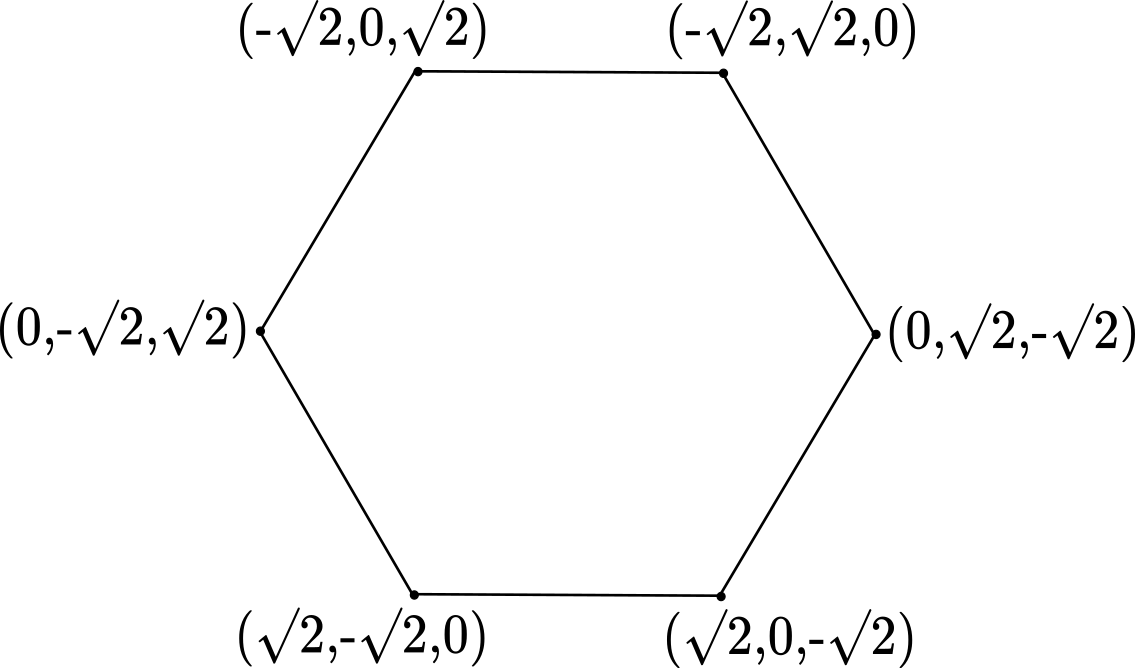}
  \caption{The permutohedron for $n=3$.}
  \label{perm3}
\end{figure}
\end{ex}

\noindent
We follow the construction of \cite[Section 8.1]{BEER} and establish a bijection between the poset $\mathcal{F}(C[S])$ and the poset of ordered partitions of $\{1,\ldots,n\}$ with the relation of refinement. An \textit{ordered partition} of $\left\{1, \ldots , n\right\}$ is a $p$-tuple $P=\left(P_1, \ldots, P_p\right)$ of nonempty subsets of $\left\{1,2,\ldots,n\right\}$ such that $\left\{1,2,\ldots,n\right\}=P_1 \cup P_2 \cup \cdots \cup P_p$ and $P_i\cap P_j=\emptyset$ for $i\neq j$. We will call the subsets $P_i$ the \textit{blocks} of the ordered partition $P$. Permuting the order of the blocks, we obtain a different ordered partition. An ordered partition $P=(P_1,\ldots,P_p)$ is a \textit{refinement} of another ordered partition $Q=\left(Q_1, \ldots , Q_q\right)$ if there exists a choice of ordered indices $0=j_0<j_1<\cdots<j_q=p$ such that $P_{j_{i-1}+1}\cup \cdots \cup P_{j_i}=Q_i$ for every $i=1,\ldots, q$. To be a refinement is an order relation that from now on we will denote by $\leq_{pa}$.\\
\\
\noindent There is a natural action of the symmetric group $\Sn$ on the set of all ordered partitions $\mathcal{P}$ of $\{1,\ldots,n\}$ defined by  $wP=w\left(P_1, \ldots, P_p\right)=\left(w\left(P_1\right),\ldots , w\left(P_p\right)\right)$, for all $P\in \mathcal{P}$ and $w\in \Sn$.\\
\\
\noindent
We already know that the faces $F(u,X,S)$ of the permutohedron $C[S]$ are in bijection with the cosets $u\W_{X}$ of $\W=\Sn$, where $u\in \Sn$ $(\emptyset,X)$-minimal and $X\subset S=\{(12),\ldots ,(n-1\,n)\}$. 
We can associate to an element $u \in \mathfrak{S}_n$ and a subset $X \subset S$ an ordered partition $P(u, X) = (P_1, \dots, P_n) \in \mathcal P$ as follows.
Set $p=n-\left|X\right|$ and write $S \backslash X=\left\{s_{i_1}, \ldots,s_{i_{p-1}}\right\}$ with $i_1< \cdots < i_{p-1}$. Then, set $i_0=0$ and $i_p=n$, and for $k \in \left\{1, \ldots ,p\right\}$ define the $k$-th block of the partition by
\begin{equation}\label{pikappa}
P_k=u\left(\left\{i_{k-1}+1,\ldots, i_k\right\}\right)=\left\{u\left(i_{k-1}+1\right), \ldots, u\left(i_k\right)\right\}.
\end{equation}
Observe that for every block $P \in \mathcal{P}$ there exists a $u \in \W$ and a $X \subset S$ such that $P=P\left(u,X\right)$, with $u$ not necessarily $(\emptyset,X)$-minimal. Note also that for any $u,v\in \W=\Sn$ and $X\subset S$, we have $vP(u,X)=P(vu,X)$.\\
\\
\noindent
For $P=(P_1,\ldots,P_p)$, we define $\Stab(P)=\{w \in \Sn\,|\, wP_i=P_i\; \forall i=1,\ldots,p\}$ the \textit{stabilizer of the ordered partition} $P\in \mathcal{P}$, i.e. the subgroup of all permutations
from $\Sn$ which preserve each block of $P$ as a set.
\begin{ex}
    For $n=3$, the permutohedron is the hexagon shown in Figure \ref{permufig}. To every face $F(u,X,S)$ (or coset $u\W_{X}$) we associate an ordered partition. To every vertex corresponds a partition with three blocks, to every edge corresponds a partition with two blocks, and to the only 2-face corresponds the unique partition with one block.
       \begin{figure}[ht]
       \centering
  \includegraphics[width=9cm]{ 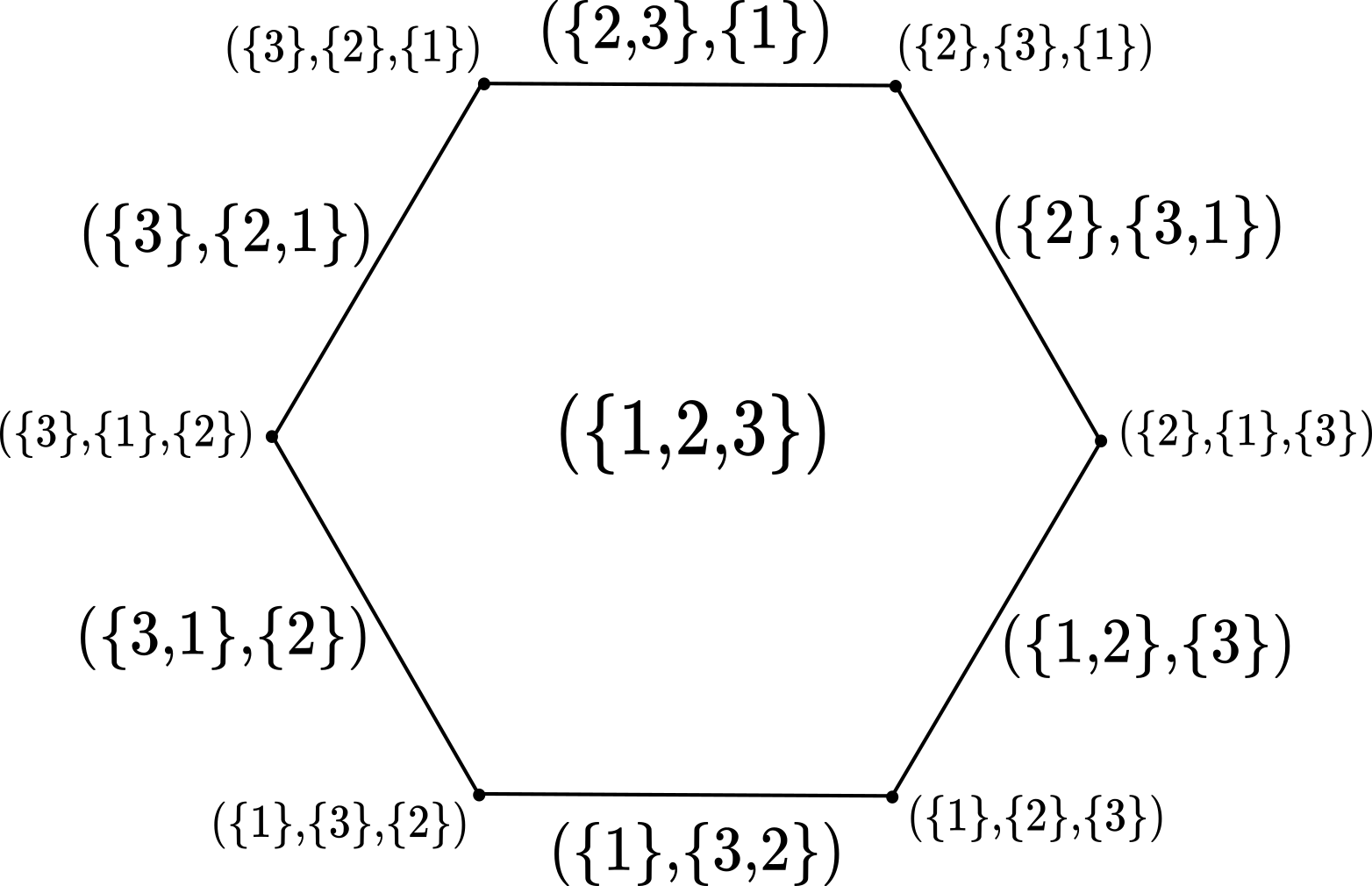}
  \caption{The permutohedron for $n=3$ with the associated ordered partitions.}
  \label{permufig}

\end{figure}
Observe that the vertex $(\{i\},\{j\},\{k\})$ is a face (and then a refinement) of the two edges $(\{i,j\},\{k\})$ and $(\{i\},\{j,k\})$, for all $i\neq j\neq k\neq i$ in $\{1,2,3\}$.
\end{ex}
\noindent
We can state the following result, of which we omit the proof.
\begin{lem}\label{stabandcosets}
$\left.\right.$
\begin{enumerate}
\item[(1)] Let $u\in \W=\Sn$ and $X\subset S$. Then $\Stab(P(u,X))=u\W_Xu^{-1}$.
\item[(2)] Let $u,v \in \W=\Sn$ and $X,Y \subset S$. We have that $P\left(u,X\right)=P\left(v,Y\right)$ if and only if $u\W_{X}=v\W_{Y}$.
\item[(3)] Let $u,v \in \W=\Sn$ and $X,Y \subset S$. We have that $P\left(u,X\right)\leq_{pa}P\left(v,Y\right)$ if and only if  $u\W_{X} \subset v\W_{Y}$.
\end{enumerate}
\end{lem}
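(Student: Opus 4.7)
The plan is to prove part (2) first (which identifies when two labelled partitions $P(u,X)$ and $P(v,Y)$ coincide), then deduce part (1) as an immediate corollary, and finally prove part (3) by a parallel analysis for refinement.

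For part (2), the key observation is that the sequence of block sizes of $P(u,X)$ depends only on $X$ and not on $u$: writing $S \setminus X = \{s_{i_1}, \ldots, s_{i_{p-1}}\}$ with $i_0 = 0 < i_1 < \cdots < i_p = n$, the block sizes are $(i_1, i_2 - i_1, \ldots, n - i_{p-1})$. Hence $P(u,X) = P(v,Y)$ forces $X = Y$. Once $X = Y$, equality of the two partitions amounts to $v^{-1}u$ preserving each interval $\{i_{k-1}+1, \ldots, i_k\}$ setwise, which by the classical description of Young subgroups of $\Sn$ is equivalent to $v^{-1}u \in \W_X$, i.e., $u\W_X = v\W_X$. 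For part (1), I will use the equivariance identity $wP(u,X) = P(wu,X)$, which is immediate from the definition. Then $w \in \Stab(P(u,X))$ iff $P(wu,X) = P(u,X)$, and by (2) this is iff $wu\W_X = u\W_X$, that is, iff $w \in u\W_X u^{-1}$.

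For part (3), the refinement $P(u,X) \leq_{pa} P(v,Y)$ means there exist indices $0 = k_0 < \cdots < k_q = p$ such that $u(\{i_{k_{l-1}}+1, \ldots, i_{k_l}\}) = v(\{j_{l-1}+1, \ldots, j_l\})$ for each $l$, where $\{j_l\}$ indexes $S \setminus Y$ in the same manner. Comparing block sizes forces $\{j_1, \ldots, j_{q-1}\} \subset \{i_1, \ldots, i_{p-1}\}$, equivalently $S \setminus Y \subset S \setminus X$, equivalently $X \subset Y$; then the residual condition reduces to $v^{-1}u$ preserving each $Y$-interval setwise, i.e., $v^{-1}u \in \W_Y$. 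On the coset side, a direct check shows that $u\W_X \subset v\W_Y$ is equivalent to the conjunction $\W_X \subset \W_Y$ (hence $X \subset Y$) together with $v^{-1}u \in \W_Y$: the containment gives $v^{-1}u \in \W_Y$ and $v^{-1}u\W_X \subset \W_Y$, whence $\W_X \subset \W_Y$; conversely these two conditions yield $u\W_X \subset u\W_Y = v\W_Y$.

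The only subtle point is verifying, in part (3), that the \emph{ordered} refinement condition (the blocks of $P(u,X)$ must combine consecutively, in order, to form the blocks of $P(v,Y)$) corresponds to coset inclusion, rather than to some weaker set-theoretic coarsening. This is automatic because both decompositions of $\{1, \ldots, n\}$ are indexed in the natural left-to-right order of the intervals $\{i_{k-1}+1, \ldots, i_k\}$ and $\{j_{l-1}+1, \ldots, j_l\}$, so once $X \subset Y$ the compatible indexing $j_l = i_{k_l}$ is forced; no reordering of blocks is possible, and the correspondence with the coset picture is clean.
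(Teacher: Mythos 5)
Your proof is correct. Note that the paper itself omits the proof of this lemma entirely (``We can state the following result, of which we omit the proof''), so there is no in-paper argument to compare against; your write-up would in fact fill that gap. The route you take --- reading off $X$ from the block-size sequence, identifying $\Stab$ of the interval partition with the Young subgroup $\W_X$, deducing (1) from (2) via the equivariance $wP(u,X)=P(wu,X)$, and matching the ordered-refinement condition with coset inclusion by the telescoping size count $j_l=i_{k_l}$ --- is the natural one and all steps check out. One small point worth making explicit in part (2): for the converse direction you also need that $u\W_X=v\W_Y$ forces $X=Y$; this follows because equal cosets force equal subgroups, and distinct subsets of $S$ generate distinct standard parabolic subgroups. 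Your closing remark on why the \emph{ordered} refinement causes no extra trouble is exactly the right subtlety to address.
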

\noindent
This assures that there is a poset isomorphism between $\mathcal{F}(C[S])$ and $(\mathcal{P},\leq_{pa})$. Observe also that, as the symmetric group acts on the poset of faces of its Coxeter polytope by $vF(u,X,S)=F(vu,X,S)$, in the same way it acts on the poset of partitions, so $v P(u,X)=P(vu,X)$ is an action that preserves the refinement. The stabilizer of the face $F(u,X,S)$ of the permutohedron is then $u\W_Xu^{-1}$.\\ 
\\
\noindent
Two ordered partitions $P=\left(P_1, \ldots, P_p\right)$ and $Q=\left(Q_1, \ldots, Q_q\right)$ of $\left\{1,2, \ldots, n\right\}$ are said to be \textit{equivalent} if they are equal up to permutation of the blocks, namely if $p=q$ and there exists some permutation $ \pi \in \mathfrak{S}_p$ such that $P_{\pi\left(i\right)}=Q_i$ for all $i \in \left\{1, \ldots, p\right\}$. We easily see that two ordered partitions are equivalent if and only if they have the same stabilizer. By (1) of Lemma \ref{stabandcosets}, we can deduce that for any $u,v\in \Sn$ and $X,Y\subset S$, the ordered partitions $P(u,X)$
and $P(v,Y)$ are equivalent if and only if $u\W_Xu^{-1}=v\W_Yv^{-1}$. This is easily checked to be an equivalence relation and it will be essential to generalize the construction to all Coxeter polytopes.

\begin{defn}
Consider $\mathcal{P}\times \Sn$ and define an equivalence relation $\sim$ on it as follows. Take $P,Q \in \mathcal{P}$ and $w,w' \in \Sn$, with $P=\left(P_1, \ldots, P_p\right)$. We say that $\left(P,w\right)$ is \textit{equivalent} to $\left(Q,w'\right)$, and we write $\left(P,w\right) \sim \left(Q,w'\right)$, if $P$ and $Q$ are equivalent and, for all $k \in \left\{1,2, \ldots, p\right\}$ and all $i,j \in P_k$, $w\left(i\right)<w\left(j\right) \Leftrightarrow w'\left(i\right)<w'\left(j\right)$.
\end{defn}

\begin{rmk}\label{crucialrmk}
Fix one representative $P$ for each equivalence class of ordered partitions under
permutation of the blocks. Then every equivalence class of $P \times \Sn$ with respect to $\sim$ contains a unique pair $(P, h)$ with $h \in \Stab(P)$. Hence $(P\times \Sn )/\sim\;
\cong \bigsqcup_{P} \Stab(P )$,
where $P$ runs through representatives of unordered partitions. More details will be given in Proposition \ref{propdescriptioncellsomegan}.
\end{rmk}

\noindent
It is possible to check that the equivalence relation just introduced on $\mathcal{P}\times \Sn$ behaves well with respect to the refinement. Again, we omit the proof of the following lemma.

\begin{lem} \label{refin}
Let $P,Q \in \mathcal{P}$ and $w,w' \in \Sn$ be such that $\left(P,w\right) \sim \left(Q,w'\right)$. Let $P=\left(P_1 , \ldots , P_p\right)$ and denote by $\pi \in \mathfrak{S}_p$ the unique permutation such that $Q=\left(P_{\pi^{-1}\left(1\right)}, \ldots , P_{\pi^{-1}\left(p\right)}\right)$. Let $P'=\left(P_1', \ldots, P_q'\right)$ be a refinement of $P$ and let $\mu$ be an element of $\mathfrak{S}_q$ such that $Q'=\left(P'_{\mu^{-1}\left(1\right)}, \ldots , P'_{\mu^{-1}\left(q\right)}\right)$ is a refinement of $Q$. Then $\left(P',w\right) \sim \left(Q', w'\right)$.
\end{lem}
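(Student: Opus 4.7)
The plan is to unpack the definition of the equivalence relation $\sim$ on $\mathcal{P}\times\Sn$ and verify its two clauses directly for the pair $((P',w),(Q',w'))$. Unwinding, I need to show that $P'$ and $Q'$ are equivalent ordered partitions, and that for every block $P'_k$ of $P'$ and every pair $i,j\in P'_k$ one has $w(i)<w(j)\Leftrightarrow w'(i)<w'(j)$.

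For the first clause, I would simply appeal to how $\mu$ is introduced in the statement: by construction $Q'=(P'_{\mu^{-1}(1)},\ldots,P'_{\mu^{-1}(q)})$ is obtained from $P'$ by permuting its blocks via $\mu^{-1}$, so $P'$ and $Q'$ have identical underlying sets of blocks and the same number of blocks. Hence they are equivalent as ordered partitions, without any further input. For the second clause, I would fix an index $k\in\{1,\ldots,q\}$ and elements $i,j\in P'_k$, and use the refinement hypothesis on $P'$: because $P'$ refines $P$, there exists an index $m\in\{1,\ldots,p\}$ with $P'_k\subseteq P_m$. Then $i$ and $j$ lie in the same block $P_m$ of $P$, so the assumed equivalence $(P,w)\sim(Q,w')$ applied to the block $P_m$ delivers $w(i)<w(j)\Leftrightarrow w'(i)<w'(j)$ at once.

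I do not foresee any genuine obstacle; the lemma essentially says that the equivalence data $(P,w)\sim(Q,w')$ restrict well along compatible refinements. The only point that requires a little care is to keep the roles of the two reordering permutations $\pi$ and $\mu$ separate: $\pi$ governs how the blocks of $P$ are permuted to form $Q$, while $\mu$ governs how the blocks of $P'$ are permuted to form $Q'$, and no compatibility between $\pi$ and $\mu$ is required for the argument, because the conditions defining $\sim$ concern only pairs of elements inside a common block and are insensitive to how the blocks themselves are indexed.
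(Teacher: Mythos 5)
Your proof is correct: the paper omits the proof of this lemma, and your direct verification of the two clauses of $\sim$ (blocks of $Q'$ are a permutation of those of $P'$ by construction, and the order condition for $i,j$ in a block of $P'$ follows from the corresponding condition on the containing block of $P$) is exactly the intended argument. Your closing remark that no compatibility between $\pi$ and $\mu$ is needed is also accurate, since the conditions defining $\sim$ only see pairs of elements within a common block.
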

\noindent
If two partitions $P(u,X)$ and $P(v,Y)\in \mathcal{P}$ are equivalent, then we say that the two faces $F(u,X,S)$ and $ F(v,Y,S)$ are \textit{equivalent}. Without loss of generality, we can also suppose that for $u,v\in \Sn$ and $X,Y\subset S$, as above, $u$ is $(\emptyset,X)$-minimal and $v$ is $(\emptyset,Y)$-minimal. We then define an identification map 
\[j_{F(u,X,S),F(v,Y,S)}:F(u,X,S)\longrightarrow F(v,Y,S)\] 
given by the translation by vector $v(o)-u(o)$. 
\begin{prop}\label{translfunziona}
If the faces $F(u,X,S)$ and $F(v,Y,S)$ are equivalent, then the translation by vector $v(o)-u(o)$ is a well-defined isometry that sends isometrically $F(u,X,S)$ to $F(v,Y,S)$.
\end{prop}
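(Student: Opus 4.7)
The plan is to identify each face with the convex hull of an orbit of a single point, and then reduce the claim to showing that the translation vector $v(o)-u(o)$ is fixed under a specific subgroup of $\Sn$. Writing $H := \Stab(P(u,X))$, Lemma \ref{stabandcosets}(1) gives $H = u\W_Xu^{-1}$, and since $P(u,X)$ and $P(v,Y)$ are equivalent (hence have the same stabilizer) we also have $H = v\W_Yv^{-1}$. It follows that $u\W_X = Hu$ and $v\W_Y = Hv$, whence
\[F(u,X,S) = conv\{h\cdot u(o) : h\in H\}, \qquad F(v,Y,S) = conv\{h\cdot v(o) : h\in H\}.\]

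Let $\tau$ denote the translation by $c := v(o)-u(o)$. Since the action of $H$ on $V$ is linear while $\tau$ is affine, for every $h\in H$ one computes $\tau(h\cdot u(o)) = h\cdot u(o) + c$ while $h\cdot v(o) = h\cdot u(o) + h\cdot c$. Therefore $\tau(h\cdot u(o)) = h\cdot v(o)$ for every $h\in H$ if and only if $c$ is fixed by $H$. Once this invariance is established, $\tau$ sends the orbit $H\cdot u(o)$ bijectively onto $H\cdot v(o)$, and passing to convex hulls yields $\tau(F(u,X,S)) = F(v,Y,S)$; the fact that $\tau$ is an isometry is automatic since translations are isometries.

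The main work, and the main obstacle, is therefore verifying that $c = v(o)-u(o)$ is $H$-invariant. Elements of $H$ act on $\mathbb{R}^n \supset V$ by permuting coordinates within each block $P_j$ of $P(u,X) = (P_1, \ldots, P_p)$, so $H$-invariance of $c$ is equivalent to the coordinate function $k \mapsto (v(o))_k - (u(o))_k$ being constant on each $P_j$. Here the assumption that $u$ is $(\emptyset, X)$-minimal becomes essential: it forces $u$ to be strictly increasing on every interval $\{i_{j-1}+1, \ldots, i_j\}$, so $u^{-1}$ restricts to an order isomorphism between $P_j$ and $\{i_{j-1}+1, \ldots, i_j\}$, and similarly for $v$ and $Y$ (with the block reindexing $\pi$ provided by the equivalence of partitions). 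Combining this with the explicit formula $o_m = (n+1-2m)/\sqrt{2}$ and writing $k$ as the $l$-th smallest element of $P_j$, one computes $(v(o))_k - (u(o))_k = o_{j'_{\pi(j)-1}+l} - o_{i_{j-1}+l} = \sqrt{2}(i_{j-1} - j'_{\pi(j)-1})$, a scalar depending only on $j$ and not on $l$, which completes the proof.
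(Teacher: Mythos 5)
Your proof is correct. The reduction is the same as the paper's: both arguments identify $F(u,X,S)=conv\{h\,u(o)\mid h\in H\}$ and $F(v,Y,S)=conv\{h\,v(o)\mid h\in H\}$ for $H=u\W_Xu^{-1}=v\W_Yv^{-1}$ and reduce the claim to showing that $H$ fixes the vector $v(o)-u(o)$. Where you diverge is in how that invariance is verified. The paper proves it root-theoretically: it first establishes Lemma \ref{upixvpiy}, namely that $(\emptyset,X)$- and $(\emptyset,Y)$-minimality together with $u\W_Xu^{-1}=v\W_Yv^{-1}$ force $u(\Pi_X)=v(\Pi_Y)$, and then computes $\langle u(\alpha_{s_0}),v(o)-u(o)\rangle=1-1=0$ using dual bases, so that each reflection generating $H$ fixes the translation vector. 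You instead exploit the type-$A$ coordinates: $H$ acts by permuting coordinates within the blocks $P_j$, minimal coset representatives in $\Sn$ are exactly the permutations increasing on the intervals cut out by $S\setminus X$, and the explicit formula $o_m=(n+1-2m)/\sqrt2$ makes $(v(o))_k-(u(o))_k=\sqrt2\,(i_{j-1}-j'_{\pi(j)-1})$ constant on each block. Your computation checks out (and the if-and-only-if with $H$-invariance is right, since $H$ is the full symmetric group of each block). The trade-off is generality: the paper explicitly remarks that its argument works for arbitrary spherical type Coxeter graphs, and it reuses exactly this fact in Subsection \ref{generalBeer} when defining $\Omega'(\Gamma)$; your argument is more elementary and self-contained but is tied to the permutohedron model of $A_{n-1}$ and would have to be replaced by something like Lemma \ref{upixvpiy} to cover the general case.
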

\noindent
To prove Proposition \ref{translfunziona}, we need some preliminary results. We are working in the case of the symmetric group $\W=\Sn$ generated by the simple transpositions, but Proposition \ref{translfunziona}, as well as the arguments to prove it, works in the general setting of Coxeter graphs of spherical type. Recall that $F(u,X,S)$ is equivalent to $F(v,Y,S)$ if and only if $u\W_Xu^{-1}=v\W_Y v^{-1}$, which we can write as $v^{-1}u\W_Xu^{-1}v=\W_Y$.
We first state and prove the following preliminary lemma.
\begin{lem}\label{lemdoublecoset}
Let $X,Y$ be subsets of $S$ and let $u,v$ be in $\W[\Gamma]$. Suppose that $u\W_X u^{-1}=v\W_Yv^{-1}$ and let $w_0$ be the unique element of minimal length in the double coset $\W_{Y}(v^{-1}u)\W_{X}$. Then $w_0Xw_0^{-1}=Y$.
\end{lem}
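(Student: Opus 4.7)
My plan is to first reduce the identity to the cleaner conjugation $w_0\W_X w_0^{-1}=\W_Y$, and then exploit the $(Y,X)$-minimality of $w_0$, which rigidifies conjugation enough to force each $w_0 s w_0^{-1}$ to have length exactly one.

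The reduction step goes as follows. Set $g=v^{-1}u$, so the hypothesis $u\W_X u^{-1}=v\W_Y v^{-1}$ becomes $g\W_X g^{-1}=\W_Y$. By the preceding lemma (the existence of $(Y,X)$-minimal double coset representatives with additive lengths), $g$ admits a factorization $g=y w_0 x$ with $y\in \W_Y$, $x\in \W_X$. Since $x\W_X x^{-1}=\W_X$ and conjugation by $y\in \W_Y$ preserves $\W_Y$, the hypothesis collapses immediately to
\[
w_0\W_X w_0^{-1}=\W_Y.
\]

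The heart of the argument is a short length computation. Fix $s\in X$ and put $t_s:=w_0 s w_0^{-1}$; by the reduction above, $t_s\in \W_Y$. Because $w_0$ is $(Y,X)$-minimal, it is in particular $(Y,\emptyset)$-minimal, and by part (2) of the Corollary quoted in the excerpt this is equivalent to the length-additivity
\[
l(t w_0)=l(t)+l(w_0) \qquad \text{for every } t\in \W_Y.
\]
Applying this with $t=t_s$ and combining it with the identity $t_s w_0 = w_0 s$ and with the $(\emptyset,X)$-minimality of $w_0$ (which gives $l(w_0 s)=l(w_0)+1$), I obtain
\[
l(t_s)+l(w_0) \;=\; l(t_s w_0) \;=\; l(w_0 s) \;=\; l(w_0)+1,
\]
so $l(t_s)=1$. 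Thus $t_s$ is a simple reflection of $\W[\Gamma]$ lying in $\W_Y$; since the length function on $\W_Y$ agrees with the restriction of the length function on $\W[\Gamma]$, I conclude $t_s\in S\cap \W_Y=Y$. This yields the inclusion $w_0 X w_0^{-1}\subseteq Y$.

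For the reverse inclusion I will run the identical argument symmetrically: $w_0^{-1}$ is $(X,Y)$-minimal in $\W_X w_0^{-1}\W_Y$ and satisfies $w_0^{-1}\W_Y w_0=\W_X$, so the same length computation with the roles of $X$ and $Y$ exchanged gives $w_0^{-1}Y w_0\subseteq X$, i.e., $Y\subseteq w_0 X w_0^{-1}$. The two inclusions together give $w_0 X w_0^{-1}= Y$, as required. The only step requiring any care is the initial reduction to $w_0\W_X w_0^{-1}=\W_Y$; once that is in hand, the rigidity provided by $(Y,X)$-minimality does all the work, and no fine analysis of roots or heights is needed.
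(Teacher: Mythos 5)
Your proof is correct. The paper explicitly omits a proof of this lemma (``commonly known by experts''), so there is nothing to compare against; your argument --- reducing via the factorization $v^{-1}u = yw_0x$ to the clean identity $w_0\W_Xw_0^{-1}=\W_Y$, then using the length additivity coming from $(Y,X)$-minimality together with $t_sw_0=w_0s$ to force $l(w_0sw_0^{-1})=1$, and closing with the symmetric argument for $w_0^{-1}$ --- is precisely the standard proof one would supply, and every step is justified by the Corollary on $(\emptyset,X)$- and $(X,\emptyset)$-minimality quoted in the paper.
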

\begin{proof}
By hypothesis, we have $(v^{-1}u)\W_{X}(v^{-1}u)^{-1}=\W_{Y}$. Thus, if we set $w=v^{-1}u$, we get $w\W_{X}w^{-1}=\W_{Y}$. Now, write $w=a w_0 b$, where $a \in \W_{Y}$, $b\in \W_{X}$ and $w_0$ is $(Y,X)-$minimal in $\W_{Y}w\W_{X}$. Recall that by Lemma \ref{w0mindoublecoset} we know that the minimal element in the double coset is unique. Substituting the expression for $w$, we obtain
\begin{align*}
w\W_{X}w^{-1}=a w_0 b \W_{X} b^{-1} w_0^{-1}a^{-1}=a w_0 \W_{X}w_0^{-1}a^{-1}=\W_{Y},\\
w_0 \W_{X}w_0^{-1}=a^{-1}\W_{Y}a=\W_{Y}.
\end{align*}
We have shown that $w_0\W_{X}w_0^{-1}=\W_{Y}$. Now, we want to prove the equality for the generators. Take $s\in X$ and consider $w_0 s w_0^{-1}\in \W_{Y}$. There exists some $y\in \W_{Y}$ such that $w_0s=yw_0$. Now, compare the lengths: by the minimality of $w_0$ in $w_0\W_{X}$, we have $l(w_0s)=l(w_0)+1$. But $w_0$ is also of minimal length in the coset $\W_{Y}w_0$, so $l(yw_0)=l(y)+l(w_0)$. This implies that $l(y)=1$ thus $y=t$ for some $t\in Y$ and then $w_0Xw_0^{-1}\subseteq Y$. Applying the same argument to $w_0^{-1}$, which is the minimal element in the double
coset $\W_X(u^{-1}v)\W_Y$, and using $w_0^{-1} \W_Y w_0 = \W_X$, we obtain the other inclusion $w_0^{-1}Y w_0\subseteq  X$. Thus
$w_0Xw_0^{-1} = Y$.
\end{proof}

\noindent
This result implies that there is a well-defined linear map from $V_X$ to $V_Y$ that sends the element of the basis $\alpha_s$ for $s\in X$ to $\alpha_{w_0 s w_0^{-1}}$, that is $\alpha_t$ for some $t\in Y$ since $w_0 s w_0^{-1}\in Y$. We can also remark that such a map coincides with the restriction of the action of the element $w_0$ in the vector space $V$, through the standard linear representation.
\begin{prop}\label{actionofw0}
Let $X,Y$ be as defined in Lemma \ref{lemdoublecoset}, and let $f:V_{X}\longrightarrow V_{Y}$ be the linear map that sends $\alpha_s$ to $\alpha_{w_0sw_0^{-1}}$. Then $f$ agrees with the restriction of the action of $w_0$ on $V_X$ via the canonical representation $\rho$.
\end{prop}
\begin{proof}
Let $s$ be a generator in $X$, and consider the reflection $w_0sw_0^{-1}$. Call the root $w_0(\alpha_s):=\beta$, so that $w_0sw_0^{-1}=r_{\beta}$. By Lemma \ref{lemdoublecoset},  $w_0sw_0^{-1}=t\in Y$. Then, since $\beta$ and $\alpha_t$ are associated to the same reflection, $\beta$ is either $\alpha_t$ or $-\alpha_t$. As recalled in Subsection \ref{CoxandArt}, $w_0(\alpha_s)$ is a positive root if and only if $l(w_0s)=l(w_0)+1$, which holds by the minimality of $w_0$ in the coset $w_0\W_{X}$. In particular, for all $s\in X$, we have $w_0(\alpha_s)=\alpha_{w_0sw_0^{-1}}=\alpha_t=f(\alpha_s)$. 
\end{proof}
\noindent Observe that, in general, $w_0sw_0^{-1}$ is not a generator in $S$, and the map $f$ is well-defined only on the subspace $V_X$. The action of $w_0$, however, is defined in the entire space $V$, and its restriction to $V_X$ coincides with $f$. \\
\\
\noindent Suppose then that $u\W_Xu^{-1}=v\W_Y v^{-1}$ and that $w_0$ is the $(Y,X)$-minimal element in the double coset $\W_Y(v^{-1}u)\W_X$. As a consequence of Proposition \ref{actionofw0}, we obtain that $w_0(V_X)=V_Y$. Since $v^{-1}u$ and $w_0$ by definition belong to the same $(\W_Y,\W_X)$ double coset, we can write $v^{-1}u=aw_0b$ with $a\in \W_Y$ and $b\in \W_X$. Hence, 
\[v^{-1}u(V_X)=aw_0b(V_X)=aw_0(V_X)=a(V_Y)=V_Y.\]
We just obtained that $u(V_X)=v(V_Y)$. By Lemma \ref{lemdoublecoset} we also easily get that $u(\Phi_X)=v(\Phi_Y)$, where $\Phi_X=\{g(\alpha_s)\,|\, g\in \W_X, \; s\in X\}$ and $\Phi_Y=\{h(\alpha_t)\,|\, h\in \W_Y, \; t\in Y\}$.\\
We also assumed that $u$ is $(\emptyset,X)$-minimal, and that $v$ is $(\emptyset,Y)$-minimal. The next lemma gives a stronger description of how $u$ and $v$ act on the root system.
\begin{lem}\label{upixvpiy}
Let $u, v \in \W[\Gamma]$ and $X, Y \subset S$ be such that $u$ is $(\emptyset, X)$-minimal, $v$ is $(\emptyset, Y)$-minimal and $u \W_X u^{-1} = v \W_Y v^{-1}$.
Set $\Pi_X=\{\alpha_s\,|\, s\in X\}$ and $\Pi_Y=\{\alpha_t\,|\, t\in Y\}$.
Then $u (\Pi_X) = v (\Pi_Y)$.
\end{lem}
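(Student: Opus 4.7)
The plan is to leverage the equality $u(\Phi_X)=v(\Phi_Y)$ already established in the preceding discussion and upgrade it to an equality of positive subsystems $u(\Phi_X^+)=v(\Phi_Y^+)$ by exploiting the minimality hypotheses on $u$ and $v$; uniqueness of a base of a root system then immediately forces $u(\Pi_X)=v(\Pi_Y)$.

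The key technical step is to show that $u$ sends positive roots of the parabolic subsystem $\Phi_X$ to positive roots of $\Phi[\Gamma]$. Fix $\beta\in\Phi_X^+$. I would pick $g\in\W_X$ and $s\in X$ with $\beta=g(\alpha_s)$ and $l_{\W_X}(gs)>l_{\W_X}(g)$; such a pair exists by the criterion recalled from \cite[Section 5.4]{Hump} applied inside the Coxeter system $(\W_X,X)$. Because the length function of $\W_X$ is the restriction of that of $\W[\Gamma]$, this also reads $l(gs)>l(g)$. The $(\emptyset,X)$-minimality of $u$ then gives $l(ug)=l(u)+l(g)$ and $l(ugs)=l(u)+l(gs)$, so $l(ugs)>l(ug)$, and the same criterion now applied in $(\W[\Gamma],S)$ yields $u(\beta)=ug(\alpha_s)\in\Phi^+[\Gamma]$. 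Since $\Phi_X=\Phi_X^+\sqcup(-\Phi_X^+)$ and $u$ is linear, one also has $u(\Phi_X^-)\subset\Phi^-[\Gamma]$, whence
\[
u(\Phi_X^+)\;=\;u(\Phi_X)\cap\Phi^+[\Gamma].
\]
Arguing symmetrically for $v$ and intersecting the already known equality $u(\Phi_X)=v(\Phi_Y)$ with $\Phi^+[\Gamma]$ produces $u(\Phi_X^+)=v(\Phi_Y^+)$.

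To finish, I would invoke the uniqueness of a base of a root system. The map $u$ acts on $V$ as a linear isometry, so it carries $\Phi_X$ onto $u(\Phi_X)$ as root systems and the base $\Pi_X$ onto the base $u(\Pi_X)$ of the positive system $u(\Phi_X^+)$; likewise $v(\Pi_Y)$ is a base of $v(\Phi_Y^+)$. Since the positive systems $u(\Phi_X^+)$ and $v(\Phi_Y^+)$ of the common root system $u(\Phi_X)=v(\Phi_Y)$ coincide, their (uniquely determined) bases must coincide, giving $u(\Pi_X)=v(\Pi_Y)$. The only real obstacle I foresee lies in the second paragraph: one must verify that a length-increasing decomposition $\beta=g(\alpha_s)$ computed inside $\W_X$ genuinely transfers to a length-increasing statement in $\W[\Gamma]$, but this is precisely guaranteed by the agreement of the two length functions on $\W_X$.
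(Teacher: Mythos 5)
Your proof is correct, but it takes a genuinely different route from the paper's. You first upgrade the minimality hypotheses to the statement that $u$ carries the whole positive parabolic subsystem $\Phi_X^+$ into $\Phi^+[\Gamma]$ (using the compatibility of the length function of $\W_X$ with that of $\W[\Gamma]$ and the additivity $l(ug)=l(u)+l(g)$ coming from $(\emptyset,X)$-minimality), deduce $u(\Phi_X^+)=v(\Phi_Y^+)$, and then conclude by the classical uniqueness of the simple system contained in a given positive system. The paper instead only uses the weaker fact that the individual roots $u(\alpha_s)$ and $v(\alpha_t)$ are positive; it then expands each $u(\alpha_s)$ as a nonnegative combination of the $v(\alpha_t)$ and vice versa, and runs a direct support/extremality argument (the sets $A_s$, $B_t$) together with the unit-norm computation to force a bijection $u(\alpha_s)=v(\alpha_t)$. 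In effect, the paper's extremality argument is a hands-on proof of exactly the uniqueness statement you invoke as a black box. Your version is shorter and more conceptual, at the price of citing a theorem that is usually stated for finite root systems; that is harmless here since the lemma is applied when $\Gamma$ is of spherical type, but since the lemma is stated for arbitrary $\Gamma$ you would want to note that the uniqueness of the base determined by a positive system also holds for the (possibly infinite) root systems of general Coxeter systems, whereas the paper's self-contained argument needs no such appeal.
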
 
\begin{proof}
Recall that $V_X$ denotes the vector subspace of $V$ generated by $\Pi_X$, and that  $V_Y$ denotes the vector subspace of $V$ generated by $\Pi_Y$.
We just saw that from the equality $u \W_X u^{-1} = v \W_Y v^{-1}$ it follows that $u(\Phi_X) = v (\Phi_Y)$ and $u(V_X) = v (V_Y)$. Now take $s \in X$.
Since $u (\alpha_s) \in u (\Phi_X) = v (\Phi_Y) \subset v (V_Y)$, the root $u (\alpha_s)$ can be written in the form $u (\alpha_s) = \sum_{t \in Y} a_{s,t}\, v(\alpha_t)$, where all the real coefficients $a_{s,t}$ are either non-negative, or non-positive.
The element $u$ is $(\emptyset, X)$-minimal, which means that for all $s\in X$, $l(us)=l(u)+1$. As we saw in Subsection \ref{CoxandArt}, this means that $u (\alpha_s)$ is a positive root.
In the same way, for all $t \in Y$, $v (\alpha_t)$ is a positive root.
We deduce that $a_{s,t}$ are all non-negative.
In the same fashion, for all $t \in Y$, $v (\alpha_t)$ can be written as $v (\alpha_t) = \sum_{s \in X} b_{t,s}\, u (\alpha_s)$, where the real coefficients $b_{t,s}$ are non-negative.\medskip\\
\noindent For $s \in X$, set $A_s = \{ t \in Y \mid a_{s,t}>0\}$, and for $t \in Y$, set $B_t = \{ s \in X \mid b_{t,s} > 0\}$.
Fix now an element $s$ in $X$.
Then $\{s\} = \bigcup_{t \in A_s} B_t$.
We now show in particular that, for all $t \in A_s$, $B_t = \{s\}$. By definition of $A_s$ and $B_t$, if we consider $s\in X$ and $t\in Y$, we can rewrite
\begin{equation*}
    u(\alpha_s)=\sum_{t\in A_s}a_{s,t}v(\alpha_t),\qquad \qquad \qquad v(\alpha_t)=\sum_{s'\in B_t}b_{t,s'}u(\alpha_s').
\end{equation*}
\noindent Therefore, we obtain that, for all $s\in X$,
\[u(\alpha_s)=\sum_{t\in A_s}a_{s,t}\sum_{s'\in B_t}b_{t,s'}u(\alpha_s')=\sum_{t\in A_s}\sum_{s'\in B_t}a_{s,t}b_{t,s'}u(\alpha_{s'}).\]
\noindent Suppose now that, for $t\in A_s$, the set $B_t$ contains at least one generator $s'\neq s$. We would have that 
\[u(\alpha_s)=a_{s,t}b_{t,s'}u(\alpha_{s'})+\sum_{t\in A_s}\,\sum_{s''\in B_t\backslash\{s'\}}a_{s,t}b_{t,s''}u(\alpha_{s''}),\]
\noindent which leads to an absurd because $a_{s,t}b_{t,s'}\neq 0$ and the formula above would give a non-trivial linear combination of vectors in the basis $\Pi_X=\{\alpha_s\mid s\in S\}$ that equals the null vector. Hence, $B_t=\{s\}$ for all $t\in A_s$, and $\bigcup_{t\in A_s}B_t=\{s\}$.\medskip\\
\noindent
If $A_s$ had at least two elements, denoted by $t_1$ and $t_2$, then we could not have $B_{t_1} = B_{t_2} = \{s\}$, because in this case $v(\alpha_{t_1})$ and $v (\alpha_{t_2})$ would be linearly dependent, which is not the case.
Therefore, $A_s$ is a singleton. Namely, there exists $t \in Y$ and $a >0$ such that $u(\alpha_s) = a\,v(\alpha_t)$.
But $1 = \langle u (\alpha_s), u (\alpha_s) \rangle = a^2 \langle v (\alpha_t), v (\alpha_t) \rangle = a^2$, thus $a=1$ and $u (\alpha_s) = v (\alpha_t)$.
In the same fashion we show that for all $t \in Y$, there exists an $s \in X$ such that $v (\alpha_t) = u (\alpha_s)$.
We can then conclude that $u (\Pi_X) = v (\Pi_Y)$.
\end{proof}
\noindent
We are now ready to show that the map $j_{F(u,X,S),F(v,Y,S)}=\tau(v(o)-u(o))$ is well-defined.

\begin{proof}[Proof of Proposition \ref{translfunziona}]
To prove that $\tau(v(o)-u(o))$ sends $F(u,X,S)$ to $F(v,Y,S)$, it suffices to show that it commutes with the action of $b$ for all $b\in B=u\W_Xu^{-1}=v\W_Yv^{-1}$. Indeed, take an arbitrary $b\in B$ and suppose that $b$ commutes with $\tau(v(o)-u(o))$. Recall that $F(u,X,S)=conv\{ug(o)\,|\,g\in \W_X\}$ and that $F(v,Y,S)=conv\{vh(o)\,|\,h\in \W_Y\}$ with $o=\sum_{s\in S}\alpha_s^*\in V$. We can write every vertex of the first face as $ug(o)$ for $g\in \W_X$, so as $ugu^{-1}u(o)=bu(o)$ for $b=ugu^{-1}\in B$.\medskip \\
\noindent If we apply the translation to $F(u,X,S)$, we see that it must send the vertex $bu(o)$ to $\tau(v(o)-u(o))(bu(o))=b(\tau(v(o)-u(o))(u(o)))=bv(o)$, that is a vertex $vhv^{-1}v(o)=vh(o)$ of $F(v,Y,S)$, for some $h\in \W_Y$ such that $ugu^{-1}=vhv^{-1}$. So, if the translation commutes with all $b\in B$, then it sends the first face to the second. \medskip  \\
\noindent We now show that these always commute, namely, that each $b\in B$ fixes the translation vector $v(o)-u(o)$. Equivalently, we can show that for every $s_0\in X$, we have $us_0u^{-1}(v(o)-u(o))=v(o)-u(o)$. Note that $us_0u^{-1}$ is a reflection of root vector $u(\alpha_{s_0})$, therefore it fixes the vector as above if and only if $\langle u(\alpha_{s_0}),v(o)-u(o)\rangle=0$. Moreover, by Lemma \ref{upixvpiy}, we know that for any $s_0\in X$, there exists some $t_0\in Y$ such that $u(\alpha_{s_0})=v(\alpha_{t_0})$. \medskip \\
\noindent Now we decompose $v(o)=v(\alpha_{t_0}^*)+\sum_{t\in S\backslash\{t_0\}}v(\alpha_t^*)$ and $u(o)=u(\alpha_{s_0}^*)+\sum_{s\in S\backslash \{s_0\}}u(\alpha_s^*)$. Since $u$ and $v$ are both isometries, the sets $u(\Pi)=\{u(\alpha_s)\,|\, s\in S\}$ and $v(\Pi)=\{v(\alpha_t)\,|\, t\in S\}$ are still two bases of the vector space $V$.
We denote by $u(\Pi)^*=\{u(\alpha_s)^*\,|\, s\in S\}$ the dual basis of $u(\Pi)$ and by $v(\Pi)^*=\{v(\alpha_t)^*\,|\, t\in S\}$ the dual basis of $v(\Pi)$. Remark that we obtain $u(\alpha_s^*)=(u(\alpha_s))^*$ and $v(\alpha_t^*)=(v(\alpha_t))^*$ for all $s,t$ in $S$. We can then rewrite $v(o)=v(\alpha_{t_0})^*+\sum_{t\in S\backslash\{t_0\}}v(\alpha_t)^*$ and $u(o)=u(\alpha_{s_0})^*+\sum_{s\in S\backslash \{s_0\}}u(\alpha_s)^*$. Finally, we compute \begin{equation}
\langle u(\alpha_{s_0}),v(o)-u(o)\rangle=\langle v(\alpha_{t_0}),v(\alpha_{t_0})^*+\sum_{t\in S\backslash\{t_0\}}v(\alpha_t)^*\rangle-\langle u(\alpha_{s_0}),u(\alpha_{s_0})^*+\sum_{s\in S\backslash \{s_0\}}u(\alpha_s)^* \rangle=1-1=0.
\end{equation}
\noindent Therefore, $v(o)-u(o)$ is orthogonal to each $u(\alpha_{s_0})=v(\alpha_{t_0})$, so it commutes with any $b=ugu^{-1}=vhv^{-1}\in B$ for $g\in \W_X$ and $h\in \W_Y$, and it sends $F(u,X,S)$ to $F(v,Y,S)$.
\end{proof}

\begin{ex}
    Consider a face $F(u,X,S)$ of the permutohedron $C[S]$ such that $X=\{s,t\}\subset S$, as illustrated in Figure \ref{perm_in_Beer}.

\begin{figure}[ht]
       \centering
       \begin{minipage}{7cm}
\includegraphics[width=6cm]{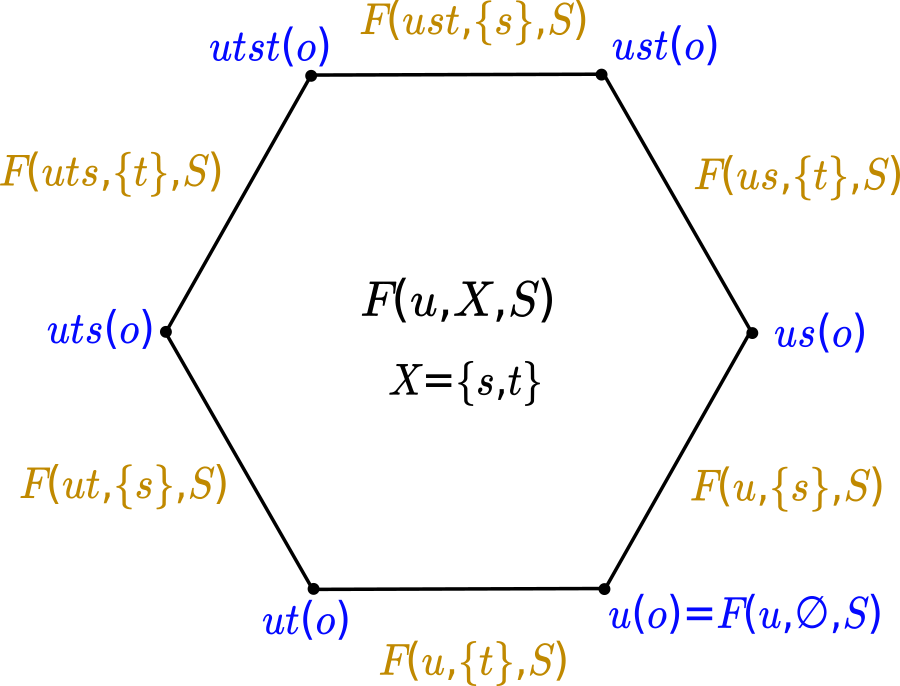}
  \caption{A face of dimension 2 of the permutohedron $C[S]$.}
  \label{perm_in_Beer}
      \end{minipage}
      \qquad 
      \begin{minipage}{7cm}
\includegraphics[width=6cm]{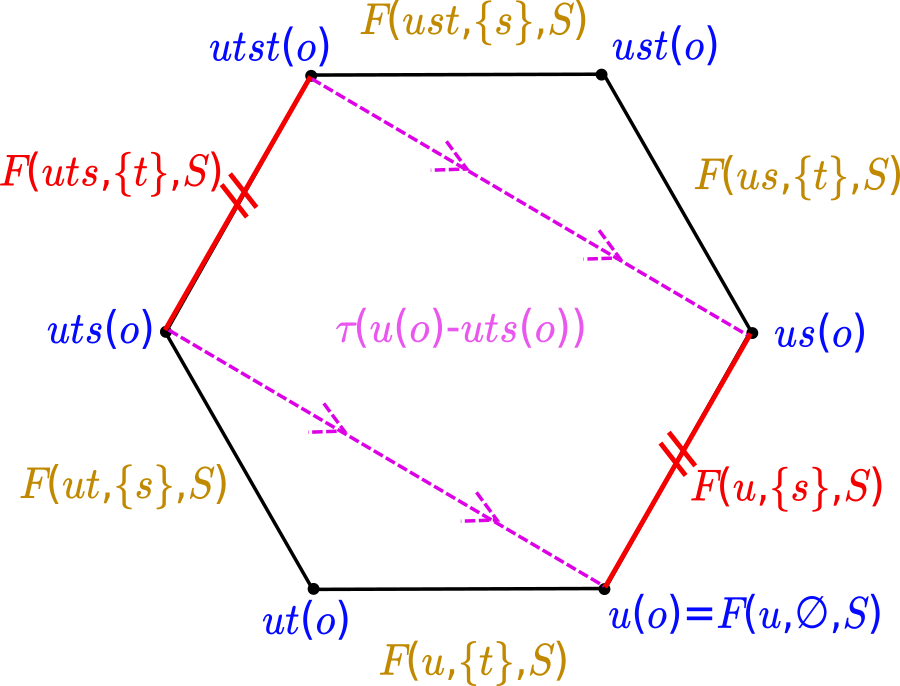}
  \caption{The translation identifying $F(uts,\{t\},S)$ and $F(u,\{s\},S)$.}
\label{perm_in_Beer_trasl}
      \end{minipage}
\end{figure}
\noindent The vertices of such a face are all of the form $ug(o)=F(ug,\emptyset,S)$ with $g\in \W_{X}$. Observe that for all $g_1,g_2\in \W_{X}$, the vertices $F(ug_1,\emptyset,S)$ and $F(ug_2,\emptyset,S)$ are equivalent, in the sense that their associated ordered partitions $P(ug_1,\emptyset)$ and $P(ug_2,\emptyset)$ represent the same unordered partition. Hence, for all $g\in \W_{X}$, $ug(o)$ is identified to $u(o)$ through the translation
\begin{align*}
    \tau(u(o)-ug(o)): ug(o)\longmapsto u(o).
\end{align*}
In Figure \ref{perm_in_Beer}, also observe that the faces $F(uts,\{t\},S)$ and $F(u,\{s\},S)$ are equivalent. Indeed, $uts\W_{\{t\}}stu^{-1}=u\W_{\{s\}}u^{-1}$, which means that $P(uts,\{t\})\sim P(u,\{s\})$. In any copy of $C[S]$ in $C[S]\times \Sn$, we identify these two faces through the translation $\tau(uts(o)-u(o))$:
\begin{align*}
    \tau(u(o)-uts(o))\;:\; F(uts,\{t\},S)&\longrightarrow F(u,\{s\},S),\\
    uts(o) &\longmapsto u(o),\\
    utst(o) &\longmapsto us(o).
\end{align*}
\noindent The translation is illustrated in Figure \ref{perm_in_Beer_trasl}.
\end{ex}

\begin{ex}
Consider now two equivalent faces $F(u,X,S)$ and $F(v,Y,S)$ of $C[S]$ such that $|X|=|Y|=2$. Namely, $u\W_Xu^{-1}=v\W_Y v^{-1}$, and calling $w_0$ the unique $(Y,X)$-minimal element in the double coset $\W_Y (v^{-1}u)\W_X$, thanks to Lemma \ref{lemdoublecoset} we know that $w_0 Xw_0^{-1}=Y$.
\begin{figure}[h!]
    \centering
\includegraphics[width=16cm]{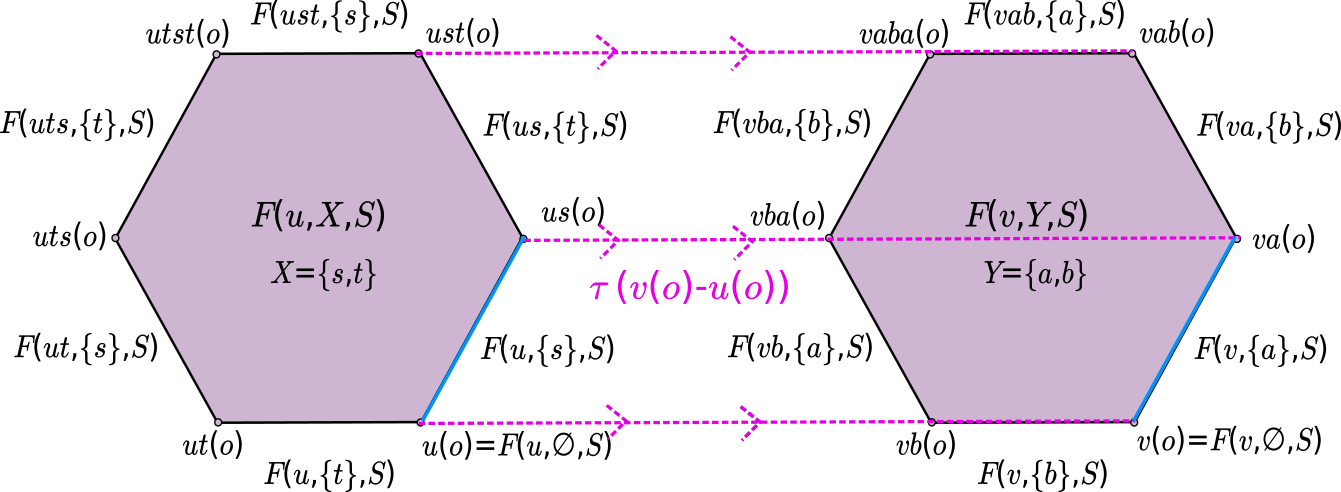}
    \caption{the translation by vector $v(o)-u(o)$.}
\label{perm_in_BEER_trasl2cell}
\end{figure}
    \noindent Suppose that $X=\{s,t\}\subset S$ and $Y=\{a,b\}\subset S$. The translation $\tau(v(o)-u(o))$ sends isometrically $F(u,X,S)$ to $F(v,Y,S)$, as illustrated in Figure \ref{perm_in_BEER_trasl2cell}. \medskip \\
    \noindent Since $w_0Xw_0^{-1}=Y$, either $w_0 s w_0^{-1}=a$ and $w_0 t w_0^{-1}=b$ or  $w_0 s w_0^{-1}=b$ and  $w_0 t w_0^{-1}=a$. Suppose that we are in the first case, and the second will be analogous.
    Therefore, the sub-face $F(u,\{s\},S)$ of $F(u,X,S)$ is identified to $F(v, \{w_0 s w_0^{-1}\},S)\subset F(v,Y,S)$ through the translation $\tau(v(o)-u(o))$.
\end{ex}

\noindent
We have proven that if $F(u,X,S)$ and $F(v,Y,S)$ are equivalent, $j_{F(u,X,S),F(v,Y,S)}=\tau(v(o)-u(o))$ sends isometrically $F(u,X,S)$ to $F(v,Y,S)$.
Consider now two equivalent pairs $F_1=\left(F(u,X,S),w\right) \sim F_2=\left(F(v,Y,S),w'\right)$, that are faces of $C[S]\times \Sn$. Then define an isometry between $k_{F_1,F_2}:(F(u,X,S),w)\longrightarrow (F(v,Y,S),w')$ such that for each $x\in F(u,X,S)$, we set \begin{equation}\label{defattachingmap}
k_{F_1,F_2}(x,w):=(j_{F(u,X,S),F(v,Y,S)}(x),w').
\end{equation}

\noindent
We are now ready to give the definition of the space $\Omega_n$.

\begin{defn}\label{deforiginalbeer}
The \textit{Bartholdi\textendash Enriquez\textendash Etingof\textendash Rains complex}, referred to as the \textit{BEER complex} and denoted by $\Omega_n$, is the quotient of the space $C\left[S\right] \times \Sn$ by the relation $\sim$, which identifies the face $F_1=(F(u,X,S),w)$ with the face $F_2=(F(v,Y,S),w')$ if the associated pairs in $\mathcal{P}\times \Sn$ are such that $\left(P(u,X),w\right) \sim \left(P(v,Y),w'\right)$. The identification for every $x\in F(u,X,S)$ is given by $(x,w)\sim k_{F_1,F_2}(x,w)$.\end{defn}
\noindent
Lemma \ref{refin} shows that this identification is well-defined. Since the identifications are isometries on faces, the Euclidean metric on the permutohedron $C[S]$ induces a well-defined piecewise Euclidean structure on $\Omega_n$ , and we endow $\Omega_n$ with the associated intrinsic path metric.
\begin{nt}
    When we write $[(F,w)]$ with $F\in \mathcal{F}(C[S])$ a face of the permutohedron and $w\in \Sn$, we mean the equivalence class of the face $(F,w)\in C[S]\times \Sn$ in the quotient $\Omega_n$. Thanks to the poset isomorphism between $\mathcal{F}(C[S])$ and the ordered partitions $\mathcal{P}$, for all $P\in \mathcal{P}$, we denote by $F(P)$ the face of the permutohedron associated with $P$. We may also write $[(P,w)]\in \Omega_n$ to mean the class of the pair $(F(P),w)\in C[S]\times\Sn$ under $\sim$.
\end{nt}
\begin{rmk}\label{rmkdecriptionfacesOmegan}
    Lemma \ref{refin} also provides a description of the sub-cells of a given cell $[(F,w)]\in \Omega_n$. Let $F=F(u,X,S)$ with $u\in \Sn$ $(\emptyset,X)$-minimal, and let $X\subset S$. The sub-faces of $F(u,X,S)$ in $C[S]$ are of the form $F(ug,Z,S)$ with $g\in \W_X$, $g$ $(\emptyset,Z)$-minimal and $Z\subset X$. Therefore, the cells contained in $[(F(u,X,S),w)]\in \Omega_n$ are of the form $[(F(ug,Z,S),w)]$.
\end{rmk}
\begin{thm}\cite[Theorem 8.2]{BEER}\label{fundoriginalbeer}
The fundamental group of $\Omega_n$ is the pure virtual braid group on $n$ strands $\PVB_n$.
\end{thm}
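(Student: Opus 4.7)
The plan is to equip $\Omega_n$ with the CW structure induced from $C[S] \times \Sn$ (viewed as $n!$ copies of the permutohedron, indexed by $\Sn$), to enumerate its cells of dimension at most $2$, to read off the attaching maps of the $2$-cells from the equivalence relation $\sim$, and finally to match the resulting presentation with Bardakov's presentation of $\PVB_n$.

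First I would analyze the low-dimensional cells. A vertex of $C[S] \times \Sn$ is a pair $(v(o), w)$ corresponding to $(P(v, \emptyset), w)$, and $P(v, \emptyset)$ always consists of $n$ singletons. Any two such partitions are equivalent, and the compatibility condition on $(w, w')$ is vacuous for singleton blocks; hence all vertices collapse to a single $0$-cell $x_0$. A $1$-face $F(u, \{s_i\}, S)$ corresponds to an ordered partition with exactly one size-$2$ block $\{u(i), u(i+1)\}$ and $n-2$ singletons. The equivalence class of $(F(u, \{s_i\}, S), w)$ is therefore determined by the unordered pair $\{u(i),u(i+1)\}$ together with the sign of $w(u(i)) - w(u(i+1))$, i.e.\ by an ordered pair $(a,b)$ of distinct elements of $\{1,\dots,n\}$. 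This yields exactly $n(n-1)$ edges $\lambda_{a,b}$, each a loop at $x_0$, matching the number of Bardakov generators of $\PVB_n$.

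Next I would turn to the $2$-cells, which come from faces $F(u, \{s_i,s_j\}, S)$. When $|i-j|\geq 2$ the face is a square (two disjoint size-$2$ blocks) whose four boundary edges, traversed following the orientation convention of Notation \ref{orientedges}, spell a word of the form $\lambda_{a,b}\lambda_{c,d}\lambda_{a,b}^{-1}\lambda_{c,d}^{-1}$ with $\{a,b\}\cap\{c,d\}=\emptyset$. When $|i-j|=1$ the face is a hexagon (one size-$3$ block $\{a,b,c\}$), whose six boundary edges spell a relation of type $\lambda_{a,b}\lambda_{a,c}\lambda_{b,c} = \lambda_{b,c}\lambda_{a,c}\lambda_{a,b}$ once the orientation labels are tracked across successive refinements. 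With $\Omega_n$ having the single $0$-cell $x_0$ and these labelled $1$-cells and $2$-cells, the standard description of $\pi_1$ of a $2$-dimensional CW-complex gives a presentation whose generators are the $\lambda_{a,b}$ and whose relators are exactly the commutation relations and the three-cycle relations of Bardakov's presentation of $\PVB_n$; this yields $\pi_1(\Omega_n)\cong \PVB_n$.

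The main technical obstacle is the bookkeeping of orientations on the hexagonal $2$-cells. Concretely, at each of the six vertices of a hexagon one must compute which ordered pair is being transposed and in which direction relative to the ambient permutation $w$, so as to verify that the boundary cycle really reads as the braid-like relation on the nose (rather than, say, a conjugate or inverse form). Once this is done for one representative hexagon, $\Sn$-equivariance of the construction shows that every hexagon contributes a relation of the same type, and the identification with Bardakov's presentation is complete.
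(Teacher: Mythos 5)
Your proposal follows essentially the same route as the paper's proof: a single $0$-cell, one oriented edge for each ordered pair $(a,b)$ of distinct indices (obtained from the size-$2$ block together with the relative order imposed by $w$), and the two families of $2$-cells — squares from faces with $m_{s_i,s_j}=2$ giving the commutation relators, hexagons from faces with $m_{s_i,s_j}=3$ giving the relators $\lambda_{a,b}\lambda_{a,c}\lambda_{b,c}=\lambda_{b,c}\lambda_{a,c}\lambda_{a,b}$ — which together reproduce Bardakov's presentation of $\PVB_n$. The orientation bookkeeping you flag as the main technical point is exactly what the paper carries out explicitly via the refinements $P^1_{ijk},\dots,P^6_{ijk}$ and the identification of opposite edges.
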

\noindent
Recall that a presentation of $\PVB_n$ is given in \cite{Bard04}, with generators $\zeta_{ij}$ for $1\leq i \neq j \leq n$ and relations $\zeta_{ij} \zeta_{kl}= \zeta_{kl}\zeta_{ij}$  for $i,j,k,l$ pairwise distinct, and $\zeta_{ij} \zeta_{ik} \zeta_{jk}= \zeta_{jk}\zeta_{ik}\zeta_{ij}$ for $i,j,k$ pairwise distinct.
\begin{proof}[Proof of Theorem \ref{fundoriginalbeer}.]

We are going to show that:

\begin{enumerate}
\item[(0)] The 0-skeleton of $\Omega_n$ is made of just one point $\omega^0$.
\item [(1)]For $i,j \in \left\{1,2, \ldots, n\right\}$ with $i< j$, we have two different 1-cells $\Dd^1(i, j)$ and $\Dd^1(j,i)$. These 1-cells are pairwise distinct and they form a graph, which is the 1-skeleton of $\Omega_n$.
\item [(2)]\begin{enumerate}
\item For all $i,j,k$ pairwise distinct, there exists a 2-cell $\Dd^2\left(i,j,k\right)$ whose boundary is the loop $\Dd^1(i,j)\Dd^1(i,k)\Dd^1(j,k)\Dd^1(i,j)^{-1}\Dd^1(i,k)^{-1}\Dd^1(j,k)^{-1}$ with the given orientation.
\item For all $i,j,k,l$ pairwise distinct, there exists a 2-cell $\Dd^2\left(i,j,k,l\right)$ whose boundary is the loop $\Dd^1(i,j)\Dd^1(k,l)\Dd^1(i,j)^{-1}\Dd^1(k,l)^{-1}$ with the given orientation.
\item All the 2-cells in $\Omega_n$ are of the form $\Dd^2\left(i,j,k\right)$ or $\Dd^2\left(i,j,k,l\right)$.
\end{enumerate}
\end{enumerate}
These steps prove the result.
To see that the 0-skeleton consists of just one point, observe that the vertices of $C[S]$ are of the form $F(u,\emptyset,S)$ with $u\in \Sn$, so they correspond to the ordered partitions $P(u,\emptyset)=(\{u(1)\},\ldots,\{u(n)\})$ made of $n$ blocks. These ordered partitions all represent the same unordered partition. Moreover, since they consist of singleton blocks, the condition of equivalence on the second components holds clearly. Thus, all the vertices $(F(u,\emptyset,S),w)$ of $C[S]\times \Sn$ represent the same equivalence class, denoted by $\omega^0$.\\
\\
\noindent Recall that for any ordered partition $P\in \mathcal{P}$, we will denote the face of the permutohedron that it represents by $F(P)$. The edges of $C[S]$ are of the form $F(u,s_h,S)$ where $s_h\in S$ and $u\in \Sn$, $(\emptyset,\{s_h\})$-minimal. For each $i< j$ in $\{1,\ldots,n\}$, denote by $P_{ij}$ the partition $P_{ij}=(\{i,j\},\{1\},\ldots,\{i-1\},\{i+1\},\ldots,\{j-1\},\{j+1\},\ldots,\{n\})$. It is not hard to be convinced that each ordered partition $P(u,\{s_h\})$ is equivalent to a unique $P_{ij}$. We call $s_i$ the simple transposition $(i\;i+1)$ for $i\in \{1,\ldots,n-1\}$, and denote by $t_{ij}$ the (not necessarily simple) transposition $(ij)$ with $i, j\in \{1,\ldots,n-1\}$ and $i<j$. Now, the pair $(F(P_{ij}),w)$ of $C[S]\times \Sn$ is equivalent to $(F(P_{ij}),\id)$ if $w(i)<w(j)$, and it is equivalent to $(F(P_{ij}),t_{ij})$ if $w(i)>w(j)$ instead. We call these two classes of edges in $\Omega_n$, respectively, $\Dd^1(i,j)$ and $\Dd^1(j,i)$. Now, for $i,j \in \left\{1, \ldots, n\right\}$ such that $i<j$, let
\begin{align*}
P^0_{ij}&=\left(\left\{i\right\},\left\{j\right\},\left\{1\right\}, \ldots, \left\{i-1\right\},\left\{i+1\right\}, \ldots, \left\{j-1\right\},\left\{j+1\right\}, \ldots,\left\{n\right\}\right),\nonumber \\
P^0_{ji}&=\left(\left\{j\right\},\left\{i\right\},\left\{1\right\}, \ldots, \left\{i-1\right\},\left\{i+1\right\}, \ldots, \left\{j-1\right\},\left\{j+1\right\}, \ldots,\left\{n\right\}\right).\nonumber 
\end{align*}
Thanks to the poset isomorphism between $\mathcal{P}$ and $\mathcal{F}(C[S])$, we know that the endpoints of $F(P_{ij})$ are $F(P^0_{ij})$ and $F(P^0_{ji})$. For $w\in \Sn$, we orient $(F(P_{ij}),w)$ from $\left(F(P^0_{ij}),w\right)$ to $\left(F(P^0_{ji}),w\right)$ if $w\left(i\right)<w\left(j\right)$, and from $\left(F(P^0_{ji}),w\right)$ to $\left(F(P^0_{ij}),w\right)$ if $w\left(i\right)>w\left(j\right)$. In particular, $(F(P_{ij}), \id)$ is oriented from $P_{ij}^0$ to $P_{ji}^0$ and $(F(P_{ij}), t_{ij})$ is oriented from $P_{ji}^0$ to $P_{ij}^0$. These orientations
induce the orientations of $\Dd^1(i,j)$ and $\Dd^1(j,i)$, respectively. \\
\\
\noindent The 2-cells of $C[S]$ correspond to the faces $F(u,X,S)$ with $X\subset S$, $|X|=2$ and $u\in \Sn$ $(\emptyset,X)$-minimal. The associated partitions have $n-2$ blocks. There are two cases, which we now describe. \\
\\
\noindent
Let $i,j,k \in \left\{1, \ldots,n\right\}$ such that $i<j<k$. We write $\left\{1,2, \ldots,n\right\}\backslash \left\{i,j,k\right\}=\left\{r_1,r_2, \ldots, r_{n-3}\right\}$ with $r_1<r_2< \cdots <r_{n-3}$, and set $P_{ijk}=\left(\left\{i,j,k\right\},\left\{r_1\right\},\left\{r_2\right\}, \ldots,\left\{r_{n-3}\right\}\right)$.
Now let $i,j,k,l \in \left\{1, \ldots,n\right\}$ such that $i<j<k<l$. We write $\left\{1,2, \ldots,n\right\}\backslash \left\{i,j,k,l\right\}=\left\{r_1,r_2, \ldots, r_{n-4}\right\}$ with $r_1<r_2< \cdots <r_{n-4}$, and set
$P_{ijkl}=\left(\left\{i,j\right\},\left\{k,l\right\},\left\{r_1\right\},\left\{r_2\right\}, \ldots,\left\{r_{n-4}\right\}\right)$,$P_{ijkl}'=\left(\left\{i,k\right\},\left\{j,l\right\},\left\{r_1\right\},\left\{r_2\right\}, \ldots,\right.$ $\left.\left\{r_{n-4}\right\}\right)$ and $P_{ijkl}''=\left(\left\{i,l\right\},\left\{j,k\right\},\left\{r_1\right\},\left\{r_2\right\}, \ldots,\left\{r_{n-4}\right\}\right)$. The ordered partition $P(u,X)$ associated to $F(u,X,S)$ is equivalent to a unique partition of the form $P_{ijk}$, $P_{ijkl}$,  $P_{ijkl}'$ or  $P_{ijkl}''$.\\
\\
\noindent Suppose first that $P(u,X)$ is equivalent to $P_{ijk}$. Set $I=\{i,j,k\}$ and denote by $\mathfrak{S}(I)$ the set of permutations of $I$. Then the element $(P_{ijk},w)$ of $\mathcal{P}\times \Sn$ is equivalent to exactly one pair $(P_{ijk},\overline{w})$ with $\overline{w}\in \mathfrak{S}(I)$. We denote the corresponding equivalence class in $\Omega_n$ $\Dd^2(\overline{w}(i),\overline{w}(j),\overline{w}(k))$. The edges in the boundary of $(F(P_{ijk}),\overline{w})$ are given by all possible refinements of $P_{ijk}$:
\begin{align}
&P^1_{ijk}=\left(\left\{i,j\right\},\left\{k\right\},\left\{r_1\right\}, \ldots, \left\{r_{n-3}\right\}\right), &P^4_{ijk}=\left(\left\{k\right\},\left\{i,j\right\},\left\{r_1\right\}, \ldots, \left\{r_{n-3}\right\}\right), \\ &P^2_{ijk}=\left(\left\{j\right\},\left\{i,k\right\},\left\{r_1\right\}, \ldots, \left\{r_{n-3}\right\}\right), &P^5_{ijk}=\left(\left\{i,k\right\},\left\{j\right\},\left\{r_1\right\}, \ldots, \left\{r_{n-3}\right\}\right),\nonumber \\
&P^3_{ijk}=\left(\left\{j,k\right\},\left\{i\right\},\left\{r_1\right\}, \ldots, \left\{r_{n-3}\right\}\right), &P^6_{ijk}=\left(\left\{i\right\},\left\{j,k\right\},\left\{r_1\right\}, \ldots, \left\{r_{n-3}\right\}\right) .\nonumber 
\end{align}

\noindent Thus, $F(u,X,S)=F(P_{ijk})$ is a regular hexagon whose boundary consists of $F(P^1_{ijk})\cup\cdots\cup F(P^6_{ijk})$. Moreover, observe that in $\mathcal{P}\times \Sn$, we have $\left(P^m_{ijk},\overline{w}\right) \sim \left(P^{m+3}_{ijk},\overline{w}\right) $ for $m \in \left\{1,2,3\right\}$. When we project the boundary of $(F(P_{ijk}),\overline{w})$ onto $\Dd^2(\overline{w}(i),\overline{w}(j),\overline{w}(k))$ in $\Omega_n$, we obtain a loop made up of six edges that, with a suitable choice of starting points and orientations, can be written as \[\Dd^1(\overline{w}(i),\overline{w}(j))\Dd^1(\overline{w}(i),\overline{w}(k))\Dd^1(\overline{w}(j),\overline{w}(k))\Dd^1(\overline{w}(i),\overline{w}(j))^{-1}\Dd^1(\overline{w}(i),\overline{w}(k))^{-1}\Dd^1(\overline{w}(j),\overline{w}(k))^{-1}.\]

\noindent 
Now consider the case where $P(u, X)$ is equivalent to $P_{ijkl}$ with $i<j<k<l$ (the reasoning for $P_{ijkl}'$
and $P_{ijkl}''$
is completely analogous). A similar argument shows that the partition corresponds to a square, and that the cell $(F(P_{ijkl}),w)$ of $C[S]\times \Sn$ projects onto a 2-cell $\Dd^2(\overline{w}(i),\overline{w}(j),\overline{w}(k),\overline{w}(l))$ of $\Omega_n$, whose boundary with orientation is given by \[\Dd^1(\overline{w}(i),\overline{w}(j))\Dd^1(\overline{w}(k),\overline{w}(l))\Dd^1(\overline{w}(i),\overline{w}(j))^{-1}\Dd^1(\overline{w}(k),\overline{w}(l))^{-1}.\]
\noindent All the 2-cells of $\Omega_n$ are either of the form $\Dd^2(\overline{w}(i),\overline{w}(j),\overline{w}(k),\overline{w}(l))$ or of the form \\$\Dd^2(\overline{w}(i),\overline{w}(j),\overline{w}(k))$, and this concludes the proof.
\end{proof}
\noindent However, in the same theorem, the authors also claim that the space $\Omega_n$ is locally CAT(0), which would imply that $\Omega_n$ is a classifying space for $\PVB_n$. In Theorem \ref{FalseCat0}, we explain why the local CAT(0) property is false.\\
\\
\noindent All the definitions and results about CAT(0) spaces are based on  \cite{BriHaefl}. We summarize in the following theorem the main properties that will be used later.

\begin{thm}(\cite[Theorem  II.5.4]{BriHaefl}) \label{cat0properties} 
Let $X$ be a metric space that is complete and geodesic.
\begin{enumerate}
\item [(1)]If $X$ is CAT(0), then it is contractible.
\item[(2)] If $X$ is locally CAT(0), then the universal covering of $X$ is contractible, implying that $X$ is aspherical.
\item[(3)] Let $X$ be a piecewise Euclidean complex whose cells represent a finite number of isometry classes. Then $X$ is locally CAT(0) if and only if, for every vertex $x \in X$, every non\textendash homotopically trivial closed geodesic in $Lk\left(x,X\right)$ has length $\geq 2\pi$.
\end{enumerate}

\end{thm}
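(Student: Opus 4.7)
The plan is to address the three parts of the theorem separately, in each case reducing to a standard consequence of the CAT(0) inequality. These are classical results from Bridson--Haefliger, so the task is to assemble known ingredients rather than to invent new ones.

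For (1), I would exhibit an explicit deformation retraction of $X$ onto a chosen basepoint $x_{0}$. The CAT(0) inequality applied to the degenerate comparison triangle spanned by two geodesics with the same endpoints forces them to coincide, so for each $x\in X$ there is a unique geodesic $\gamma_{x}\colon[0,1]\to X$ from $x_{0}$ to $x$. Define $H\colon X\times[0,1]\to X$ by $H(x,t)=\gamma_{x}(t)$. Continuity of $H$ follows from the convexity of the distance function along geodesics, itself an immediate consequence of the CAT(0) comparison inequality, which yields a bound on $d\bigl(\gamma_{x}(t),\gamma_{y}(s)\bigr)$ in terms of $d(x,y)$ and $|t-s|$. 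This shows that $X$ is contractible.

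For (2), I would invoke the metric Cartan--Hadamard theorem: the universal cover $\widetilde{X}$ of a complete, locally CAT(0) space, endowed with the lifted length metric, is globally CAT(0). The key steps are to lift small locally CAT(0) balls to $\widetilde{X}$, note that each such lift is a convex CAT(0) subset, and then use simple connectivity of $\widetilde{X}$ to glue comparison triangles: any geodesic triangle can be subdivided into pieces that fit inside lifted CAT(0) balls, and the local comparison estimates concatenate via a homotopy argument to produce a global inequality. Once $\widetilde{X}$ is shown to be CAT(0), part (1) applies to give contractibility of $\widetilde{X}$, so $X$ is aspherical. The Cartan--Hadamard step is the main obstacle, since propagating local convexity across the universal cover demands a careful and technical use of simple connectivity, and it is the deepest of the three assertions.

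For (3), the approach combines Gromov's link condition with Bridson's completeness result for piecewise Euclidean complexes. Bridson's theorem ensures that a piecewise Euclidean complex with only finitely many isometry classes of cells is a complete geodesic space, so parts (1) and (2) apply once local CAT(0) is established. A small metric neighborhood of a vertex $x$ is isometric to an open Euclidean cone over the link $\mathrm{Lk}(x,X)$, equipped with its induced piecewise spherical structure, and by Berestovskii's theorem such a cone is CAT(0) if and only if the link is CAT(1); at non-vertex points of $X$ local CAT(0) follows from the product structure of strata and reduces again to the link condition at vertices. In the two-dimensional setting relevant here the link $\mathrm{Lk}(x,X)$ is a metric graph, for which the CAT(1) property is characterized by the systolic inequality that every closed geodesic have length at least $2\pi$; non-homotopically-trivial geodesics are the only ones that can violate this bound, since null-homotopic closed curves in a graph cannot be locally geodesic. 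Assembling these reductions yields exactly the stated criterion.
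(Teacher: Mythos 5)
The paper does not actually prove this statement: it is quoted verbatim from Bridson--Haefliger \cite{BriHaefl} as background, and the only thing the paper does with it is apply part (3) to the two-dimensional complex $\Omega_3$ in Theorem \ref{FalseCat0}. So there is no in-paper argument to match; what you have done is reconstruct the standard proofs from the reference, and your reconstruction is essentially correct. Part (1) is the usual geodesic contraction: uniqueness of geodesics and convexity of the metric (BH II.1.4) give continuity of $H(x,t)=\gamma_x(t)$, hence contractibility. Part (2) is the metric Cartan--Hadamard theorem (BH II.4.1), whose proof is indeed the patchwork/gluing argument you outline, followed by an application of (1) to the universal cover. Part (3) correctly assembles Bridson's theorem (finitely many isometry types of cells implies complete geodesic, BH I.7.19), Berestovski\u{\i}'s cone criterion, and Gromov's link condition (local CAT(0) if and only if all vertex links are CAT(1)), together with the reduction of non-vertex points to vertex links.

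One point in your treatment of (3) deserves emphasis, and is in fact a quiet correction of the statement as the paper records it. The criterion ``CAT(1) $\Leftrightarrow$ every non-homotopically-trivial closed geodesic has length $\geq 2\pi$'' is valid when the link is a metric \emph{graph}, i.e.\ when $X$ is two-dimensional, precisely because a locally injective loop in a graph is never null-homotopic. For higher-dimensional complexes the links are piecewise spherical complexes of dimension $\geq 2$, and there the restriction to non-null-homotopic geodesics is wrong (a short closed geodesic on a simply connected link violates CAT(1) while satisfying the stated condition vacuously); one needs either all closed geodesics of length $\geq 2\pi$ together with a recursive local condition, or the full CAT(1) hypothesis on the link. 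Your phrase ``in the two-dimensional setting relevant here'' correctly confines the clean systolic characterization to the case the paper actually uses, whereas the paper states (3) for arbitrary piecewise Euclidean complexes. Since the sole application is to $\Omega_3$, this imprecision is harmless for the paper, but your version is the more careful one.
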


\begin{thm}\label{FalseCat0}
The complex $\Omega_3$ is not locally CAT(0).
\end{thm}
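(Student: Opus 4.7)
The strategy is to apply Theorem \ref{cat0properties}(3) at the unique $0$-cell $\omega^0$ of $\Omega_3$: it suffices to exhibit a closed geodesic in $Lk(\omega^0,\Omega_3)$ of length strictly less than $2\pi$ which is not null-homotopic.

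First I would record the cell structure of $\Omega_3$ given by the proof of Theorem \ref{fundoriginalbeer}: a single vertex $\omega^0$, six oriented edges $D^1(i,j)$ indexed by ordered pairs $i\neq j$ in $\{1,2,3\}$, and six $2$-cells $D^2(a,b,c)$ indexed by permutations of $(1,2,3)$. Since $C[S]$ is a regular hexagon for $n=3$ and the identifications of Definition \ref{deforiginalbeer} are Euclidean isometries by Proposition \ref{translfunziona}, each $2$-cell inherits the metric of a regular hexagon, so every corner has interior angle $2\pi/3$. Consequently $Lk(\omega^0,\Omega_3)$ is a metric graph whose vertices are the twelve half-edges at $\omega^0$, which I denote $s(i,j)$ and $t(i,j)$ for the source and target directions of $D^1(i,j)$, and whose $1$-cells are arcs of length $2\pi/3$, one for each corner of each $2$-cell.

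The key step is to produce two distinct $2$-cells that share a corner. Reading the boundary word $D^1(1,2)\,D^1(1,3)\,D^1(2,3)\,D^1(1,2)^{-1}\,D^1(1,3)^{-1}\,D^1(2,3)^{-1}$ of $D^2(1,2,3)$, the corner between $D^1(1,2)$ and $D^1(1,3)$ joins the half-edges $t(1,2)$ and $s(1,3)$. Reading the boundary word $D^1(1,3)\,D^1(1,2)\,D^1(3,2)\,D^1(1,3)^{-1}\,D^1(1,2)^{-1}\,D^1(3,2)^{-1}$ of $D^2(1,3,2)$, the corner between $D^1(1,3)^{-1}$ and $D^1(1,2)^{-1}$ joins $s(1,3)$ and $t(1,2)$, since traversing $D^1(1,3)^{-1}$ terminates at $s(1,3)$ and starting $D^1(1,2)^{-1}$ begins at $t(1,2)$. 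Hence these two corners contribute two distinct $1$-cells of length $2\pi/3$ in $Lk(\omega^0,\Omega_3)$ connecting the same pair of vertices.

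Their concatenation is a non-backtracking closed loop in the metric graph $Lk(\omega^0,\Omega_3)$, so it is a closed geodesic, and it is not null-homotopic because it uses two distinct $1$-cells of the link. Its length is $4\pi/3<2\pi$, so Theorem \ref{cat0properties}(3) forces $\Omega_3$ not to be locally CAT(0). The main delicate point is the bookkeeping of sources and targets through the inverted edges in the two boundary words; once the shared corner has been identified, the length estimate is immediate.
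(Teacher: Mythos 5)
Your proof is correct and follows essentially the same route as the paper: both apply the link condition of Theorem \ref{cat0properties}(3) at $\omega^0$ and exhibit the same digon in $Lk(\omega^0,\Omega_3)$, formed by the two corners joining the target direction of $D^1(1,2)$ to the source direction of $D^1(1,3)$ in the hexagons $D^2(1,2,3)=D^2(\id)$ and $D^2(1,3,2)=D^2((23))$, giving a non-trivial closed geodesic of length $4\pi/3<2\pi$. Your explicit bookkeeping of sources and targets through the inverted edges matches the paper's identification of the arcs connecting $y^+(1,2)$ and $y^-(1,3)$.
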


\begin{proof}
Since $\Omega_3$ is a piecewise Euclidean complex with a finite number of isometry classes, we will use Part (3) of Theorem \ref{cat0properties} and show that the geometric link of the only vertex $\omega^0$ in $\Omega_3$ contains a nontrivial loop with length $<2\pi$.\\
\\
\noindent Let $\mathcal{P}$ be the set of ordered partitions of $\{1,2,3\}$, where each $P=P(u,X)$ is associated with a coset $u\W_{X}$ with $X\subset S=\{s_1=(1\;2),s_2=(2\;3)\}$ and $u\in \mathfrak{S}_3$, $(\emptyset,X)$-minimal. The complex $\Omega_3=(C[S]\times \mathfrak{S}_3)/\sim$ is described as follows.\\
\\
\textbf{Description of the 0-skeleton.} There is a unique vertex denoted by $\omega^0$, which corresponds to the only class $\left(P\left(\id, \emptyset\right), \id\right)$ in the quotient.\\
\\
\textbf{Description of the 1-skeleton.} The complex has 6 oriented edges, denoted by\\ $\Dd^1(1,2), \Dd^1(1,3), \Dd^1(2,3), \Dd^1(2,1), \Dd^1(3,1), \Dd^1(3,2)$. These edges were previously described in the proof of Theorem \ref{fundoriginalbeer}.\\
\\
\textbf{Description of the 2-skeleton.} The complex has
 6 classes of 2-cells. In this case $n=3$, and the only parabolic subgroup $\W_{X}$ with $\left|X\right|=2$ is $\W=\mathfrak{S}_3$ itself. The ordered partition associated with the 2-dimensional face of $C[S]$ consists of just one block $\left(\left\{1,2,3\right\}\right)$. In $C[S]\times \mathfrak{S}_3$ we make six copies of such a cell, and each copy $(C[S],w)$, for $w \in \mathfrak{S}_3$, is the only element in its class with respect to $\sim$. Thus, $\Omega_3$ has six 2-cells of type $\left(P_{123},w\right)$ with $w \in \mathfrak{S}_3$ and $P_{123}$ as in the proof of Theorem \ref{fundoriginalbeer}. We denote the class of $(F(P_{123}),w)$ in $\Omega_3$ by $\Dd^2(w)$, for $w\in \mathfrak{S}_3$.\\
 \\
\noindent Each 2-cell is a regular hexagon, and its boundary is described by the relations like in (2)(a) of the proof of Theorem \ref{fundoriginalbeer}.
For example, two of these boundaries are:
\begin{align*}
&\Dd^1(1,2)\;\Dd^1(1,3)\;\Dd^1(2,3)\;\Dd^1(1,2)^{-1}\;\Dd^1(1,3)^{-1}\;\Dd^1(2,3)^{-1}
&\mbox{   for $\Dd^2(\id)$},\nonumber\\
&\Dd^1(1,3)\;\Dd^1(1,2)\;\Dd^1(3,2)\;\Dd^1(1,3)^{-1}\;\Dd^1(1,2)^{-1}\;\Dd^1(3,2)^{-1} &\mbox{   for $\Dd^2((23))$}.\nonumber
\end{align*}
Now, take any two indices $i,j \in \left\{1,2,3\right\}$, with $i\neq j$. The two endpoints of the loop $\Dd^1(i,j)$, which are identified with $\omega^0$, determine two vertices $y^+(i,j)$ and $y^-(i,j)$ of $Lk\left(\omega^0,\Omega_3\right)$. By convention, we fix the orientation from $y^-(i,j)$ to $y^+(i,j)$. All the vertices of $Lk\left(\omega^0,\Omega_3\right)$ are of this form. In particular, $Lk\left(\omega^0,\Omega_3\right)$ has 12 vertices. Since all the 2-cells of $\Omega_3$ are regular hexagons, the edges of $Lk\left(\omega^0,\Omega_3\right)$ all have length $\frac{2 \pi}{3}$. Therefore, $Lk\left(\omega^0,\Omega_3\right)$ has no minimal cycle of length $<2 \pi$ if and only if it is impossible to find two arcs in $Lk\left(\omega^0,\Omega_3\right)$ that share the same endpoints.\\
However, both $\Dd^2(\id)$ and $\Dd^2((23))$ determine arcs connecting $y^+(1,2)$ and $y^-(1,3)$, so these two vertices are linked by two different edges. Thus, there is a non homotopically trivial closed geodesic in $Lk(\omega^0,\Omega_3)$ with length $2\cdot \frac{2}{3}\pi=\frac{4}{3}\pi<2\pi$, meaning that $\Omega_3$ cannot be CAT(0).

\end{proof}

\subsection{The generalized BEER complex}\label{generalBeer}
In this subsection, we give the definition of the generalized BEER space $\Omega(\Gamma)$ and prove that its fundamental group is $\PVA[\Gamma]$. In the construction of Subsection \ref{originalBEERspace}, the faces of the permutohedron are in bijection with the ordered partitions of $\{1,2,\ldots,n\}$. As before, given $P\in \mathcal{P}$ an ordered partition, we denote by $F(P)$ the unique corresponding face of the permutohedron $C[S]$. Two different ordered partitions are equivalent if they represent the same unordered partition. Given this, two different faces $F_1=(F(P),w)$ and $F_2=(F(Q),w')$ in two copies of the permutohedron in $C[S]\times \Sn$ are equivalent if the partitions $P$ and $Q$ are equivalent, and if the permutations $w$ and $w'$ are such that for every block $P_k$ of $P$ (which is also a block of $Q$ by equivalence), for every $i,j\in P_k$, we have $w(i)<w(j)$ if and only if $w'(i)<w'(j)$. The complex $\Omega_n$ is the quotient of $C[S]\times \Sn$ under the defined equivalence relation given by the attaching map $(x,w)\sim k_{F_1,F_2}(x,w)$ for all $x\in F(P)$, where $k_{F_1,F_2}$ is a translation in the first component.\\
\\
\noindent The first intuition for defining the general space $\Omega(\Gamma)$ came from attempting to translate Definition \ref{deforiginalbeer} for $\Gamma$ any Coxeter graph of spherical type, with $(\W[\Gamma],S)$ its Coxeter system. In this case, we can replace the permutohedron by the usual Coxeter polytope $C[S]$, and the product space by $C[S]\times \W[\Gamma]$. Thanks to Lemma \ref{facescoxpol}, the faces $F(u,X,S)$ of $C[S]$ are in bijection with the cosets $u\W_{X}$ of $\W[\Gamma]$ with $X\subset S$ and $u\in \W[\Gamma]$, but as usual we choose $u$ to be the $(\emptyset,X)$-minimal representative of the coset. Now, take two cosets $u\W_{X}$ and $v\W_{Y}$ with $X,Y\subset S$ and $u,v\in \W[\Gamma]$, and consider their associated faces $F(u,X,S)$ and $F(v,Y,S)$ of $C[S]$. In the case $\Gamma=A_{n-1}$, the equivalence of the associated ordered partitions, as deduced from part (1) of Lemma \ref{stabandcosets}, can be rephrased saying that $F(u,X,S)\sim F(v,Y,S)$ if and only if $u\W_{X}u^{-1}=v\W_{Y}v^{-1}$. In general, if this equality of parabolic subgroups holds, we say that the cosets are \textit{equivalent}, and we write $u\W_{X}\sim v\W_{Y}$. \\
\\
\noindent Now, consider two pairs $(F(u,X,S),w)$ and $(F(v,Y,S),w')$ of $C[S]\times \W[\Gamma]$. The condition regarding the action of the permutation in the second component on the indices can be rephrased in the general setting in terms of the action of $\W[\Gamma]$ on its root system $\Phi[\Gamma]$. 
As recalled in Subsection \ref{APrefsubgrp}, the simple root vectors of $\W[A_{n-1}]=\Sn$ can be described as $\alpha_{i,i+1}=\frac{e_i-e_{i+1}}{\sqrt{2}}\in \mathbb{R}^{n+1}$, for all $i\in 1,\ldots,n-1$. The entire root system of the symmetric group can be described as 
\[
\Phi[A_{n-1}]=\{\alpha_{i,j}\mid 1\leq i\neq j\leq n\},
\]
\noindent
with $\alpha_{i,j}=\frac{e_i-e_{j}}{\sqrt{2}}$. In particular, if $i<j$, $\alpha_{i,j}$ is a positive root and $\alpha_{j,i}=-\alpha_{i,j}$ is a negative root. \\
\\
\noindent Recall that the action of $\Sn$ on $\Phi[A_{n-1}]$ is the action on the indices, i.e., for all $w\in \Sn$, $w(\alpha_{i,j})=\alpha_{w(i),w(j)}$. Specifically, if $i<j$ and $w(i)<w(j)$, then $w$ sends the positive root $\alpha_{i,j}$ to a positive root, while if $w(i)>w(j)$, then $w$ sends the positive root $\alpha_{i,j}$ to the negative root $\alpha_{w(i),w(j)}$. \\
\\
\noindent We now come back to the two pairs $(F(u,X,S),w)$ and $(F(u,X,S),w')$ in $C[S]\times \Sn$ that represent the same cell in the original BEER complex $\Omega_n$. Let $P=P(u,X)$ be the ordered partition associated with the face $F(u,X,S)$ of the permutohedron, as defined earlier. In particular $P=(P_1,\ldots,P_p)$, where the number of blocks $p$ is $n-|X|$. Since $(F(u,X,S),w)\sim(F(u,X,S),w')$, then for all $k\in \{1,\ldots,p\}$ and for all $i<j$ with $i,j\in P_k$, we must have $w(i)<w(j)$ if and only if $w'(i)<w'(j)$. In terms of action on the root system, this means that for all $k\in \{1,\ldots,p\}$ and for all $\alpha_{i,j}\in \Phi^+[A_{n-1}]$ with $i,j\in P_k$, we have that $w(\alpha_{i,j})\in \Phi^+[A_{n-1}]$ if and only if $w'(\alpha_{i,j})\in \Phi^+[A_{n-1}]$.\\
\\
\noindent We now aim to generalize the condition $\alpha_{i,j}\in \Phi^+[A_{n-1}]$ with $i,j\in P_k$ to arbitrary Coxeter graphs. Recall that the block $P_k$ of the partition $P=P(u,X)$ is given by:
\[
P_k=u\{i_{{k-1}}+1,\ldots,i_{k}\}=\{u(i_{{k-1}}+1),\ldots,u(i_{k})\},
\]
\noindent where $p=n-|X|$, $S\backslash X=\{s_{i_1},\ldots,s_{i_{p-1}}\}$ with $i_1< \cdots < i_{p-1}$, and $i_0=0$, $i_p=n$. For all $k=1,\ldots,p$, define $X_k=\{s_{i_{{k-1}}+1},\ldots,s_{i_{k}-1}\}$, with the convention that $X_k=\emptyset$ if $i_{k}=i_{k-1}+1$. Observe that $X=X_1\sqcup\cdots\sqcup X_p$ and that the block $P_k$ is stabilized by the group $u\W_{X_k}u^{-1}$, which is a parabolic subgroup of $\Sn$. Therefore, taken a positive root $\alpha_{i,j}$, the condition $i,j\in P_k$ means that 
\[
i,j\in \{u(i_{{k-1}}+1),\ldots,u(i_{k})\},
\]
which is equivalent to saying that
\[
\alpha_{i,j}\in u(\Phi_{X_k})\cap \Phi^+[A_{n-1}],
\]
where $u(\Phi_{X_k})$ is the root system of the parabolic group $u\W_{X_k}u^{-1}$ seen as a Coxeter group. Namely, $u(\Phi_{X_k})=u\{v(\alpha_{s})\mid v\in \W_{X_k}, s\in X_k\}$. We want the above condition to hold for all $k\in \{1,\ldots,p\}$. Observe that \begin{align*}
    u\W_Xu^{-1}=\Stab(P(u,X))=\Stab((P_1,\ldots,P_p))=\\=\Stab(P_1)\times \cdots\times \Stab(P_p)=u\W_{X_1}u^{-1}\times \cdots \times u\W_{X_p}u^{-1}.
\end{align*}
\noindent Hence,
\[u(\Phi_{X})=u(\Phi_{X_1})\sqcup \cdots \sqcup u(\Phi_{X_p}).\]
\noindent
In conclusion, two pairs $(F(u,X,S),w)$ and $(F(v,Y,S),w')$ represent the same face in $\Omega_n$ if and only if $u\W_Xu^{-1}=v\W_Yv^{-1}$ and, for all roots $\alpha_{i,j}\in u(\Phi_{X})\cap \Phi^+[A_{n-1}]$,
\[
w(\alpha_{i,j})\in \Phi^+[A_{n-1}] \;\Longleftrightarrow\; w'(\alpha_{i,j})\in\Phi^+[A_{n-1}].
\]
\noindent We aim now to consider a general Coxeter graph $\Gamma$ of spherical type, with the product space $C[S]\times \W[\Gamma]$.
We say that $(F(u,X,S),w)$ is \textit{equivalent} to $(F(v,Y,S),w')$, and we write $(F(u,X,S),w)\approx (F(v,Y,S),w')$, if $u\W_{X}\sim v\W_{Y}$ and if for any $\beta \in u(\Phi_{X})=v(\Phi_{Y})$, we have that $w(\beta)\in \Phi^+[\Gamma] \Longleftrightarrow w'(\beta)\in \Phi^+[\Gamma]$.
Suppose $F_1 = (F(u,X,S),w)\approx F_2 = (F(v,Y,S),w')$.
Then, as in the case of $\Omega_n$, one can check that the translation by vector $ v(o) - u(o)$ induces an isometry $k_{F_1,F_2} : F_1 \to F_2$.
Define $\Omega' (\Gamma)$ as the quotient of $C [S] \times \W [\Gamma]$ by the relation which, for two equivalent faces $F_1$ and $F_2$ of $C [S] \times \W [\Gamma]$, identifies every $x \in F_1$ with $k_{F_1, F_2} (x) \in F_2$.
Now, it can be shown that the fundamental group of $\Omega' (\Gamma)$ is indeed ${\rm PVA} [\Gamma]$.\\
\\
\noindent However, unlike in the case of $\Gamma = A_{n-1}$, to get a covering of $\Omega' (\Gamma)$ which coincides with a covering of $\Sigma (\Gamma)$, more cells need to be added.
Moreover, such a construction holds only for some Coxeter graphs of spherical type, and we aim for a construction that applies to any Coxeter graph.
Thus, we realized that to consider a cell for each parabolic subgroup of $\W[\Gamma]$ is insufficient to get the desired complex. Therefore, we moved on to the notion of almost parabolic reflection subgroups to include more cells, allowing us to define the appropriate space.\\
\\
\noindent We will give now a description of the generalized BEER complex. As with the Salvetti complex, we will be able to define the BEER complex even for graphs $\Gamma$ which are not necessarily of spherical type or finite, simply by attaching one cell for each AP spherical-type reflection subgroup.\\
\\
\noindent Before defining $\Omega(\Gamma)$, we remark on some properties of the AP spherical-type sets of roots. Recall that $\Rf=\{\cX\subset  \Phi[\Gamma] \; |\;  \cX \mbox{ is AP and of spherical type}\}.$
\begin{rmk}
    Let $\Gamma$ be a Coxeter graph and $\cY\subset\cX\subset\Phi[\Gamma]$, with $\cX\in \Rf$.  We know that $\W_{\cY}$ is a standard parabolic subgroup of $\W_{\cX}$, generated by the reflections $R_{\cY}\subset R_{\cX}$. Let $u\in \W_{\cX}$ be an $(\emptyset,R_{\cY})$-minimal element. Then $u(\cY)$ is an AP and spherical-type set of roots. In fact, $\cY$ is AP and of spherical type, and since $\W_{\cX}$ acts on $V_{\cX}$ by invertible linear transformations, the set of roots $u(\cY)$ is linearly independent whenever $\cY$ is. Clearly, property (AP2) is satisfied, and being contained in $\W_{\cX}$, the reflection subgroup associated to $u(\cY)$ is still of spherical type.
\end{rmk}

\noindent By the presentation for AP reflection subgroups obtained in Theorem \ref{refAP}, Equation \ref{presrefsub}, we have that the AP spherical-type reflection subgroup $\W_{u(\cY)}$ is isomorphic to $u\W_{\cY}u^{-1}$. The latter is, in particular, a parabolic subgroup of $\W_{\cX}$ of rank $|\cY|$. We have seen in Equation \ref{coxpolrefsub} how to describe the Coxeter polytope relative to a spherical-type AP reflection subgroup. Now we investigate the link between $C[\cY]$ and $C[u(\cY)]$.

\begin{prop}\label{uchi}
    Consider $\cY\subset\cX\subset \Phi[\Gamma]$, with $\cX$ being AP and of spherical type. Let $u\in \W_{\cX}$ be an $(\emptyset,R_{\cY})$-minimal element. Then there is a linear isomorphism $V_{\cY}\longrightarrow V_{u(\cY)}$ that sends  $C[\cY]$ isometrically to $C[u(\cY)]$ and is induced by the action of the element $u$ on $V$.
\end{prop}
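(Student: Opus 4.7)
My plan is to take $f$ to be the restriction to $V_{\cY}$ of the linear action of $u \in \W[\Gamma] \subset GL(V)$, and show that this restriction has the claimed properties. First I would observe that since $u$ is an isometry of $V$ that sends the basis $\cY$ of $V_{\cY}$ bijectively onto $u(\cY)$, its restriction $f := u|_{V_{\cY}}$ is a linear isomorphism onto $V_{u(\cY)} = \bigoplus_{\beta \in \cY} \mathbb{R} \cdot u(\beta)$, and it is an isometry with respect to the scalar products induced on these subspaces by $\langle\cdot,\cdot\rangle$.

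Next I would prove that $f(o_{\cY}) = o_{u(\cY)}$. Writing $o_{\cY} = \sum_{\beta \in \cY} \beta^*_{\cY}$, where $\{\beta^*_{\cY} \mid \beta \in \cY\}$ is the dual basis of $\cY$ inside $V_{\cY}$, I would use that $u$ preserves the scalar product to check that $\langle u(\beta^*_{\cY}), u(\gamma) \rangle = \langle \beta^*_{\cY}, \gamma \rangle = \delta_{\beta,\gamma}$ for all $\beta, \gamma \in \cY$. Because $u(\beta^*_{\cY}) \in V_{u(\cY)}$, this identifies $u(\beta^*_{\cY})$ with the dual $u(\beta)^*_{u(\cY)}$ taken inside $V_{u(\cY)}$, and summing yields the desired equality.

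The main step is then to transfer the vertex set of $C[\cY]$ onto that of $C[u(\cY)]$ via $f$. The generators of $\W_{u(\cY)}$ are the reflections $r_{u(\beta)} = u r_{\beta} u^{-1}$ for $\beta \in \cY$, so conjugation by $u$ gives a group isomorphism $\W_{\cY} \to \W_{u(\cY)}$, $z \mapsto uzu^{-1}$. Combined with the base-point equality from the previous paragraph, for every $z \in \W_{\cY}$ I get $f(z(o_{\cY})) = u z(o_{\cY}) = (uzu^{-1})(u(o_{\cY})) = (uzu^{-1})(o_{u(\cY)})$, and as $z$ ranges over $\W_{\cY}$ the element $uzu^{-1}$ ranges over $\W_{u(\cY)}$. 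Hence $f$ bijects the vertex set of $C[\cY]$ onto the vertex set of $C[u(\cY)]$, and since $f$ is a linear isometry, taking convex hulls gives $f(C[\cY]) = C[u(\cY)]$ isometrically.

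The subtle step I expect is the identification of dual bases: the dual $\beta^*_{\cY}$ is taken inside $V_{\cY}$ (not inside $V$), and $u(\beta)^*_{u(\cY)}$ inside $V_{u(\cY)}$, so I must be careful to restrict the scalar product to the appropriate subspace at each stage. Beyond this, everything is a direct transport-of-structure via the isometry $u$, and the $(\emptyset, R_{\cY})$-minimality hypothesis on $u$ is not used in the proof itself; it merely singles out a canonical representative of the coset $u\W_{\cY}$, which will be needed later when these isometries are assembled into attaching maps for the complex $\Omega(\Gamma)$.
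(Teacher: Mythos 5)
Your proposal is correct and follows essentially the same route as the paper: restrict the action of $u$ to $V_{\cY}$, use preservation of the bilinear form to get $u(o_{\cY})=o_{u(\cY)}$, and then send each vertex $z(o_{\cY})$ to $uzu^{-1}(o_{u(\cY)})$ via the conjugation isomorphism $\W_{\cY}\to\W_{u(\cY)}$. Your explicit verification of the dual-basis identification, and your observation that the $(\emptyset,R_{\cY})$-minimality of $u$ is not actually needed in the argument, are both consistent with the paper's (terser) proof.
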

\begin{proof}
    Recall that $V_{\cY}=\bigoplus_{\beta\in \cY}\mathbb{R}\cdot \beta$ and $V_{u(\cY)}=\bigoplus_{\beta\in \cY}\mathbb{R}\cdot u(\beta)$. The fundamental points are $o_{\cY}=\sum_{\beta\in \cY}\beta_{\cY}^{*}$ and $o_{u(\cY)}=\sum_{\beta\in u(\cY)}\beta_{u(\cY)}^{*}$. Since $u$ preserves the bilinear form, it is easy to see that $u(o_{\cY})=o_{u(\cY)}$. Moreover, $u:V_{\cY}\longrightarrow V_{u(\cY)}$ sends a basis to a basis. For all $z\in \W_{\cY}$, we have then that the vertex $z(o_{\cY})$ of $C[\cY]$ is sent by $u$ to $u(z(o_{\cY}))=uzu^{-1}u(o_{\cY})=uzu^{-1}(o_{u(\cY)})$, which is a vertex of $C[u(\cY)]$ since $uzu^{-1}\in u\W_{\cY}u^{-1}=\W_{u(\cY)}$. Summarizing:
    \begin{align*}
        u:V_{\cY}&\longrightarrow V_{u(\cY)},\\
        C[\cY]&\longrightarrow C[u(\cY)],\\
        z(o_{\cY})&\longmapsto uzu^{-1}(o_{u(\cY)}).
    \end{align*}
    Observe also that if $\cZ\subset \cY$ and $v\in \W_{\cY}$ is $(\emptyset,R_{\cZ})$-minimal, then $uvu^{-1}\in \W_{u(\cY)}$ and it is $(\emptyset,R_{u(\cZ)})$-minimal.
\end{proof}

\begin{defn}\label{defbeer}
Let $\Gamma$ be a Coxeter graph and $\Phi[\Gamma]$ be its root system. The BEER complex, denoted by $\Omega(\Gamma)$, is the CW-complex defined as follows.\\
\\
\textbf{Description of the 0-skeleton.}
The 0-skeleton $\Omega^{0}(\Gamma)$ consists of just one point, $\omega^0$.\\
\\
\textbf{Description of the 1-skeleton.} For every root $\beta\in \Phi[\Gamma]$, take a copy $\mathbb{A}(\beta)$ of the Coxeter polytope $C[\beta]=conv\{z(o_{\beta})\;|\;z\in \W_{\beta}=\{\id,r_{\beta}\}\}$, which we identify with $C[\beta]$ via an isometry $f_{\beta}:C[\beta]\longrightarrow \mathbb{A}(\beta)$. Observe that $o_{\beta}=\beta_{\beta}^*=\beta$, so $C[\beta]$ is the segment $[-\beta,\beta]$, naturally oriented from $\beta$ to $-\beta$. This orientation is transmitted to $\mathbb{A}(\beta)$ via the isometry $f_{\beta}$. It is important to remark that, even if $C[\beta]=C[-\beta]$, we have $\mathbb{A}(-\beta)\neq \mathbb{A}(\beta)$. For any $\beta\in \Phi[\Gamma]$, we define a map $\lambda^1_{\beta}:\partial \mathbb{A}(\beta)\longrightarrow \Omega^0(\Gamma)$ by setting $\lambda^1_{\beta}(f_{\beta}(\beta))=\lambda^1_{\beta}(f_{\beta}(-\beta))=\omega^0$. Define the 1-skeleton $\Omega^1(\Gamma)$ to be the disjoint union of $\{\omega^0\}$ and all the cells $\mathbb{A}(\beta)$, under the identifications that attach $x$ to $\lambda^1_{\beta}(x)$ for all $\beta\in \Phi[\Gamma]$ and for all $x\in \partial\mathbb{A}(\beta)$. The inclusion of $\mathbb{A}(\beta)$ in $\{\omega^0\}\sqcup (\bigsqcup_{\beta'\in \Phi[\Gamma]}\mathbb{A}(\beta'))$, composed with the quotient projection onto $\Omega^1(\Gamma)$, gives a characteristic map $\Lambda^1_{\beta}: \mathbb{A}(\beta)\longrightarrow \Omega^1(\Gamma)$ whose image is denoted by $D^1(\beta)$. The map $\Lambda^1_{\beta}$ extends $\lambda^1_{\beta}$ and is a homeomorphism from the interior of the segment $\mathbb{A}(\beta)$ to $D^1(\beta)\,\backslash\, \{\omega^0\}$.  Thus, as a set, $\Omega^1(\Gamma)=\{\omega^0\}\bigcup\left(\bigcup_{\beta\in \Phi[\Gamma]}D^1(\beta)\right)$, where each $D^1(\beta)$ is a closed segment with both ends attached to the only vertex $\omega^0$.\\
    \\
\textbf{Description of the $k$-skeleton.} Suppose that the $(k-1)$-skeleton of $\Omega(\Gamma)$ is defined for $k\geq 2$. Namely, for each $\cY\subset \Phi[\Gamma]$ AP and of spherical type of rank $h\leq k-1$, we have a copy $\mathbb{D}(\cY)$ of the Coxeter polytope $C[\cY]$, which we identify with $C[\cY]$ via an isometry $f_{\cY}:C[\cY]\longrightarrow \mathbb{D}(\cY)$, and we have an $h$-cell $D^h(\cY)$ in $\Omega^{k-1}(\Gamma)$ 
which is the image of a continuous characteristic map $\Lambda^h_{\cY}\,:\, \mathbb{D}(\cY)\longrightarrow D^h(\cY)$, establishing a homeomorphism between the interior of $\mathbb{D}(\cY)$ and $D^h(\cY)\,\backslash\, \partial D^h(\cY)$. If $|\cY|=1$, then there exists $\beta\in \Phi[\Gamma]$ such that $\cY=\{\beta\}$, and we assume $\mathbb{D}(\cY)=\mathbb{A}(\beta)$. If $\cY=\emptyset$, then we adopt the following conventions: 
    \[
    V_{\emptyset}=\{0\},\qquad \W_{\emptyset}=\{\id\},\qquad o_{\emptyset}=0,\qquad C[\emptyset]=\{0\}.
    \]
    \noindent We also set $\mathbb{D}(\emptyset)=\{0\}$. Take $f_{\emptyset}:C[\emptyset]\longrightarrow \mathbb{D}(\emptyset)$ to be the identity map, and define $\lambda_{\emptyset}^0:\mathbb{D}(\emptyset)\longrightarrow\Omega^0(\Gamma)$ to be the map sending $0$ to the unique vertex $\omega^0$. Thus $D^0(\emptyset)=\{\omega^0\}$.\\
\\
 \noindent Take now, for each $\cX\in \Rf_k$, a copy $\mathbb{D}(\cX)$ of the Coxeter polytope $C[\cX]$, identified with $C[\cX]$ via an isometry $f_{\cX}:C[\cX]\longrightarrow \mathbb{D}(\cX)$. Let $\cY\subset \cX$ with $|\cY|=h\leq k-1$ and $u\in \W_{\cX}$ an $(\emptyset,R_{\cY})$-minimal element. Since $u(\cY)\in \Rf$ and the rank $|u(\cY)|=h\leq k-1$, there is a cell $D^h(u(\cY))$ in $\Omega^{k-1}(\Gamma)$ with its characteristic map $\Lambda^h_{u(\cY)}:\mathbb{D}(u(\cY))\longrightarrow D^h(u(\cY))$.
    Define now $\chi^k_{u,\cY,\cX}:F(u,\cY,\cX)\longrightarrow \Omega^{k-1}(\Gamma)$ to be the following composition of maps:
\[
F(u,\cY,\cX)\xrightarrow[u^{-1}]{} F(\id,\cY,\cX)\xrightarrow[\tau(o_{\cY}-o_{\cX})]{}C[\cY]\xrightarrow[u]{}C[u(\cY)]\xrightarrow[f_{u(\cY)}]{}\mathbb{D}(u(\cY))\xrightarrow[\Lambda^h_{u(\cY)}]{}D^h(u(\cY)),
\]
where $\tau(o_{\cY}-o_{\cX})$ denotes the translation that commutes with the action of $\W_{\cY}$. In other words, the face $F(u,\cY,\cX)$ is first identified with $C[\cY]$, then transported by $u$ to $C[u(\cY)]$, and finally attached to the cell $D^h(u(\cY))$. Observe that $u\circ \tau(o_{\cY}-o_{\cX})\circ u^{-1}=\tau(u(o_{\cY}-o_{\cX}))$, so we can rewrite the expression of $\chi^k_{u,\cY,\cX}$ as    
\[\chi^k_{u,\cY,\cX}=\Lambda^h_{u(\cY)}\circ f_{u(\cY)} \circ\tau(u(o_{\cY}-o_{\cX})).\]
Recall that if $F(u,\cY,\cX)$ is a face of $C[\cX]$ with $\cY$ and $u$ as before, the faces of $F(u,\cY,\cX)$ are of the form $F(uv,\cZ,\cX)=conv\{uvw(o_{\cX})\,|\,w\in \W_{\cZ}\}$ with $\cZ\subset \cY$ and $v\in \W_{\cY}$, $(\emptyset,R_{\cZ})$-minimal.

\begin{lem}\label{lambdawelldefined}
    Let $\cX,\cY,\cZ \in \Rf$ be such that $\cZ\subset \cY\subset \cX$, with $|\cX|=k,|\cY|=h,|\cZ|=l$. Take $u\in \W_{\cX}$, $u$ $(\emptyset,R_{\cY})$-minimal and $v\in \W_{\cY}$, $v$ $(\emptyset,R_{\cZ})$-minimal. We have that $F(uv,\cZ,\cX)\subset F(u,\cY,\cX)\subset C[\cX]$. Then, the restriction of $\chi^k_{u,\cY,\cX}$ to $F(uv,\cZ,\cX)$ coincides with $\chi^k_{uv,\cZ,\cX}$. 
\end{lem}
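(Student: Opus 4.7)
The plan mirrors the proof of Lemma \ref{varphiiscontinuous}, adapted to account for the extra twist by $u$ that appears in the BEER construction. First, using the identity $u \circ \tau(o_\cY - o_\cX) \circ u^{-1} = \tau(u(o_\cY - o_\cX))$ already noted in the definition, I rewrite both maps in the compact form
\begin{align*}
\chi^k_{u,\cY,\cX} &= \Lambda^h_{u(\cY)} \circ f_{u(\cY)} \circ \tau(u(o_\cY - o_\cX)),\\
\chi^k_{uv,\cZ,\cX} &= \Lambda^l_{uv(\cZ)} \circ f_{uv(\cZ)} \circ \tau(uv(o_\cZ - o_\cX)).
\end{align*}

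Next, I track what $\chi^k_{u,\cY,\cX}$ does on the subface $F(uv,\cZ,\cX) \subset F(u,\cY,\cX)$. The analog of Lemma \ref{transl} for $\cY \subset \cX$ shows that $\tau(o_\cY - o_\cX)$ sends $F(v,\cZ,\cX)$ onto $F(v,\cZ,\cY) \subset C[\cY]$, and Proposition \ref{uchi} shows that the subsequent action of $u$ carries this face to $F(uvu^{-1}, u(\cZ), u(\cY)) \subset C[u(\cY)]$. Since conjugation by $u$ is a Coxeter isomorphism $(\W_\cY, R_\cY) \to (\W_{u(\cY)}, R_{u(\cY)})$, the element $uvu^{-1}$ is $(\emptyset, R_{u(\cZ)})$-minimal in $\W_{u(\cY)}$, so $\tau(u(o_\cY - o_\cX))$ sends $F(uv,\cZ,\cX)$ isometrically onto a genuine boundary face of $C[u(\cY)]$.

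By induction on $k$, the cell $D^h(u(\cY))$ has already been constructed at level $h \leq k-1$ together with a consistent attaching map, so that on each face $F(u',\cY',u(\cY))$ of $\partial C[u(\cY)]$ the composition $\Lambda^h_{u(\cY)} \circ f_{u(\cY)}$ coincides with $\chi^h_{u',\cY',u(\cY)}$. Applied to our face, and using that $uvu^{-1}(u(\cZ)) = uv(\cZ)$, this gives
\[\Lambda^h_{u(\cY)} \circ f_{u(\cY)}\big|_{F(uvu^{-1},u(\cZ),u(\cY))} = \Lambda^l_{uv(\cZ)} \circ f_{uv(\cZ)} \circ \tau\bigl(uvu^{-1}(o_{u(\cZ)} - o_{u(\cY)})\bigr).\]

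It only remains to identify the composition of the two translations with $\tau(uv(o_\cZ - o_\cX))$. Proposition \ref{uchi} gives $u(o_\cY) = o_{u(\cY)}$ and $u(o_\cZ) = o_{u(\cZ)}$, whence $uvu^{-1}(o_{u(\cZ)} - o_{u(\cY)}) = uv(o_\cZ - o_\cY)$; thus the total translation vector is $uv(o_\cZ - o_\cY) + u(o_\cY - o_\cX)$. Since $v \in \W_\cY$ fixes $o_\cY - o_\cX$ (by the analog of Lemma \ref{transl} for $\cY \subset \cX$), one has $u(o_\cY - o_\cX) = uv(o_\cY - o_\cX)$, and the sum telescopes to $uv(o_\cZ - o_\cX)$. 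Hence the restriction agrees with $\chi^k_{uv,\cZ,\cX}$, completing the proof. The main obstacle is purely bookkeeping, keeping straight the various fundamental points and their images under $u$; once Proposition \ref{uchi} and the fixed-vector version of Lemma \ref{transl} are in hand, everything reduces to the translation-vector computation above.
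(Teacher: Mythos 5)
Your proof is correct and follows essentially the same route as the paper's: factor the restriction through the face $F(uvu^{-1},u(\cZ),u(\cY))$ of $C[u(\cY)]$, invoke the inductive description of $\Lambda^h_{u(\cY)}$ on the boundary to replace it by $\chi^h_{uvu^{-1},u(\cZ),u(\cY)}$, and then use $u(o_{\cY})=o_{u(\cY)}$, $u(o_{\cZ})=o_{u(\cZ)}$ from Proposition \ref{uchi} together with the fact that $v\in\W_{\cY}$ fixes $o_{\cY}-o_{\cX}$ to collapse the translations to $\tau(uv(o_{\cZ}-o_{\cX}))$. The only cosmetic difference is that you carry out the computation directly on translation vectors, whereas the paper cancels the conjugating factors $u^{-1}\circ u$ step by step; the content is identical.
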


\noindent We will prove this lemma afterwards. Therefore, we can define a continuous map $\chi^k_{\cX}:\partial C[\cX]\longrightarrow \Omega^{k-1}(\Gamma)$ as follows. Let $x\in \partial C[\cX]$. We choose $\cY\subset \cX$ and $u\in \W_{\cX}$ an $(\emptyset, R_{\cY})$-minimal element such that $x\in F(u,\cY,\cX)$ and we set $\chi^k_{\cX}(x)=\chi^k_{u,\cY,\cX}(x)$. Thanks to the previous lemma, this definition does not depend on the choice of $F(u,\cY,\cX)$, and the map $\chi^k_{\cX}$ is continuous. Then, define $\lambda^k_{\cX}:\partial \mathbb{D}(\cX)\longrightarrow \Omega^{k-1}(\Gamma)$ by setting
\[
\lambda^k_{\cX}=\chi^k_{\cX}\circ (f^{-1}_{\cX})|_{\partial \mathbb{D}(\cX)}.
\]
The map $\lambda^{k}_{\cX}$ is continuous. Now for all $\cX\in \Rf_k$ and for all $x\in \partial \mathbb{D}(\cX)$, we write $x\sim \lambda^k_{\cX}(x)$. The $k$-skeleton of the cell complex will be defined as  
    \[
    \Omega^{k}(\Gamma):=\bigslant{\left(\Omega^{k-1}(\Gamma)\bigsqcup \left( \bigsqcup_{\cX \in \Rf_k}\mathbb{D}(\cX)\right)\right)}{\sim}.
    \]
    For every $\cX\in \Rf_k$, the cell $\mathbb{D}(\cX)$ has a natural characteristic map $\Lambda^k_{\cX}$ to $\Omega^k(\Gamma)$, whose image is denoted by $D^k(\cX)$. This $\Lambda^k_{\cX}$ is given by the composition of the inclusion of $\mathbb{D}(\cX)$ into the disjoint union and the quotient projection.\\
    \\
    \textbf{Description of the complex.} We set $\Omega(\Gamma)=\bigcup_{k=0}^{\infty}\Omega^{k}(\Gamma)$, endowed with the weak topology.
\end{defn}

\noindent The reader may have noticed the similarity between complexes $\Sigma(\Gamma)$ and $\Omega(\Gamma)$.
Their sets of $k$-cells bijectively correspond to each other (and are parametrized by
the sets $\Rf_k$ in both cases), however the attaching maps are different. In $\Sigma(\Gamma)$, a face $F(u,\cY,\cX)$ of the cell indexed by $\cX$ is attached to the cell indexed
by $\cY$. In $\Omega(\Gamma)$, the same face is attached instead to the cell indexed by $u(\cY)$. This
distinction is already visible in dimension 2. Moreover, while each
2-cell of $\Sigma(\Gamma)$ is attached to the union of just two 1-cells, we will see in the proof
of Theorem \ref{pi1beer} that in the complex $\Omega(\Gamma)$ each 2-cell corresponding to $\cX=\{\beta,\gamma\}$
will be attached to the union of $m$ 1-cells, where $m=\widehat{m}_{\beta,\gamma}$ (see Figures \ref{2cellsalfig} and \ref{2cellbeerfig} for
comparison).While each
2-cell of $\Sigma(\Gamma)$ is attached to the union of just two 1-cells, we will see in the proof
of Theorem \ref{pi1beer} that in the complex $\Omega(\Gamma)$ each 2-cell corresponding to $\cX=\{\beta,\gamma\}$
will be attached to the union of $m$ 1-cells, where $m=\widehat{m}_{\beta,\gamma}$ (see Figures \ref{2cellsalfig} and \ref{2cellbeerfig} for
comparison).\\

\begin{proof}[Proof of Lemma \ref{lambdawelldefined}]
The map $\chi^k_{uv,\cZ,\cX}$ is described in detail as follows:
\begin{align*}
    F(uv,\cZ,\cX)\xrightarrow[]{\tau(uv(o_{\cZ}-o_{\cX}))}C[uv(\cZ)]\xrightarrow[]{f_{uv(\cZ)}}\mathbb{D}(uv(\cZ))\xrightarrow[]{\Lambda^l_{uv(\cZ)}} D^l(uv(\cZ)).
\end{align*}
The restriction of $\chi^k_{u,\cY,\cX}$ to $F(uv,\cZ,\cX)$ is described as
\begin{align*}
    F(uv,\cZ,\cX)\xrightarrow[]{u^{-1}}F(v,\cZ,\cX) \xrightarrow[]{\tau(o_{\cY}-o_{\cX})}F(v,\cZ,\cY)\xrightarrow[]{u}F(uv,\cZ,\cY)=\\
    =F(uvu^{-1},u(\cZ),u(\cY))\subset C[u(\cY)]\xrightarrow[]{f_{u(\cY)}}\mathbb{D}(u(\cY))\xrightarrow[]{\Lambda^h_{u(\cY)}} \Omega^h(\Gamma).
\end{align*}
Since $F(uvu^{-1},u(\cZ),u(\cY))\subset \partial C[u(\cY)]$, we have that $f_{u(\cY)}(F(uvu^{-1},u(\cZ),u(\cY)))\subset \partial \mathbb{D}(u(\cY))$. By induction, the restriction of $\Lambda^h_{u(\cY)}$ to the boundary of the cell is given by $\lambda^h_{u(\cY)}=\chi^h_{u(\cY)}\circ (f^{-1}_{u(\cY)})|_{\partial{\mathbb{D}(u(\cY))}}$. Thus, we have \[\Lambda^h_{u(\cY)}\circ f_{u(\cY)} |_{F(uvu^{-1},u(\cZ),u(\cY))}=\chi^h_{uvu^{-1},u(\cZ),u(\cY)},\]
\noindent which is the composition of the maps
\begin{align*}
F(uvu^{-1},u(\cZ),u(\cY))\xrightarrow[]{uv^{-1}u^{-1}}F(\id,u(\cZ),u(\cY))\xrightarrow[]{\tau(o_{u(\cZ)}-o_{u(\cY)})}C[u(\cZ)]\xrightarrow[]{}\\\xrightarrow[]{uvu^{-1}} C[uv(\cZ)]
\xrightarrow[]{f_{uv(\cZ)}}\mathbb{D}(uv(\cZ))\xrightarrow[]{\Lambda^l_{uv(\cZ)}}D^l(uv(\cZ)).
\end{align*}
By combining the previous equations, we obtain that the restriction of  $\chi^k_{u,\cY,\cX}$ to $F(uv,\cZ,\cX)$ is the composition

\begin{equation}\label{eqchi}
    \Lambda^l_{uv(\cZ)}\circ f_{uv(\cZ)}\circ uvu^{-1}\circ \tau(o_{u(\cZ)}-o_{u(\cY)})\circ uv^{-1}u^{-1}\circ u\circ \tau(o_{\cY}-o_{\cX})\circ u^{-1}.
\end{equation}
\noindent As seen in Proposition \ref{uchi}, since $u\in \W_{\cX}$ is $(\emptyset,R_{\cY})$-minimal (and thus, $(\emptyset,R_{\cZ})$-minimal) we get that $u(o_{\cY})=o_{u(\cY)}$ and $u(o_{\cZ})=o_{u(\cZ)}$. Therefore, we get $u^{-1}\circ\tau(o_{u(\cZ)}-o_{u(\cY)})\circ u=\tau(u^{-1}u(o_{\cZ}-o_{\cY}))=\tau(o_{\cZ}-o_{\cY})$. In Equation \ref{eqchi}, we can cancel the sequence $u^{-1}\circ u$ in the middle and rewrite the entire equation as $\Lambda^l_{uv(\cZ)}\circ f_{uv(\cZ)}\circ uv\circ \tau(o_{\cZ}-o_{\cY})\circ v^{-1}\circ\tau(o_{\cY}-o_{\cX})\circ u^{-1}$. Now, exploiting the fact that $v^{-1}$, being an element of $\W_{\cY}$, commutes with the translation $\tau(o_{\cY}-o_{\cX})$, we can rewrite the restriction of $\chi^k_{u,\cY,\cX}$ to $F(uv,\cZ,\cX)$ as $\Lambda^l_{uv(\cZ)}\circ f_{uv(\cZ)}\circ uv\circ \tau(o_{\cZ}-o_{\cY})\circ \tau(o_{\cY}-o_{\cX})\circ v^{-1}u^{-1}$, which simplifies to $\Lambda^l_{uv(\cZ)}\circ f_{uv(\cZ)}\circ \tau(uv(o_{\cZ}-o_{\cX}))$ and coincides with $\chi^k_{uv,\cZ,\cX}$.

\end{proof}

\medskip

\noindent Therefore, the BEER complex $\Omega(\Gamma)$ is well-defined. In Subsection \ref{eqialityofspaces} we show that, for $\Gamma=A_{n-1}$, $\Omega(\Gamma)$ coincides with the space $\Omega_n$ described in Subsection \ref{originalBEERspace}.  \\
\\
\noindent We now proceed to study the fundamental group of the generalized BEER complex $\Omega(\Gamma)$.\\
We aim to show that the presentation of $\pi_1(\Omega(\Gamma))$ coincides with the one of the pure virtual Artin group $\PVA[\Gamma]$ given in  \cite{BellParThiel}, which we recalled in Theorem \ref{prespva}. We adopt here the notation already used in Section \ref{virtualartingroups}, Equation \ref{betakappa}. \\
 \begin{thm}\label{pi1beer}
     Let $\Gamma$ be a Coxeter graph, $\PVA[\Gamma]$ its associated pure virtual Artin group, and $\Omega(\Gamma)$ the BEER space.
     Then
     \[ \pi_1(\Omega(\Gamma))\cong \PVA[\Gamma].\]
 \end{thm}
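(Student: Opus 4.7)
The plan is to read off a presentation of $\pi_1(\Omega(\Gamma))$ from the 2-skeleton and match it term-by-term with the presentation of $\widehat{\PVA}[\Gamma]$ given in Theorem \ref{prespva}; the isomorphism $\widehat{\PVA}[\Gamma] \cong \PVA[\Gamma]$ from the same theorem then gives the result. Since $\Omega^0(\Gamma)$ is a single point, a maximal tree in $\Omega^1(\Gamma)$ is trivial, and the fundamental group of the 1-skeleton is free on the set of 1-cells $\{D^1(\beta)\mid \beta\in\Phi[\Gamma]\}$. The candidate isomorphism on generators is $\widehat{\zeta}_\beta \longleftrightarrow [D^1(\beta)]$, with each $D^1(\beta)$ oriented as in Definition \ref{defbeer}.

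Next, I will enumerate the 2-cell relations. By Lemma \ref{chiapsphtyp} together with Remark \ref{card2parabolic}, the elements of $\Rf_2$ are precisely the pairs $\cX=\{\beta,\gamma\}$ with $\widehat{m}_{\beta,\gamma}=:m<\infty$; for such an $\cX$, the polytope $C[\cX]$ is a $2m$-gon whose vertices are indexed by the dihedral group $\W_\cX=\langle r_\beta,r_\gamma\rangle$. Every edge has the form $F(u,\{\delta\},\cX)$ with $\delta\in\{\beta,\gamma\}$ and $u\in\W_\cX$ an $(\emptyset,\{r_\delta\})$-minimal element, and by the definition of $\chi^2_{u,\{\delta\},\cX}$ it is attached to $D^1(u(\delta))$ with orientation from $u(o_\cX)$ to $ur_\delta(o_\cX)$.

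The main step is to trace the boundary of $C[\cX]$ once around, starting at $o_\cX$ and walking first through $r_\beta(o_\cX),\ r_\beta r_\gamma(o_\cX),\ \ldots,\ w_0(o_\cX)$, then back through $r_\gamma r_\beta(o_\cX),\ldots, r_\gamma(o_\cX)$ to $o_\cX$. One should verify, by induction on the distance from $o_\cX$, that the $k$-th edge of the first half is labeled by the root $\beta_k$ from Equation \ref{betakappa} and is traversed positively, while its parallel opposite (corresponding to the same reflection via the longest element $w_0\in\W_\cX$) carries the same label but is traversed negatively. The resulting boundary word is $[D^1(\beta_1)]\cdots [D^1(\beta_m)][D^1(\beta_1)]^{-1}\cdots [D^1(\beta_m)]^{-1}$, producing the relation $[D^1(\beta_m)]\cdots [D^1(\beta_1)]=[D^1(\beta_1)]\cdots [D^1(\beta_m)]$ in $\pi_1$, which matches $Z(\gamma,\beta,m)=Z(\beta,\gamma,m)$ under the candidate identification. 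I expect this orientation bookkeeping to be the main technical hurdle, although it closely parallels the hexagon computation for $\Omega_n$ carried out in the proof of Theorem \ref{fundoriginalbeer}. Invoking the standard fact that attaching cells of dimension $\geq 3$ does not alter $\pi_1$, the 2-skeleton presentation coincides verbatim with that of $\widehat{\PVA}[\Gamma]$, whence $\pi_1(\Omega(\Gamma))\cong \widehat{\PVA}[\Gamma]\cong \PVA[\Gamma]$.
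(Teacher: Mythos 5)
Your proposal is correct and follows essentially the same route as the paper: a single vertex gives a free group on the 1-cells $D^1(\beta)$, the rank-2 AP sets are exactly the parabolic pairs $\{\beta,\gamma\}$ with $\widehat{m}_{\beta,\gamma}<\infty$, and tracing the $2m$-gon boundary yields the word $D^1(\beta_1)\cdots D^1(\beta_m)(D^1(\beta_1))^{-1}\cdots(D^1(\beta_m))^{-1}$, matching the relations $Z(\gamma,\beta,m)=Z(\beta,\gamma,m)$ of $\widehat{\PVA}[\Gamma]$. The one point you defer (that the opposite edge carries the same label) is precisely the identity $\gamma_{m+1-k}=\beta_k$, which the paper imports from Lemma 2.5 of Bellingeri--Paris--Thiel; with that cited, your argument is complete.
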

\begin{proof}
    We can restrict our study to the 2-skeleton $\Omega^2(\Gamma)$. Let us analyze its cellular structure.\\
    \\
    \textbf{Description of the 0-skeleton.} There is just one vertex $\omega^0$.\\
    \\
    \textbf{Description of the 1-skeleton.} For each root $\beta\in \Phi[\Gamma]$, there is an oriented edge $D^1(\beta)$ with both endpoints identified with $\omega^0$. The fundamental group is then generated by the 1-cells $D^1(\beta)$, thus there is one generator for each root in $\Phi[\Gamma]$.\\
    \\
    \noindent Now, let us examine how the 2-cells of $\Omega^2(\Gamma)$ are formed, in order to obtain the relations to add in the presentation of the fundamental group.\\
\\
\textbf{Description of the 2-skeleton.} The faces of $\Omega^2(\Gamma)$ are of the form $D^2(\cX)$, where $\{\beta,\gamma\}=\cX\subset\Phi[\Gamma]$ is an AP set of roots of spherical type. In particular, thanks to Remark \ref{card2parabolic}, we know that $\cX$ is parabolic. Thus, it must be of the form $\cX=w(\Pi_X)$ with $X=\{s,t\}\subset S$ and  $m_{s,t}\neq \infty$, which from now on we will simply denote by $m$. We set $\beta=w(\alpha_s)$, $\gamma=w(\alpha_t)$.\\
\\
By definition, the interior of $D^2(\cX)$ is homeomorphic to the interior of a copy $\mathbb{D}(\cX)$ of the Coxeter polytope $C[\cX]$, which is a regular $2m$-gon, whose vertices are in bijection with the elements in the parabolic subgroup $w\W_{X}w^{-1}=\W_{\cX}$:
    \[\W_{\cX}=\{\id, wsw^{-1}, wtw^{-1}, wstw^{-1},\ldots, w\Prod_{R}(s,t;m)w^{-1}\}=\{\id, r_{\beta}, r_{\gamma}, r_{\beta}r_{\gamma}, \ldots , \Prod_R(r_{\beta},r_{\gamma};m)\}.\]
    \noindent As noted in Remark \ref{coxpolitopeofaparabolic},
 the polytope $C[\cX]$ is the image of $C[X]$ (shown in Figure \ref{coxpolfig}) under the action of $w$.\\
 \\
\noindent As we see in Figure \ref{figcoxpolparabsubgr}, the edges that bound this polygon are represented by the expressions $F(u,\cY,\cX)$, where $u\in \W_{\cX}$ $(\emptyset,\cY)$-minimal and $\cY\subset \cX$, namely $\cY=\{\delta\}$ for $\delta\in \{\beta,\gamma\}$. In Figure
\ref{figcoxpolparabsubgr}, to ease the notation, we omit the point $o_{\cX}$ at the vertices.\\
\\
\noindent The next aim is to show that the 1-cells in the  boundary of $\Lambda^2_{\cX}(\mathbb{D}(\cX))=D^2(\cX)$ are $D^1(\beta_k)$, with $\beta_k$ as in Equation \ref{betakappa} and $k\leq m$.\\
\\
\noindent First, recall that the edge $F(u,\delta,\cX)$ of $C[\cX]$ is sent in the BEER space to the 1-cell $D^1(u(\delta))$ through $\chi^2_{u,\delta,\cX}$.
Consider first the polytope $C[\cX]$ shown in Figure \ref{figcoxpolparabsubgr}, which is isometric to the model cell $\mathbb{D}(\cX)$ through a map $f_{\cX}$.\\
\\
\noindent Starting from the vertex labeled by $\id$ in Figure \ref{figcoxpolparabsubgr}, we move counterclockwise and first find the edge $F(\id,\beta,\cX)$ sent by $\chi^2_{\id,\beta,\cX}$ to the arc $D^1(\beta)$, where $\beta=w(\alpha_s)$. The second edge, $F(r_{\beta},\gamma,\cX)$, is sent to $D^1(r_{\beta}(\gamma))$ where
 $ws(\alpha_t)=wsw^{-1}(\gamma)=r_{\beta}(\gamma)$, and the third is sent to $D^1(r_{\beta}r_{\gamma}(\beta))$ with $wstw^{-1}(\beta)=r_{\beta}r_{\gamma}(\beta)$. Continuing in this fashion, we find that the $k$-th edge for $k\leq m$ is associated with the root $\Prod_R(r_{\beta},r_{\gamma};k-1)(\beta)$ if $k$ is odd, and with $\Prod_R(r_{\gamma},r_{\beta};k-1)(\gamma)$ if $k$ is even. Thus, the $k$-th edge for $k\leq m$ is sent to the cell $D^1(\beta_k)$ with the notation as in Equation \ref{betakappa}.\\
 \\
  \noindent For the second half of the perimeter, we move clockwise from the vertex labeled by $\id$ and find the edges $F(\id,\gamma,\cX)$, $F(r_{\gamma},\beta,\cX)$, $F(r_{\gamma}r_{\beta},\gamma,\cX)$, and so on, which are, respectively, sent to $D^1(\gamma), D^1(r_{\gamma}(\beta))$, $D^1(r_{\gamma}r_{\beta}(\gamma))$ etc. by the maps $\chi^2_{\id,\gamma,\cX},\chi^2_{r_{\gamma},\beta,\cX}$ etc. Thus, the $h$-th edge with $h\leq m$ clockwise is sent to the cell $D^1(\gamma_h)$. By the same reasoning as in the proof of
Lemma 2.5 of \cite{BellParThiel}, we have that $\gamma_{m+1-k}=\beta_k$, so two opposite edges in the polygon are sent to the same 1-cell in $\Omega(\Gamma)$. Consequently, any edge in $\partial D^2(\cX)$ is of the form $D^1(\beta_k)$ for $k\leq m$.
 \begin{figure}[ht]
\centering
\begin{minipage}{8.5cm}
  \centering
  \includegraphics[width=8.5cm]{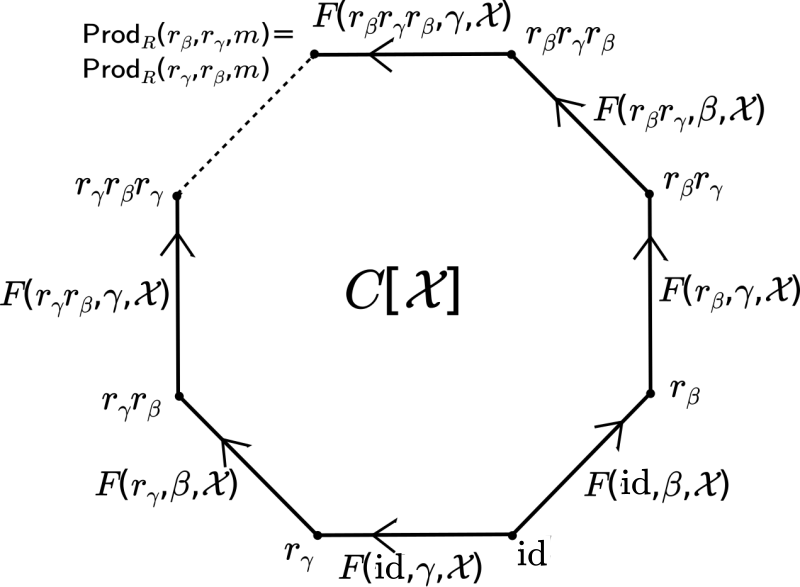}
     \caption{The Coxeter polytope of the parabolic subgroup $\W_{\cX}$ with $\cX=w(\Pi_{\{s,t\}})$ and $m=4$.}
\label{figcoxpolparabsubgr}
\end{minipage}
\qquad
\begin{minipage}{7.5cm}
  \centering
\includegraphics[width=7.5cm]{ 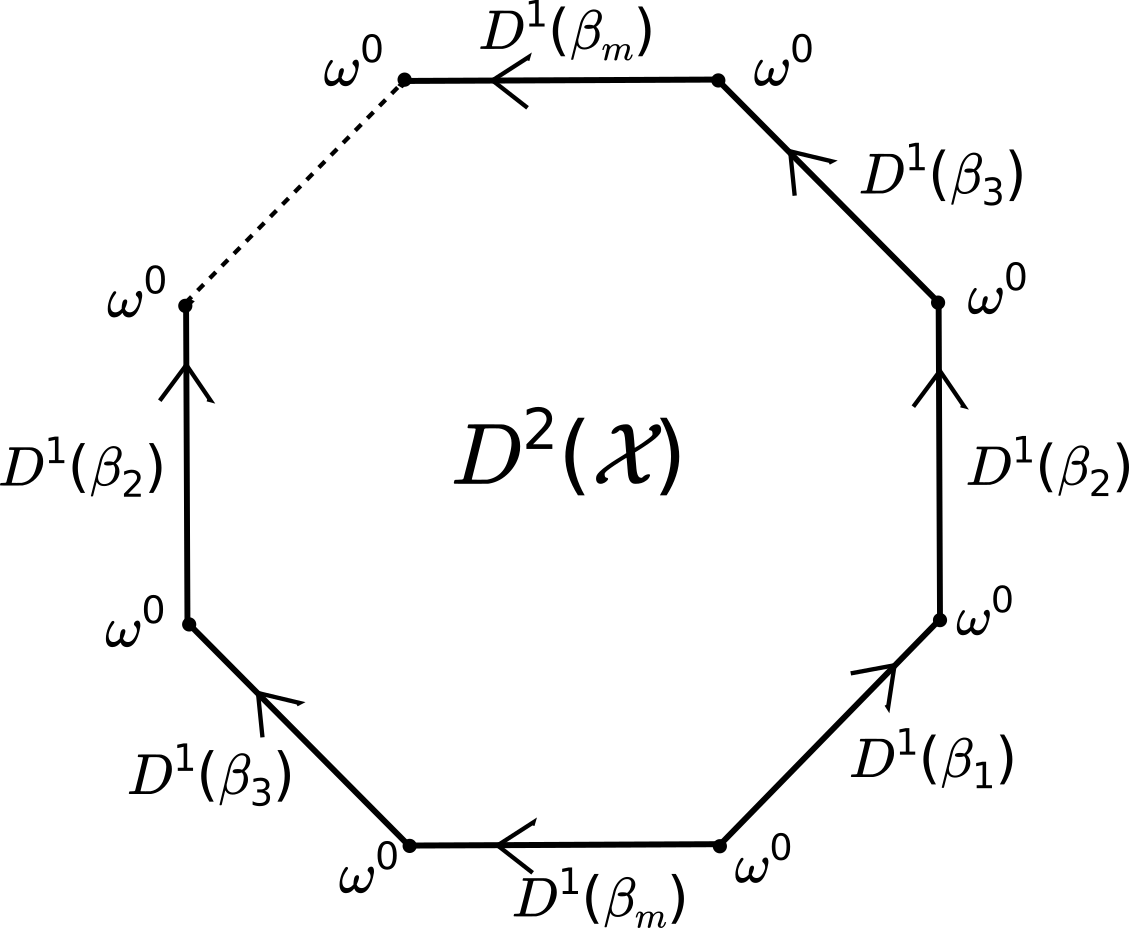}
  \caption{The 2-cell $D^2(\cX)$ of the BEER complex $\Omega(\Gamma)$.}
  \label{2cellbeerfig}
\end{minipage}
\end{figure}
\noindent The orientation of the boundary of $C[\cX]$ induces an orientation on its copy $\mathbb{D}(\cX)$, which is preserved in $D^2(\cX)\subset \Omega^2(\Gamma)$. In particular, thanks to Remark \ref{orientedgesparabolic} we know that the edge $F(u,\delta,\cX)$ is oriented from the vertex $u$ to $ur_{\delta}$. As we can see in Figure \ref{2cellbeerfig}, the boundary of $D^2(\cX)$ is given by the oriented path 
   \begin{equation}
       \partial D^2(\cX)=D^1(\beta_1)D^1(\beta_2)\dots D^1(\beta_m)(D^1(\beta_1))^{-1}(D^1(\beta_2))^{-1}\dots (D^1(\beta_m))^{-1}.
   \end{equation}
For all $\beta,\gamma\in \Phi[\Gamma]$ such that $\beta=w(\alpha_s)$, $\gamma=w(\alpha_t)$ with $w\in \W[\Gamma]$, $s,t\in S$ and $m_{s,t}\neq \infty$, we set $\beta_k,\gamma_k$ as before and $Z(D^1(\gamma),D^1(\beta);m_{s,t})=D^1(\beta_{m_{s,t}})\dots D^1(\beta_2) D^1(\beta_1)$. As in Subsection \ref{virtualartingroups}, we have \[Z(D^1(\beta),D^1(\gamma);m_{s,t})=D^1(\gamma_{m_{s,t}})\dots D^1(\gamma_2) D^1(\gamma_1),\]
\noindent which is the reverse word of $Z(D^1(\gamma),D^1(\beta);m_{s,t})$.\\
\\
\noindent In conclusion, in the presentation of $\pi_1(\Omega(\Gamma))$, for all $\cX\in \Rf$ with $|\cX|=|\{\beta=w(\alpha_s),\gamma=w(\alpha_t)\}|=2$, we must add the relation
\[D^1(\beta_1)D^1(\beta_2)\cdots D^1(\beta_{m_{s,t}})=D^1(\beta_{m_{s,t}})\dots D^1(\beta_2) D^1(\beta_1).\]
\noindent Therefore, the presentation of the fundamental group of the BEER complex is 
\begin{equation}
\begin{split}
    \pi_1(\Omega(\Gamma))=\left\langle D^1(\beta), \beta \in \Phi[\Gamma]\;|\;Z(D^1(\gamma),D^1(\beta);m_{s,t})=Z(D^1(\beta),D^1(\gamma);m_{s,t})\right. \\
    \left. \forall \beta,\gamma\in \Phi[\Gamma]\mbox{ s.t. } ,\beta\neq \gamma,\;\,\beta=w(\alpha_s),\gamma=w(\alpha_t) \mbox{ with } s,t\in S, w\in \W[\Gamma], m_{s,t}\neq \infty \right\rangle.
    \end{split}
\end{equation}
\noindent By Theorem \ref{prespva}, we obtain that $\pi_1(\Omega(\Gamma))\cong \PVA[\Gamma]$.
   
\end{proof}

\subsection{The equality between \texorpdfstring{$\Omega_n$}{TEXT} and \texorpdfstring{$\Omega(A_{n-1})$}{TEXT}}\label{eqialityofspaces}
\noindent
The purpose of this subsection is to show that, when $\Gamma=A_{n-1}$, the generalized BEER complex defined in Definition \ref{defbeer} and studied in Subsection \ref{generalBeer} coincides with the space originally proposed by Bartholdi, Enriquez, Etingof and Rains in \cite{BEER}.\\
\\
We first show that the two complexes $\Omega_n$ and $\Omega(A_{n-1})$ have their cells in bijective correspondence. Then, to prove that they yield the same cellular complex, we verify that their attaching maps, gluing the boundary of a $k$-cell to the $(k-1)$-skeleton, coincide.\\
\\
We know that $\Omega(A_{n-1})$ has a $k$-cell (for $k\leq n-1$) for each subset of roots $\cX\subset \Phi[A_{n-1}]$ that is almost parabolic and of spherical type. Since all reflection subgroups of the finite group $\W[A_{n-1}]=\Sn$ must be of spherical type, our task reduces to understanding the condition of being almost parabolic for a subset $\cX$ of the root system of the symmetric group. This analysis will be carried out in Subsubsection \ref{subsubparabolicSn}.\medskip\\
\noindent In Subsubsection \ref{cellulardescriptionOmegan}, we describe the cellular structure of the original BEER complex $\Omega_n$, reformulating several arguments already used in the proof of Theorem \ref{fundoriginalbeer}, but now expressed in terms of parabolic subgroups and root systems.\medskip\\
\noindent Finally, in Theorem \ref{omegan=omegaxan}, we prove that the generalized BEER complex $\Omega(\Gamma)$ introduced in Section \ref{generalBeer}, for 
$\Gamma=A_{n-1}$ coincides with $\Omega_n$. The proof proceeds by induction on the 
$k$-skeleta of the corresponding CW-complexes.

\subsubsection{Parabolic subgroups of the symmetric group \texorpdfstring{$\Sn$}{TEXT}}\label{subsubparabolicSn}

\noindent An exhaustive description of the root system of the Coxeter group of type $A_{n-1}$ can be found in \cite[Planche I]{Bourbaki}. We provide a summary here below.\\
\\
\noindent For $\Gamma=A_{n-1}$ and $S=\{s_1,\ldots,s_{n-1}\}$, the set of simple transpositions $s_i=(i\;i+1)$ for $i\in\{1,\ldots,n-1\}$, the Coxeter group is the familiar symmetric group $\Sn$. We examine its explicit description following the construction of Subsection \ref{coxeterpolytopes}. 
Let $U=\mathbb{R}^n$ with the standard scalar product and $\left\{e_1, \ldots , e_n\right\}$ as the canonical basis. Denote now by $V$ the hyperplane of $U$ of equation $x_1+ \cdots + x_n=0$ and for $i \in \left\{1, \ldots , n-1\right\}$, set $\alpha_{i,i+1}=\frac{e_i-e_{i+1}}{\sqrt{2}}$. Then, $\Pi=\left\{\alpha_1, \ldots, \alpha_{n-1}\right\}$ is the set of simple roots of $\Sn$ and a basis of $V$. For simplicity, the roots $\alpha_{i,i+1}$ will be simply denoted by $\alpha_i$. The root system of the symmetric group can be described as 
\[
\Phi[A_{n-1}]=\{\alpha_{i,j}\mid 1\leq i\neq j\leq n\},
\]
\noindent and the reflection associated to the root $\alpha_{i,j}$ is the transposition $t_{i,j}=(i\;j)\in \Sn$. \\
\\
\noindent  It can be observed that the action of $\Sn$ on $\Phi[A_{n-1}]$ is the action on the indices, namely, for all $w\in \Sn$, $w(\alpha_{i,j})=\alpha_{w(i),w(j)}$.\\
\\
\noindent For any permutation $w\in \Sn$, we define the \textit{support} of $w$ as the set of indices in $\{1,\ldots ,n\}$ that are not fixed by $w$; that is, the support consists of all $i$ such that $w(i)\neq i$.\medskip \\
\noindent Note that the support of a permutation is always contained in the union of the supports of the simple transpositions that appear in any reduced expression for $w$. \medskip \\
\noindent Now, given a collection of permutations $w_1,\ldots,w_l\in \Sn$, we define the support of the subgroup they generate, $\langle w_1,\ldots,w_l\rangle$, as the set of indices $i\in\{1,\ldots,n\}$ for which some permutation in the subgroup does not fix $i$.\medskip\\
\noindent The following classical result provides a complete characterization of all reflection subgroups in $\Sn$. Its statement can be found for instance in \cite[Theorem 3.1]{DPR13}, but since we do not find a proof in the literature, for the sake of completeness, we write it here.

\begin{thm}\label{refsbgpofSn}
Let $\Gamma$ be the Coxeter graph $A_{n-1}$. Then all reflection subgroups of $\W[\Gamma]=\Sn$ are parabolic. In other words, any subgroup of the symmetric group generated by transpositions, is a conjugate of a subgroup generated by \textit{simple} transpositions. 
\end{thm}
\begin{proof}
    A reflection subgroup of $\Sn$ is a group generated by transpositions $t_{i,j}=(i\;j)$, with $i,j\in \{1,\ldots,n\}$ and $i\neq j$. Consider a set of transpositions $\{t_{i_1,j_1},\ldots,t_{i_k,j_k}\}$. The support of the reflection subgroup $\langle t_{i_1,j_1},\ldots,t_{i_k,j_k}\rangle$ is contained in the set $\{i_1,j_1,i_2,j_2,\ldots ,i_k,j_k\}$.\medskip
    \\
    \noindent Observe that a subgroup of $\Sn$ generated by transpositions is isomorphic to a direct product of symmetric groups $\mathfrak{S}_{l_1}\times\cdots\times \mathfrak{S}_{l_p}$. Indeed, consider a graph whose vertices are the integers from 1 to $n$, and there is an edge joining $i$ and $j$ if $t_{i,j}\in \{t_{i_1,j_1},\ldots,t_{i_k,j_k}\}$. The reflection subgroup $\langle t_{i_1,j_1},\ldots,t_{i_k,j_k}\rangle$ is the direct product of symmetric groups $\mathfrak{S}_{l_1}\times\cdots\times \mathfrak{S}_{l_p}$\ where each factor is the group of permutations of the set of vertices in a connected component of such a graph.
    \medskip\\
    \noindent Let $l_1+\cdots+l_p=:l\leq n$ denote the cardinality of the support of $\langle t_{i_1,j_1},\ldots,t_{i_k,j_k}\rangle$.  Each direct factor $\mathfrak{S}_{l_m}$ for $m\in \{1,\ldots,p\}$ has a support of size $l_m$. The support of $\mathfrak{S}_{l_1}$, for instance, is a translate of $\{1,\ldots,l_1\}$ under some permutation $u\in \Sn$. Consequently, the first factor is of the form $u\W_{X_1}u^{-1}$, where $X_1=\{s_{1},s_2,\ldots,s_{l_1-1}\}$, and $s_i$ denotes the simple transposition $(i\;i+1)$. This holds for each factor. \medskip \\
    \noindent Namely, there exists a permutation (in general not unique) $u\in \Sn$ such that, for each $m=1,2,\ldots,p$, $u$ maps the interval $\{l_1+l_2+\cdots+l_{m-1}+1,\ldots,l_1+\cdots+l_{m-1}+l_m\}$ to the support of the corresponding factor $\mathfrak{S}_{l_m}$ (with the convention that $l_0=0$). 
   For each $m\in \{1,\ldots,p\}$, let $X_m:=\{s_{l_1+\cdots+l_{m-1}+1},\ldots,s_{l_1+\cdots+l_{m-1}+l_m-1}\}\subset S$. Then the reflection subgroup decomposes as 
    \[\langle t_{i_1,j_1},\ldots,t_{i_k,j_k}\rangle=\mathfrak{S}_{l_1}\times\cdots\times \mathfrak{S}_{l_p}=u\W_{X_1}u^{-1}\times\cdots\times u\W_{X_p}u^{-1}.\]
    \noindent Now let $X=\bigsqcup_{m=1}^pX_m=\{s_1,\ldots,s_{l_1-1},s_{l_1+1},\ldots,s_{l_1+l_2-1},\ldots ,s_{l_1+\cdots +l_p-1}\}$, and observe that $X=\{s_1,\ldots,s_l\}\backslash \{s_{l_1},s_{l_1+l_2},\ldots,s_{l_1+\cdots+l_p}\}$. Since $\W_{X}=\W_{X_1\sqcup\cdots\sqcup X_p}=\W_{X_1}\times \cdots\times \W_{X_p}$, we have:
    \[\langle t_{i_1,j_1},\ldots,t_{i_k,j_k}\rangle=u(\W_{X_1}\times 
\ldots \times \W_{X_p})u^{-1}=u\W_{X}u^{-1},
    \]
   \noindent showing that the reflection subgroup is parabolic.

\end{proof}

\noindent This theorem implies in particular that all almost parabolic reflection subgroups of $\Sn$ are parabolic. 

\begin{rmk}\label{parsubsofSnminimal}
    In general, there exist several $X\subset S$ and $u\in \Sn$ such that $\langle t_{i_1,j_1},\ldots,t_{i_k,j_k}\rangle=u\W_{X}u^{-1}$. However, once $X$ is fixed, there is always at least one choice of $u$ such that $u$ is $(\emptyset,X)$-minimal. From now on, we will always make such a choice.
\end{rmk}

\begin{ex}\label{exrefsubn=6}
   Let $n=6$, so $\W[A_5]=\mathfrak{S}_6$. Consider the reflection subgroup generated by the transpositions $t_{2,3},t_{2,4},t_{1,6}$, whose support is $\{1,2,3,4,6\}$. The transpositions $t_{2,3}$ and $t_{2,4}$ generate a subgroup isomorphic to $\mathfrak{S}_3$ acting on the set $\{2,3,4\}$, which corresponds to the conjugate of $\W_{X_1}$ with $X_1=\{s_1,s_2\}$ under some permutation $u$.\medskip \\
   \noindent The subgroup $\langle t_{1,6}\rangle$ is isomorphic to $\mathfrak{S}_2$ on the support $\{1,6\}$, which is disjoint from $\{2,3,4\}$. We can express $\langle t_{1,6}\rangle=u\W_{X_2}u^{-1}$, where $X_2=\{s_4\}$ and $u$ is a permutation that sends $ \{1,2,3\}$ to $\{2,3,4\}$ and $\{4,5\}$ to $\{1,6\}$. Now set $u=(1\;3\;4)(5\;6)$ and $X=X_1\cup X_2=\{s_1,s_2,s_4\}$. We get that
   \[
   \langle t_{2,3},t_{2,4},t_{1,6} \rangle= \langle t_{2,3},t_{2,4}\rangle \times \langle t_{1,6} \rangle=u \W_{X}u^{-1}\cong \mathfrak{S}_3\times \mathfrak{S}_2.
   \]
\end{ex}

\begin{rmk}\label{XapinSn}
    Let $\cX\subset\Phi[A_{n-1}]$ be any subset of roots of the root system of the symmetric group $\Sn$. By Theorem \ref{refsbgpofSn}, we know that $\langle R_{\cX}\rangle$ is a parabolic subgroup. Namely, there exist $u\in 
    \Sn$ and $X\subset S=\{s_1,\ldots,s_{n-1}\}$ such that $\langle R_{\cX}\rangle=u\W_Xu^{-1}$. Set $\cX'=u(\Pi_{X})$. Therefore, even though $\cX$ may not be parabolic, there exists a choice of a parabolic $\cX'\subset \Phi[A_{n-1}]$ such that $\langle R_{\cX}\rangle=\langle R_{\cX'}\rangle$.
\end{rmk}

\begin{ex}\label{exparabolicsofSn}
    Consider $\Gamma=A_2$, so that $\W[\Gamma]$ is the symmetric group $\mathfrak{S}_3$. The root system $\Phi[\Gamma]$ contains six root vectors:
    \[
    \Phi[A_2]=\{\alpha_{1,2},\alpha_{2,3},\alpha_{1,3},\alpha_{2,1},\alpha_{3,2},\alpha_{3,1}\},
    \]
    \noindent whose base is $\Pi=\{\alpha_{1,2},\alpha_{2,3}\}$. The set $\cX=\{\alpha_{1,2},\alpha_{1,3}\}$ cannot be parabolic, otherwise there would exist a $w\in \mathfrak{S}_3$ such that $\cX=w(\Pi)$. But \[\langle\alpha_{1,2},\alpha_{1,3}\rangle=\left\langle \frac{e_1-e_2}{\sqrt{2}},\frac{e_1-e_3}{\sqrt{2}}\right\rangle=\left\langle \left(\frac{1}{\sqrt{2}},-\frac{1}{\sqrt{2}},0\right),\left(\frac{1}{\sqrt{2}},0,-\frac{1}{\sqrt{2}}\right)\right\rangle=\frac{1}{2},\]
    \noindent while 
    \[
    \langle w(\alpha_{1,2}),w(\alpha_{2,3})\rangle=\langle \alpha_{1,2},\alpha_{2,3}\rangle=\left\langle \left(\frac{1}{\sqrt{2}},-\frac{1}{\sqrt{2}},0\right),\left(0,\frac{1}{\sqrt{2}},-\frac{1}{\sqrt{2}}\right)\right\rangle=-\frac{1}{2}.
    \]
    \noindent However, $\langle R_{\cX}\rangle=\mathfrak{S}_3=\langle R_{\Pi}\rangle$ is a parabolic subgroup. 
\end{ex}

\noindent We know that all almost parabolic reflection subgroups of $\Sn$ are parabolic. We want now to obtain the same result for almost parabolic subsets of roots $\cX\subset\Phi[A_{n-1}]$.

\begin{lem}\label{allAPareparabolic}
Let $\Gamma$ be the Coxeter graph $A_{n-1}$, and let $\cX\subset\Phi[A_{n-1}]$ be an almost parabolic set of roots. Then, there exist $X\subset S$, $u\in \Sn$ $(\emptyset,X)$-minimal and $\overline{w}\in u\W_Xu^{-1}$ such that $\cX=\overline{w}u(\Pi_X)$.
\end{lem}
\begin{proof}
Let $\cX\subset\Phi[A_{n-1}]$ be an almost parabolic subset of roots. By Theorem \ref{refAP}, we know that the reflection subgroup associated with $\cX$ admits a Coxeter presentation with Coxeter system $(\W_{\cX},R_{\cX})$.
   According to Theorem \ref{refsbgpofSn}, we know that there exist $u\in \Sn$ and $X\subset S$ such that $\langle R_{\cX}\rangle=\W_{\cX}=u\W_{X}u^{-1}=\W_{u(\Pi_X)}$. As pointed out in Remark \ref{parsubsofSnminimal}, suppose that $u$ is $(\emptyset,X)$-minimal.
   Thus, the reflection subgroup $\W_{\cX}$ is an almost parabolic reflection subgroup with respect to two different AP sets of roots: $\cX$ and $\cX'=u(\Pi_X)$. As in the proof of Theorem \ref{refsbgpofSn}, $\W_{\cX}=u\W_Xu^{-1}$ is isomorphic to a direct product of symmetric groups $\mathfrak{S}_{l_1}\times \cdots\times \mathfrak{S}_{l_p}$. With the same notation as in Theorem \ref{refsbgpofSn}, for all $1\leq m\leq p$ let $X_m$ be the subset of $S$ such that $X=X_1\sqcup\cdots\sqcup X_p$ and $\mathfrak{S}_{l_m}=u\W_{X_m}u^{-1}$.
   \medskip\\
   \noindent By abuse of notation, we write the Coxeter matrix $\M_{\cX}$ (resp. $\M_{\cX'}$) to mean the set of Coxeter relations $(r_{\beta}r_{\gamma})^{m_{\beta,\gamma}}=\id$, $r_{\beta}^2=\id$ for all $\beta,\gamma\in \cX$ (resp. $(r_{\beta'}r_{\gamma'})^{m_{\beta',\gamma'}}=\id$, $r_{\beta'}^2=\id$ for all $\beta',\gamma'\in \cX'$). Observe that, since $\cX'=u(\Pi_X)$, the Coxeter matrix $\M_{\cX'}$ coincides with the sub-matrix $\M_X$ of $\M[\Gamma]$. Then, such a reflection subgroup has two Coxeter presentations:
   \[
   \langle R_{\cX}\mid \M_{\cX}\rangle=\W_{\cX}=\langle R_{\cX'}\mid \M_{\cX'}\rangle=\langle usu^{-1}\mid (usu^{-1})^2=\id,\;(ustu^{-1})^{m_{s,t}}=\id\;\forall s,t\in X\rangle.
   \]
   \noindent If $R_{\cX}\neq R_{\cX'}$, then these are two different presentations of the same spherical-type Coxeter group $\W_{\cX}=\mathfrak{S}_{l_1}\times \cdots \times \mathfrak{S}_{l_p}$ for some $p\in \mathbb{N}$. Since no symmetric group can be decomposed as a direct product of two proper subgroups, both presentations correspond to the same Coxeter graph $A_{l_1-1}\sqcup\cdots \sqcup A_{l_p-1}$. \medskip \\
   \noindent
Changing the generating set in the presentation of a factor $\mathfrak{S}_{l_m}$ for $1\leq m\leq p$, amounts to changing the base of the associated root system $\Phi[A_{l_m-1}]$. Specifically, if $\mathfrak{S}_{l_m}$ has two Coxeter presentations corresponding to bases $\cX_m'=\{ \alpha_{1},\ldots,\alpha_{l_m-1}\}$ and $\cX_m=\{ \beta_{1},\ldots,\beta_{l_m-1}\}$ of $\Phi[A_{l_m-1}]$, there exists a unique $w_m\in \mathfrak{S}_{l_m}$ such that $w_m(\{ \alpha_{1},\ldots,\alpha_{l_m-1}\})=\{ \beta_{1},\ldots,\beta_{l_m-1}\}$, for all $m=1,\ldots, p$. (See \cite[Section 1.4]{Hump}) \medskip \\
   \noindent Let $\cX=\cX_1\,\sqcup\,\cdots\,\sqcup\,\cX_p$ and $\cX'=\cX'_1\,\sqcup\,\cdots\,\sqcup\,\cX'_p=u(\Pi_{X_1})\,\sqcup \,\cdots\,\sqcup\, u(\Pi_{X_p})$, where $\cX_m$ (resp. $\cX'_{m}$) is the subset of roots such that $R_{\cX_m}$ (resp. $R_{\cX'_m}$) generates the factor $\mathfrak{S}_{l_m}$, for all $m=1,\ldots,p$. Let $w_m\in \W_{\cX_m}$ be the unique element such that $w_m(\cX'_{m})=\cX_m$, and let $\overline{w}:=w_1\,\cdots \,w_p$, where $w_1,\ldots,w_p$ commute pairwise  since their supports are disjoint. Also observe that $\overline{w}\in\W_{\cX}=u\W_Xu^{-1}$ and that $\overline{w}(\cX_m)=w_m(\cX_m)=\cX_{m}'$, for all $m=1,\ldots,p$. Then:
   \begin{multline*}
\cX=\cX_1\sqcup\cdots\sqcup\cX_p=w_1(\cX'_1)\sqcup\cdots\sqcup w_p(\cX'_p)=\overline{w}(\cX'_1)\sqcup\cdots\sqcup \overline{w}(\cX'_p)
=\overline{w}u(\Pi_{X_1})\sqcup\cdots\sqcup \overline{w}u(\Pi_{X_p})=\\=\overline{w}u(\Pi_{X_1}\sqcup\cdots \sqcup \Pi_{X_p})=\overline{w}u(\Pi_{X_1\sqcup\cdots \sqcup X_p})= \overline{w}u(\Pi_{X})=\overline{w}(\cX'),
   \end{multline*}
   \noindent where we recall that $X=X_1\sqcup \cdots \sqcup X_p$. Let $w:=\overline{w}u$. We got finally that $\cX=\overline{w}u(\Pi_X)$, with $\overline{w}, u$ and $X$ as in the statement of the lemma.
\end{proof}
\begin{rmk}\label{rmk-wbar-unique}
    In the previous proof, remark that, once $u\in \Sn$ and $X\subset S$ such that $\W_{\cX}=u\W_X u^{-1}$ are fixed, the element $\overline{w}=w_1\cdots w_p$ such that $\cX=\overline{w}u(\Pi_X)$ is unique. Indeed, each $w_m$ is the unique element of $\mathfrak{S}_{l_m}
    $ that conjugates two simple systems of $\mathfrak{S}_{l_m}
    $.
\end{rmk}
\begin{rmk}\label{oioia}
From the proof of Lemma \ref{allAPareparabolic}, we also deduce that, if $\cX$ and $\cX'$ are two almost parabolic subsets of roots of $\Phi[A_{n-1}]$ such that $\W_{\cX}=\W_{\cX'}=\W_{u(\Pi_X)}$, then there exists $\overline{w}\in \W_{\cX}$ such that $\cX=\overline{w}(\cX')$.
\end{rmk}
\begin{cor}\label{corallAPareparabolic}
Let $\Gamma$ be the Coxeter graph $A_{n-1}$, and let $\cX\subset\Phi[A_{n-1}]$ be an almost parabolic set of roots. Then, $\cX$ is parabolic.
\end{cor}
\begin{proof}
By Lemma \ref{allAPareparabolic}, we know that there exist $u$, $\overline{w}$ and $X\subset S$ such that $\cX=\overline{w}u(\Pi_X).$ By setting $\overline{w}u=:w$, we get that $\cX=w(\Pi_X)$, hence $\cX$ is parabolic.
\end{proof}

\noindent 
By the description of $\Omega(\Gamma)$ as in Definition \ref{defbeer}, we know that it has a $k$-dimensional cell $D^k(\cX)$ for each $\cX\subset \Phi[\Gamma]$ subset of roots almost parabolic and of spherical type, such that $|\cX|=k$. Thanks to Lemma \ref{allAPareparabolic} and Corollary \ref{corallAPareparabolic} we know that, when $\Gamma=A_{n-1}$, all AP subsets of roots $\cX$ are indeed parabolic, of the form $\cX=\overline{w}u(\Pi_X)$ with $\overline{w}\in u\W_{X}u^{-1}=\W_{\cX}$, $u\in \Sn$ a $(\emptyset,X)$-minimal element, and $X$ a subset of $S$ of cardinality $k$. However, the choice of $u$ and $X$ is not unique.\medskip \\
\noindent
In the next result, we show that even though $u$ and $X$ giving $\cX=\overline{w}u(\Pi_X)$ are not unique, $\cX$ is uniquely determined by the pair $(u\W_Xu^{-1},\overline{w})$. Therefore, a parabolic subset of $\Phi[A_{n-1}]$, and thus a $k$-cell of $\Omega(A_{n-1})$, is bijectively associated with a parabolic subgroup of $\Sn$ of rank $k$ and an element in such a parabolic subgroup. 
\begin{lem}\label{APsubsetsofSn}
Let $\cX\subset\Phi[A_{n-1}]$ be a parabolic subset of roots of $\Sn$, of the form $\cX=\overline{w}u(\Pi_X)$ as in Lemma \ref{allAPareparabolic}. Then $\cX$ is uniquely determined by the pair $(u\W_Xu^{-1},\overline{w})$, where $u\W_Xu^{-1}$ is a parabolic subgroup of $\Sn$ and $\overline{w}$ is an element in $u\W_Xu^{-1}$.  
\end{lem}
\begin{proof}
Given a parabolic set of roots $\cX\subset \Phi[A_{n-1}]$, let $\W_{\cX}$ be its associated reflection subgroup. According to Theorem \ref{refsbgpofSn}, there exist $u\in \Sn$ and $X\subset S$ such that $\W_{\cX}=u\W_Xu^{-1}$. As in Lemma \ref{allAPareparabolic}, we know that there exists a unique $\overline{w}=\overline{w}_{u,X}\in \W_{\cX}$ such that $\cX=\overline{w}_{u,X}\,u(\Pi_X)$.\medskip\\
\noindent
We want to show that the pair $(u\W_Xu^{-1},\overline{w})$ does not depend on the choice of $u$ and $X\subset S$. Indeed,
as in Theorem \ref{refsbgpofSn}, if there exist $Y\subset S$ and $v\in \Sn$ $(\emptyset,Y)$-minimal such that $\W_{\cX}=v\W_Yv^{-1}$, then obviously $\W_{\cX}=u\W_{X}u^{-1}=v\W_Yv^{-1}$. We want now to show that also $\overline{w}_{u,X}=\overline{w}_{v,Y}$. Since $\W_{\cX}=\W_{u(\Pi_X)}=\W_{v(\Pi_Y)}$, by Remarks \ref{rmk-wbar-unique} and \ref{oioia}, there is a unique element $\overline{w}_{u,X}\in u\W_Xu^{-1}$ such that
\[
\cX=\overline{w}_{u,X}\,u(\Pi_X),
\]
and similarly, there exists a unique element $w_{v,Y}\in v\W_Yv^{-1}$ such that $\cX= \overline{w}_{v,Y}\, v(\Pi_Y)$. By Lemma \ref{upixvpiy}, we know that, for $u$ $(\emptyset,X)$-minimal and $v$ $(\emptyset,Y)$-minimal, $\W_{\cX}=u\W_{X}u^{-1}=v\W_Yv^{-1}$ implies $u(\Pi_X)=v(\Pi_Y)$. Then, replacing $u(\Pi_X)$ by $v(\Pi_Y)$ in the above equality we obtain (again, by Remark \ref{rmk-wbar-unique}) that $\overline{w}_{u,X}=\overline{w}_{v,Y}$.\medskip\\
\noindent
On the other hand, given a pair $(u\W_Xu^{-1},\overline{w})$ where $\overline{w}\in u\W_Xu^{-1}$, $X\subset S$ and  $u\in \Sn$ is an $(\emptyset,X)$-minimal element, we can uniquely associate $(u\W_Xu^{-1},\overline{w})$ with the parabolic subset of roots $\cX=\overline{w}u(\Pi_X)$.
\medskip\\
\noindent
Hence, the parabolic subset of roots $\cX$ is uniquely determined by the pair $(u\W_Xu^{-1},\overline{w})$, and vice-versa.
\end{proof}

\noindent To show that $\Omega(A_{n-1})$ and $\Omega_n$ have their cells in bijective correspondence, we show that also the $k$-cells of $\Omega_n$ can be determined by a parabolic subgroup of $\Sn$ of rank $k$ and an element of such a parabolic subgroup.

\begin{prop}\label{propdescriptioncellsomegan}
    Let $\Omega_n$ be the complex defined in \cite{BEER}, as in Definition \ref{deforiginalbeer}. Then, the cells of $\Omega_n$ are in bijection with the pairs $(u\W_Xu^{-1},\overline{w})$, where $u\W_Xu^{-1}$ is a parabolic subgroup of $\W=\Sn$, and $\overline{w}$ is an element in $u\W_Xu^{-1}$.
    \end{prop}
\begin{proof}
Recall that the BEER complex $\Omega_n$ is defined as the quotient of $C[S]\times \Sn=\mathcal{P}\times \Sn$ under the equivalence relation $\sim$, with attaching maps described in Equation \ref{defattachingmap}.
We have already observed in Remark \ref{crucialrmk} that the equivalence classes in $\mathcal{P}\times \Sn/\sim$ are in bijection with the elements of the set $\bigcup_{P\in\mathcal{P}}([P]\times \Stab(P))$, where $[P]$ is the equivalence class of the ordered partition $P\in \mathcal P$ with respect to the equivalence relation defined in Subsection \ref{originalBEERspace}, and $\Stab(P)<\Sn$ is its stabilizer. Indeed, given two ordered partitions $P:=P(u,X)$ and $Q:=P(v,Y)$, we have that $[P]=[Q]$ if and only if \[\Stab(P(u,X))=u\W_Xu^{-1}=\Stab(P(v,Y))=v\W_Yv^{-1},\]\noindent with $u,v\in \Sn$ and $X,Y\subset S$. Thus, the equivalence class $[P]$ can be represented by the parabolic subgroup $u\W_Xu^{-1}=\Stab(P)$. As usual, without loss of generality, we choose $u$ to be $(\emptyset,X)$-minimal.\\ \\
\noindent Additionally, if a pair $(F(u,X,S),w')$ of $C[S]\times \Sn$ belongs to the same class as $(F(u,X,S),w)$ in $\Omega_n$, it means that $w$ and $w'$ have the same inversions in the blocks of the (un)ordered partition $[P(u,X)]$. Denote by $P_1,\ldots,P_p$ the blocks of the partition $P(u,X)$ associated with the face $F(u,X,S)$ of the permutohedron. As we remarked at the beginning of Subsection \ref{generalBeer}, we can write that $(F(u,X,S),w)\sim(F(u,X,S),w')$ if and only if, for all $k=1,\ldots,p$ and for all $i,j\in P_k$ with $i<j$, $w(i)<w(j)\,\Longleftrightarrow w'(i)<w'(j)$. With the same notations as in Subsection \ref{generalBeer}, for all $k=1,\ldots,p$, to write $i,j\in P_k$ means
\[
i,j\in P_k=\{u(i_{k-1}+1),\ldots,u(i_k)\}.
\]
\noindent Recall that we set $X_k=\{s_{i_{k-1}+1},\ldots,s_{i_k-1}\}\subset S$ for all $k=1,\ldots,p$. Let us denote by $\mathfrak{S}(P_k)$ the group of permutations on the indices in $P_k$. Clearly, $\mathfrak{S}(P_k)$ is the parabolic subgroup $u\W_{X_k}u^{-1}$ of $\Sn$. Then, similarly to what we have observed in the proof of Theorem \ref{fundoriginalbeer}, for all $k=1,\ldots,p$, there exists a unique $w_k\in \mathfrak{S}(P_k)=u\W_{X_k}u^{-1}$ such that
\[
w(i)<w(j)\,\Longleftrightarrow\, w_k(i)<w_k(j).
\]
\noindent This must hold for all blocks $P_1,\ldots,P_p$. Observe that $\Stab(P(u,X))=u\W_Xu^{-1}$ is the direct product of the stabilizers of the blocks $u\W_{X_1}u^{-1}\times \cdots \times u\W_{X_p}u^{-1}$ and that $X=X_1\sqcup\cdots\sqcup X_p$. The elements $w_1,\ldots,w_p$ commute pairwise since their supports are disjoint. Set now $\overline{w}=w_1w_2\cdots w_p$. Therefore, there exists a unique element $\overline{w}\in u\W_{X_1}u^{-1}\times \cdots \times u\W_{X_p}u^{-1}=u\W_Xu^{-1}$ such that, for all $k=1,\ldots,p$ and for all $i,j\in P_k$ with $i<j$, we have
\[
w(i)<w(j)\, \Longleftrightarrow \, \overline{w}(i)<\overline{w}(j).
\]
\noindent Indeed, if $i,j\in P_k$, then $\overline{w}(i)=w_k(i)$ and $\overline{w}(j)=w_k(j)$. Thus, we have checked that there exists a unique $\overline{w}\in \Stab(P(u,X))$ such that the face $(F(u,X,S),w)$ of $C[S]\times \Sn$ is equivalent to $(F(u,X,S),\overline{w})$ in the quotient $\Omega_n$.\medskip\\
\noindent Consequently, the equivalence class of $(F(u,X,S),w)$ in $\Omega_n$ can be represented by $(\Stab(P(u,X),$ $\overline{w})$, and the pair $(u\W_Xu^{-1},\overline{w})$ does not depend on the choice of the representative of\\ $[(F(u,X,S),w)]\in \Omega_n$.\medskip\\
\noindent
On the other hand, given a pair $(u\W_Xu^{-1},\overline{w})$ for some $u\in \Sn$ and $X\subset S$, we can uniquely associate it with the cell $[(F(u,X,S),\overline{w})]$ of $\Omega_n$.
\medskip\\
\noindent
It follows that the cells of $\Omega_n$ are in one-to-one correspondence with the pairs $(u\W_Xu^{-1},\overline{w})$. 
\end{proof}
\noindent
    As usual, given a cell $(F(u,X,S),w)$ of $C[S]\times \Sn$ represented by the pair $(u\W_Xu^{-1},\overline{w}_{u,X})$ in $\Omega_n$, $u$ will be chosen to be $(\emptyset,X)$-minimal.

\subsubsection{Cellular description of \texorpdfstring{$\Omega_n$}{TEXT}}\label{cellulardescriptionOmegan}
\noindent In the proof of Theorem \ref{fundoriginalbeer}, we have given a description of the cells in $\Omega_n$ in terms of ordered partitions and permutations. Here, in light of the results obtained in Proposition \ref{propdescriptioncellsomegan}, we rephrase such a description in terms of roots and parabolic subgroups of $\Sn$. Moreover, we will describe how the boundary of a $k$-cell is glued to the cells in the $(k-1)$-skeleton. These considerations will be a fundamental ingredient to show that $\Omega_n$ and $\Omega(\A_{n-1})$ are indeed the same space. \medskip
\\\noindent
Recall that $\Omega_n$ is defined as a quotient of the product space $C[S]\times \Sn$ under the equivalence relation introduced in Definition \ref{deforiginalbeer}. A cell of dimension $0\leq k\leq n-1$ in $\Omega_n$ is an equivalence class $[(F(u,X,S),w)]$, where $F(u,X,S)$ is a face of dimension $|X|=k$ of the permutohedron $C[S]$, and $w$ is an element in $\Sn$.\medskip\\
\noindent Also recall that $(F(u,X,S),w)$ and $(F(u',X',S),w')$ belong to the same equivalence class in $\Omega_n$ if and only if:
\begin{enumerate}
    \item the parabolic subgroups $u\W_{X}u^{-1}=\Stab(P(u,X))$ and $u'\W_{X'}u'^{-1}=\Stab(P(u',X'))$ coincide;
    \item for all $\gamma\in u(\Phi_X)\cap \Phi^+$ (which coincides with $u'(\Phi_{X'})\cap \Phi^+$), $w(\gamma)\in \Phi^+\Longleftrightarrow w'(\gamma)\in \Phi^+$.
\end{enumerate}
\noindent If we interpret $u(\Phi_X)\cap \Phi^+$ as the positive roots of the Coxeter group $u\W_Xu^{-1}$, we see that there exists a unique element $\overline{w}\in u\W_Xu^{-1}$ such that for all $\gamma \in u(\Phi_X)\cap \Phi^+$, $w(\gamma)\in \Phi^+\Longleftrightarrow \overline{w}(\gamma)\in \Phi^+$. Hence, the class $[(F(u,X,S),w)]$ can be uniquely determined by the pair $(u\W_X u^{-1}, \overline{w})$.
\medskip\\
\noindent We now start analyzing the complex $\Omega_n$ skeleton-wise. Recall that if $X\subset S$ is a singleton $X=\{s\}$, then, to lighten the notations, we write $C[s]$, $F(u,s,S)$ and $\W_s$ instead of $C[\{s\}]$, $F(u,\{s\},S)$ and $\W_{\{s\}}$.\\
\\
\noindent\textbf{Description of the 0-skeleton.}
As we have remarked in the proof of Theorem \ref{fundoriginalbeer}, the 0-skeleton $(\Omega_n)^0$ consists of just one point, $\omega^0$. Indeed, all the vertices in the product space $(F(u,\emptyset,S),w)\in C[S]\times \Sn$ are associated with a partition constituted by $n$ blocks $[P(u,\emptyset)]=[(\{1\},\ldots,\{n\})]$, which is stabilized by the trivial group $u\W_{\emptyset}u^{-1}=\{\id\}$. The only class $[(F(u,\emptyset,S),w)]$ is represented by the pair $(u\W_{\emptyset}u^{-1},\overline{w})=(\{\id\},\id)$. We write the only 0-cell of $\Omega_n$ as $\omega^0=\Dd^0(\emptyset)$.
\\
\\
\textbf{Description of the 1-skeleton.}
The 1-cells of $\Omega_n$ are quotients of the edges in $C[S]\times \Sn$, which are of the form $(F(u,s,S),w)$ with $s\in S$ and $u$ $(\emptyset,s)$-minimal. Since the stabilizer $\Stab(P(u,s))=u\W_{s}u^{-1}=\{\id, usu^{-1}\}$ has two elements, the class $[(F(u,s,S),w)]$ in $\Omega_n$ is either associated with the pair $(u\W_{s}u^{-1},\id)$, or with the pair $(u\W_{s}u^{-1},usu^{-1})$. Indeed, consider $u(\Phi_{s})\cap \Phi^+=\{u(\alpha_s),-u(\alpha_s)\}\cap \Phi^+$. By minimality of $u$, $u(\alpha_s)$ is a positive root. If $w\in \Sn$ sends $\gamma:=u(\alpha_s)$ to a positive root, then $[(F(u,s,S),w)]$ is represented by $(u\W_{s}u^{-1},\id)$, while if $w(\gamma)\in \Phi^{-}$, then $[(F(u,s,S),w)]$ is represented by $(u\W_{s}u^{-1},usu^{-1})$. \medskip\\
\noindent Notice that in the proof of Theorem \ref{fundoriginalbeer}, we have called $\gamma=\alpha_{i,j}$, and the two associated arcs  $[(F(u,s,S),\id)]=\Dd^1(i,j)$; and $[(F(u,s,S),usu^{-1})]=[(F(u,s,S),t_{ij})]=\Dd^1(j,i)$. Let $s\in S$ be $s=s_h$ for some $h\in \{1,\ldots,n-1\}.$ As in the definition of the partition $P(u,s)$ in Equation \ref{pikappa}, we set $S\backslash X=\{s_{i_1},\ldots,s_{i_{n-2}}\}$ with $i_1<\cdots<i_{n-2}$. Thus, for $l\in \{1,\ldots,n-2\}$, 
\[
i_l=\begin{cases}
    l\;&\text{ if }\, l<h,\\l+1\;&\text{ if }\, l\geq h.
\end{cases}
\]
\noindent The only 2-block of the partition $P(u,s_h)=(P_1,\ldots,P_{n-1})$ is $P_h=u\{i_{h-1}+1,i_{h-1}+2\}=\{u(h),u(h+1)\}$.
Therefore, the indices $i,j$ belonging to the only $2$-block of $P(u,s)$ are $i=u(h)$, $j=u(h+1)$. We can then rewrite the cells $\Dd^1(i,j)$ and $\Dd^1(j,i)$ as $\Dd^1(\overline{w}u(h),\overline{w}u(h+1))$, with $\overline{w}\in \{\id, us_hu^{-1}\}$.
\medskip\\
\noindent We now rephrase the considerations above in terms of roots. In general, for all $s\in S$, for all $u\in \Sn$ $(\emptyset,s)$-minimal and all $\overline{w}\in u\W_{s}u^{-1}$, we have a 1-cell  $[(F(u,s,S),w)]\in \Omega_n$ represented by $(u\W_{s}u^{-1},\overline{w})$. Observe that $[(F(u,s,S),\overline{w})]=[(F(\overline{w}u,s,S),\overline{w})]$. If, as before, we set $\gamma:=u(\alpha_s)\in\Phi^+[A_{n-1}]$, then in $(\Omega_n)^1$ we find two cells for each positive root: a cell $\Dd^1(\gamma)=\Dd^1(\id(\gamma))$ and a cell $\Dd^1(r_{\gamma}(\gamma))=\Dd^1(-\gamma)$. Denoting by $\beta$ the root $\overline{w}u(\alpha_s)=\overline{w}(\gamma)$, we can then say that in $\Omega_n$ there is a 1-cell $\Dd^1(\overline{w}u(\alpha_s))=\Dd^1(\beta)$ for each root $\beta\in \Phi[A_{n-1}]$.
\medskip\\
\noindent In $C[S]\times \Sn$, the endpoints of the 1-dimensional face $(F(\overline{w}u,s,S),\overline{w})$ are $(F(\overline{w}u,\emptyset,S),\overline{w})=(\overline{w}u(o),\overline{w})$ and $(F(\overline{w}us,\emptyset,S),\overline{w})=(\overline{w}us(o),\overline{w})$. The orientation of $(F(\overline{w}u,s,S),\overline{w})$ is from $(F(\overline{w}u,\emptyset,S),\overline{w})$ to $(F(\overline{w}us,\emptyset,S),\overline{w})$. Namely, if $\overline{w}=\id$, then $(F(u,\emptyset,S),\overline{w})=(F(u,\emptyset,S),\id)$ is oriented from $(u(o),\id)$ towards $(us(o),\id)$, while if $\overline{w}=usu^{-1}$, the orientation of $(F(\overline{w}u,\emptyset,S),\overline{w})$ $=(F(us,\emptyset,S),\overline{w})$ is from $(us(o),usu^{-1})$ to $(u(o),usu^{-1})$.
In $\Omega_n$, both these vertices are identified with $\omega^0=[(F(\id,\emptyset,S),\id)]$ by the following isometries:
\begin{align*}
(F(u,\emptyset,S),\overline{w})=(u(o),\overline{w})\xrightarrow[]{\tau(o-u(o))\times \overline{w}^{-1}}(o,\id),\\
(F(us,\emptyset,S),\overline{w})=(us(o),\overline{w})\xrightarrow[]{\tau(o-us(o))\times \overline{w}^{-1}}(o,\id).
\end{align*}

\noindent To summarize, the 1-skeleton of $\Omega_n$ is built as follows.\medskip\\
\noindent For each $\beta=\overline{w}u(\alpha_s)\in \Phi[A_{n-1}]$, take a copy $\mathbb{B}(\beta)$ of the face of Coxeter polytope $F(\overline{w}u,s,S)$, identified with $F(\overline{w}u,s,S)$ through an isometry $g_{\beta}:F(\overline{w}u,s,S)\longrightarrow \mathbb{B}(\beta)$. Observe that $F(\overline{w}u,s,S)=conv\{\overline{w}uz(o)\,|\,z\in\{\id,s\}\}$ is the segment $[\overline{w}u(o), \overline{w}us(o)]$, which is oriented from $\overline{w}u(o)$ to $\overline{w}us(o)$. Namely, if $\overline{w}=\id$, which means that $\beta\in \Phi^+$, then $F(\overline{w}u,s,S)=[u(o),us(o)]$ is oriented from $u(o)$ towards $us(o)$, while if $\beta\in \Phi^-$, the segment is oriented from $us(o)$ towards $u(o)$. For each $\beta=\overline{w}u(\alpha_s)\in \Phi[A_{n-1}]$, we define a map $e^1_{\beta}:\partial \mathbb{B}(\overline{w}u(\alpha_s))\longrightarrow (\Omega_n)^0$ by $e^1_{\beta}(g_{\beta}(\overline{w}u(o)))=\omega^0$. Similarly, we also set $e^1_{\beta}(g_{\beta}(\overline{w}us(o)))=\omega^0$. \medskip\\
\noindent The 1-skeleton of $\Omega_n$ is the union $\{\omega^0\}\sqcup(\bigsqcup_{\beta\in \Phi[A_{n-1}]}\mathbb{B}(\beta))$ under the identifications that attach $x$ to $e^1_{\beta}(x)$ for all $\beta\in \Phi[A_{n-1}]$ and all $x\in \partial \mathbb{B}(\beta)$. The image of $\mathbb{B}(\beta)=\mathbb{B}(\overline{w}u(\alpha_s))$ in $(\Omega_n)^1$ under the composition of inclusion and quotient projection, gives the 1-cell $[(F(\overline{w}u,s,S),\overline{w})]=\Dd^1(\overline{w}u(\alpha_s))$. We denote this composition of maps by $E^1_{\beta}:\mathbb{B}(\beta)\longrightarrow \Dd^1(\beta)=\Dd^1(\overline{w}u(\alpha_s))$.\\
\\
\noindent \textbf{\textbf{Description of the $k$-skeleton.}} Before treating the general case for any $2\leq k\leq n-1$, to ease the comprehension, we analyze the particular case $k=2$.\medskip\\
\noindent The 2-cells of $\Omega_n$ of the form $[(F(u,X,S),w)]$, where $X\subset S$ has cardinality 2 and $u\in \Sn$ is $(\emptyset,X)$-minimal, are represented by the pair $(u\W_Xu^{-1},\overline{w})$. The element $\overline{w}\in u\W_{X}u^{-1}$ is the only one such that for all $\gamma\in u(\Phi_X)\cap \Phi^+$, $w(\gamma)$ is a positive root if and only if $\overline{w}(\gamma)$ is a positive root.\medskip \\
\noindent The 2-face $F(u,X,S)$ of the permutohedron, as we have seen in the proof of Theorem \ref{fundoriginalbeer}, is associated with the partition $P(u,X)=(P_1,\ldots,P_{n-2})$ with $n-2$ blocks. Either all the blocks are singletons except for a block with three elements, or all blocks are singletons except two, that are of cardinality 2. In the first case, the associated face $F(u,X,S)$ is a hexagon, denoted by $\Dd^2(i,j,k)$ in Theorem \ref{fundoriginalbeer}, while in the second case it is a square, denoted by $\Dd^2(i,j,k,l)$ in Theorem \ref{fundoriginalbeer}.\medskip\\
\noindent 
In terms of subsets of the root system $\Phi[A_{n-1}]$, we have that there is a 2-cell in $\Omega_n$ for all pairs $(u\W_Xu^{-1},\overline{w})$ such that $X\subset S$ has cardinality 2, $u\in \Sn$ is $(\emptyset,X)$-minimal and $\overline{w}\in u\W_Xu^{-1}$. We denote the cell $[(F(u,X,S),\overline{w})]=[(F(\overline{w}u,X,S),\overline{w})]$ by $\Dd^2(\overline{w}u(\Pi_X))$. Observe that any $\cX$ parabolic subset of roots of $\Phi[A_{n-1}]$ with $|\cX|=2$ can be written in the form $\cX=\overline{w}u(\Pi_X)$ with $X, \overline{w}$ and $u$ as before. We can then state that in $\Omega_n$ we find a 2-cell $\Dd^2(\cX)$ for all parabolic subset of roots $\cX=\overline{w}u(\Pi_X)$ of cardinality 2.
\medskip\\
\noindent We want now to determine the 1-cells in the boundary of $\Dd^2(\overline{w}u(\Pi_X))$. The edges of $\left[\left(F(u,X,S),\right.\right.$ $\left.\left.,w\right)\right]$ $=[(F(\overline{w}u,X,S),\overline{w})]$ are the projections to the quotient space of the edges of $(F(u,X,S),\overline
{w})$ $\in C[S]\times\Sn$, which are of the form $(F(uv,Y,S),\overline{w})$ with $Y=\{s\}\subset X$, $v\in \W_X$ and $v$ $(\emptyset,Y)$-minimal. The face $F(uv,\{s\},S)$ of $C[S]$ corresponds to the partition $P(uv,s)=(Q_1,\ldots,Q_{n-1})$, which is a refinement of $P(u,X)$ for $s\in X$. As before, let $s$ be $s=s_h$ for some $h\in \{1,\ldots,n-1\}$. The only 2-block of the partition $P(uv,s_h)$ is $Q_h=\{uv(h),uv(h+1)\}$. The class $[(F(uv,s_h,S),\overline{w})]$ in $\Omega_n$ is $\Dd^1(\overline{w}uv(\alpha_{s_h}))$, therefore, the edges in the boundary of the 2-cell $[(F(u,X,S),\overline{w})]$ are of the form $\Dd^1(\overline{w}uv(\alpha_s))$ with $Y=\{s\}\subset X$, $v\in \W_{X}$ and $v$ $(\emptyset,Y)$-minimal.\medskip \\
\noindent
In other words, the 2-skeleton of $\Omega_n$ is described as follows. \medskip\\
\noindent
For all parabolic subsets of roots $\cX=\overline{w}u(\Pi_X)\subset \Phi[A_{n-1}]$ such that $|\cX|=2$, we take a copy $\mathbb{C}(\cX)$ of the face $F(\overline{w}u,X,S)$ of the Coxeter polytope $C[S]$,  which we identify with $F(\overline{w}u,X,S)$ through an isometry $g_{\cX}: F(\overline{w}u,X,S)\longrightarrow \mathbb{C}(\overline{w}u(\Pi_X))$. For each $Y\subset X$ and each $v\in \W_X$ $(\emptyset,Y)$-minimal, there is a map $a^2_{\overline{w}u,v,Y,X}: F(\overline{w}uv,Y,S)\longrightarrow\Dd^1(\overline{w}uv(\Pi_Y))$ defined as follows. Let $z:=\overline{w}uvu^{-1}\overline{w}^{-1}\in \overline{w}u\W_Yu^{-1}\overline{w}^{-1}\subset \overline{w}u\W_Xu^{-1}\overline{w}^{-1}$. Then:
\[
F(\overline{w}uv,Y,S)\xrightarrow[g_{\overline{w}uv(\Pi_Y)}]{} \mathbb{B}(\overline{w}uv(\Pi_Y))\xrightarrow[E^1_{\overline{w}uv(\Pi_Y)}]{}\Dd^1(\overline{w}uv(\Pi_Y))=\Dd^1(z\overline{w}u(\Pi_Y)).\]
\noindent Thus, we define $a^2_{\overline{w}u,X}:\partial F(\overline{w}u,X,S)\longrightarrow (\Omega_n)^1$ as follows. Let $x\in \partial F(\overline{w}u,X,S)$. Choose any $Y\subset X$ and $v\in \W_X$ $(\emptyset,Y)$-minimal such that $x\in F(\overline{w}uv,Y,S)$, and set $a^2_{\overline{w}u,X}(x)=a^2_{\overline{w}u,v,Y,X}(x)$. In this way, we have a global attaching map 
\[
    e^2_{\overline{w}u(\Pi_X)}=e^2_{\cX}: \partial \mathbb{C}(\cX)\longrightarrow(\Omega_n)^1
\]
\noindent by composing $(g_{\cX})^{-1}|_{\partial \mathbb{C}(\cX)}$ with $a^2_{\cX}=a^2_{\overline{w}u,X}$. The 2-skeleton of $\Omega_n$ is given by the union of $(\Omega_n)^1$ and $\bigsqcup_{\cX=\overline{w}u(\Pi_X)}\mathbb{C}(\overline{w}u(\Pi_X))$ under the identifications that attach $x$ to $e^2_{\cX}(x)$ for all $\cX$ and all $x\in \partial \mathbb{C}(\cX)$. The composition of inclusion and quotient projection is the characteristic map $E^2_{\cX}:\mathbb{C}(\cX)\longrightarrow \Dd^2(\cX)$ of the cell $\Dd^2(\cX)$.
\medskip \\
\noindent Now we treat the case of a general $2\leq k\leq n-1$, supposing that for all $\cT=\overline{r}q(\Pi_T)$ with $T\subset S$, $|T|=h<k$, $q\in \Sn$ $(\emptyset,T)$-minimal and $\overline{r}\in q\W_Tq^{-1}$, there is a copy $\mathbb{C}(\mathcal{T})$ of $F(\overline{r}q,T,S)$ attached to $(\Omega_n)^h$ through a characteristic map $E^h_{\cT}: \mathbb{C}(\cT)\longrightarrow \Dd^h(\cT)$. \medskip \\
\noindent A $k$-cell of $\Omega_n$ is the equivalence class of a pair $(F(u,X,S),w)$ where $X\subset S$ has cardinality $k$ and $u$ is $(\emptyset,X)$-minimal. Let $\overline{w}$ be the unique element in $u\W_Xu^{-1}$ such that $[(F(u,X,S),w)]=[(F(\overline{w}u,X,S),\overline{w})]$. Then, in $(\Omega_n)^k$ there is a cell for all parabolic set of roots $\cX=\overline{w}u(\Pi_X)$. Take a copy $\mathbb{C}(\cX)$ of $F(\overline{w}u,X,S)$, identified to it through an isometry $g_{\cX}$.
\medskip \\
\noindent As we saw for $k=2$, the cells in the boundary of $[(F(\overline{w}u,X,S),\overline{w})]$ are of the form $\Dd^h(\overline{w}uv(\Pi_Y))$, with $Y\subset X$, $v\in \W_X$ $(\emptyset,Y)$-minimal and $|Y|=h<k$. The boundary of $[(F(\overline{w}u,X,S),\overline{w})]$ is attached to the $(k-1)$-skeleton of $\Omega_n$ as follows.
Let $z:=\overline{w}uvu^{-1}\overline{w}^{-1}\in \overline{w}u\W_Yu^{-1}\overline{w}^{-1}\subset \overline{w}u\W_Xu^{-1}\overline{w}^{-1}$. Then, define $a^k_{\overline{w}u,v,Y,X}$ to be the following composition of maps:
\[
F(\overline{w}uv,Y,S)\xrightarrow[g^h_{\overline{w}uv(\Pi_Y)}]{} \mathbb{C}(\overline{w}uv(\Pi_Y))\xrightarrow[E^h_{\overline{w}uv(\Pi_Y)}]{}\Dd^h(\overline{w}uv(\Pi_Y))=\Dd^h(z\overline{w}u(\Pi_Y)).\]
\noindent Thus, we define $a^k_{\overline{w}u,X}:\partial F(\overline{w}u,X,S)\longrightarrow (\Omega_n)^{k-1}$ as follows. Let $x\in \partial F(\overline{w}u,X,S)$. Choose any $Y\subset X$ and $v\in \W_X$ $(\emptyset,Y)$-minimal such that $x\in F(\overline{w}uv,Y,S)$, and set $a^k_{\overline{w}u,X}(x)=a^k_{\overline{w}u,v,Y,X}(x)$. In this way, we have a global attaching map 
\[
    e^k_{\overline{w}u(\Pi_X)}=e^k_{\cX}: \partial \mathbb{C}(\cX)\longrightarrow(\Omega_n)^{k-1}
\]
\noindent by composing $(g_{\cX})^{-1}|_{\partial \mathbb{C}(\cX)}$ with $a^k_{\overline{w}u,X}$. The $k$-skeleton of $\Omega_n$ is given by the union of $(\Omega_n)^{k-1}$ and $\bigsqcup_{\cX=\overline{w}u(\Pi_X)}\mathbb{C}(\overline{w}u(\Pi_X))$ under the identifications that attach $x$ to $e^k_{\cX}(x)$ for all $\cX$ and all $x\in \partial \mathbb{C}(\cX)$. The composition of inclusion and quotient projection gives a map $E^k_{\cX}:\mathbb{C}(\cX)\longrightarrow \Dd^k(\cX)$, which is the characteristic map of the cell $\Dd^k(\cX)$.
\medskip \\
\noindent Proceeding inductively in this fashion, we obtain a description of all the skeleta of $\Omega_n$. The cells of highest dimension of $\Omega_n$ are associated with the subsets $\cX=\overline{w}u(\Pi_S)$. Since the only $(\emptyset,S)$-minimal element is $u=\id$, we have a cell of dimension $n-1$ for each element  $\overline{w}=w$ of the symmetric group $\Sn$. Thus, in $\Omega_n$ there are $n!$ cells of maximal dimension.

\subsubsection{The equality theorem}\label{sectionomegan=omegaxan}
\noindent In this subsubsection we finally show that $\Omega_n$ and $\Omega(A_{n-1})$ coincide for all $n\in \mathbb{N}$.
\begin{thm}\label{omegan=omegaxan}
The generalized BEER complex $\Omega(\Gamma)$ defined in Subsection \ref{generalBeer}, for $\Gamma=A_{n-1}$, coincides with the space $\Omega_n$ defined in \cite{BEER}.
\end{thm}
\begin{proof}
In this proof, we compare the cellular descriptions of $\Omega_n$ and $\Omega(A_{n-1})$, done respectively in Subsection \ref{generalBeer} and Subsubsection \ref{cellulardescriptionOmegan}. We first observe that these CW-complexes have both their cells in bijective correspondence with the parabolic subsets of roots $\cX\subset \Phi[A_{n-1}]$. Indeed, as remarked in the previous sections, the generalized BEER complex has a $k$-cell $D^k(\cX)$ for each almost parabolic subset of roots $\cX$ such that $|\cX|=k$. Thanks to Lemma \ref{allAPareparabolic}, we know that $\cX$ can be expressed as $\cX=\overline{w}u(\Pi_X)$, where $X\subset S$, $u$ is $(\emptyset,X)$-minimal and $\overline{w}$ belongs to the parabolic subgroup \[u\W_{X}u^{-1}=\overline{w}u\W_{X}u^{-1}\overline{w}^{-1}=\W_{\overline{w}u(\Pi_X)}=\W_{\cX}.\]
\noindent Keep in mind that, in general, $\cX$ can be written in such a form for several $u$ and $X$, but it is uniquely determined by the pair $(u\W_Xu^{-1},\overline{w})$ (see Lemma \ref{APsubsetsofSn}).\medskip \\
\noindent
On the other hand, by Subsubsection \ref{cellulardescriptionOmegan}, we have seen that $\Omega_n$ has a $k$-cell $\Dd^k(\overline{w}u(\Pi_X))=[(F(\overline{w}u,X,S),\overline{w})]$ for each pair $(u\W_Xu^{-1},\overline{w})$ where $X\subset S$, $|X|=k$, $u$ is $(\emptyset,X)$-minimal and $\overline{w}\in \W_{u(\Pi_X)}$. Thus, the two complexes have their cells in bijection.\medskip \\
\noindent
For any parabolic $\cX\subset \Phi[A_{n-1}]$ such that $|\cX|=k$, we denote by $D^k(\cX)$ and $\Dd^k(\cX)$ the $k$-cell associated with $\cX$ in $\Omega(A_{n-1})$ and $\Omega_n$, respectively. Moreover, we check that these cells have the same shape, i.e. that the model cells $\mathbb{D}(\cX)$ and $\mathbb{C}(\cX)$ such that $D^k(\cX)=\Lambda^k_{\cX}(\mathbb{D}(\cX))$ and $\Dd^k(\cX)=E^k_{\cX}(\mathbb{C}(\cX))$, are isometric. 
In the generalized BEER complex $\Omega(A_{n-1})$, a model cell $\mathbb{D}(\cX)$ is isometric to the Coxeter polytope $C[\cX]$ through $f_{\cX}:C[\cX]\longrightarrow \mathbb{D}(\cX)$. In $\Omega_n$, since it is the quotient of the product space $C[S]\times \Sn$, a model cell $\mathbb{C}(\cX)$ is isometric to the face $F(\overline{w}u,X,S)$ of the permutohedron $C[S]$, namely, there is an isometry $g_{\cX}:F(\overline{w}u,X,S)\longrightarrow \mathbb{C
}(\cX)$.\medskip\\
\noindent Since $\cX=\overline{w}u(\Pi_X)$ is parabolic, by Remark \ref{coxpolitopeofaparabolic} we know that $C[\cX]=C[\overline{w}u(\Pi_X)]$ is the isometric image of $C[X]$ under $\overline{w}u$. In particular, the following composition of maps
\[
F(\overline{w}u,X,S)\xrightarrow[(\overline{w}u)^{-1}]{} F(\id,X,S)\xrightarrow[\tau(o_X-o)]{}{}C[X]\xrightarrow[\overline{w}u]{}C[\overline{w}u(\Pi_X)]=C[\cX]\]
is an isometry between $F(\overline{w}u,X,S)$ and $C[\cX]$. Therefore, for all $\cX\subset\Phi[A_{n-1}]$, the model cells $\mathbb{C}(\cX)$ and $\mathbb{D}(\cX)$ are isometric through $im^k_{\cX}:=f_{\cX}\circ\tau(\overline{w}u(o_X)-\overline{w}u(o))\circ(g_{\cX})^{-1}$.\\ \\
\noindent
We observe now that sub-cells of $D^k(\cX)$ and $\Dd^k(\cX)$ are respectively of the form $D^h(z(\cY))$ and $\Dd^h(z(\cY))$, for each $\cY\subset \cX$ of cardinality $|\cY|=h<k$ and each $z\in \W_{\cX}$ that is $(\emptyset,R_{\cY})$-minimal. For $\Omega(A_{n-1})$, the fact that the cells in the boundary of $D^k(\cX)$ are the ones above is a direct consequence of Definition \ref{defbeer}. We now look at the cellular structure of $\Omega_n$ presented in Subsubsection \ref{cellulardescriptionOmegan}. Since the cell $\Dd^k(\cX)=\Dd^k(\overline{w}u(\Pi_X))$ is the image of $F(\overline{w}u,X,S)$ under the characteristic map $E^k_{\cX}$, its sub-cells are the images $\Dd^h(\overline{w}uv(\Pi_Y))$ of $F(\overline{w}uv,Y,S)$ under the attaching maps $a^k_{\overline{w}u,v,Y,X}$, where $Y\subset X$, $|Y|=h<k$, $v\in \W_X$ and $v$ is $(\emptyset,Y)$-minimal. If we set $\cY=\overline{w}u(\Pi_Y)\subset\overline{w}u(\Pi_X)=\cX$ and $z:=\overline{w}uvu^{-1}\overline{w}^{-1}$, we see that $z(\cY)=\overline{w}uv(\Pi_Y)$, and moreover $z\in \W_{\cX}$ and is $(\emptyset,R_{\cY})$-minimal. Hence, the cells in the boundary of $\Dd^k(\cX)$ are $\Dd^h(z(\cY))$ for all $\cY\subset \cX$ with $|\cY|=h<k$, and $z\in \W_{\cX}$ an $(\emptyset,R_{\cY})$-minimal element.\\
\\
\noindent We have just checked that, chosen a parabolic subset of roots $\cX$ of cardinality $k$, the sub-cells of $D^k(\cX)$ are in bijection with the sub-cells of $\Dd^k(\cX)$. We want now to verify that the boundary of the cell associated with $\cX$ is attached to the $(k-1)$-skeleton in the same way in the two complexes. Recall that for $z,\cX$ and $\cY$ as before, the expression of $\lambda^k_{\cX}:\partial \mathbb{D}(\cX)\longrightarrow\Omega^{k-1}(A_{n-1})$ is $\chi^k_{\cX}\circ(f_{\cX})^{-1}|_{\partial \mathbb{D}(\cX)}$, where for $x\in F(z,\cY,\cX)$, the isometry $\chi^k_{\cX}=\chi^k_{z,\cY,\cX}$ is as follows:
\[
F(z,\cY,\cX)\xrightarrow[z^{-1}]{} F(\id,\cY,\cX)\xrightarrow[\tau(o_{\cY}-o_{\cX})]{}C[\cY]\xrightarrow[z]{}C[z(\cY)]\xrightarrow[f_{z(\cY)}]{}\mathbb{D}(z(\cY))\xrightarrow[\Lambda^h_{z(\cY)}]{}D^h(z(\cY)).
\]
\noindent 
On the other hand, in $\Omega_n$, for $z,\cX$ and $\cY$ as before, the expression of $e^k_{\cX}:\partial \mathbb{C}(\cX)\longrightarrow(\Omega_n)^{k-1}$ is $e^k_{\cX}=a^k_{\cX}\circ(g_{\cX}^{-1})|_{\partial \mathbb{C}(\cX)}$, where for $x\in F(\overline{w}uv,Y,S)$, the isometry $a^k_{\cX}=a^k_{\overline{w}u,v,Y,X}$ is as follows:
\[
F(\overline{w}uv,Y,S)\xrightarrow[g^h_{\overline{w}uv(\Pi_Y)}]{} \mathbb{C}(\overline{w}uv(\Pi_Y))\xrightarrow[E^h_{\overline{w}uv(\Pi_Y)}]{}\Dd^h(z\overline{w}u(\Pi_Y))=\Dd^h(z(\cY)).\]
\noindent
To show that $\Omega_n$ and $\Omega(A_{n-1})$ coincide, we verify that, for all parabolic $\cX=\overline{w}u(\Pi_X)\in\Phi[A_{n-1}] $, the cells $\Dd^k(\cX)$ and $D^k(\cX)$ are equal. To do so, we proceed by induction on the dimension $k$ of the cells. \\
\\
\noindent The 0-skeleton is made by just one point $\omega^0$ in the two complexes. Therefore, it is clear that $\Dd^0(\emptyset)=D^0(\emptyset)=\omega^0$.\\
\\
\noindent Take now any root $\beta=\overline{w}u(\alpha_s)$ in $\Phi[A_{n-1}]$, and consider the model cells $\mathbb{A}(\beta)$ and $\mathbb{B}(\beta)$. As explained in Subsubsection \ref{cellulardescriptionOmegan}, the cell $\Dd^1(\beta)$ of $\Omega_n$ is the image of the model cell $\mathbb{B}(\beta)$ (isometric to $F(\overline{w}u, s,S)$) through the characteristic map $E^1_{\beta}$. $E^1_{\beta}$ extends the map $e^1_{\beta}$ defined on the boundary $\partial \mathbb{B}(\beta)$ as $e^1_{\beta}=a^1_{\overline{w}u(\alpha_s)}\circ (g_{\beta}^{-1})|_{\partial \mathbb{B}(\beta)}$, where
\[
a^1_{\beta}:\partial F(\overline{w}u,s,S)=\bigcup_{v\in \W_{\{s\}}} F(\overline{w}uv,\emptyset,S)=\{\overline{w}u(o),\overline{w}us(o)\}\longrightarrow \omega^0,
\]
\noindent In $\Omega(A_{n-1})$, the cell $D^1(\beta)$ is the image of $\mathbb{A}(\beta)$ under the characteristic map $\Lambda^1_{\beta}$, which extends $\lambda^1_{\beta}=\chi^1_{\beta}\circ (f_{\beta}^{-1})|_{\partial \mathbb{A}(\beta)}$, where
\[
\chi^1_{\beta}:\partial C[\beta]=\{-\beta,\beta\}\longrightarrow \omega^0.
\]
\noindent Observe now that $\beta=o_{\beta}\in C[\beta]$ is the image of the base-point $o_s=\alpha_s\in C[s]$ under the action of $\overline{w}u$. By composing $\tau(\overline{w}u(o_s)-\overline{w}u(o))|_{\partial F(\overline{w}u,s,S)}$ with $\chi^1_{\beta}$, we obtain a map $\partial C[\beta]\longrightarrow\omega^0=\Dd^0(\emptyset)=D^0(\emptyset)$ that is, for all $v\in \W_{\{s\}}=\{\id,s\}$
\[
F(\overline{w}uv,\emptyset,S)\xrightarrow[(\overline{w}u)^{-1}]{} F(v,\emptyset,S)\xrightarrow[\tau(o_s-o)]{}{}vC[\emptyset]=\{v(o_s)\}\xrightarrow[\overline{w}u]{}\overline{w}uv(o_s)=\overline{w}uv(\alpha_s)=\pm \beta\xrightarrow[\chi^1_{\beta}]{}\omega^0.
\]
\noindent The map above, written concisely as $\chi^1_{\beta}\circ \tau(\overline{w}u(o_s-o))$, coincides with $a^1_{\beta}$. This means that 
\begin{multline*}
e^1_{\beta}=a^1_{\beta}\circ (g^{-1}_{\beta})|_{\partial \mathbb{B}(\beta)}=\chi^1_{\beta}\circ \tau(\overline{w}u(o_s-o))\circ (g^{-1}_{\beta})|_{\partial \mathbb{B}(\beta)}=\\=\underbrace{\chi^1_{\beta}\circ(f_{\beta}^{-1})|_{\partial \mathbb{A}(\beta)}}_{\lambda^1_{\beta}}\underbrace{\circ(f_{\beta})|_{\partial C[\beta]}\circ \tau(\overline{w}u(o_s-o))\circ (g^{-1}_{\beta})|_{\partial \mathbb{B}(\beta)}}_{(im^1_{\beta})|_{\partial \mathbb{B}(\beta)}}.
\end{multline*}
\noindent We just obtained
\[
e^1_{\beta}=\lambda^1_{\beta}\circ (im^1_{\beta})|_{\partial \mathbb{B}(\beta)}.
\]
\noindent
The 1-skeleton of the original BEER complex $\Omega_n$ is
\[
(\Omega_n)^1=\left(\{\omega^0\}\sqcup\left(\bigsqcup_{\beta=\overline{w}u(\alpha_s)\in \Phi[A_{n-1}]}\mathbb{B}(\beta)\right)\right)\left.\right/\sim,
\]
\noindent where $x\sim e^1_{\beta}(x)$ for all $\beta\in \Phi[A_{n-1}]$ and $x\in \partial \mathbb{B}(\beta)$. In particular, we can write $e^1_{\beta}(x)$ as $\lambda^1_{\beta}\circ (im^1_{\beta})|_{\partial \mathbb{B}(\beta)}(x)$, hence the previous formula becomes
\[
(\Omega_n)^1=\left(\{\omega^0\}\sqcup\left(\bigsqcup_{\beta\in \Phi[A_{n-1}]}\mathbb{A}(\beta)\right)\right)\left.\right/\sim=\Omega^1(A_{n-1}),
\]
\noindent where for all $y=(im^1_{\beta})|_{\partial \mathbb{B}(\beta)}(x)$, we identify $y$ with $\lambda^1_{\beta}(y)$. In other words, the 1-skeleta of the two complexes $\Omega_n$ and $\Omega(A_{n-1})$ coincide. This also implies that the characteristic map $\Lambda^1_{\beta}:\mathbb{A}(\beta)\longrightarrow D^1(\beta)$ and $E^1_{\beta}:\mathbb{B}(\beta)\longrightarrow \Dd^1(\beta)$ satisfy
\[
E^1_{\beta}=\Lambda^1_{\beta}\circ im^1_{\beta}.
\]
\noindent We can conclude that for all roots $\beta\in \Phi[A_{n-1}]$, $D^1(\beta)$ and $\Dd^1(\beta)$ coincide.\\
\\
\noindent The base step of induction is concluded. We now show the induction step. Suppose that for all $1\leq h<k$ the $h$-skeleta of $\Omega_n$ and $\Omega(A_{n-1})$ coincide. In particular suppose that for all parabolic $\cT\subset \Phi[A_{n-1}]$ of rank $h<k$, the cells $\Dd^h(\cT)$ and $D^h(\cT)$ are equal and $E^h_{\cT}=\Lambda^h_{\cT}\circ im^h_{\cT}$.\\
\\
\noindent Take now a parabolic subset $\cX\subset \Phi[A_{n-1}]$ of cardinality $k$. Let us write $\cX$ as $\overline{w}u(\Pi_X)$ for some $X\subset S$, $u$ $(\emptyset,X)$-minimal and $\overline{w}\in u\W_Xu^{-1}$. The associated $k$-cells in $\Omega_
n$ and $\Omega(A_{n-1})$ are respectively $\Dd^k(\cX)=E^k_{\cX}(\mathbb{C(\cX)})$ and $D^k(\cX)=\Lambda
^k_{\cX}(\mathbb{D}(\cX))$.
Consider the face $F(\overline{w}uv,Y,S)$ of $F(\overline{w}u,X,S)$, isomorphic to the model cell $\mathbb{C}(\overline{w}u(\Pi_X))=\mathbb{C}(\cX)$ through $g_{\cX}$. As usual,  $Y\subset X$, $|Y|=h$,$v\in \W_X$ and $v$ is a $(\emptyset,Y)$-minimal element. As before, set $\cX=\overline{w}u(\Pi_X)$, $\cY=\overline{w}u(\Pi_Y)\subset\cX$ and $z=\overline{w}uvu^{-1}\overline{w}^{-1}$.\\
\\
\noindent We observe that the composition $\tau(\overline{w}uv(o_X)-\overline{w}uv(o))$ with $\chi^k_{z,\cY,\cX}$, gives a map that we describe below:
\begin{multline*}
    F(\overline{w}uv,Y,S)\xrightarrow[(\overline{w}uv)^{-1}]{} F(\id,Y,S)\xrightarrow[\tau(o_X-o)]{}{}F(\id,Y,X)\subset \partial C[X]\xrightarrow[\overline{w}uv]{}F(z,\cY,\cX)\xrightarrow[]{}\\ \xrightarrow[z^{-1}]{} F(\id,\cY,\cX)\xrightarrow[\tau(o_{\cY}-o_{\cX})]{}C[\cY]\xrightarrow[z]{}C[z(\cY)]\xrightarrow[f_{z(\cY)}]{}\mathbb{D}(z(\cY))\xrightarrow[\Lambda^h_{z(\cY)}]{}D^h(z(\cY)).
\end{multline*}
\noindent By induction hypothesis, since $|z(\cY)|=h<k$, we have that $D^h(z(\cY))=\Dd^h(z(\cY))=\Dd^h(\overline{w}uv(\Pi_Y))$ and that $\Lambda^h_{z(\cY)}\circ im^h_{z(\cY)}=E^h_{z(\cY)}$.
As for $k=1$, we see that $\chi^k_{z,\cY,\cX}\circ \tau(\overline{w}uv(o_X)-\overline{w}uv(o))$ is the attaching map $a^k_{\overline{w}u,v,Y,X}:F(\overline{w}uv,Y,S)\subset \partial F(\overline{w}u,X,S)\longrightarrow \Dd^h(\overline{w}uv(\Pi_Y))$. In fact, simplifying the writing, we obtain that \begin{equation}\label{eqcomposition}
    \chi^k_{z,\cY,\cX}\circ \tau(\overline{w}uv(o_X)-\overline{w}uv(o))=\Lambda^h_{z(\cY)}\circ f_{z(\cY)}\circ \tau(z(o_{\cY})-z(o_{\cX}))\circ \tau(\overline{
w}uv(o_X)-\overline{
w}uv(o)).
\end{equation}
\noindent As in Remark \ref{coxpolitopeofaparabolic}, since $\cX=\overline{w}u(\Pi_X)$ and $\cY=\overline{w}u(\Pi_Y)$, we have that $z(o_{\cY})=\overline{w}uv(o_Y)$ and $z(o_{\cX})=\overline{w}uv(o_X)$. Thus, the formula in Equation \ref{eqcomposition} becomes
\[
\Lambda^h_{z(\cY)}\circ f_{z(\cY)}\circ \tau(\overline{
w}uv(o_Y)-\overline{
w}uv(o_X))\circ \tau(\overline{
w}uv(o_X)-\overline{
w}uv(o))=\Lambda^h_{z(\cY)}\circ f_{z(\cY)}\circ \tau(\overline{
w}uv(o_Y)-\overline{
w}uv(o)).
\]
\noindent Now recall that $a^k_{\overline{w}u,v,Y,X}=E^h_{z(\cY)}\circ g^h_{z(\cY)}$. By induction hypothesis, we can replace $E^h_{z(\cY)}$ by $\Lambda^h_{z(\cY)}\circ im^h_{z(\cY)}$. Since \[im^h_{z(\cY)}=f_{z(\cY)}\circ \tau(\overline{w}uv(o_{Y}-o))\circ (g_{z(\cY)})^{-1},\]
\noindent we finally obtain 
\begin{multline*}
    a^k_{\overline{w}u,v,Y,X}=E^h_{z(\cY)}\circ g^h_{z(\cY)}=\Lambda^h_{z(\cY)}\circ f_{z(\cY)}\circ \tau(\overline{w}uv(o_{Y}-o))\circ (g_{z(\cY)})^{-1}\circ (g_{z(\cY)})=\\
    =\Lambda^h_{z(\cY)}\circ f_{z(\cY)}\circ \tau(\overline{w}uv(o_{Y}-o))=\chi^k_{z,\cY,\cX}\circ \tau(\overline{w}uv(o_X)-\overline{w}uv(o)).
\end{multline*}
\noindent Then, we got that, for all $x\in \partial\mathbb{C(\cX)}$ such that $g_{\cX}^{-1}(x)\in F(\overline{w}uv,Y,S)$,
\begin{multline*}
e^k_{\cX}(x)=a
^k_{\overline{w}u,v,Y,X}\circ (g_{\cX}^{-1})|_{\partial \mathbb{C}(\cX)}(x)=\chi^k_{z,\cY,\cX}\circ \tau(\overline{w}uv(o_X)-\overline{w}uv(o))\circ(g_{\cX}^{-1})|_{\partial \mathbb{C}(\cX)}(x)=\\
=\underbrace{\chi^k_{z,\cY,\cX}\circ (f_{\cX}^{-1})|_{\partial \mathbb{D}(\cX)}}_{\lambda^k_{\cX}} \circ \underbrace{(f_{\cX})|_{\partial C[\cX]} \circ \tau(\overline{w}uv(o_X)-\overline{w}uv(o))\circ(g_{\cX}^{-1})|_{\partial \mathbb{C}(\cX)}}_{im^k_{\cX}|_{\partial \mathbb{C}(\cX)}}(x)\\
\Longrightarrow e^k_{\cX}(x)=\lambda^k_{\cX} \circ im^k_{\cX}|_{\partial \mathbb{C}(\cX)}(x).
\end{multline*}
\noindent Analogously to what we obtained for $k=1$, a consequence of the equality above is that the $k$-cells $D^k(\cX)$ and $\Dd^k(\cX)$ coincide, the $k$-skeleta of $\Omega_n$ and $\Omega(A_{n-1})$ are equal, and $E^k_{\cX}=\Lambda^k_{\cX}\circ im^k_{\cX}$.\\
\\
\noindent By induction, we conclude that $\Omega_n$ and $\Omega(A_{n-1})$ are the same CW-complex.
\end{proof}

\section{The common covering of \texorpdfstring{$\Sigma(\Gamma)$}{TEXT} and \texorpdfstring{$\Omega(\Gamma)$}{TEXT}}\label{commoncovering}
Consider $\Gamma$ to be a Coxeter graph and $\VA[\Gamma]$ its virtual Artin group, with the two subgroups $\PVA[\Gamma]$ and $\KVA[\Gamma]$. In Subsection \ref{salbargam} we have built the Salvetti complex $\salbargam=\Sigma(\Gamma)$, whose fundamental group is isomorphic to the kernel $\KVA[\Gamma]$. Meanwhile, in Subsection \ref{generalBeer}, we constructed the BEER complex $\Omega(\Gamma)$, with $\pi_1$ isomorphic to $\PVA[\Gamma]$.
For $\Gamma=A_{n-1}$, the complex $\Omega(\Gamma)$ coincides with the space originally defined in \cite{BEER} and presented in Subsection \ref{originalBEERspace}. However, the local CAT(0) approach cannot be used to prove its asphericity, since
Theorem \ref{eqialityofspaces} shows that $\Omega_3$ is not locally CAT(0). To investigate this question, we will compare the (generalized) BEER complex to the Salvetti complex and find that these two spaces, for any $\Gamma$, share the same universal covering. Thanks to some existing results, we will be able to state that this common universal covering is contractible when $\Gamma$ is of spherical or affine type. \\
\\
We recall that for a given finite Coxeter graph $\Gamma$, $\salbar$ is homotopically equivalent to the complexified complement of the reflection arrangement of $\W[\Gamma]$, modulo the action of the Coxeter group (see \cite{Sal87}). Stating that $\salbar$ is aspherical is equivalent to saying that $\A[\Gamma]$ satisfies the $K(\pi,1)$-conjecture. We now wish to study the conjecture for the graph $\widehat{\Gamma}$. Since the set of vertices of $\widehat{\Gamma}$ correspond bijectively to the root system $\Phi[\Gamma]$, the only case in which $\widehat{\Gamma}$ is a finite graph is when $\Gamma$ is of spherical type.\\
\\
\noindent In this work, $\Gamma$ is not necessarily a finite Coxeter graph, and thus $\widehat{\Gamma}$ is not finite either. The spaces we built in Subsections \ref{salbargam} and \ref{generalBeer} are well-defined even when $\Gamma$ has a countable set of vertices, as well as their covering spaces. Moreover, the main result of this work (Theorem \ref{maintheorem}), which establishes the existence of a common covering between the two spaces, holds in the general setting. However, the asphericity of $\Sigma(\Gamma)$ is guaranteed only if $\Gamma$ is of spherical or affine type. In \cite{BellParThiel}, the authors prove the following fundamental result:
\begin{thm}[Theorem 4.1 in \cite{BellParThiel}.]\label{freeofinfBPT}
Let $\Gamma$ be a Coxeter graph of spherical type or of affine type, and let $\cX$ be a free of infinity subset of $\Phi[\Gamma]$. Then $\cX$ is finite, $\widehat{\Gamma}_{\cX}$ is of spherical type or of affine type, and $n_{sph}(\widehat{\Gamma}_{\cX})\leq n_{sph}(\Gamma).$   
\end{thm}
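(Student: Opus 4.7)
The plan is to split into the spherical and affine cases and exploit the geometry of the reflection arrangement of $\W[\Gamma]$ in each. The spherical case is essentially immediate: $\Phi[\Gamma]$ is finite so $\cX$ is finite automatically; the reflection subgroup $\W_{\cX}\subset \W[\Gamma]$ is a subgroup of a finite group and hence finite (of spherical type); and the rank of any reflection subgroup is bounded by $|S|=n_{sph}(\A[\Gamma])$ by linear independence of simple roots in the canonical representation. From here on I would concentrate on the affine case.

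For the affine case I would use the standard realisation of $\W[\Gamma]$ as a group of affine isometries on a Euclidean affine space $E$ of dimension $|S|-1=n_{sph}(\A[\Gamma])$, in which every root $\beta\in\Phi[\Gamma]$ gives an affine reflection $r_\beta$ with reflection hyperplane $H_\beta\subset E$. The crucial dichotomy is that $r_\beta r_\gamma$ has finite order precisely when $H_\beta\cap H_\gamma\neq\emptyset$: when the hyperplanes are parallel, $r_\beta r_\gamma$ is a nontrivial translation, so $\widehat{m}_{\beta,\gamma}=\infty$. The affine reflection arrangement decomposes into finitely many families of mutually parallel hyperplanes, one family per positive root of the underlying finite Weyl group. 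Since $\cX$ is free of infinity, the hyperplanes $\{H_\beta\}_{\beta\in\cX}$ pairwise intersect, hence $\cX$ contains at most one representative from each family and is therefore finite.

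To determine the type of $\widehat{\Gamma}_{\cX}$, I would first verify that $\cX$ is AP: property (AP2) is literally the free-of-infinity hypothesis, and (AP1) follows by transplanting the positive-semidefiniteness argument of Lemma~\ref{chiapsphtyp} to $\widehat{V}_{\cX}$ while allowing isotropic directions. Proposition~\ref{reflecrionstandard} then identifies $\widehat{\W}_{\cX}$ with the reflection subgroup $\W_{\cX}\subset \W[\Gamma]$, and it remains to show the latter is spherical or affine as a Coxeter group. Geometrically, the hyperplanes $\{H_\beta\}_{\beta\in\cX}$ either share a common point $p\in E$, in which case $\W_{\cX}$ stabilises $p$ and is a finite reflection group (spherical type); or their intersection is empty but all normals are orthogonal to a common nonzero direction, in which case $\W_{\cX}$ acts as an affine reflection group on a quotient subspace (affine type). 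The final inequality then follows because any AP spherical subset $\cY\subseteq\cX$ yields a finite reflection subgroup $\W_{\cY}\subset \W[\Gamma]$ of rank $|\cY|$, and a finite reflection subgroup of an affine Coxeter group on $E$ cannot have rank exceeding $\dim E = n_{sph}(\A[\Gamma])$.

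The main obstacle I expect is the middle step of the third paragraph: rigorously classifying the reflection subgroup $\W_{\cX}$ when the generating hyperplanes do not all share a common point. The cleanest approach is to decompose via the projection $E\to E/L$, where $L$ is the common direction of parallelism among the normals, reducing to a finite subgroup on the quotient and a translation part on $L$; matching this decomposition with the Coxeter presentation provided by Theorem~\ref{refAP} and checking that the associated bilinear form on $\widehat{V}_{\cX}$ inherits positive semidefiniteness from the ambient form completes the identification with a spherical or affine Coxeter diagram.
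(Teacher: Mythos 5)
The paper itself offers no proof of this statement --- it is imported verbatim as Theorem 4.1 of \cite{BellParThiel} --- so the only thing to assess is whether your argument is correct, and it has a genuine gap at its central step. You assert that a free of infinity subset $\cX\subset\Phi[\Gamma]$ is AP (in particular that its roots are linearly independent) and then invoke Proposition \ref{reflecrionstandard} to identify $\widehat{\W}_{\cX}$ with the reflection subgroup $\langle R_{\cX}\rangle\subset\W[\Gamma]$. Both claims fail. Take $\Gamma=A_2$ and $\cX=\{\alpha_1,\alpha_2,-(\alpha_1+\alpha_2)\}$: each pair lies in some $w(\Pi)$ (for instance $\{\alpha_2,-(\alpha_1+\alpha_2)\}=s_1s_2(\Pi)$ and $\{-(\alpha_1+\alpha_2),\alpha_1\}=s_2s_1(\Pi)$), so $\cX$ is free of infinity with $\widehat{m}_{\beta,\gamma}=3$ for all pairs, yet the three roots sum to zero. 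Then $\widehat{\Gamma}_{\cX}$ is the triangle with all labels $3$, i.e.\ $\tilde{A}_2$, so $\widehat{\W}_{\cX}$ is infinite of affine type while $\langle R_{\cX}\rangle=\mathfrak{S}_3$ is finite. This one example refutes your spherical-case conclusion that $\widehat{\Gamma}_{\cX}$ is of spherical type (it can be affine even when $\Gamma$ is spherical), shows that free of infinity does not imply (AP1), and shows that your third paragraph classifies the wrong group: the geometry of the hyperplanes $H_\beta$ controls the reflection subgroup $\langle R_{\cX}\rangle$, not the standard parabolic subgroup $\widehat{\W}_{\cX}$ of $\widehat{\W}$ that the theorem is actually about.

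The repair for the type assertion bypasses AP entirely: since $\cX$ is free of infinity, the identity $\llangle\widehat{E}_{\beta},\widehat{E}_{\gamma}\rrangle=\langle\beta,\gamma\rangle$ (quoted in the proof of Lemma \ref{chiapsphtyp}) holds for \emph{every} pair in $\cX$, so the canonical bilinear form of $\widehat{\Gamma}_{\cX}$ is the pullback of $\langle\cdot,\cdot\rangle$ along the linear map $\widehat{E}_{\beta}\longmapsto\beta$, hence positive semidefinite whenever $\Gamma$ is of spherical or affine type; this gives the dichotomy directly, with no independence hypothesis. The parts of your argument that survive are the finiteness of $\cX$ in the affine case (two roots whose affine reflection hyperplanes are parallel, whether distinct or equal, do have $\widehat{m}=\infty$, so $\cX$ meets each parallel class at most once) and the bound on $n_{sph}$, which is recovered because the subsets $\cY\subseteq\cX$ with $\widehat{\W}_{\cY}$ spherical genuinely are AP by Lemma \ref{chiapsphtyp}, hence consist of linearly independent roots spanning a positive definite subspace, forcing $|\cY|\leq n_{sph}(\A[\Gamma])$.
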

\noindent Recall that $n_{sph}$ denotes the spherical dimension of the Artin group, i.e., the maximal cardinality of a subset of the generators such that the standard parabolic subgroup generated by this set is of spherical type. Theorem \ref{freeofinfBPT} says, in particular, that there does not exist a free of infinity subset of vertices in $\widehat{\Gamma}$ with an infinite number of elements. Recall also that $\cX\subset \Phi[\Gamma]$ is said to be free of infinity if $\widehat{m}_{\beta,\gamma}\neq \infty$ for all $\beta,\gamma$ in $\cX$. Free of infinity subsets of generators are very important for the study of Artin groups. In particular, Ellis and Sk{\"o}ldberg (and also Godelle and Paris, with a different proof) demonstrated the following theorem: 
\begin{thm}[\cite{EllSk10, GodPar12}]\label{freeinfelsk}
    Let $\Gamma$ be a Coxeter graph on a finite set of vertices $S$. If for each $X\subset S$ free of infinity $\Gamma_X$ satisfies the $K(\pi,1)$-conjecture, then $\Gamma$ satisfies the $K(\pi,1)$-conjecture.
\end{thm}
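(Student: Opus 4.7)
\medskip
\noindent\textbf{Proof proposal.} The plan is to argue by induction on the number $N(\Gamma)$ of unordered pairs $\{s,t\} \subset S$ with $m_{s,t} = \infty$. The base case $N(\Gamma) = 0$ is immediate: then $S$ is itself free of infinity, so the hypothesis applied to $X = S$ gives that $\Gamma = \Gamma_S$ satisfies the $K(\pi,1)$-conjecture. For the inductive step, suppose $N(\Gamma) \geq 1$ and fix a pair $\{s_0, t_0\}$ with $m_{s_0, t_0} = \infty$. For every proper subset $Y \subsetneq S$ the graph $\Gamma_Y$ still satisfies the hypothesis (its free of infinity subsets are also free of infinity in $\Gamma$), and if $s_0 \notin Y$ or $t_0 \notin Y$ then $N(\Gamma_Y) < N(\Gamma)$, so by induction $\Gamma_Y$ satisfies the $K(\pi,1)$-conjecture in that case. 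Note in particular that each $\Gamma_{S \setminus \{s_0\}}$ and $\Gamma_{S \setminus \{t_0\}}$ is $K(\pi,1)$, so $\widetilde{\salbar_{S \setminus \{s_0\}}}$ and $\widetilde{\salbar_{S \setminus \{t_0\}}}$ are contractible.

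\medskip
\noindent The key combinatorial remark is that the cells of $\salbar$ are indexed by spherical type $X \in \Sf$, and since an $\infty$-edge cannot occur inside a spherical subset, every such $X$ avoids at least one of $s_0, t_0$. Hence every cell of $\salbar$ lies in $\salbar_{S \setminus \{s_0\}}$ or $\salbar_{S \setminus \{t_0\}}$, and the corresponding statement lifts to the universal cover: $\widetilde{\salbar}$ is covered by the $\A[\Gamma]$-translates of $\widetilde{\salbar_{S \setminus \{s_0\}}}$ and $\widetilde{\salbar_{S \setminus \{t_0\}}}$. I would then apply a nerve-type argument to this cover: since each piece is contractible by induction, it suffices to show that every nonempty intersection of finitely many translates is itself contractible, and that the nerve of the cover is contractible.

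\medskip
\noindent To identify the intersections, the natural guess is that an intersection of translates is either empty or a translate of $\widetilde{\salbar_Y}$ for some $Y \subsetneq S$ with $\{s_0, t_0\} \not\subset Y$, in which case contractibility follows from the inductive hypothesis. This identification rests on Van der Lek's theorem on standard parabolic subgroups of Artin groups (giving the injectivity of $\A[\Gamma_Y] \hookrightarrow \A[\Gamma]$ and the control of coset intersections), together with the fact that the subcomplex $\salbar_Y \subset \salbar$ lifts exactly to the preimage of $\A[\Gamma_Y]$ in $\A[\Gamma]$. Once the intersection pattern is established, the contractibility of the nerve reduces to a combinatorial statement about the poset of cosets of standard parabolic subgroups missing $s_0$ or $t_0$, which one can collapse directly.

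\medskip
\noindent The main obstacle I expect is the bookkeeping needed to verify the "good cover" hypothesis. Specifically, one must show that distinct $\A[\Gamma]$-translates of $\widetilde{\salbar_Y}$ either coincide or meet in a translate of a strictly smaller $\widetilde{\salbar_Z}$, with the appropriate parabolic $Z$ controlled by $\Gamma$'s combinatorics. This is where the parabolic subgroup theory of Artin groups is essential, and where a careful argument (rather than a formal one) is needed; the rest of the induction is then fairly mechanical, and combining it with the base case yields the desired $K(\pi,1)$-conjecture for $\Gamma$.
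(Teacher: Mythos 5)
First, note that the paper does not actually prove this statement: it is quoted from \cite{EllSk10} and \cite{GodPar12}, so there is no internal proof to compare against. Judged on its own terms, your outline follows the standard strategy (close to the route of \cite{GodPar12}): induct on the number of $\infty$-labelled pairs, observe that no spherical $X\in\Sf$ can contain both vertices of such a pair, hence $\salbar = \overline{Sal}(\Gamma_{S\setminus\{s_0\}}) \cup \overline{Sal}(\Gamma_{S\setminus\{t_0\}})$ with intersection $\overline{Sal}(\Gamma_{S\setminus\{s_0,t_0\}})$, and all three pieces inherit the hypothesis and have strictly fewer $\infty$-pairs. The base case and the bookkeeping of the induction are correct.

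The gap is that the step you defer is the entire content of the theorem: what is needed is a gluing statement of the form ``if $X=X_1\cup X_2$ with $X_1$, $X_2$ and $X_0=X_1\cap X_2$ aspherical and $\pi_1(X_0)\to\pi_1(X_i)$ injective, then $X$ is aspherical,'' and your proposed nerve argument does not quite deliver it as written. The ``natural guess'' that a nonempty intersection of translates is a single translate of a smaller lifted piece is not automatic: a fixed component of $p^{-1}\bigl(\overline{Sal}(\Gamma_{S\setminus\{s_0\}})\bigr)$ meets a fixed component of $p^{-1}\bigl(\overline{Sal}(\Gamma_{S\setminus\{t_0\}})\bigr)$ a priori in a \emph{disjoint union} of components of $p^{-1}\bigl(\overline{Sal}(\Gamma_{S\setminus\{s_0,t_0\}})\bigr)$, whereas the nerve lemma needs this intersection to be connected and needs the nerve (a bipartite graph) to be a tree. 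Both facts amount to saying that the graph of components is the Bass--Serre tree of the amalgam $\A[\Gamma]\cong \A[\Gamma_{S\setminus\{s_0\}}] *_{\A[\Gamma_{S\setminus\{s_0,t_0\}}]} \A[\Gamma_{S\setminus\{t_0\}}]$ (which is what van Kampen gives for this decomposition), and identifying the components of the preimages as genuine universal covers of the pieces requires Van der Lek's injectivity of standard parabolic inclusions. So the argument can be closed, but only by supplying exactly the Whitehead-type union theorem (or the tree-of-spaces argument) that you leave implicit; as it stands, the proposal establishes the easy combinatorial decomposition and stops just short of the theorem's actual topological content.
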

\noindent Combining Theorem \ref{freeinfelsk} and Theorem \ref{freeofinfBPT}, along with the solutions to the conjecture for $\Gamma$ of spherical or affine type, Bellingeri, Paris and Thiel obtained the following theorem:
\begin{thm}[Theorem 6.3 in \cite{BellParThiel}]\label{gammahatkpi1}\nonumber
    Let $\Gamma$ be a Coxeter graph of spherical type, then $\A[\widehat{\Gamma}]$ satisfies the $K(\pi,1)$-conjecture.
\end{thm}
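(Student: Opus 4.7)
The plan is to apply the reduction result of Ellis--Sk\"oldberg and Godelle--Paris (Theorem \ref{freeinfelsk}) to the graph $\widehat{\Gamma}$ and reduce the problem to cases where the $K(\pi,1)$-conjecture is already established. This is clean because when $\Gamma$ is of spherical type, the root system $\Phi[\Gamma]$ is finite, so the vertex set of $\widehat{\Gamma}$ is finite, which is exactly the hypothesis needed to apply Theorem \ref{freeinfelsk}.

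First I would check the hypothesis of Theorem \ref{freeinfelsk}: let $\widehat{S} = \{\widehat{s}_\beta \mid \beta \in \Phi[\Gamma]\}$ be the (finite) vertex set of $\widehat{\Gamma}$, and let $\widehat{S}_{\cX} \subset \widehat{S}$ be an arbitrary free-of-infinity subset corresponding to $\cX \subset \Phi[\Gamma]$. We must show that $\A[\widehat{\Gamma}_{\cX}]$ satisfies the $K(\pi,1)$-conjecture. Here is where Theorem \ref{freeofinfBPT} does the heavy lifting: since $\Gamma$ is of spherical type and $\cX$ is free of infinity, the theorem guarantees that $\widehat{\Gamma}_{\cX}$ is itself of spherical type or of affine type.

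Next I would invoke the known cases of the $K(\pi,1)$-conjecture. For $\widehat{\Gamma}_{\cX}$ of spherical type, Deligne's theorem (\cite{Deligne1972}) gives the $K(\pi,1)$-property, while for $\widehat{\Gamma}_{\cX}$ of affine type, the Paolini--Salvetti theorem (\cite{PaoSal}) provides it. In either case the conjecture holds for every free-of-infinity standard parabolic subgroup of $\A[\widehat{\Gamma}]$.

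Finally, applying Theorem \ref{freeinfelsk} to the finite Coxeter graph $\widehat{\Gamma}$ yields the $K(\pi,1)$-conjecture for $\A[\widehat{\Gamma}]$ itself, which is the claim. The argument is essentially a two-line synthesis of existing results; no genuine obstacle is expected, since all the work has been pushed into Theorem \ref{freeofinfBPT}. The only thing to be careful about is verifying that ``spherical or affine'' is preserved when passing to standard parabolic subgroups of $\widehat{\Gamma}_{\cX}$ (so that Deligne/Paolini--Salvetti really can be invoked as a black box), but this is immediate since standard parabolic subgroups of spherical or affine Coxeter groups are themselves of spherical or affine type.
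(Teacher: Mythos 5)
Your argument is correct and is exactly the synthesis the paper describes (and attributes to Bellingeri--Paris--Thiel): since $\Gamma$ is of spherical type, $\widehat{\Gamma}$ is a finite graph, Theorem \ref{freeofinfBPT} shows every free-of-infinity $\widehat{\Gamma}_{\cX}$ is of spherical or affine type, Deligne and Paolini--Salvetti settle those cases, and Theorem \ref{freeinfelsk} then yields the conjecture for $\widehat{\Gamma}$. Your closing worry about parabolic subgroups of $\widehat{\Gamma}_{\cX}$ is unnecessary --- the reduction theorem only requires the conjecture for each $\widehat{\Gamma}_{\cX}$ itself --- but it does no harm.
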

\noindent
From this result, it follows that when $\Gamma$ is of spherical type, $\salbargam=\Sigma(\Gamma)$ is aspherical. However, Theorem \ref{freeofinfBPT} works also for the affine case. The only obstacle to applying the same combination of results to show that $\Sigma(\Gamma)$ is aspherical for $\Gamma$ of affine type is that the Tits cone is not well-defined, and therefore, the classical statement of the $K(\pi,1)$-conjecture does not make sense. But if we extend the conjecture by requiring the Salvetti complex to be aspherical, then there is no need to assume that the set of generators is finite. Similarly, Theorem \ref{freeinfelsk} does not need in the hypotheses a finite Coxeter graph, as long as the free of infinity subsets we consider are finite, which holds in our case thanks to Theorem \ref{freeofinfBPT}. Thus, with the extended conjecture, we can generalize Theorem \ref{gammahatkpi1} to the affine case as well. Specifically, the following result holds:
\begin{thm}\label{gammahatkpiAFF} Let $\Gamma$ be a Coxeter graph of spherical type or of affine type, then $\A[\widehat{\Gamma}]$ satisfies the $K(\pi,1)$-conjecture. Namely, $\Sigma(\Gamma)$ is aspherical.
\end{thm}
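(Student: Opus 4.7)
The plan is to assemble the proof from three ingredients that are already available: Theorem \ref{freeofinfBPT} from \cite{BellParThiel}, the free-of-infinity reduction of Ellis--Sk\"oldberg and Godelle--Paris (Theorem \ref{freeinfelsk}), and the known validity of the $K(\pi,1)$-conjecture for Artin groups of spherical type (Deligne \cite{Deligne1972}) and of affine type (Paolini--Salvetti \cite{PaoSal}). The key observation that makes the argument go through in this infinite setting is that even though $\widehat{\Gamma}$ has infinitely many vertices whenever $\Gamma$ is not of spherical type, its free-of-infinity subsets are extremely well behaved.

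First, I would invoke Theorem \ref{freeofinfBPT} to fix, for each finite free-of-infinity $\mathcal{X}\subset\Phi[\Gamma]$, the fact that $\widehat{\Gamma}_{\mathcal{X}}$ is a finite Coxeter graph of spherical or affine type. For such a graph, the classical $K(\pi,1)$-conjecture is meaningful and is settled, so $\overline{Sal}(\widehat{\Gamma}_{\mathcal{X}}) = \Sigma(\Gamma)_{\mathcal{X}}$ is aspherical. This handles every finite ``piece'' of $\Sigma(\Gamma)$ indexed by a free-of-infinity subset of vertices of $\widehat{\Gamma}$.

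Second, I would apply the free-of-infinity reduction. The statement of Theorem \ref{freeinfelsk} is written for $\Gamma$ on a finite set $S$, but the proofs in \cite{EllSk10} and \cite{GodPar12} realise $\overline{Sal}(\Gamma)$ as a homotopy colimit (an iterated pushout) of the subcomplexes $\overline{Sal}(\Gamma_X)$ over free-of-infinity $X\subset S$. To extend the conclusion to $\widehat{\Gamma}$, I would first apply the finite-case theorem to each finite full subgraph $\widehat{\Gamma}_{\widehat{T}}$ of $\widehat{\Gamma}$: by Theorem \ref{freeofinfBPT}, every free-of-infinity subset of $\widehat{T}$ is finite and of spherical or affine type, so $\overline{Sal}(\widehat{\Gamma}_{\widehat{T}})$ is aspherical. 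Then, since $\Sigma(\Gamma) = \overline{Sal}(\widehat{\Gamma})$ is the increasing union of the finite subcomplexes $\overline{Sal}(\widehat{\Gamma}_{\widehat{T}})$ as $\widehat{T}$ ranges over the finite subsets of the vertex set $\Phi[\Gamma]$ of $\widehat{\Gamma}$, and each inclusion is a subcomplex inclusion, I would conclude asphericity of $\Sigma(\Gamma)$ from the fact that higher homotopy groups commute with such filtered colimits of CW-inclusions (any map $S^k\to \Sigma(\Gamma)$ has compact image, hence factors through some $\overline{Sal}(\widehat{\Gamma}_{\widehat{T}})$, where it is nullhomotopic).

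The main obstacle is the passage from finite to infinite Coxeter graphs in the free-of-infinity reduction. Once one accepts the ``extended'' formulation of the $K(\pi,1)$-conjecture in terms of asphericity of $\overline{Sal}(\Gamma)$ (as discussed in the introduction), the argument is essentially the combination above; the only delicate point is making the colimit/compactness step rigorous and verifying that the Ellis--Sk\"oldberg/Godelle--Paris pushout decomposition respects the filtered union by finite parabolic subgraphs. This is a routine but non-vacuous check, and it is where the strength of Theorem \ref{freeofinfBPT} is genuinely used, since it guarantees that the local pieces entering the colimit are themselves finite of spherical or affine type.
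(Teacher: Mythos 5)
Your proposal is correct and follows essentially the same route as the paper: combine Theorem \ref{freeofinfBPT} (free-of-infinity subsets of $\widehat{\Gamma}$ are finite and of spherical or affine type), the free-of-infinity reduction of Theorem \ref{freeinfelsk}, and the known spherical (Deligne) and affine (Paolini--Salvetti) cases of the $K(\pi,1)$-conjecture. The one place where you go beyond the paper is the passage to the infinite vertex set: the paper simply asserts that Theorem \ref{freeinfelsk} extends when all free-of-infinity subsets are finite, whereas your reduction through the finite full subgraphs $\widehat{\Gamma}_{\widehat{T}}$ together with the compactness/weak-topology argument for $\pi_k$ is a cleaner and fully rigorous way to justify that step, since it only ever invokes the Ellis--Sk\"oldberg/Godelle--Paris theorem in the finite setting where it is actually stated.
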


\begin{proof}
    The only case to be checked is the affine one. For every finite subset $X\subseteq S $ of vertices, the finite full subgraph $\widehat{\Gamma_X}$ satisfies the hypothesis of Theorem \ref{freeinfelsk}, hence $\overline{Sal}(\widehat{\Gamma_X})$ is aspherical. Moreover,
    \[
    \overline{Sal}(\widehat{\Gamma})=\bigcup_{X\subseteq S,\,|X|<\infty}\overline{Sal}(\widehat{\Gamma_X}).
    \]
    \noindent Since every map from a compact sphere into $\overline{Sal}(\widehat{\Gamma})$ has image contained in some subcomplex, it factors through some $\overline{Sal}(\widehat{\Gamma_X})$ and is null-homotopic there. Hence $\overline{Sal}(\widehat{\Gamma})$ is aspherical.
\end{proof}
\noindent With this approach in mind, we now turn to the study of the common covering of the two spaces.\\
\\
\noindent Since we are working with cell complexes, all our (pointed) spaces $(X,x)$ will be path-connected, locally path-connected and semilocally simply-connected.
By the theory of covering spaces, we know that for every subgroup $H$ of the fundamental group $\pi_1(X,x)$, there corresponds a covering space $p: (\overline{X}_H, \overline{x})\longrightarrow( X,x)$ such that the image under the induced group homomorphism is $p^*(\pi_1(\overline{X}_H,\overline{x}))=H$.\\
\\
\noindent We will consider a common subgroup, called $\LVA[\Gamma]$ of $\PVA[\Gamma]$ and $\KVA[\Gamma]$. For such a subgroup, we can construct two different complexes: a covering of $\Sigma(\Gamma)$ and a covering of $\Omega(\Gamma)$, both with fundamental group isomorphic to $\LVA[\Gamma]$. The main result of this section is Theorem \ref{maintheorem}, where we show that there is an isomorphism between these two spaces. This implies that the Salvetti complex $\Sigma(\Gamma)$ and the BEER complex $\Omega(\Gamma)$ have a common covering, and thus the same universal covering, that is contractible when $\KVA[\Gamma]\cong \A[\widehat{\Gamma}]$ satisfies the $K(\pi,1)$-conjecture.

\subsection{The group \texorpdfstring{$\LVA[\Gamma]$}{TEXT} and the two regular coverings}
As before, let $\Gamma$ be a Coxeter graph, for the moment, not necessarily finite. We recall that we have two short exact sequences of groups:

  \begin{align*}
  \KVA[\Gamma]\underset{i}{\hookrightarrow} \VA[\Gamma]\underset{\pi_K}{\twoheadrightarrow}\W[\Gamma]; \qquad\qquad
  \PVA[\Gamma]\underset{i}{\hookrightarrow} \VA[\Gamma]\underset{\pi_P}{\twoheadrightarrow}\W[\Gamma].
  \end{align*}
Consider the restrictions $\pi_K|_{\PVA[\Gamma]}:\PVA[\Gamma]\longrightarrow \W[\Gamma]$ and $\pi_P|_{\KVA[\Gamma]}:\KVA[\Gamma]\longrightarrow \W[\Gamma]$. Observe that $Ker(\pi_K|_{\PVA[\Gamma]})=\PVA[\Gamma]\cap \KVA[\Gamma]=Ker(\pi_P|_{\KVA[\Gamma]})$. Note also that the restriction of $\pi_K$ to $\PVA[\Gamma]$ and the restriction of $\pi_P$ to $\KVA[\Gamma]$ are both surjective.

\begin{defn}
    The intersection $\PVA[\Gamma]\cap \KVA[\Gamma]$ is denoted by $\LVA[\Gamma]$ and it is a normal subgroup of both $\PVA[\Gamma]$ and $\KVA[\Gamma]$.
\end{defn}
\noindent
 By definition of the kernel $\LVA[\Gamma]$, we have two short exact sequences of groups:
 \begin{align*}
 1\rightarrow \LVA[\Gamma]\xrightarrow[]{i}\KVA[\Gamma]\xrightarrow[]{\pi_P}\W[\Gamma]\rightarrow 1; \qquad\qquad  1\rightarrow \LVA[\Gamma]\xrightarrow[]{i}\PVA[\Gamma]\xrightarrow[]{\pi_K}\W[\Gamma]\rightarrow 1
  ;
  \end{align*}
which imply that $\W[\Gamma]\cong \,\KVA[\Gamma]/{\LVA[\Gamma]}\cong \PVA[\Gamma]/\LVA[\Gamma]$.
Being $\LVA[\Gamma]$ normal in both groups, we have that there exist two regular coverings  $p_{\Sigma}:(\overline{\Sigma}(\Gamma),\overline{\sigma}^0)\longrightarrow (\Sigma(\Gamma),\sigma^0)$ and  $p_{\Omega}:(\overline{\Omega}(\Gamma),\overline{\omega}^0)\longrightarrow (\Omega(\Gamma),\omega^0)$ such that 
\[p_{\Sigma}^*(\pi_1(\overline{\Sigma}(\Gamma),\overline{\sigma}^0))=\LVA[\Gamma]=p_{\Omega}^*(\pi_1(\overline{\Omega}(\Gamma),\overline{\omega}^0)).\]

\noindent By the theory of regular covering spaces, the deck group of the covering $p_{\Sigma}$ is isomorphic to $\nicefrac{\pi_1(\Sigma(\Gamma))}{\pi_1(\overline{\Sigma}(\Gamma))}\cong \W[\Gamma]$. Similarly, the deck group of the covering $p_{\Omega}$ is also isomorphic to $\W[\Gamma]$.
\begin{nt}\textbf{Action of the Coxeter group on the fibers of the covering space}\label{fiberaction}\\
In general, let $p:(\overline{X},\overline{x}^0)\longrightarrow( X,x^0)$ be a covering, bearing in mind that in our case the base space $(X,x^0)$ will be $(\Sigma(\Gamma),\sigma^0)$ or $(\Omega(\Gamma),\omega^0)$. The fiber $p^{-1}(x^0)$ is in bijection with the right cosets of $p^*(\pi_1(\overline{X},\overline{x}^0))=:H$ in $\pi_1(X,x^0)$. This bijection is obtained as follows: for any $g\in \pi_1(X,x^0)$, which represents a homotopy class of a loop $g:[0,1]\longrightarrow X$ such that $g(0)=g(1)=x^0$, there exists a unique lift $\overline{g}$ in $\overline{X}$ such that $\overline{g}(0)=\overline{x}^0$. We associate to the element $g $ the point $\overline{g}(1)$, which depends only on the right coset $Hg$. We can easily show that this defines the required bijective correspondence. In this matter, we compose the elements of the fundamental group from left to right.\\
\\
\noindent If the covering $p$ is regular, then $H$ is normal in $\pi_1(X,x^0)$, so for all $g \in \pi_1(X,x^0) $ we have that $Hg=gH$ and we can adopt the more familiar notation of left cosets. Thus, we establish a correspondence between the quotient $H\backslash\pi_1(X,x^0)=\pi_1(X,x^0)/H=:G$ and the fiber $p^{-1}(x^0)$. This also gives an action of $G$ on the fiber, coming from the action of $G$ on itself by right multiplication. Precisely, for all $g'$ in $G$ and $\overline{x}$ in $p^{-1}(x^0)$, we define $\overline{x}\cdot g'$ as $(\overline{g\cdot g'})(1)$, where $g$ is an element of $G$ such that its lift satisfies $\overline{g}(1)=\overline{x}$. Notice that the composition of loops $g\cdot g'$ means that we first perform $g$ and then $g'$. In particular, if $g=\id$, its lift $\overline{g}$ is the constant path at $\overline{x}^0$, and, for this reason, we write $\overline{x}^0=\overline{x}(\id)$. \\
\\
\noindent When the covering $p$ is regular, we also have a natural isomorphism between the quotient $\pi_1(X,x^0)/p^*(\pi_1(\overline{X},\overline{x}^0))=:G$ and the deck group $\mathrm{Deck}(p)$. Recall that $\mathrm{Deck}(p)$ is the group of automorphisms $d:\overline{X}\longrightarrow \overline{X}$ such that $p\circ d=d$ (for further information, see \cite[Chapter III, Sections 5,6]{bredon1993topology}). Each deck transformation $d\in \mathrm{Deck}(p)$ permutes the points in a given fiber of $p$, and it is completely determined by the image of a point $\overline{x}\in p^{-1}(x^0)$. The isomorphism
    $G\longrightarrow \mathrm{Deck}(p)$ is defined by $g\longmapsto d_g$, where $d_g$ is the unique element in $\mathrm{Deck}(p)$ such that $d_g(\overline{x}^0)=\overline{x}^0\cdot g$. This gives a left action of $G$ on the covering space $(\overline{X},\overline{x}^0)$ by
\begin{align}\label{actionGoncovering}
      G\times \overline{X}&\longrightarrow \overline{X},
\nonumber\\ (g,\overline{x})&\longmapsto d_g(\overline{x}),
\end{align}
\noindent which is given on the fiber $p^{-1}(x^0)$ by the left multiplication of labels. Indeed, let $\overline{x}(h):=\overline{x}^0\cdot h$ denote the point of the fiber labeled by $h\in G$, then
\[
d_g(\overline{x}(h))=d_{g}(\overline{x}^0\cdot h)=d_{g}(\overline{x}^0)\cdot h=(\overline{x}^0\cdot g)\cdot h=\overline{x}^0\cdot (gh)=\overline{x}(gh).
\]
 
\noindent In our case, $G\cong \W[\Gamma]$, so up to choosing a basepoint $\overline{x}^0$ in $\overline{X}$, for all $z\in \W[\Gamma]$, the point $\overline{x}(z)\in p^{-1}(x^0)$ is, by definition, the endpoint of the lift $\overline{z}(1)$. With this notation, the right monodromy action of the Coxeter group on the fiber of the regular covering is given by
\begin{align}\label{actionWonfiber}
     p^{-1}(x^0)\times \W[\Gamma]&\longrightarrow p^{-1}(x^0),
\nonumber\\ \overline{x}(z)\cdot z'&\longmapsto \overline{x}(zz').
\end{align}
Observe that the endpoint of the lift of $zz'$ starting at $\overline{x}^0$ coincides with the endpoint of the lift of $z'$ starting at $\overline{x}(z)$.
\end{nt}
\noindent Having established this notation, we now proceed to study the CW-structure of $\overline{\Sigma}(\Gamma)$ and $\overline{\Omega}(\Gamma)$.

\subsection{The covering space \texorpdfstring{$\overline{\Sigma
}(\Gamma)$}{TEXT}}

The cellular definition of $\Sigma(\Gamma)$ was provided in Definition \ref{sigmadef}. To summarize here, we recall that in $\Sigma(\Gamma)$ there are:
\begin{itemize}
\item one vertex $\sigma^0$;
    \item an oriented 1-cell $\Delta^1(\beta)$ for each $\beta\in \Phi[\Gamma]$, with $\mathfrak{s}(\Delta^1(\beta))=\mathfrak{t}(\Delta^1(\beta))=\sigma^0$;
    \item a $k$-cell $\Delta^k(\cX)$ for each $\cX\in \Rf_k$ with $k\geq 2$. The boundary of such a cell is composed of the $h$-cells $\Delta^{h}(\cY)$ for $\cY\subset\cX$ and $|\cY|=h\leq k-1$.
\end{itemize}
\noindent The orientation on the 1-skeleton is induced by the orientation of the edges of the copies $\mathbb{D}(\cX)$ of the Coxeter polytopes $C[\cX]$. \\
\\
\noindent The group $\LVA[\Gamma]$ is normal in $\KVA[\Gamma]\cong \pi_1(\Sigma(\Gamma))$, so by choosing a suitable basepoint $\overline{\sigma}^0\in \overline{\Sigma}(\Gamma)$, there exists a regular covering $p_{\Sigma}:(\overline{\Sigma}(\Gamma),\overline{\sigma}^0)\longrightarrow (\Sigma(\Gamma),\sigma^0)$ with deck group $\W[\Gamma]$. Recall that, if $\beta=w(\alpha_s)$ with $w\in \W[\Gamma]$ and $s\in S$, then $\KVA[\Gamma]$ is an Artin group generated by the elements $\delta_{\beta}=w\cdot (\sigma_s)=\iota_{\W}\;(w) \sigma_s \;\iota_{\W}(w)^{-1}$ (using the same notation as in Subsection \ref{virtualartingroups}). The homomorphism $\pi_P$ is defined by $\pi_P(\sigma_s)=s$ and $\pi_P(\tau_s)=s$.  
 We now detail the restriction of $\pi_P$:
\begin{align}\label{dovevadeltabeta}
\pi_P|_{\KVA[\Gamma]}:\KVA[\Gamma]&\longrightarrow \W[\Gamma]\nonumber\\
    \iota_{\W}(w) \;\sigma_s \; \iota_{\W}(w)^{-1}=\delta_{\beta}&\longmapsto  r_{\beta}= wsw^{-1},
\end{align}
which is the projection of $\KVA[\Gamma]$ onto its quotient $\KVA[\Gamma]/\LVA[\Gamma]$.
\begin{nt}
    From now on, for any Coxeter polytope $C[\cX]$, associated with a set of roots $\cX \in \Rf$ which is AP and of spherical type, we choose the basepoint as the vertex $\id(o_{\cX})=o_{\cX}$. Observe that $C[\cX]$ is path-connected, locally path-connected and simply connected.
\end{nt}
\noindent It is generally known that the covering $(\overline{X},\overline{x}^0)$ of a CW-complex $(X,x)$ has a CW-complex structure obtained by lifting the characteristic maps to $\overline{X}$. Our goal now is to describe the cellular structure of the covering space of $\Sigma(\Gamma)$. We will first define a CW-complex $\overline{\Sigma}(\Gamma)$ inductively on its skeleta, endowed with a free and properly discontinuous left action of the Coxeter group. Then, we will observe that the quotient under this action indeed yields $\Sigma(\Gamma)$, meaning that the projection $p_{\Sigma}:\overline{\Sigma}(\Gamma)\longrightarrow \Sigma(\Gamma)$ is a covering map.
\begin{defn}
The complex $\overline{\Sigma}(\Gamma)$ has the following cellular structure.\\
\\
\textbf{Description of the 0-skeleton.}
    The 0-skeleton is a discrete set of vertices 
 $\overline{\Sigma}^0(\Gamma)=\{\overline{\sigma}(z)\,|\,z\in \W[\Gamma]\}$, which is in one-to-one correspondence with $\W[\Gamma]$. \\
 \\
\textbf{Description of the 1-skeleton.} 
For all $z\in \W[\Gamma]$ and all $\beta\in \Phi[\Gamma]$, we take a copy $\mathbb{A}(z,\beta)$ of the Coxeter segment $C[\beta]=[\beta,-\beta]$, identified with $C[\beta]$ via an isometry $f_{z,\beta}:C[\beta]\longrightarrow \mathbb{A}(z,\beta)$. We define $\overline{\varphi}^1_{z,\beta}:\partial \mathbb{A}(z,\beta)\longrightarrow \overline{\Sigma}^0(\Gamma)$ by setting $\overline{\varphi}^1_{z,\beta}(f_{z,\beta}(\beta))=\overline{\sigma}(z)$ and $\overline{\varphi}^1_{z,\beta}(f_{z,\beta}(-\beta))=\overline{\sigma}(zr_{\beta})$. Then, we define: 
\[\overline{\Sigma}^1(\Gamma)=\left(\overline{\Sigma}^0(\Gamma)\sqcup\left(\bigsqcup_{z\in \W[\Gamma],\beta\in \Phi[\Gamma]}\mathbb{A}(z,\beta)\right)\right)/\sim,\]
where, for all $z\in \W[\Gamma]$, all $\beta$ in $\Phi[\Gamma]$ and all $x\in \partial \mathbb{A}(z,\beta)$, we have $x\sim \overline{\varphi}^1_{z,\beta}(x)$. The embedding of $\mathbb{A}(z,\beta)$ into $\left(\overline{\Sigma}^0(\Gamma)\sqcup\left(\bigsqcup_{z'\in \W[\Gamma],\beta'\in \Phi[\Gamma]}\mathbb{A}(z',\beta')\right)\right)$ induces a characteristic map $\overline{\phi}^1_{z,\beta}:\mathbb{A}(z,\beta)\longrightarrow \overline{\Sigma}^1(\Gamma)$, whose image is denoted by $\overline{\Delta}^1(z,\beta)$. Furthermore we orient $\mathbb{A}(z,\beta)$ from $f_{z,\beta}(\beta)$ towards $f_{z,\beta}(-\beta)$, which induces an orientation on the 1-cell $\overline{\Delta}^1(z,\beta)$, from $\overline{\sigma}(z)$ towards $\overline{\sigma}(zr_{\beta})$.
\\
\\
\textbf{Description of the $k$-skeleton.}
Let $k\geq 2$. We define the $k$-skeleton $\overline{\Sigma}^k(\Gamma)$ by induction on $k$. Since $\overline{\Sigma}^1(\Gamma)$ is already defined, we can assume that $k\geq 2$ and that $\overline{\Sigma}^h(\Gamma)$ is defined for all $h\leq k-1$.
In particular, for each $z\in \W[\Gamma]$ and each $\cY\subset \Phi[\Gamma]$ which is AP and of spherical type of rank $h\leq k-1$, we have a copy $\mathbb{D}(z,\cY)$ of the Coxeter polytope $C[\cY]$, which we identify with $C[\cY]$ via an isometry $f_{z,\cY}:C[\cY]\longrightarrow \mathbb{D}(z,\cY)$. We also have an $h$-cell $\overline{\Delta}^h(z,\cY)$ in $\overline{\Sigma}^{k-1}(\Gamma)$, whose interior is homeomorphic to the interior of $\mathbb{D}(z,\cY)$ via a continuous characteristic map $\overline{\phi}^h_{z,\cY}: \mathbb{D}(z,\cY)\longrightarrow \overline{\Delta}^h(z,\cY)$. If $h=|\cY|=1$, then there exists $\beta\in \Phi[\Gamma]$ such that $\cY=\{\beta\}$, and we assume $\mathbb{D}(z,\cY)=\mathbb{A}(z,\beta)$. If $\cY=\emptyset$, then as usual we adopt the following conventions: 
    \[
    V_{\emptyset}=\{0\},\qquad \W_{\emptyset}=\{\id\},\qquad o_{\emptyset}=0,\qquad C[\emptyset]=\{0\}.
    \]
    \noindent We also set $\mathbb{D}(z,\emptyset)=\{0\}$. Take $f_{z,\emptyset}:C[\emptyset]\longrightarrow \mathbb{D}(z,\emptyset)$ to be the identity map, and define $\overline{\varphi}_{z,\emptyset}^0:\mathbb{D}(z,\emptyset)\longrightarrow\overline{\Sigma}^0(\Gamma)$ to be the map sending $0$ to the vertex $\overline{\sigma}(z)$. Thus $\overline{\Delta}^0(z,\emptyset)=\overline{\sigma}{(z)}$.
\\
\\
\noindent Now, for each $\cX\in \Rf_k$ and for each $z\in \W[\Gamma]$ take a copy $\mathbb{D}(z,\cX)$ of the Coxeter polytope $C[\cX]$, identified with $C[\cX]$ via an isometry $f_{z,\cX}:C[\cX]\longrightarrow \mathbb{D}(z,\cX)$. Let $\cY\subset \cX$ with $|\cY|=h\leq k-1$ and $u\in \W_{\cX}$ an $(\emptyset,R_{\cY})$-minimal element.
Define now $\overline{\xi}^k_{z,u,\cY,\cX}:F(u,\cY,\cX)\longrightarrow \overline{\Sigma}^{k-1}(\Gamma)$ as the composition:
\[F(u,\cY,\cX)\xrightarrow[]{u^{-1}}F(\id,\cY,\cX)\xrightarrow[]{\tau(o_{\cY}-o_{\cX})}C[\cY]\xrightarrow[]{f_{zu,\cY}}\mathbb{D}(zu,\cY)\xrightarrow[]{\overline{\phi}^h_{zu,\cY}}\overline{\Delta}^h(zu,\cY)\subset \overline{\Sigma}^{k-1}(\Gamma).\]
In other words:
\[\overline{\xi}^k_{z,u,\cY,\cX}=\overline{\phi}^h_{zu,\cY}\circ f_{zu,\cY}\circ \tau(o_{\cY}-o_{\cX})\circ u^{-1}. \]
Recall that if $F(u,\cY,\cX)$ is a face of $C[\cX]$ with $\cY$ and $u$ as before, the faces of $F(u,\cY,\cX)$ are of the form $F(uv,\cZ,\cX)=conv\{uvz'(o_{\cX})\,|\,z'\in \W_{\cZ}\}$ with $\cZ\subset \cY$ and $v\in \W_{\cY}$, $(\emptyset,R_{\cZ})$-minimal.

 \begin{lem}\label{goodefxibar}
    Let $\cX,\cY,\cZ \in \Rf$ be such that $\cZ\subset \cY\subset \cX$, and $|\cX|=k,|\cY|=h,|\cZ|=l$. Take $u\in \W_{\cX}$, $u$ $(\emptyset,R_{\cY})$-minimal and $v\in \W_{\cY}$, $v$ $(\emptyset,R_{\cZ})$-minimal. We have that $F(uv,\cZ,\cX)\subset F(u,\cY,\cX)\subset C[\cX]$. Then, the restriction of $\overline{\xi}^k_{z,u,\cY,\cX}$ to $F(uv,\cZ,\cX)$ coincides with $\overline{\xi}^k_{z,uv,\cZ,\cX}$. 
\end{lem}
\noindent The proof of this lemma follows similar arguments from previous sections.
Thus, we can define a continuous map $\overline{\xi}^k_{z,\cX}:\partial C[\cX]\longrightarrow \overline{\Sigma}^{k-1}(\Gamma)$ as follows. Let $x\in \partial C[\cX]$. We choose $\cY\subset \cX$ and $u\in \W_{\cX}$ an $(\emptyset, R_{\cY})$-minimal element such that $x\in F(u,\cY,\cX)$, and set $\overline{\xi}^k_{z,\cX}(x)=\overline{\xi}^k_{z,u,\cY,\cX}(x)$. Thanks to Lemma \ref{goodefxibar}, this definition does not depend on the choice of $F(u,\cY,\cX)$, and the map $\overline{\xi}^k_{z,\cX}$ is continuous. We then define $\overline{\varphi}^k_{z,\cX}:\partial \mathbb{D}(z,\cX)\longrightarrow \overline{\Sigma}^{k-1}(\Gamma)$ by setting:
\[
\overline{\varphi}^k_{z,\cX}=\overline{\xi}^k_{z,\cX}\circ (f^{-1}_{z,\cX})|_{\partial \mathbb{D}(z,\cX)}.
\]
The map $\overline{\varphi}^{k}_{z,\cX}$ is then continuous. Now for all $\cX\in \Rf_k$, for all $z\in \W[\Gamma]$ and for all $x\in \partial \mathbb{D}(z,\cX)$, we write $x\sim \overline{\varphi}^k_{z,\cX}(x)$. The $k$-skeleton of the cell complex will be defined as  
    \[
    \overline{\Sigma}^{k}(\Gamma):=\bigslant{\left(\overline{\Sigma}^{k-1}(\Gamma)\bigsqcup \left( \bigsqcup_{\cX \in \Rf_k,z\in \W[\Gamma]}\mathbb{D}(z,\cX)\right)\right)}{\sim}.
    \]
    For every $\cX\in \Rf_k$ and every $z\in \W[\Gamma]$, the cell $\mathbb{D}(z,\cX)$ has a natural characteristic map $\overline{\phi}^k_{z,\cX}$ to $\overline{\Sigma}^k(\Gamma)$, whose image is denoted by $\overline{\Delta}^k(z,\cX)$. This $\overline{\phi}^k_{z,\cX}$ is given by the composition of the inclusion of $\mathbb{D}(z,\cX)$ into the disjoint union and the quotient projection. \\
    \\
    \textbf{Description of the complex.} We set $\overline{\Sigma}(\Gamma)=\bigcup_{k=0}^{\infty}\overline{\Sigma}^{k}(\Gamma)$, endowed with the weak topology.
    \end{defn}
    \noindent    The right monodromy action of $\W[\Gamma]$ on the fiber labels is given by $\sigma(z)\cdot z'=\sigma(zz').$ The corresponding deck action of $\W[\Gamma]$ on $\overline{\Sigma}(\Gamma)$ is the left action determined by $z'\cdot \sigma(z)=\sigma(z'z)$. We extend this cellularly by setting $z'\cdot \overline{\Delta}^k(z,\cX)=\overline{\Delta}^k(z'z,\cX)$, with $f_{z'z,\cX}\circ f_{z,\cX}^{-1}:\mathbb{D}(z,\cX)\longrightarrow\mathbb{D}(z'z,\cX)$ inducing the action of $z'$ on the corresponding closed cell. With this convention the attaching maps are equivariant: if a boundary face of $\overline{\Delta}^k(z,\cX)$ is attached to $\overline{\Delta}^h(zu,\cY)$, then applying $z'$ sends it to $\overline{\Delta}^h(z'zu,\cY)$, which is precisely the corresponding boundary cell of $\overline{\Delta}^k(z'z,\cX)$. The following result is a straightforward consequence of the description of $\Sigma(\Gamma)$ and $\overline{\Sigma}(\Gamma)$.

    \begin{lem}
        We have $\overline{\Sigma}(\Gamma)/\W[\Gamma]=\Sigma(\Gamma)$, and the regular covering associated with this quotient is the covering induced by the epimorphism $\pi_P:\KVA[\Gamma]\longrightarrow \W[\Gamma]$.
    \end{lem}
\begin{cor}\label{sigmatilde}
     The regular covering space $\overline{\Sigma}(\Gamma)$ has the following CW-structure: 
    \begin{itemize}
        \item The 0-skeleton consists of a single point $\overline{\sigma}(z)$ for each $z\in \W[\Gamma]$.
        \item The 1-skeleton is composed of an oriented 1-cell $\overline{\Delta}^1(z,\beta)$ for each $z\in \W[\Gamma]$ and $\beta\in \Phi[\Gamma]$, with source $\mathfrak{s}(\overline{\Delta}^1(z,\beta))=\overline{\sigma}(z)$ and target $\mathfrak{t}(\overline{\Delta}^1(z, \beta))=\overline{\sigma}(zr_{\beta})$.
        \item For any $k\geq 2$, there is a $k$-cell $\overline{\Delta}^k(z,\cX)$ for each $z\in \W[\Gamma]$ and $\cX\in \Rf_k$. The boundary of this cell is made up of $h$-cells of the form $\overline{\Delta}^{h}(zu,\cY)$ where $\cY\subset \cX$, $|\cY|=h\leq k-1$, $u\in \W_{\cX}$ and $u$ is $(\emptyset, R_{\cY})$-minimal.
    \end{itemize}
\end{cor}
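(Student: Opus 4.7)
The plan is to recognize that Corollary \ref{sigmatilde} is essentially a summary of the inductive construction of $\overline{\Sigma}(\Gamma)$ just given, combined with the preceding lemma identifying $\overline{\Sigma}(\Gamma)/\W[\Gamma]$ with $\Sigma(\Gamma)$ as the regular covering associated with the epimorphism $\pi_P|_{\KVA[\Gamma]}$. Each of the three bullet points can be read off from the attaching maps defined above, so the proof is essentially bookkeeping, but it should be organized dimension by dimension.

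The 0- and 1-skeleton bullets are immediate. By definition one has $\overline{\Sigma}^0(\Gamma) = \{\overline{\sigma}(v) \mid v \in \W[\Gamma]\}$, giving the first bullet. For the second, I would unfold the attaching map $\overline{\varphi}^1_{\beta, z}$, which was constructed to send $f_{\beta, z}(\beta) \mapsto \overline{\sigma}(z)$ and $f_{\beta, z}(-\beta) \mapsto \overline{\sigma}(zr_\beta)$. The chosen orientation of $C[\beta]$, from $\beta$ towards $-\beta$, transfers through the isometry $f_{\beta, z}$ to the 1-cell $\overline{\Delta}^1(\beta, z)$, producing the stated source $\overline{\sigma}(z)$ and target $\overline{\sigma}(zr_\beta)$. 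There is nothing else to check at this level.

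The step requiring the most care, and which I would treat as the main point of the proof, is the $k$-cell boundary description for $k \geq 2$; I would argue by induction on $k$. Fix $\cX \in \Rf_k$ and $z \in \W[\Gamma]$. By construction, the attaching map $\overline{\varphi}^k_{\cX, z}$ restricted to a face $F(u, \cY, \cX) \subset \partial C[\cX]$ is the composition
\[
\overline{\xi}^k_{u,\cY,\cX,z} = \overline{\phi}^h_{\cY, zu} \circ f_{\cY, zu} \circ \tau(o_\cY - o_\cX) \circ u^{-1},
\]
whose image is precisely the $h$-cell $\overline{\Delta}^h(\cY, zu)$. As the pairs $(\cY, u)$ range over all subsets $\cY \subset \cX$ with $|\cY| = h \leq k-1$ and all $(\emptyset, R_\cY)$-minimal $u \in \W_\cX$, the faces $F(u, \cY, \cX)$ exhaust $\partial C[\cX]$, so the boundary of $\overline{\Delta}^k(\cX, z)$ is exactly the union of the cells $\overline{\Delta}^h(\cY, zu)$ of the claimed form. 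The compatibility of these restrictions across overlapping faces $F(uv, \cZ, \cX) \subset F(u, \cY, \cX)$ is guaranteed by the $\overline{\xi}$-compatibility lemma stated and used in the preceding construction, so no separate argument is required.
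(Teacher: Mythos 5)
Your proposal is correct and follows the same route as the paper, which treats this corollary as an immediate consequence of the inductive definition of $\overline{\Sigma}(\Gamma)$: the three bullets are read off from the $0$-cells, the attaching maps $\overline{\varphi}^1_{\beta,z}$, and the maps $\overline{\xi}^k_{u,\cY,\cX,z}$ whose images are the cells $\overline{\Delta}^h(\cY,zu)$. Your unfolding of the attaching maps and appeal to the compatibility lemma is exactly the bookkeeping the paper leaves implicit.
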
    

\noindent We include a picture in Figure \ref{fig2cellsigmatilde} to represent the 2-cell $\overline{\Delta}^2(z, \{\beta,\gamma\})$ of $\overline{\Sigma}(\Gamma)$ for $\cX=\{\beta,\gamma\}\in \Rf_2$, with the orientations fixed on the edges.
\begin{figure}[ht]
    \centering
    \includegraphics[width=10cm]{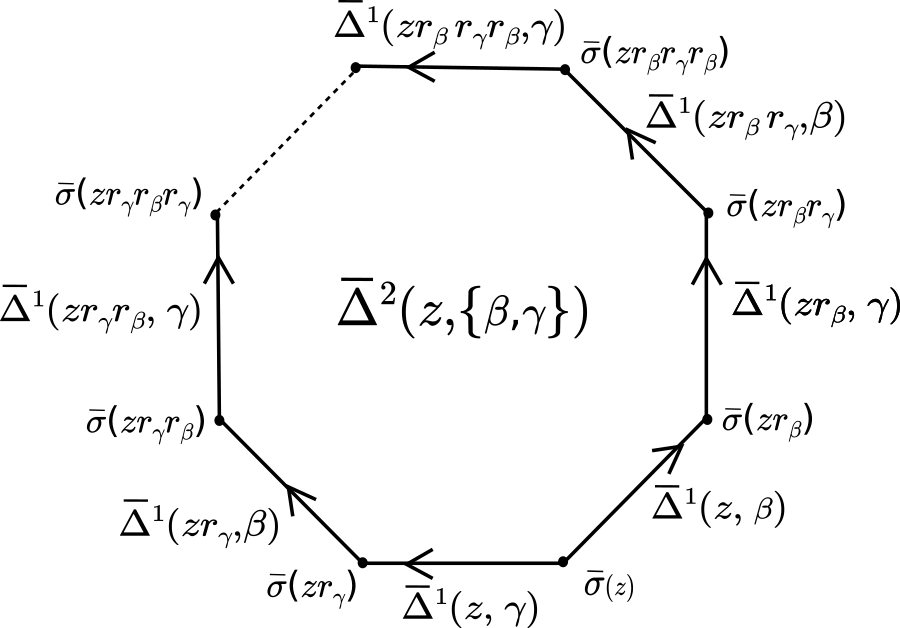}
    \caption{A 2-cell of the covering space $\overline{\Sigma}(\Gamma)$.}
    \label{fig2cellsigmatilde}
\end{figure}

\subsection{The covering space \texorpdfstring{$\overline{\Omega
}(\Gamma)$}{TEXT}}
In the same way as we did for $\overline{\Sigma}(\Gamma)$, we describe the covering space $\overline{\Omega}(\Gamma)$, whose fundamental group is $\LVA[\Gamma]$. Recall that the BEER complex $\Omega(\Gamma)$ is defined skeleton-wise inductively as follows. There are:
\begin{itemize}
    \item one vertex $\omega^0$;
    \item there is an oriented 1-cell $D^1(\beta)$ for each $\beta\in \Phi[\Gamma]$, with $\mathfrak{s}(D^1(\beta))=\mathfrak{t}(D^1(\beta))=\omega^0$;
    \item there is a $k$-cell $D^k(\cX)$ for each $\cX\in \Rf_k$. The boundary of this cell is formed by the $h$-cells  $D^{h}(u(\cY))$ where $\cY\subset \cX$, $|\cY|=h\leq k-1$, $u\in \W_{\cX}$ and $u$ is $(\emptyset, R_{\cY})$-minimal.
\end{itemize}
Again, the orientation on the 1-skeleton is induced by the orientation of the edges of the copies $\mathbb{D}(\cX)$ of the Coxeter polytopes $C[\cX]$.\\
\\
\noindent There exists a choice for $\overline{\omega}^0\in \overline{\Omega}(\Gamma)$ and a regular covering $p_{\Omega}:(\overline{\Omega}(\Gamma), \overline{\omega}^0)\longrightarrow (\Omega(\Gamma),\omega^0)$ with deck group $\W[\Gamma]$. Recall that $\PVA[\Gamma]$ is generated by the elements $\zeta_{\beta}=\iota_{\W}(w)\; \tau_s  \sigma_s \;\iota_{\W}(w)^{-1}$ for $\beta=w(\alpha_s) \in \Phi[\Gamma]$. The homomorphism $\pi_K$ is defined by $\pi_K(\sigma_s)=1$ and $\pi_K(\tau_s)=s$.  
We have the group homomorphism
\begin{align*}
\pi_K|_{\PVA[\Gamma]}:\PVA[\Gamma]&\longrightarrow \W[\Gamma]\\
    \zeta_{\beta}&\longmapsto r_{\beta}.   
\end{align*}

\begin{rmk}\label{rmksimply}
    For any $a\in \W[\Gamma]$ and $\beta,\gamma\in \Phi[\Gamma]$, set $\beta'=a(\beta)$ and $\gamma'=a(\gamma)$. Then $r_{\beta'}=ar_{\beta}a^{-1}$, and $r_{\beta'}(\gamma')=ar_{\beta}a^{-1}a(\gamma)=ar_{\beta}(\gamma)$.
    Moreover, if $\{\beta,\gamma\}\in \Rf$ and $\beta_k,\gamma_k$ are defined for $k\geq 1$ as in Equation \ref{betakappa}, applying recursively the second equality, we easily get $\beta'_k=a(\beta_k)$. 
\end{rmk}
\noindent As we did for $\overline{\Sigma}(\Gamma)$, we now describe a CW-complex $\overline{\Omega}(\Gamma)$, which we will later observe to be the regular covering of $\Omega(\Gamma)$ associated with the epimorphism $\pi_K:\PVA[\Gamma]\longrightarrow \W[\Gamma]$. As a consequence, we will have $\pi_1(\overline{\Omega}(\Gamma))=\LVA[\Gamma]$.

\begin{defn} The complex $\overline{\Omega}(\Gamma)$ has the following cellular structure.\\
\\
\textbf{Description of the 0-skeleton.}
    The 0-skeleton is a discrete set of vertices 
 $\overline{\Omega}^0(\Gamma)=\{\overline{\omega}(z)\,|\,z\in \W[\Gamma]\}$, in one-to-one correspondence with $\W[\Gamma]$. \\
 \\
\textbf{Description of the 1-skeleton.} 
For all $z\in \W[\Gamma]$ and all $\beta\in \Phi[\Gamma]$, we take a copy $\mathbb{A}(z,\beta)$ of the Coxeter segment $C[\beta]=[\beta,-\beta]$, which we identify with $C[\beta]$ via an isometry $f_{z,\beta}:C[\beta]\longrightarrow \mathbb{A}(z,\beta)$. We define now $\overline{\lambda}^1_{z,\beta}:\partial \mathbb{A}(z,\beta)\longrightarrow \overline{\Omega}^0(\Gamma)$ by setting $\overline{\lambda}^1_{z,\beta}(f_{z,\beta}(\beta))=\overline{\omega}(z)$ and $\overline{\lambda}^1_{z,\beta}(f_{z,\beta}(-\beta))=\overline{\omega}(zr_{\beta})$. Then we set 
\[\overline{\Omega}^1(\Gamma)=\left(\overline{\Omega}^0(\Gamma)\sqcup\left(\bigsqcup_{z\in \W[\Gamma],\beta\in \Phi[\Gamma]}\mathbb{A}(z,\beta)\right)\right)/\sim,\]
where, for all $z\in \W[\Gamma]$, all $\beta$ in $\Phi[\Gamma]$ and all $x\in \partial \mathbb{A}(z,\beta)$, we have $x\sim \overline{\lambda}^1_{z,\beta}(x)$. The embedding of $\mathbb{A}(z,\beta)$ into $\left(\overline{\Omega}^0(\Gamma)\sqcup\left(\bigsqcup_{z'\in \W[\Gamma],\beta'\in \Phi[\Gamma]}\mathbb{A}(z',\beta')\right)\right)$ induces a characteristic map $\overline{\Lambda}^1_{z,\beta}:\mathbb{A}(z,\beta)\longrightarrow \overline{\Omega}^1(\Gamma)$ whose image is denoted by $\overline{D}^1(z,\beta)$. Furthermore we orient $\mathbb{A}(z,\beta)$ from $f_{z,\beta}(\beta)$ towards $f_{z,\beta}(-\beta)$, which induces an orientation on the 1-cell $\overline{D}^1(z,\beta)$, from $\overline{\omega}(z)$ towards $\overline{\omega}(zr_{\beta})$.
\\
\\
\textbf{Description of the $k$-skeleton.}
Let $k\geq 2$. We define the $k$-skeleton $\overline{\Omega}^k(\Gamma)$ by induction on $k$. Since $\overline{\Omega}^1(\Gamma)$ is already defined, we can assume that $k\geq 2$ and that $\overline{\Omega}^h(\Gamma)$ is defined for all $h\leq k-1$.
In particular, for each $z\in \W[\Gamma]$ and each $\cY\subset \Phi[\Gamma]$ AP and of spherical type of rank $h\leq k-1$, we have a copy $\mathbb{D}(z,\cY)$ of the Coxeter polytope $C[\cY]$ which we identify with $C[\cY]$ via an isometry $f_{z,\cY}:C[\cY]\longrightarrow \mathbb{D}(z,\cY)$, and we have an $h$-cell $\overline{D}^h(z,\cY)$ in $\overline{\Omega}^{k-1}(\Gamma)$ whose interior is homeomorphic to the interior of $\mathbb{D}(z,\cY)$ through a continuous characteristic map $\overline{\Lambda}^h_{z,\cY}: \mathbb{D}(z,\cY)\longrightarrow \overline{D}^h(z,\cY)$. If $h=|\cY|=1$, then there exists $\beta\in \Phi[\Gamma]$ such that $\cY=\{\beta\}$, and we assume $\mathbb{D}(z,\cY)=\mathbb{A}(z,\beta)$. If $\cY=\emptyset$, then as usual we adopt the following conventions: 
    \[
    V_{\emptyset}=\{0\},\qquad \W_{\emptyset}=\{\id\},\qquad o_{\emptyset}=0,\qquad C[\emptyset]=\{0\}.
    \]
    \noindent We also set $\mathbb{D}(z,\emptyset)=\{0\}$. Take $f_{z,\emptyset}:C[\emptyset]\longrightarrow \mathbb{D}(z,\emptyset)$ to be the identity map, and define $\overline{\lambda}_{z,\emptyset}^0:\mathbb{D}(z,\emptyset)\longrightarrow\overline{\Omega}^0(\Gamma)$ to be the map sending $0$ to the vertex $\overline{\omega}(z)$. Thus $\overline{D}^0(z,\emptyset)=\overline{\omega}{(z)}$.
\\
\\
Take now, for each $\cX\in \Rf_k$ and for each $z\in \W[\Gamma]$, a copy $\mathbb{D}(z,\cX)$ of the Coxeter polytope $C[\cX]$, identified with $C[\cX]$ via an isometry $f_{z,\cX}:C[\cX]\longrightarrow \mathbb{D}(z,\cX)$. Let $\cY\subset \cX$ with $|\cY|=h\leq k-1$ and $u\in \W_{\cX}$ an $(\emptyset,R_{\cY})$-minimal element.
Define now $\overline{\chi}^k_{z,u,\cY,\cX}:F(u,\cY,\cX)\longrightarrow \overline{\Omega}^{k-1}(\Gamma)$ to be the following composition of maps:
\begin{align*}
    F(u,\cY,\cX)\xrightarrow[]{u^{-1}}F(\id,\cY,\cX)\xrightarrow[]{\tau(o_{\cY}-o_{\cX})}C[\cY]\xrightarrow[]{u}C[u(\cY)]\xrightarrow[]{}\\\xrightarrow[]{f_{zu^{-1},u(\cY)}}\mathbb{D}(zu^{-1},u(\cY))\xrightarrow[]{\overline{\Lambda}^h_{zu^{-1},u(\cY)}}\overline{D}^h(zu^{-1},u(\cY))\subset \overline{\Omega}^{k-1}(\Gamma).
\end{align*}
In other words,
\[\overline{\chi}^k_{z,u,\cY,\cX}=\overline{\Lambda}^h_{zu^{-1},u(\cY)}\circ f_{zu^{-1},u(\cY)}\circ \tau(u(o_{\cY}-o_{\cX})). \]
Recall that if $F(u,\cY,\cX)$ is a face of $C[\cX]$ with $\cY$ and $u$ as before, the faces of $F(u,\cY,\cX)$ are of the form $F(uv,\cZ,\cX)=conv\{uvz'(o_{\cX})\,|\,z'\in \W_{\cZ}\}$ with $\cZ\subset \cY$ and $v\in \W_{\cY}$, $(\emptyset,R_{\cZ})$-minimal.

 \begin{lem}\label{gooddeflambdabar}
    Let $\cX,\cY,\cZ \in \Rf$ be such that $\cZ\subset \cY\subset \cX$, and $|\cX|=k,|\cY|=h,|\cZ|=l$. Take $u\in \W_{\cX}$, $u$ $(\emptyset,R_{\cY})$-minimal and $v\in \W_{\cY}$, $v$ $(\emptyset,R_{\cZ})$-minimal. We have that $F(uv,\cZ,\cX)\subset F(u,\cY,\cX)\subset C[\cX]$. Then, the restriction of $\overline{\chi}^k_{z,u,\cY,\cX}$ to $F(uv,\cZ,\cX)$ coincides with $\overline{\chi}^k_{z,uv,\cZ,\cX}$. 
\end{lem}
\noindent The proof of this lemma is analogous to the ones we have seen in previous sections.
Therefore, we can define a continuous map $\overline{\chi}^k_{z,\cX}:\partial C[\cX]\longrightarrow \overline{\Omega}^{k-1}(\Gamma)$ as follows. Let $x\in \partial C[\cX]$. We choose $\cY\subset \cX$ and $u\in \W_{\cX}$ an $(\emptyset, R_{\cY})$-minimal element such that $x\in F(u,\cY,\cX)$, and we set $\overline{\chi}^k_{z,\cX}(x)=\overline{\chi}^k_{z,u,\cY,\cX}(x)$. Thanks to Lemma \ref{gooddeflambdabar}, this definition does not depend on the choice of $F(u,\cY,\cX)$, and the map $\overline{\chi}^k_{z,\cX}$ is continuous. Then we define $\overline{\lambda}^k_{z,\cX}:\partial \mathbb{D}(z,\cX)\longrightarrow \overline{\Omega}^{k-1}(\Gamma)$ by setting
\[
\overline{\lambda}^k_{z,\cX}=\overline{\chi}^k_{z,\cX}\circ (f^{-1}_{z,\cX})|_{\partial \mathbb{D}(z,\cX)}.
\]
\noindent The map $\overline{\lambda}^{k}_{z,\cX}$ is then continuous. Now, for all $\cX\in \Rf_k$, for all $z\in \W[\Gamma]$, and for all $x\in \partial \mathbb{D}(z,\cX)$, we write $x\sim \overline{\lambda}^k_{z,\cX}(x)$. The $k$-skeleton of the cell complex will be defined as  
    \[
    \overline{\Omega}^{k}(\Gamma):=\bigslant{\left(\overline{\Omega}^{k-1}(\Gamma)\bigsqcup \left( \bigsqcup_{\cX \in \Rf_k,z\in \W[\Gamma]}\mathbb{D}(z,\cX)\right)\right)}{\sim}.
    \]
    For every $\cX\in \Rf_k$ and every $z\in \W[\Gamma]$, the cell $\mathbb{D}(z,\cX)$ has a natural characteristic map $\overline{\Lambda}^k_{z,\cX}$ to $\overline{\Omega}^k(\Gamma)$, whose image is denoted by $\overline{D}^k(z,\cX)$. This $\overline{\Lambda}^k_{z,\cX}$ is given by the composition of the inclusion of $\mathbb{D}(z,\cX)$ into the disjoint union and the quotient projection. \\
    \\
    \textbf{Description of the complex.} We set $\overline{\Omega}(\Gamma)=\bigcup_{k=0}^{\infty}\overline{\Omega}^{k}(\Gamma)$, endowed with the weak topology.
    \end{defn}
    \noindent It is easily seen that we have a free and properly discontinuous left action of $\W[\Gamma]$ on $\overline{\Omega}(\Gamma)$. If $z,z'\in \W[\Gamma]$ and $\cX\in \Rf_k$, then $z'\cdot\overline{D}^k(z,\cX):=\overline{D}^k(z'z,\cX)$. More precisely, the isometry $f_{z'z,\cX}\circ f_{z,\cX}^{-1}:\mathbb{D}(z,\cX)\longrightarrow\mathbb{D}(zz',\cX)$ induces the action of $z'$ on $\overline{D}^k(z,\cX)$. The following result is a straightforward consequence of the description of $\Omega(\Gamma)$ and $\overline{\Omega}(\Gamma)$.

    \begin{lem}
        We have $\overline{\Omega}(\Gamma)/\W[\Gamma]=\Omega(\Gamma)$, and the regular covering associated with this quotient is the covering induced by the epimorphism $\pi_K:\PVA[\Gamma]\longrightarrow \W[\Gamma]$.
    \end{lem}
\begin{cor}\label{omegatilde}
    The regular covering space $\overline{\Omega}(\Gamma)$ has the following CW-structure: 
    \begin{itemize}
        \item The 0-skeleton consists of one point $\overline{\omega}(z)$ for each $z\in \W[\Gamma]$.
        \item The 1-skeleton consists of an oriented 1-cell $\overline{D}^1(z,\beta)$ for each $z\in \W[\Gamma]$ and $\beta\in \Phi[\Gamma]$, with source $\mathfrak{s}(\overline{D}^1( z,\beta))=\overline{\omega}(z)$ and target $\mathfrak{t}(\overline{D}^1(z,\beta))=\overline{\omega}(zr_{\beta})$.
        \item For any $k\geq 2$, there is a $k$-cell $\overline{D}^k(z,\cX)$ for each $z\in \W[\Gamma]$ and $\cX\in \Rf_k$. The boundary of this cell is composed of the $h$-cells of the form $\overline{D}^{h}(zu^{-1},u(\cY))$, where $\cY\subset \cX$, $|\cY|=h\leq k-1$, $u\in \W_{\cX}$, and $u$ is $(\emptyset, R_{\cY})$-minimal.
    \end{itemize}    
\end{cor}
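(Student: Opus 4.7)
The plan is to extract the claimed CW-structure directly from the inductive construction of $\overline{\Omega}(\Gamma)$ given in the preceding definition, and then combine this with the lemma stating $\overline{\Omega}(\Gamma)/\W[\Gamma] = \Omega(\Gamma)$ (together with the fact that the right action of $\W[\Gamma]$ on $\overline{\Omega}(\Gamma)$ is free and properly discontinuous) to conclude that this CW-structure is precisely the one of the regular covering associated with the epimorphism $\pi_K|_{\PVA[\Gamma]}:\PVA[\Gamma]\longrightarrow \W[\Gamma]$.

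First, the 0- and 1-skeleton assertions are literal reformulations of the 0-skeleton and 1-skeleton parts of the definition: the vertex set is $\{\overline{\omega}(z)\mid z\in \W[\Gamma]\}$; for each pair $(\beta,z)$ the cell $\mathbb{A}(\beta,z)$ is glued via $\overline{\lambda}^1_{\beta,z}$ so that $f_{\beta,z}(\beta)\mapsto \overline{\omega}(z)$ and $f_{\beta,z}(-\beta)\mapsto \overline{\omega}(zr_{\beta})$; and the orientation of $C[\beta]=[\beta,-\beta]$ transfers through $f_{\beta,z}$ to give the claimed source and target.

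For $k\geq 2$ I would analyze the attaching map $\overline{\lambda}^k_{\cX,z}$ restricted to each face of $C[\cX]$ through $f_{\cX,z}$. By construction, $\partial C[\cX]$ decomposes as the union of faces $F(u,\cY,\cX)$ with $\cY\subsetneq\cX$ and $u\in \W_{\cX}$ $(\emptyset,R_{\cY})$-minimal, and on each such face the attaching map coincides with $\overline{\chi}^k_{u,\cY,\cX,z}$. By definition this composition factors through $\overline{D}^h(u(\cY),zu^{-1})$ with $h=|\cY|$, because its last steps are $f_{u(\cY),zu^{-1}}$ followed by $\overline{\Lambda}^h_{u(\cY),zu^{-1}}$. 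Consequently the boundary of $\overline{D}^k(\cX,z)$ is exactly the union of the cells $\overline{D}^h(u(\cY),zu^{-1})$ for the prescribed pairs $(u,\cY)$, as claimed.

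As a consistency check (not strictly necessary for the proof but clarifying the indexing), I would trace the $0$- and $1$-dimensional incidences on $\partial\overline{D}^k(\cX,z)$. The vertex $u(o_{\cX})=F(u,\emptyset,\cX)$ maps to $\overline{D}^0(\emptyset,zu^{-1})=\overline{\omega}(zu^{-1})$, while the edge $F(u,\{\beta\},\cX)$, oriented from $u(o_{\cX})$ to $ur_{\beta}(o_{\cX})$, maps to the oriented edge $\overline{D}^1(u(\beta),zu^{-1})$, which by the 1-skeleton description runs from $\overline{\omega}(zu^{-1})$ to $\overline{\omega}(zu^{-1}r_{u(\beta)})$. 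Using the standard identity $r_{u(\beta)}=ur_{\beta}u^{-1}$, one checks $zu^{-1}r_{u(\beta)}=z(ur_{\beta})^{-1}$, which agrees with the image of the endpoint $ur_{\beta}(o_{\cX})=F(ur_{\beta},\emptyset,\cX)$. This confirms that the right-action convention $\overline{\omega}(z)\cdot z'=\overline{\omega}(zz')$ from Notation \ref{fiberaction} is compatible with the stated boundary formula.

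I do not anticipate a genuine obstacle here, since the corollary is essentially a bookkeeping statement following from the definition. The only point requiring care is why the boundary cells carry the indices $(u(\cY),zu^{-1})$ rather than $(\cY,zu)$; this is dictated by the right-multiplication convention on the fiber and by the presence of the intermediate isometry $u:C[\cY]\longrightarrow C[u(\cY)]$ in the definition of $\overline{\chi}^k_{u,\cY,\cX,z}$, which differentiates the covering structure of $\overline{\Omega}(\Gamma)$ from that of $\overline{\Sigma}(\Gamma)$ described in Corollary \ref{sigmatilde}.
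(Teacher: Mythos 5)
Your proposal is correct and follows exactly the route the paper takes: the corollary is read off directly from the inductive definition of $\overline{\Omega}(\Gamma)$ (the paper gives no separate proof, treating it as immediate), with the boundary description coming from the fact that $\overline{\chi}^k_{u,\cY,\cX,z}$ factors through $\overline{\Lambda}^h_{u(\cY),zu^{-1}}\circ f_{u(\cY),zu^{-1}}$. Your consistency check on the vertex and edge incidences, including the identity $zu^{-1}r_{u(\beta)}=zr_{\beta}u^{-1}$, is accurate and correctly isolates the one point needing care, namely why the indices are $(u(\cY),zu^{-1})$ rather than $(\cY,zu)$.
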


\noindent We include Figure \ref{fig2cellomegatilde} to represent the 2-cell $\overline{D}^2(z,\{\beta,\gamma\})$ of $\overline{\Omega}(\Gamma)$ for $z\in \W[\Gamma]$, $\cX=\{\beta,\gamma\}\in \Rf_2$, and the orientations fixed on the edges. Remember that the roots $\beta_k,\gamma_k$ have the same expression as in Equation \ref{betakappa}.
\begin{figure}[ht]
    \centering
    \includegraphics[width=11cm]{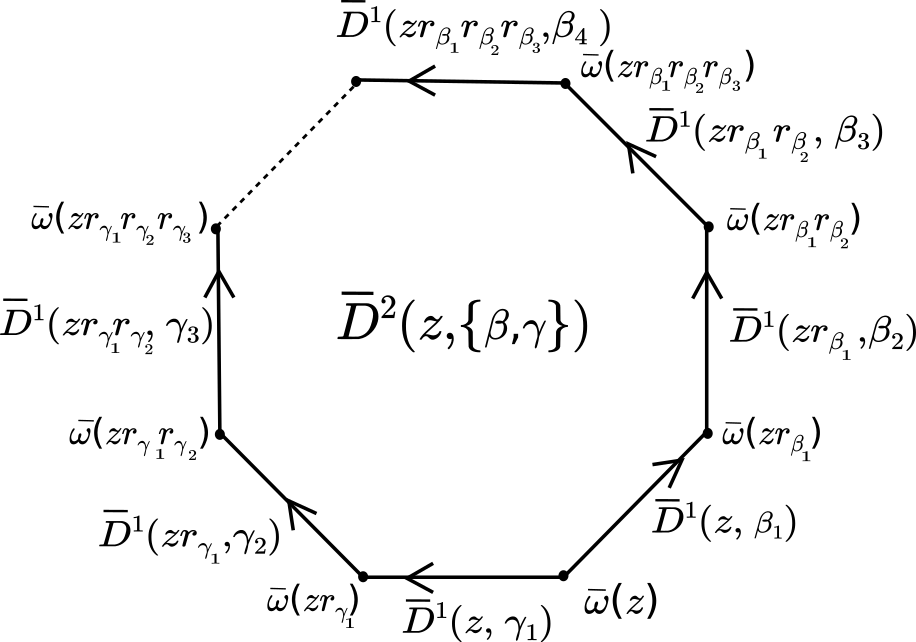}
    \caption{A 2-cell of the covering space $\overline{\Omega}(\Gamma)$.}
    \label{fig2cellomegatilde}
\end{figure}

\subsection{The isomorphism between \texorpdfstring{$\overline{\Sigma
}(\Gamma)$}{TEXT} and \texorpdfstring{$\overline{\Omega
}(\Gamma)$}{TEXT}}
In this last subsection, we define a map $g$ between the two regular coverings $\overline{\Sigma}(\Gamma)$ and $\overline{\Omega}(\Gamma)$ and we prove that this is a cellular isomorphism. Before doing so, we present a preliminary result.

\begin{lem}\label{lemmasimply}
    Let $\beta,\gamma\in \Phi[\Gamma]$ be such that $m:=\widehat{m}_{\beta,\gamma}\neq \infty$, and $\beta_k$ is defined as in Equation \ref{betakappa} for $2\leq k\leq m$. Then \[
    r_{\beta_1}r_{\beta_2}\dots r_{\beta_{k-1}}=\begin{cases}
        \Prod_L(r_{\beta},r_{\gamma};k-1) \mbox{ if $k$ is even,}\\
        \Prod_L(r_{\gamma},r_{\beta};k-1) \mbox{ if $k$ is odd.}
    \end{cases}
    \]
\end{lem}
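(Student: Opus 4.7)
The plan is to prove this by induction on $n := k-1$, after a reformulation that makes the statement uniform in parity. Setting $A_n := r_{\beta_1} r_{\beta_2} \cdots r_{\beta_n}$, I will show the uniform formula
\[
A_n = \Prod_R(r_\gamma, r_\beta; n) \quad \text{for all } n \geq 1,
\]
which is equivalent to the lemma. Indeed, for $n$ odd the word $\Prod_R(r_\gamma, r_\beta; n)$ is a length-$n$ palindrome beginning and ending with $r_\beta$, so it coincides with $\Prod_L(r_\beta, r_\gamma; n)$; while for $n$ even, reading it from left to right gives $\Prod_L(r_\gamma, r_\beta; n)$. The base case $n=1$ is the trivial identity $A_1 = r_\beta$.

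For the inductive step I will use the conjugation formula $r_{w(\delta)} = w\, r_\delta\, w^{-1}$ applied to the definition (\ref{betakappa}): denoting by $W_{n-1}$ the alternating word appearing there and by $\delta \in \{\beta, \gamma\}$ the corresponding simple root, one obtains $r_{\beta_n} = W_{n-1}\, r_\delta\, W_{n-1}^{-1}$. By the inductive hypothesis, $A_{n-1} = \Prod_R(r_\gamma, r_\beta; n-1)$. The two key elementary observations are then the following. If $n$ is even, then $W_{n-1} = \Prod_R(r_\gamma, r_\beta; n-1) = A_{n-1}$, and since $n-1$ is odd, the word $A_{n-1}$ is a palindrome of involutions and hence itself an involution; therefore
\[
A_n = A_{n-1} \cdot A_{n-1}\, r_\gamma\, A_{n-1}^{-1} = r_\gamma\, A_{n-1}.
\]
If $n$ is odd, then for $n-1$ even the words $\Prod_R(r_\beta, r_\gamma; n-1)$ and $\Prod_R(r_\gamma, r_\beta; n-1)$ are reverses of each other, and therefore inverses, so $W_{n-1} = A_{n-1}^{-1}$ and
\[
A_n = A_{n-1} \cdot A_{n-1}^{-1}\, r_\beta\, A_{n-1} = r_\beta\, A_{n-1}.
\]
In either case, $A_n$ is obtained from $A_{n-1}$ by prepending one letter in alternating fashion, which produces exactly $\Prod_R(r_\gamma, r_\beta; n)$ and closes the induction.

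I do not expect a conceptual obstacle: the whole argument is bookkeeping inside the dihedral subgroup generated by $r_\beta$ and $r_\gamma$. Crucially, it never invokes the braid relation $(r_\beta r_\gamma)^{\widehat{m}_{\beta,\gamma}} = 1$; the hypothesis $\widehat{m}_{\beta,\gamma} \neq \infty$ intervenes only to ensure that the $\beta_k$ are defined for $k \leq \widehat{m}_{\beta,\gamma}$. The delicate point to keep straight is simply the identity $\Prod_R(a, b; n)^{-1} = \Prod_L(b, a; n)$ together with the palindrome property $\Prod_R(a, b; n) = \Prod_L(b, a; n)$ for odd $n$, as these are precisely what produce the cancellations in the two parity cases.
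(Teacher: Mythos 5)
Your proof is correct and follows essentially the same route as the paper's: both arguments are direct telescoping computations in the subgroup generated by $r_\beta$ and $r_\gamma$, using the conjugation formula $r_{w(\delta)}=w\,r_\delta\,w^{-1}$ together with the palindrome/reversal identities for the alternating words. The only difference is organizational — the paper collapses consecutive pairs via the identity $r_{\beta_{k-1}}r_{\beta_k}=r_\gamma r_\beta$ and then multiplies the pairs, whereas you prepend one letter at a time in a single-step induction, with the uniform reformulation $A_n=\Prod_R(r_\gamma,r_\beta;n)$ neatly packaging the two parity cases.
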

\begin{proof}
    In general, $r_{\beta_k}=\Prod_R(r_{\gamma},r_{\beta};k-1)r_{\gamma}\Prod_L(r_{\beta},r_{\gamma};k-1)$ if $k$ is even, and $r_{\beta_k}=\Prod_R(r_{\beta},r_{\gamma};k-1)r_{\beta}\Prod_L(r_{\gamma},r_{\beta};k-1)$ if $k$ is odd. Suppose $k$ is even. Then, we compute
    \begin{align*}
        r_{\beta_{k-1}}r_{\beta_k}=\Prod_R(r_{\beta},r_{\gamma};k-2)r_{\beta}\Prod_L(r_{\gamma},r_{\beta};k-2)\Prod_R(r_{\gamma},r_{\beta};k-1)r_{\gamma}\Prod_L(r_{\beta},r_{\gamma};k-1)=\\
        \Prod_R(r_{\beta},r_{\gamma};k-2)r_{\beta}r_{\beta}r_{\gamma}\Prod_L(r_{\beta},r_{\gamma};k-1)=\Prod_R(r_{\gamma},r_{\beta};k-3)\Prod_L(r_{\beta},r_{\gamma};k-1)=r_{\gamma}r_{\beta}.
    \end{align*}
        Thus, if $k$ is odd, $k-1$ is even and we get $r_{\beta_1}r_{\beta_2}\dots r_{\beta_{k-1}}=\Prod_L(r_{\gamma},r_{\beta};k-1)$. If $k$ is even, then $r_{\beta_1}r_{\beta_2}\dots r_{\beta_{k-1}}=\Prod_L(r_{\gamma},r_{\beta};k-2)r_{\beta_{k-1}}=\Prod_L(r_{\beta},r_{\gamma};k-1)$,
which is the desired equality.

\end{proof}
\noindent Thanks to Lemma \ref{lemmasimply}, if $z\in \W[\Gamma]$ and we set $\beta'=z(\beta)$, $\gamma'=z(\gamma)$, we also get the expression \[
    r_{\beta'_1}r_{\beta'_2}\dots r_{\beta'_{k-1}}=zr_{\beta_1}r_{\beta_2}\dots r_{\beta_{k-1}}z^{-1}\] 
    \noindent in terms of $r_{\gamma}$ and $r_{\beta}$.\\
    \\
\noindent The main result of this work is the following theorem.
\begin{thm}\label{isomorphismovcoverings}
 There is a cellular isomorphism $g$ between the two CW-complexes $\overline{\Sigma}(\Gamma)$ and $\overline{\Omega}(\Gamma)$.
\end{thm}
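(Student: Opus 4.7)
The plan is to define the cellular isomorphism $g$ explicitly on cells via the assignment
\[
g\bigl(\overline{\Delta}^k(\cX, z)\bigr) := \overline{D}^k(z(\cX), z^{-1})
\]
for every $\cX \in \Rf_k$ and every $z \in \W[\Gamma]$. In particular, on $0$-cells this reads $g(\overline{\sigma}(z)) = \overline{\omega}(z^{-1})$, and for $k \geq 1$ the cell-level map is realized by the isometry $z|_{C[\cX]}: C[\cX] \to C[z(\cX)]$, namely the restriction to $V_{\cX}$ of the canonical action of $z$ on $V$; this sends the fundamental point $o_\cX$ to $o_{z(\cX)}$ and each vertex $u(o_\cX)$ to $zuz^{-1}(o_{z(\cX)})$ (extending the observation of Proposition \ref{uchi}). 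Concretely, one sets
\[
g|_{\overline{\Delta}^k(\cX,z)} = \overline{\Lambda}^k_{z(\cX), z^{-1}} \circ f_{z(\cX), z^{-1}} \circ z \circ f_{\cX, z}^{-1} \circ (\overline{\phi}^k_{\cX, z})^{-1}.
\]
Because the $\W[\Gamma]$-action on $\Phi[\Gamma]$ preserves both (AP1)--(AP2) and the spherical type condition, the map $(\cX, z) \mapsto (z(\cX), z^{-1})$ is an involutive bijection of $\Rf \times \W[\Gamma]$, so $g$ is a bijection on cells of each dimension, with inverse given by the same formula.

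The heart of the proof is checking that these cell-level assignments glue into a well-defined cellular map, which I would do by induction on $k$. For each face $F(u, \cY, \cX)$ of $\mathbb{D}(\cX, z)$ with $u \in \W_\cX$ being $(\emptyset, R_\cY)$-minimal, the Salvetti attaching map identifies $F(u,\cY,\cX)$ with $\overline{\Delta}^h(\cY, zu)$ via $\overline{\xi}^k_{u, \cY, \cX, z}$, while its image $z(F(u,\cY,\cX)) = F(zuz^{-1}, z(\cY), z(\cX))$ is identified by the BEER attaching map with $\overline{D}^h(zu(\cY), (zu)^{-1})$ via $\overline{\chi}^k_{zuz^{-1}, z(\cY), z(\cX), z^{-1}}$ (one first observes that conjugation by $z$ is a length-preserving isomorphism from $(\W_\cX, R_\cX)$ to $(\W_{z(\cX)}, R_{z(\cX)})$, so that $zuz^{-1}$ is indeed $(\emptyset, R_{z(\cY)})$-minimal). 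By inductive hypothesis $g(\overline{\Delta}^h(\cY, zu)) = \overline{D}^h(zu(\cY), (zu)^{-1})$, so compatibility reduces to the identity
\[
g \circ \overline{\xi}^k_{u, \cY, \cX, z}(x) = \overline{\chi}^k_{zuz^{-1}, z(\cY), z(\cX), z^{-1}}(z(x))
\]
for every $x \in F(u, \cY, \cX)$. Unwinding both characteristic maps from the definitions in Subsections \ref{salbargam} and \ref{generalBeer}, each side evaluates to the same affine expression $z(x) + o_{zu(\cY)} - zu(o_\cX)$ in $V$, using the standard commutation $A \circ \tau(v) = \tau(A(v)) \circ A$ between linear maps and translations, together with the equalities $u(o_\cY) = o_{u(\cY)}$ and $zuz^{-1}(o_{z(\cY)}) = o_{zu(\cY)}$ supplied by Proposition \ref{uchi}.

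Once the boundary compatibility is secured, each cell-level restriction of $g$ is a homeomorphism (being a composition of isometries with the characteristic maps of the two CW-structures), so $g$ assembles into a bijective cellular map and hence a cellular isomorphism. The principal technical obstacle is the boundary computation above: the Salvetti and BEER complexes parameterize the boundary of the Coxeter polytope $C[\cX]$ genuinely differently, attaching the face $F(u, \cY, \cX)$ to $(\cY, zu)$ on the Salvetti side and to $(u(\cY), zu^{-1})$ on the BEER side, and one must verify that the twist $(\cX, z) \mapsto (z(\cX), z^{-1})$ combined with the isometric action of $z$ on Coxeter polytopes is precisely what intertwines these two conventions. The sign conventions on orientations of edges, already set up in Notation \ref{orientedges} and Remark \ref{orientedgesparabolic}, will need to be tracked through this computation to ensure that $g$ sends oriented $1$-cells to oriented $1$-cells in the correct direction.
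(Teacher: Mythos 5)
Your proposal is correct and defines exactly the same map $g(\overline{\Delta}^k(\cX,z)) = \overline{D}^k(z(\cX),z^{-1})$ as the paper, resting on the same key facts: conjugation by $z$ is a length-preserving isomorphism $\W_{\cX}\to\W_{z(\cX)}$ that preserves $(\emptyset,R_{\cY})$-minimality, and $u(o_{\cY})=o_{u(\cY)}$ for $u$ minimal. The only difference is organizational: where the paper checks boundary compatibility dimension by dimension (matching the sets of boundary cells for $k\geq 3$, and for $2$-cells comparing the oriented boundary words with the help of Lemma \ref{lemmasimply}), you verify a single pointwise identity between the two attaching maps, which is a uniform and slightly stronger check that in particular settles the edge-orientation question automatically.
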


\noindent This immediately implies the next theorem.

\begin{thm}\label{maintheorem}
The Salvetti complex $\Sigma(\Gamma)$ and the BEER complex $\Omega(\Gamma)$ have a common covering space, and thus, they have the same universal covering.    
\end{thm}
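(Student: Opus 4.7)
The plan is to deduce Theorem \ref{maintheorem} from Theorem \ref{isomorphismovcoverings}, whose proof I will outline. Once a cellular isomorphism $g: \overline{\Sigma}(\Gamma) \to \overline{\Omega}(\Gamma)$ is constructed, $\overline{\Sigma}(\Gamma) \cong \overline{\Omega}(\Gamma)$ serves as a common covering of $\Sigma(\Gamma)$ (via $p_\Sigma$) and of $\Omega(\Gamma)$ (via $p_\Omega \circ g$), and its universal cover is then the common universal cover of the two complexes.

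Comparing Corollaries \ref{sigmatilde} and \ref{omegatilde}, the natural guess for the cellular bijection is
\[
\overline{\sigma}(v) \longmapsto \overline{\omega}(v^{-1}), \qquad \overline{\Delta}^k(\cX, z) \longmapsto \overline{D}^k(z(\cX), z^{-1}) \quad (k \geq 1),
\]
which is well-defined because $z(\cX) \in \Rf_k$ whenever $\cX \in \Rf_k$, and bijective with inverse $\overline{D}^k(\cX', z') \mapsto \overline{\Delta}^k((z')^{-1}(\cX'), (z')^{-1})$. On each cell, I will realize the map through the isometry $z : C[\cX] \to C[z(\cX)]$ supplied by Proposition \ref{uchi}, setting
\[
g_{\cX, z} \;:=\; f_{z(\cX), z^{-1}} \circ z \circ f_{\cX, z}^{-1} \;:\; \mathbb{D}(\cX, z) \longrightarrow \mathbb{D}(z(\cX), z^{-1}),
\]
and then imposing $g \circ \overline{\phi}^k_{\cX, z} = \overline{\Lambda}^k_{z(\cX), z^{-1}} \circ g_{\cX, z}$.

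Next, I will verify by induction on the dimension $k$ that these cell-maps glue into a continuous cellular map. On the $1$-skeleton this is immediate: the source $\overline{\sigma}(z)$ of $\overline{\Delta}^1(\beta, z)$ is sent to the source $\overline{\omega}(z^{-1})$ of $\overline{D}^1(z(\beta), z^{-1})$, and the target matches because $z^{-1} r_{z(\beta)} = r_\beta z^{-1}$. For $k \geq 2$, the inductive step reduces to the identity
\[
\overline{\chi}^k_{zuz^{-1},\, z(\cY),\, z(\cX),\, z^{-1}} \circ z \big|_{F(u, \cY, \cX)} \;=\; g \circ \overline{\xi}^k_{u, \cY, \cX, z},
\]
to be checked on each face $F(u, \cY, \cX) \subset \partial C[\cX]$. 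Unfolding the definitions and using three inputs, namely the equivariance $z(o_\cY) = o_{z(\cY)}$ from Proposition \ref{uchi}, the translation-conjugation identity $zu \circ \tau(o_\cY - o_\cX) \circ u^{-1} = \tau(zu(o_\cY - o_\cX)) \circ z$, and the algebraic matches $(zuz^{-1})(z(\cY)) = zu(\cY)$ and $z^{-1}(zuz^{-1})^{-1} = u^{-1}z^{-1}$, both sides collapse to $\overline{\Lambda}^h_{zu(\cY),\, u^{-1}z^{-1}} \circ f_{zu(\cY),\, u^{-1}z^{-1}} \circ \tau(zu(o_\cY - o_\cX)) \circ z$.

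The main obstacle is bookkeeping rather than conceptual: the boundary cells of $\overline{\Delta}^k(\cX, z)$ are indexed by pairs $(\cY, zu)$, while those of $\overline{D}^k(z(\cX), z^{-1})$ are indexed by pairs $(zu(\cY), u^{-1}z^{-1})$, so the two conventions differ in whether the element $u \in \W_\cX$ acts on the root set or on the group-theoretic coordinate. Reconciling them requires the Coxeter isomorphism $\W_\cX \cong \W_{z(\cX)}$, $u \mapsto zuz^{-1}$, which preserves length (and hence $(\emptyset, R_\cY)$-minimality) because $z$ is an isometry of $V$. Once this compatibility is established, each $g_{\cX, z}$ is an isometry between piecewise-Euclidean cells, so $g$ is a continuous cellular bijection whose restriction to every cell is a homeomorphism, and therefore a cellular isomorphism; Theorem \ref{maintheorem} then follows as indicated.
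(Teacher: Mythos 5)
Your proposal is correct and follows essentially the same route as the paper: the same cell-level assignment $\overline{\Delta}^k(\cX,z)\mapsto \overline{D}^k(z(\cX),z^{-1})$, the same conjugation $u\mapsto zuz^{-1}$ matching the two boundary indexings, and the same length-preservation argument for $(\emptyset,R_{\cY})$-minimality. The only cosmetic difference is that you verify compatibility uniformly at the level of the attaching maps $\overline{\xi}^k_{u,\cY,\cX,z}$ and $\overline{\chi}^k_{u',\cY',z(\cX),z^{-1}}$ for all $k\geq 2$, whereas the paper treats the $k=2$ case separately by tracing the oriented boundary word via Lemma \ref{lemmasimply}; your identity subsumes that check.
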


\begin{proof}[Proof of Theorem \ref{isomorphismovcoverings}]
Define the map $g:\overline{\Sigma}(\Gamma)\longrightarrow \overline{\Omega}(\Gamma)$ by: 
    \begin{itemize}
        \item $g^0:\overline{\Sigma}^0(\Gamma)\longrightarrow \overline{\Omega}^0(\Gamma)$ is  $g^0(\overline{\sigma}(z))= \overline{\omega}(z^{-1}) $, for all $z\in \W[\Gamma]$;
        \item $g^1:\overline{\Sigma}^1(\Gamma)\longrightarrow \overline{\Omega}^1(\Gamma)$, is $g^1(\overline{\Delta}^1(z,\beta))= \overline{D}^1(z^{-1},z(\beta))$, for all $z\in \W[\Gamma]$ and $\beta\in \Phi[\Gamma]$;
        \item For all $k\geq 2$, $g^k:\overline{\Sigma}^k(\Gamma)\longrightarrow \overline{\Omega}^k(\Gamma)$ is
$g^k(\overline{\Delta}^k(z,\cX))=\overline{D}^k(z^{-1},z(\cX))$, for all $z\in \W[\Gamma]$ and $\cX\in \Rf_k$.
    \end{itemize}
\noindent Our goal is to show that $g$ is a well-defined cellular isomorphism. To do so, we must verify that it behaves well with respect to the boundary. Specifically, we proceed as follows.\\
\\
\textbf{Step 1.} Show that the source $\mathfrak{s}( g^1(\overline{\Delta}^1(z,\beta)))$ coincides with $g^0(\mathfrak{s}(\overline{\Delta}^1(z,\beta)))$ and that the target $\mathfrak{t}( g^1(\overline{\Delta}^1(z,\beta)))$ coincides with $g^0(\mathfrak{t}(\overline{\Delta}^1(z,\beta)))$, for all $z$ in $\W[\Gamma]$ and $\beta$ in $\Phi[\Gamma]$.\\
\\
\noindent Indeed, $\mathfrak{s}((\overline{\Delta}^1(z,\beta)))=\overline{\sigma}(z)$ and $\mathfrak{t}((\overline{\Delta}^1(z,\beta)))=\overline{\sigma}(zr_{\beta})$. Computing $g^0$ on these, we obtain $\overline{\omega}(z^{-1})$ and $\overline{\omega}(r_{\beta}z^{-1})$, respectively. 
On the other hand, $\mathfrak{s}(g^1(\overline{\Delta}^1(z,\beta)))=\mathfrak{s}(\overline{D}^1(z^{-1},z(\beta)))=\overline{\omega}(z^{-1})$, and $\mathfrak{t}(g^1(\overline{\Delta}^1(z,\beta)))=\overline{\omega}(z^{-1}r_{z(\beta)})=\overline{\omega}(r_{\beta}z^{-1})$, where we used that $z^{-1}r_{z(\beta)}=z^{-1}zr_{\beta}z^{-1}=r_{\beta}z^{-1}$. Thus, we obtain the desired equality.\\
\\
\textbf{Step 2.} We show that the oriented boundary $\partial g^2(\overline{\Delta}^2(z,\cX))$ coincides with $g^1(\partial \overline{\Delta}^2(z,\cX))$, for every $z\in \W[\Gamma]$ and $\cX\in \Rf_2$, with $ \cX=\{\beta,\gamma\}$.\\
\\
\noindent We illustrated the cell $\overline{\Delta}^2(z,\{\beta,\gamma\})$ in Figure \ref{fig2cellsigmatilde}. Suppose $\beta=w(\alpha_s)$ and $\gamma=w(\alpha_t)$ for some $w\in \W[\Gamma]$ and $s,t\in S$, and set $m:=m_{s,t}$. In this case, we assume $m$ is an even number, though the result is analogous if $m$ is odd.  We now examine the oriented boundary, starting from $\overline{\sigma}(z)$ and moving counterclockwise:
\[
\overline{\Delta}^{1}(z,\beta)\overline{\Delta}^{1}(zr_{\beta},\gamma)\cdots \overline{\Delta}^{1}(z\Prod_L(r_{\beta},r_{\gamma};m-1),\gamma)(\overline{\Delta}^{1}(z\Prod_L(r_{\gamma},r_{\beta};m-1)),\beta)^{-1}\dots
(\overline{\Delta}^{1}(z,\gamma))^{-1}.
\]
Applying $g^1$ to this path, we obtain:
\begin{align*}
&\overline{D}^{1}(z^{-1},z(\beta))\,\overline{D}^{1}(r_{\beta}z^{-1},zr_{\beta}(\gamma))\,\cdots\, \overline{D}^{1}(\Prod_L(r_{\beta},r_{\gamma};m-1)z^{-1},z\Prod_L(r_{\beta},r_{\gamma};m-1)(\gamma))\\
&(\overline{D}^{1}(\Prod_L(r_{\gamma},r_{\beta};m-1)z^{-1},z\Prod_L(r_{\gamma},r_{\beta};m-1)(\beta)))^{-1}\,\cdots\,
  (\overline{D}^{1}(r_{\gamma}z^{-1},zr_{\gamma}(\beta)))^{-1}\,(\overline{D}^{1}(z^{-1},z(\gamma)))^{-1},
\end{align*}
which we can rewrite as: 
\begin{align}
\label{f1ofbound}
&\overline{D}^{1}(z^{-1},z(\beta_1))\,\overline{D}^{1}(r_{\beta}z^{-1},z(\beta_2))\,\cdots\, \overline{D}^{1}(\Prod_L(r_{\beta},r_{\gamma};m-1)z^{-1},z(\beta_m))\nonumber\\
&(\overline{D}^{1}(\Prod_L(r_{\gamma},r_{\beta};m-1)z^{-1},z(\gamma_m)))^{-1}\,\cdots\,
  (\overline{D}^{1}(r_{\gamma}z^{-1},z(\gamma_2)))^{-1}\,(\overline{D}^{1}(z^{-1},z(\gamma_1)))^{-1}.
\end{align}
Next, we compute $\partial g^2(\overline{\Delta}^2(z,\cX))=\partial\overline{D}^2(z^{-1},z(\cX))$, and set $\beta'=z(\beta), \gamma'=z(\gamma)$. Using Remark \ref{rmksimply}, we find that the oriented boundary is:

\begin{align}\label{boudoff2}
\overline{D}^{1}(z^{-1},\beta'_1)\;\overline{D}^{1}(z^{-1}r_{\beta'_1},\beta'_2)\cdots \overline{D}^{1}(z^{-1}r_{\beta'_1}\dots r_{\beta'_{m-1}},\beta'_m)\nonumber\\
(\overline{D}^{1}(z^{-1}r_{\gamma'_1}\dots r_{\gamma'_{m-1}},\gamma'_m))^{-1} \dots(\overline{D}^{1}(z^{-1}r_{\gamma'_1},\gamma'_2))^{-1}(\overline{D}^{1}(z^{-1},\gamma'_1))^{-1}.
\end{align}
Now we know that for all $1\leq j \leq m$,  $z(\beta_{j})=\beta'_j$ for $\beta'=z(\beta)$ and  $\gamma'=z(\gamma)$. Moreover, if $j\geq2$: \[r_{\beta'_1}\dots r_{\beta'_{j-1}}=zr_{\beta_1}\dots r_{\beta_{j-1}}z^{-1},\] and hence $z^{-1}r_{\beta'_1}\dots r_{\beta'_{j-1}}=\Prod_L(r_{\beta},r_{\gamma};j-1)z^{-1}$ if $j$ is even, and $z^{-1}r_{\beta'_1}\dots r_{\beta'_{j-1}}=\Prod_L(r_{\gamma},r_{\beta};j-1)z^{-1}$ if $j$ is odd.
This proves that the expressions in Equations \ref{f1ofbound} and \ref{boudoff2} coincide.\\
\\
\textbf{Step 3.} We show that for all $k\geq 3$, the boundary $\partial g^k(\overline{\Delta}^k(z,\cX))$ coincides with $g^{k-1}(\partial \overline{\Delta}^k(z,\cX))$, for every $z\in \W[\Gamma]$ and $\cX\in \Rf$, where $|\cX|=k$.\\
\\
\noindent Consider a $k$-cell $\overline{\Delta}^k(z,\cX)$ in $\overline{\Sigma}(\Gamma)$, where $z\in \W[\Gamma]$ and $\cX\in \Rf_k$. By Corollary \ref{sigmatilde}, $\partial\overline{\Delta}^k(z,\cX)$ is given by the union $\bigcup\overline{\Delta}^{h}(zu,\cY)$ where $\cY\subset\cX$, $|\cY|=h\leq k-1$, and $u\in \W_{\cX}$ is $(\emptyset,R_{\cY})$-minimal. Computing $g^{k-1}\partial(\overline{\Delta}^k(z,\cX))$, we get: 
\[\bigcup\overline{D}^{h}(u^{-1}z^{-1},zu(\cY)),\]
for $\cY$, $h$ and $u$ as before.\\
\\
\noindent On the other hand, $g^k(\overline{\Delta}^k(z,\cX))=\overline{D}^k(z^{-1},z(\cX))$. 
According to Corollary \ref{omegatilde}, the faces of the $k$-cells of $\overline{\Omega}(\Gamma)$ are described by: \[\partial\overline{D}^k(z^{-1},z(\cX))=\bigcup\overline{D}^{h}(z^{-1}u'^{-1},u'(\cY')),\] where $\cY'\subset z(\cX)$, $|\cY'|=h\leq k-1$, and $u'\in \W_{z(\cX)}$ is $(\emptyset,R_{\cY'})$-minimal. \\
\\
\noindent We claim that $\partial g^k(\overline{\Delta}^k(z,\cX))$ and $g^{k-1}(\partial \overline{\Delta}^k(z,\cX))$ represent the same union of $h$-cells. For each $\cY \subset \cX$, $|\cY|=h\leq k-1$ and $u\in \W_{\cX}$ $(\emptyset,R_{\cY})$-minimal, choose $u':=zuz^{-1}\in z\W_{\cX}z^{-1}=\W_{z(\cX)}$ and $\cY':=z(\cY)\subset z(\cX)$, which has cardinality $h$. The set of roots satisfies $u'(\cY')=zuz^{-1}z(\cY)=zu(\cY)$, and $z^{-1}u'^{-1}=z^{-1}zu^{-1}z^{-1} = u^{-1}z^{-1}$. The only remaining task is
to show that $zuz^{-1}=u'$ is $(\emptyset,R_{\cY'})$-minimal. Conjugation by $z\in \W[\Gamma]$ is an isomorphism between the AP spherical-type reflection subgroups $\W_{\cX}$ and $\W_{z(\cX)}$, preserving  length, as it sends the generator $r_{\beta}$ to $r_{z(\beta)}$ for all $\beta \in \cX$. Thus, for $\beta\in \cY$, \[l_{z(\cX)}(u'r_{z(\beta)})=l_{\cX}(ur_{\beta})=l_{\cX}(u)+1=l_{z(\cX)}(u')+1,\] where $l_{\cX}$ (resp $l_{z(\cX)}$) denotes the word length in $\W_{\cX}$ (resp in $\W_{z(\cX)}$) with respect to $R_{\cX}$ (resp $R_{z(\cX)}$). Since $u$ is $(\emptyset,R_{\cY})$-minimal, it follows that $u'\in \W_{z(\cX)}$ is $(\emptyset,\cY')$-minimal.\\
\\
\noindent Hence, for each pair $(u,\cY)$ such that $\cY\subset\cX$, $|\cY|=h$, and $u\in \W_{\cX}$ is $(\emptyset,R_{\cY})$-minimal, there exists a unique pair $(u',\cY')$ with $\cY'\subset z(\cX)$, $|\cY'|=h$, $u'\in \W_{z(\cX)}$ $(\emptyset,R_{\cY'})$-minimal, satisfying: 
\[\overline{D}^{h}(u^{-1}z^{-1},zu(\cY))=\overline{D}^{h}(z^{-1}u'^{-1},u'(\cY')).\]
\noindent Similarly, for each pair $(u',\cY')$ such that $\cY'\subset z(\cX)$, $|\cY'|=h$, and $u'\in \W_{z(\cX)}$ is $(\emptyset,R_{\cY'})$-minimal, there exists a unique pair $(u,\cY)$ with $\cY\subset \cX$, $|\cY|=h$, and $u\in \W_{\cX}$ $(\emptyset,R_{\cY})$-minimal such that 
$\overline{D}^{h}(u^{-1}z^{-1},zu(\cY))=\overline{D}^{h}(z^{-1}u'^{-1},u'(\cY'))$. This concludes the proof of the third claim and the overall theorem.\\
\end{proof}

\noindent This theorem, as stated in \ref{maintheorem}, indicates that the two CW-complexes $\overline{\Sigma}(\Gamma)$ and $\overline{\Omega}(\Gamma)$ are, up to relabeling the faces, the same CW-complex.
 \begin{defn}\label{defteta}
Let $\Gamma$ be a Coxeter graph. The \textit{SB complex}, denoted by $\Theta(\Gamma)$, is the common covering space of both $\Omega(\Gamma)$ and $\Sigma(\Gamma)$, which is cellularly isomorphic to $\overline{\Sigma}(\Gamma)$ and $\overline{\Omega}(\Gamma)$.
 \end{defn}

\noindent Corollaries \ref{sigmatilde} and \ref{omegatilde} provide a detailed description of the skeleton structure of the complex $\Theta(\Gamma)$. The fundamental group of the SB complex is $\LVA[\Gamma]$, which does not have a specific name in the literature. Additionally, for $\Gamma$ of spherical type or of affine type, the SB complex is aspherical, thereby serving as a classifying space for $\LVA[\Gamma]$. This relationship arises because $\Theta(\Gamma)$ shares its universal covering space with $\Sigma(\Gamma)$, which is known to be contractible in the cases of spherical and affine types, as established in Theorem \ref{gammahatkpiAFF}.
\noindent We now present the two corollaries that affirm the asphericity of the BEER complex $\Omega(\Gamma)$ for $\Gamma$ of spherical or affine type, which was the original objective of this work.
\begin{cor}\label{beeraspht}
 If $\Gamma$ is a Coxeter graph of spherical type, then  the BEER complex $\Omega(\Gamma)$ is aspherical. Consequently, it is a classifying space for the pure virtual Artin group $\PVA[\Gamma]$.  
\end{cor}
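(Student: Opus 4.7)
The plan is a very short deduction from the machinery already assembled. First I would note that when $\Gamma$ is of spherical type, the root system $\Phi[\Gamma]$ is finite, so the Coxeter graph $\widehat{\Gamma}$ has finitely many vertices. Consequently the classical $K(\pi,1)$-conjecture is meaningful for the Artin group $\A[\widehat{\Gamma}]$, which by Theorem \ref{preskva} is isomorphic to $\KVA[\Gamma]$. By Theorem \ref{gammahatkpi1} (i.e.\ \cite[Theorem 6.3]{BellParThiel}), this conjecture holds, which is equivalent to saying that the Salvetti complex $\salbargam = \Sigma(\Gamma)$ is aspherical; in other words, its universal covering is contractible.

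Next I would apply Theorem \ref{maintheorem}: the two complexes $\Sigma(\Gamma)$ and $\Omega(\Gamma)$ admit a common covering space, namely the SB complex $\Theta(\Gamma)$ of Definition \ref{defteta}. Passing further up to the universal cover, this implies that the universal coverings of $\Sigma(\Gamma)$ and $\Omega(\Gamma)$ are homeomorphic. Since the universal cover of $\Sigma(\Gamma)$ is contractible by the preceding step, the universal cover of $\Omega(\Gamma)$ is contractible as well, and therefore $\Omega(\Gamma)$ is aspherical.

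Finally, Theorem \ref{pi1beer} yields $\pi_1(\Omega(\Gamma)) \cong \PVA[\Gamma]$. Combining this identification of the fundamental group with the vanishing of all higher homotopy groups, we conclude that $\Omega(\Gamma)$ is a $K(\PVA[\Gamma],1)$-space, i.e.\ a classifying space for the pure virtual Artin group. There is no substantial obstacle at this step: the whole point of the paper's strategy is precisely to route the asphericity question for the BEER complex through the Salvetti complex of $\widehat{\Gamma}$, so that once Theorem \ref{maintheorem} and the $K(\pi,1)$-conjecture for $\A[\widehat{\Gamma}]$ are in hand, the corollary follows in two lines.
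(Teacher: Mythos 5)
Your proposal is correct and follows exactly the paper's intended route: Theorem \ref{gammahatkpi1} gives asphericity of $\Sigma(\Gamma)=\salbargam$ in the spherical case, Theorem \ref{maintheorem} transfers this to $\Omega(\Gamma)$ via the common covering $\Theta(\Gamma)$ (hence common universal covering), and Theorem \ref{pi1beer} supplies the fundamental group identification. Nothing is missing.
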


\noindent In particular, the result holds for $\Gamma=A_{n-1}$. We obtain then the same result of asphericity for $\Omega_n$, even if it is not locally CAT(0). Combining Theorems \ref{omegan=omegaxan} and \ref{maintheorem}, we obtain the following.

\begin{cor}\label{originalbeeraspht}
    The Bartholdi\textendash Enriquez\textendash Etingof\textendash Rains complex $\Omega_n$ is aspherical.
\end{cor}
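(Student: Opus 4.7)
The plan is to deduce this corollary as an immediate specialization of Corollary \ref{beeraspht} to the Coxeter graph $\Gamma = A_{n-1}$. The first step is to recall from Subsection \ref{generalBeer} that the generalized construction $\Omega(\Gamma)$ was explicitly designed to coincide with the original Bartholdi-Enriquez-Etingof-Rains complex when $\Gamma = A_{n-1}$; indeed, in that case the Coxeter polytope $C[S]$ is precisely the permutohedron, the spherical-type AP reflection subgroups of $\mathfrak{S}_n$ reduce to the parabolic subgroups entering the original definition, and the identification maps between equivalent faces used in Definition \ref{deforiginalbeer} agree with the attaching maps $\chi^k_{u,\mathcal{Y},\mathcal{X}}$ of Definition \ref{defbeer}. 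Hence $\Omega_n = \Omega(A_{n-1})$ as CW-complexes.

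The second step is to observe that $A_{n-1}$ is a Coxeter graph of spherical type, since its associated Coxeter group is the finite symmetric group $\mathfrak{S}_n$. Therefore the hypothesis of Corollary \ref{beeraspht} is satisfied for $\Gamma = A_{n-1}$, and that corollary directly yields the asphericity of $\Omega(A_{n-1}) = \Omega_n$. In particular, despite the failure of the local CAT(0) property established in Theorem \ref{FalseCat0}, the complex $\Omega_n$ is a classifying space for the pure virtual braid group $\PVB_n = \PVA[A_{n-1}]$, which recovers (and corrects the justification of) the claim originally made in \cite{BEER}.

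Since both ingredients (the identification $\Omega_n = \Omega(A_{n-1})$ and the spherical type of $A_{n-1}$) are essentially immediate, there is no real obstacle; the entire content of the corollary is the pointwise verification that the constructions coincide on $A_{n-1}$, which was already built into the generalization in Subsection \ref{generalBeer}. Thus the proof reduces to one sentence citing Corollary \ref{beeraspht}.
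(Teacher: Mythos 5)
Your proposal is correct and matches the paper's own argument exactly: the paper likewise obtains this corollary by noting that $\Omega(A_{n-1})$ coincides with $\Omega_n$ (as recorded at the end of Subsection \ref{generalBeer}) and that $A_{n-1}$ is of spherical type, so Corollary \ref{beeraspht} applies directly. No gaps.
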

\noindent Lastly, we extend Corollary \ref{beeraspht} to the affine case, drawing upon the arguments discussed in the introduction to this subsection.

\begin{cor}\label{beerasphtAFF} If $\Gamma$ is a Coxeter graph of affine type, then  the BEER complex $\Omega(\Gamma)$ is aspherical. Consequently, it is a classifying space for the pure virtual Artin group $\PVA[\Gamma]$.   
\end{cor}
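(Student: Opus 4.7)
The plan is to assemble the corollary from three ingredients that are already available in the excerpt: the main comparison theorem between the two spaces, the known asphericity of the Salvetti complex for $\widehat{\Gamma}$ in the affine case, and the computation of $\pi_1(\Omega(\Gamma))$. All of these are stated earlier, so the proof itself reduces to combining them; the subtlety is entirely in verifying that the ``$K(\pi,1)$ for $\widehat{\Gamma}$'' input works even though $\widehat{\Gamma}$ has infinitely many vertices when $\Gamma$ is affine.

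First I would invoke Theorem \ref{maintheorem}. Since $\Omega(\Gamma)$ and $\Sigma(\Gamma)$ share a common covering space, namely the SB complex $\Theta(\Gamma)$ of Definition \ref{defteta}, they share the same universal covering. Consequently $\Omega(\Gamma)$ is aspherical if and only if $\Sigma(\Gamma)$ is aspherical. This reduces the problem to establishing asphericity of $\Sigma(\Gamma) = \salbargam$ when $\Gamma$ is of affine type.

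Next I would quote Theorem \ref{gammahatkpiAFF} to conclude that $\Sigma(\Gamma)$ is aspherical in the affine case. It is worth emphasizing in the write-up why this input is legitimate, since it is the delicate point: the classical $K(\pi,1)$-conjecture is stated only for finite Coxeter graphs, and for $\Gamma$ affine the graph $\widehat{\Gamma}$ is infinite. The rescue, as explained in the introduction to Section \ref{commoncovering}, is Theorem \ref{freeofinfBPT}: every free-of-infinity subset of the vertices of $\widehat{\Gamma}$ is finite and of spherical or affine type. This finiteness makes the hypothesis of Theorem \ref{freeinfelsk} meaningful in the extended formulation (asphericity of the Salvetti complex replacing the hyperplane complement statement), and the known resolutions of the $K(\pi,1)$-conjecture in the spherical and affine cases then apply to each $\overline{Sal}(\widehat{\Gamma}_{\cX})$ with $\cX$ free of infinity; Theorem \ref{freeinfelsk} assembles these into asphericity of $\salbargam = \Sigma(\Gamma)$.

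Finally, combining the two steps yields that $\Omega(\Gamma)$ is aspherical. To promote this to the classifying space statement, I would quote Theorem \ref{pi1beer}, which identifies $\pi_1(\Omega(\Gamma))$ with $\PVA[\Gamma]$; together with asphericity this shows that $\Omega(\Gamma)$ is a $K(\PVA[\Gamma],1)$. The main ``obstacle'' is therefore not a real obstacle in this proof but a bookkeeping matter: one must make explicit that the versions of Theorems \ref{freeofinfBPT} and \ref{freeinfelsk} used here do not require $\widehat{\Gamma}$ to be finite, only that each free-of-infinity standard parabolic subgraph be finite, which is exactly the content of Theorem \ref{freeofinfBPT}. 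Once this is noted, the corollary follows by the same three-line argument as Corollary \ref{beeraspht}.
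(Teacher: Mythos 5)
Your proposal is correct and follows exactly the route the paper takes: Theorem \ref{maintheorem} reduces asphericity of $\Omega(\Gamma)$ to that of $\Sigma(\Gamma)$, Theorem \ref{gammahatkpiAFF} (itself obtained from Theorems \ref{freeofinfBPT} and \ref{freeinfelsk} together with the known spherical and affine resolutions of the $K(\pi,1)$-conjecture) supplies that asphericity, and Theorem \ref{pi1beer} upgrades the conclusion to the classifying-space statement. Your emphasis on why the infinite graph $\widehat{\Gamma}$ is not an obstruction matches the discussion the paper gives at the start of Section \ref{commoncovering}.
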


\begin{rmk}
When $\Gamma$ is not of spherical or of affine type, it remains unclear whether $\Omega(\Gamma)$ is aspherical. From Theorem \ref{maintheorem} we can deduce that this question is equivalent to the $K(\pi,1)$-conjecture for the Artin group $\A[\widehat{\Gamma}]$, indicating that we do not anticipate an easy resolution.
\end{rmk}

\medskip

\medskip
\noindent
\textit{\textbf{Federica Gavazzi} \\  Université Bourgogne Europe, CNRS, IMB UMR 5584, 21000 Dijon, France.} \par
 \noindent \textit{E-mail address:} \texttt{\href{mailto:Federica.Gavazzi@u-bourgogne.fr}{Federica.Gavazzi@u-bourgogne.fr}}

\medskip

\bibliographystyle{alpha}
\bibliography{Bibliography}

\bigskip\bigskip{\footnotesize%

\end{document}